\newcommand{\uRep}{\ul{\smash{\mathrm{Rep}}}}
\DeclareMathOperator{\colim}{colim}
\newcommand{\pf}{\mathrm{pf}}
\newcommand{\FI}{\mathbf{FI}}
\newcommand{\FS}{\mathbf{FS}}
\newcommand{\FA}{\mathbf{FA}}
\newcommand{\VI}{\mathbf{VI}}
\newcommand{\VA}{\mathbf{VA}}
\newcommand{\VB}{\mathbf{VB}}
\newcommand{\VS}{\mathbf{VS}}
\newcommand{\FB}{\mathbf{FB}}
\newcommand{\OSp}{\mathbf{OSp}}
\tikzstyle{arrow} = [-,>=stealth]
\tikzset{node/.style={%
      draw,
      circle,
      fill,
      inner sep=0,
      outer sep=0,
      minimum size=2pt,
      node distance=30pt,
}}
\title[Brauer categories I: triangular categories]{The representation theory of Brauer categories I: triangular categories}
\date{April 12, 2022}
\author{Steven V Sam}
\address{Department of Mathematics, University of California, San Diego, CA}
\email{\href{mailto:ssam@ucsd.edu}{ssam@ucsd.edu}}
\urladdr{\url{http://math.ucsd.edu/~ssam/}}
\thanks{SS was supported by NSF grant DMS-1812462.}
\author{Andrew Snowden}
\address{Department of Mathematics, University of Michigan, Ann Arbor, MI}
\email{\href{mailto:asnowden@umich.edu}{asnowden@umich.edu}}
\urladdr{\url{http://www-personal.umich.edu/~asnowden/}}
\thanks{AS was supported by NSF grants DMS-1453893.}
\begin{document}

\begin{abstract}
This is the first in a series of papers in which we study representations of the Brauer category and its allies. We define a general notion of triangular category that abstracts key properties of the triangular decomposition of a semisimple complex Lie algebra, and develop a highest weight theory for them. We show that the Brauer category, the partition category, and a number of related diagram categories admit this structure.
\end{abstract}

\maketitle 
\tableofcontents

\section{Overview}

Brauer algebras were introduced in \cite{brauer} to extend the Schur--Weyl duality between symmetric groups and general linear groups to the orthogonal and symplectic groups. They are an archetypal example of {\it diagram algebras}: the $n$th Brauer algebra has a basis consisting of perfect matchings on a set of $2n$ vertices, which is separated into two subsets of size $n$. These diagrams can be viewed as ``functions'' from one set of size $n$ to another, and this perspective continues to make sense when the two sets have different sizes. This observation leads to the idea of the Brauer {\it category}, which has been used to give further insight into the invariant theory of classical groups \cite{LZ-SFT, LZ-brauer} and plays a key role in Deligne's interpolation categories \cite{deligne}. This paper is the first in a series in which we initiate a systematic study of the representation theory of this category and its many relatives. In the rest of this section, we give an overview of the series.

\begin{figure}[!h]
\scalebox{0.7}{
\begin{tikzpicture}
  \path[mindmap,concept color=olive!25!white]
    node[concept] {Brauer \\ category}
    [clockwise from=30]
    child[concept color=gray!15!white] { node[concept] {Brauer algebras} }
    child[concept color=gray!15!white] { node[concept] {Category $\cO$} }
    child[concept color=gray!15!white] { node[concept] {Super groups} }
    child[concept color=gray!15!white] { node[concept] {Deligne category} }
    child[concept color=gray!15!white] { node[concept] {TCA's} };    
\end{tikzpicture}
}
\caption{Concepts connected to the Brauer category.} \label{fig1}
\end{figure}

\subsection{Main results} \label{ss:main}

Let $\fG$ be the Brauer category over the complex numbers with parameter $\delta$ (see \S \ref{sec:brauer} for the definition). A {\bf representation} of $\fG$, or a {\bf $\fG$-module}, is a functor $\fG \to \Vec$, where $\Vec$ is the category of complex vector spaces. Concretely, a $\fG$-module is a sequence $(M_n)_{n \ge 0}$, where $M_n$ is a representation of the symmetric group $\fS_n$, together with ``upwards'' transition maps $M_n \to M_{n+2}$ and ``downwards'' transition maps $M_{n+2} \to M_n$ satisfying certain conditions; in fact, $M_n$ is a module over the Brauer algebra $B_n(\delta)$, but this extra structure can be recovered from the transition maps. The goal of this series of papers is to determine the structure of $\Mod_{\fG}$, the category of $\fG$-modules.

Before explaining our results, it is helpful to highlight two perspectives on $\fG$-modules:
\begin{itemize}
\item One can view $\fG$-modules from the point of view of representation theory. For instance, we show that $\Mod_{\fG}$ is (more or less) a highest weight category.
\item One can also view $\fG$-modules from the point of view of commutative algebra (in a general sense). Indeed, there is a tensor product $\otimes_{\fG}$ on $\fG$-modules that behaves like the tensor product for a commutative ring (e.g., it preserves finite generation, and is right exact but not exact). This allows one to consider notions such as ideals, prime ideals, annihilators, support, and so on.
\end{itemize}
The combination of these two structures is rather unusual, as categories exhibiting a highest weight structure are usually locally of finite length, and therefore quite different from the typical module categories seen in commutative algebra. It is this dual nature that gives the theory of $\fG$-modules much of its unique character.

The first three papers in this series focus on the representation-theoretic aspects of the Brauer category:
\begin{enumerate}[\indent 1.]
\item This paper develops the theory of {\it triangular categories}, which extracts the key properties of the triangular decomposition of a semisimple complex Lie algebra with respect to a parabolic subalgebra. We show that the category of representations of a triangular category behaves a lot like a highest weight category, and show that the Brauer category and its variants admit this structure.

\item As stated, a $\fG$-module is a sequence $(M_n)_{n \ge 0}$ of symmetric group representations---that is, a linear species---equipped with some transition maps. In the second paper \cite{brauercat2}, we show how the transition maps can be neatly packaged using the tensor product on linear species.

Let $\bV$ be the ``standard'' linear species: $\bV_1$ is one-dimensional and $\bV_n=0$ for $n \ne 1$. The upwards transition maps in $M$ can be encoded as a map of species $b \colon \Sym^2(\bV) \otimes M \to M$, while the downwards maps can be encoded as a map $c \colon M \to \Sym^2(\bV) \otimes M$. The transition maps satisfy a number of relations, and the main problem is to understand these relations in terms of $b$ and $c$.

The category of linear species does not have duals; however, if we pretend for the moment that it does, we can convert $c$ into a map $c^* \colon \Sym^2(\bV^*) \otimes M \to M$. We show that the relations satisfied by $b$ and $c$ are exactly the relations that would be needed for $b$ and $c^*$ to define a representation of the symplectic Lie algebra, if we regard $\Sym^2(\bV)$ and $\Sym^2(\bV^*)$ as upper and lower triangular nilpotent subalgebras. We therefore refer to $(b,c)$ as a representation of the ``curried symplectic algebra,'' since, in a way, it comes from currying the $\Sym^2(\bV^*)$ factor to the other side.

We establish a general theory of curried algebras, and show that many classical diagram categories have surprisingly concrete interpretations from this point of view.

\item Classical Schur--Weyl duality yields an equivalence between the category of linear species and the category of polynomial representations of the infinite general linear group, and this equivalence is compatible with tensor products. In the third paper \cite{brauercat3}, we transport the description of $\fG$-modules from the second paper across Schur--Weyl duality. We find that $\Mod_{\fG}$ is equivalent to a version of parabolic category $\cO$ for the infinite rank symplectic Lie algebra. We then translate known results about this category (such as the block structure) to obtain information about $\fG$-modules.
\end{enumerate}
The final two papers investigate the commutative algebra aspects of the Brauer category (but are heavily representation-theoretic as well):
\begin{enumerate}[\indent 1.] \setcounter{enumi}{3}

\item The fourth paper \cite{brauercat4} is a study of torsion modules (i.e., modules that locally have non-zero annihilator). We show that the torsion category admits a filtration by Serre subcategories (characterized by annihilators) such that the successive quotients are equivalent to $\Rep(\OSp(r \vert 2s))$ as $r$ and $s$ vary over non-negative integers with $r-2s=\delta$.

\item We define the category of ``generic'' $\fG$-modules, denoted $\Mod_{\fG}^{\rm gen}$, to be the Serre quotient of the category of all $\fG$-modules by the subcategory of torsion modules. This is a sort of ``fraction field'' of the Brauer category. The fifth paper \cite{brauercat5} analyzes this category. The main result is that $\Mod_{\fG}^{\rm gen}$ is the abelian envelope of the Deligne interpolation category for the orthogonal group.

\end{enumerate}
The above is a high-level overview of our main results. We have numerous specific results (e.g., classification of injectives, description of the Grothendieck group, etc.) that we do not attempt to describe here.

\subsection{Other diagram categories}

There are several variations of the Brauer category that could just as easily have been emphasized in our work. The methods that we develop apply equally well to them, but for the most part, we will mention the relevant similarities and differences without giving complete proofs. We have chosen to focus on the Brauer category as our main example due to its familiarity and the simplicity of its definition (for example, some of the other categories have complicated sign conventions).

However, we wish to emphasize that the uniformity and broad applicability of the results is one of the attractive features of our theory. For example, as stated above, the Brauer category is a sort of ``curried'' symplectic Lie algebra. This fits into a natural class of examples that includes many familiar Lie algebras, as well as many other well-studied algebras, such as Weyl and Clifford algebras. In fact, in trying to complete this picture, one is naturally led to examples of categories or algebras that have received little or no attention, but nonetheless have a rich representation theory.

Furthermore, while some of our results about the Brauer category and Brauer algebras have been proven by different methods or using different language, one of our goals for developing a uniform framework is to make the analogues of these results for related examples transparent and immediate, and to minimize the need to discover them on a case-by-case basis.

\subsection{Motivation}

Our investigation began in \cite{infrank} where we found models for the representation theory of the orthogonal $\bO_n(\bC)$ and symplectic groups $\Sp_{2n}(\bC)$ in the limit $n \to \infty$. One of the models is the category of representations of the {\it upwards} Brauer category; this is the subcategory of the Brauer category consisting only of upwards transition maps. The present work is a natural outgrowth of these investigations.

A second source of motivation comes from the general philosophy of representation stability: given a sequence of groups or algebras that naturally assemble into a category, it is often fruitful to study representations of the category as a whole. This idea has met with success for symmetric groups (leading to the theory of $\FI$-modules \cite{fimodule,fi-noeth} and twisted commutative algebras \cite{expos}), and general linear groups (leading to $\VI$- and $\mathbf{VIC}$-modules \cite{putman-sam}), to name two examples. We aim to show in these papers that this perspective is also compelling when applied to the Brauer algebras.

\subsection{Relation to previous work}

We now discuss how the work in this series relates to previous work, at a general level. We will discuss more specific connections in each paper.

\subsubsection{Diagram algebras}

The literature on diagram algebras is vast; we mention only a few of the many relevant papers here:
\begin{itemize}
\item The papers \cite{CDM,CDDM,martin3,cox} determine the block structure and Cartan matrix of the Brauer algebra, and connects it to parabolic category $\cO$ in type D; \cite{ES} shows that the module category for the Brauer algebra is in fact equivalent to a certain piece of parabolic category $\cO$ in type D.
\item The series \cite{stroppel1,stroppel2,stroppel3,stroppel4} studies Khovanov's diagram algebra, which are in some ways similar to Brauer algebras. Connections to category $\cO$ and the super general linear group are established.
\item \cite{coulembier4} shows that there is a Ringel duality between the module category for the Brauer algebra and a subcategory of the representation category of the orthosymplectic Lie group.
\item The Brauer algebra is cellular \cite{graham}, and usually (but not always) quasi-hereditary \cite[Theorem~1.3]{konigxi} (there are problems when $\delta=0$).
\item The Brauer algebra is semi-simple at non-integer parameter \cite{wenzl} (see also \cite{doran,rui}).
\end{itemize}
The above results have a similar flavor to many of our results. However, there are significant differences in some cases: for example, the above papers relate the Brauer algebra to category $\cO$ in type D, while we relate the Brauer category to category $\cO$ in type C; also, we show that the Brauer category always has a highest weight structure, while the Brauer algebra can fail to admit such structure when $\delta=0$. On the other hand, we will show in \cite{brauercat3} that the Brauer category is semi-simple at non-integer parameter, and use this to reprove Wenzl's theorem. The upshot is that the connection between the Brauer category and Brauer algebra can be quite subtle: in some cases results flow from one to the other, while in others one finds superficially similar results that are actually quite different.

In addition to the above papers, related notions of a triangular decomposition of a category have appeared before in \cite{bellamy-thiel}, \cite{CZ}, \cite{KM}, and \cite{konig}.

\subsubsection{Deligne categories}

In \cite{deligne}, Deligne introduced the category $\uRep(\bO_{\delta})$, for arbitrary parameter $\delta$, that (in a sense) interpolates the categories $\Rep(\bO_n)$ for $n \in \bN$; he also defined variants in other cases, including the general linear group and the symmetric group. In the years since, these categories have received a great deal of attention. From their inception, these categories have been intimately related to the Brauer category: indeed, Deligne defined $\uRep(\bO_{\delta})$ by simply taking the additive and Karoubian envelope of the Brauer category.

Deligne's categories are typically not abelian, and a fundamental problem is to construct and understand their abelian envelopes. This has been carried out for $\uRep(\GL_{\delta})$ in \cite{entova} using a construction involving the super general linear group, and recently for $\uRep(\mathbf{Pe}_{\delta})$ in \cite{entova2} by similar means.

As stated above, we show that the abelian envelope of $\uRep(\bO_{\delta})$ can be realized as the generic category $\Mod_{\fG}^{\rm gen}$ of the category of $\fG$-modules. We believe this construction of the abelian envelope has some advantages over previous ones in that it is extremely simple (being purely in terms of the Brauer category, and not involving auxiliary concepts like super groups) and totally uniform (the same construction applies in each case, including the case of the symmetric group). We remark that while constructing the category $\Mod_{\fG}^{\rm gen}$ is very simple, proving that it is the abelian envelope takes work, and our proof does involve super groups. It also reveals a deeper connection between Deligne's category and the Brauer category.

\subsubsection{Twisted commutative algebras}

A {\bf twisted commutative algebra (tca)} is a commutative algebra object in the category of linear species (see \cite{expos} for a general overview). In characteristic~0, one can apply Schur--Weyl duality to view a tca as an ordinary commutative algebra equipped with an action of the infinite general linear group, under which it forms a polynomial representation. Many of the upwards categories in Brauer-like categories are equivalent to tca's, or closely related objects; for example, modules over the upwards Brauer category are equivalent to modules over the tca $\Sym(\Sym^2(\bC^{\infty}))$ (see  \cite[Remark~1.3]{sym2noeth}). We can therefore apply the many results proven about tca's in recent years to the study of the Brauer category. This will be a recurrent theme in this series: for example, in this paper we use \cite{sym2noeth} to show that the Brauer category is noetherian, while in \cite{brauercat3}, we will see that ideals of the Brauer category are closely related to the equivariant prime ideals of $\Sym(\Sym^2(\bC^{\infty}))$ as studied in \cite{tcaspec}.

\subsection{Open problems}

We will discuss a number of specific open problems throughout this series. Here, we highlight three of a broad scope:
\begin{itemize}
\item In this series of papers, the Brauer algebras play a surprisingly minimal role. It would be interesting to connect our results on the Brauer category back to the Brauer algebras. For example, we show that the Brauer category is related to parabolic category $\cO$ of type C, while it is known that the Brauer algebras are related to parabolic category $\cO$ of type D; what, if any, is the relationship between these connections?
\item There are a number of diagram categories (such as the Temperley--Lieb category and its variants, or BMW categories) that we ignore, or discuss only briefly. A natural problem is to extend our results to these categories.
\item A $d$-dimensional topological quantum field theory is a monoidal functor from the $d$-dimensional cobordism category to $\Vec$. When $d=1$, the cobordism category is closely related to the (walled) Brauer category (see \cite[\S 3.1]{carqueville}) and when $d=2$, it is closely related to the partition category (see \cite[\S 2.2]{jellyfish}). Thus this series can be viewed as a study of ``non-monoidal'' low-dimensional TQFT's. A natural problem is to investigate non-monoidal TQFT's in higher dimension.
\end{itemize}

\section{Introduction}

We now discuss the contents of this paper in more detail.

\subsection{Triangular categories}

The primary theoretical focus of this paper is the notion of \emph{triangular category}. A triangular category is a category equipped with notions of ``upwards'' and ``downwards'' morphisms satisfying some axioms (see Definition~\ref{defn:tri-struct}). The definition is modeled on the triangular decomposition $\fg = \fn_- \oplus \fh \oplus \fn_+$ of a semisimple complex Lie algebra $\fg$, with the strictly upwards (resp.\ downwards) morphisms playing the role of $\fn_+$ (resp.\ $\fn_-$); the morphisms that are simultaneously upwards and downwards fulfill the role of the Cartan subalgebra $\fh$.

Suppose $\fG$ is a triangular category. We show that the category $\Mod_{\fG}$ is essentially a highest weight category, in the following sense. We associate to $\fG$ a set $\Lambda$ of ``weights,'' which is partially ordered. Given a weight $\lambda$, we define a standard module $\Delta_{\lambda}$, we construct a simple quotient $L_{\lambda}$ of $\Delta_{\lambda}$, and we show that $L_{\lambda}$ has a projective cover $P_{\lambda}$ that admits a filtration by standard objects. There are also co-standard modules $\nabla_{\lambda}$ and injective envelopes $I_{\lambda}$ that behave similarly. The proofs apply more or less standard arguments that we adapt to the setting of $\fG$-modules.

We said that $\Mod_{\fG}$ is ``essentially'' a highest weight category since highest weight categories are typically required to be locally artinian, but $\Mod_{\fG}$ (in most cases of interest) is not: for instance, the standard objects are typically not of finite length. Thus one must take care when attempting to apply familiar results from highest weight categories.

\subsection{The Brauer category}

The main interesting content of the article is the wealth of natural examples that exist. As indicated in the title, the Brauer category $\fG$ remains our primary motivation and we treat it in detail (we treat a number of other examples in less detail). We summarize our analysis here. To begin, we show:

\begin{theorem}
The Brauer category $\fG$ is naturally a triangular category.
\end{theorem}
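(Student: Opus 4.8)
The plan is to exhibit an explicit triangular structure on $\fG$ in the sense of Definition~\ref{defn:tri-struct} and to verify its axioms by direct inspection of Brauer diagrams. Recall that $\operatorname{Hom}_{\fG}(m,n)$ has a basis of perfect matchings on a set of $m+n$ vertices, of which $m$ are designated \emph{source} and $n$ are designated \emph{target}. Call an edge of such a matching a \emph{through-strand} if it joins a source vertex to a target vertex, and write $t(f)$ for the number of through-strands of a matching $f$. I would take the upwards subcategory $\fG_+$ to consist of those matchings $f \colon m \to n$ in which every source vertex lies on a through-strand --- equivalently $t(f) = m$, so that the remaining $n-m$ target vertices are matched in pairs; these are exactly the diagrams built only from ``caps''. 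Dually, $\fG_-$ would consist of the matchings in which every target vertex lies on a through-strand, i.e.\ the diagrams built only from ``cups''. Then $\fG_0 := \fG_+ \cap \fG_-$ consists precisely of the bijections, so it is the symmetric groups $\fS_n$ assembled into a groupoid, and this plays the role of the Cartan part.

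The heart of the argument is the factorization axiom. Given an arbitrary matching $f \colon m \to n$ with $t = t(f)$, I would partition the source vertices according to whether or not they lie on a through-strand, and likewise partition the target vertices. Reading off the matching induced on the non-through source vertices, the bijection recorded by the through-strands, and the matching induced on the non-through target vertices exhibits $f$ as a composite $f_+ \circ f_0 \circ f_-$ with $f_- \in \fG_-$ a morphism $m \to t$, $f_0 \in \fG_0$ a permutation of $t$, and $f_+ \in \fG_+$ a morphism $t \to n$. I would then check that this factorization of $f$ as (downwards) followed by (Cartan) followed by (upwards) is unique up to the evident action of $\fG_0$ on the intermediate object $t$, which is the precise statement the axiom requires. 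One small but useful point to isolate: when the three pieces $f_+$, $f_0$, $f_-$ are recomposed no closed loop is ever created, so the composition scalar is always $1$; consequently the whole analysis is uniform in the parameter $\delta$, including the degenerate value $\delta = 0$ at which the Brauer \emph{algebras} are known to misbehave.

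The remaining axioms should be short combinatorial checks. That $\fG_+$ and $\fG_-$ are wide subcategories is clear: a composite of cap-only diagrams is again cap-only (and no loop forms), and dually for cup-only diagrams. That $\fG$ is generated by $\fG_+$, $\fG_-$, and $\fG_0$ follows at once from the factorization. The finiteness and well-foundedness conditions on the associated poset of weights hold because, for a fixed object $n$, the only objects $m$ admitting a nonzero upwards morphism to $n$ are $m = n, n-2, n-4, \dots$, and each set $\operatorname{Hom}_{\fG_+}(m,n)$ is finite. I would also confirm that the factorization interacts with the left and right symmetric-group actions on $\operatorname{Hom}$-sets in the manner Definition~\ref{defn:tri-struct} demands, so that the standard and costandard modules subsequently built from this structure behave as intended.

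I expect the main obstacle to be formal rather than conceptual: stating and verifying the ``uniqueness up to $\fG_0$'' clause of the factorization axiom in exactly the form Definition~\ref{defn:tri-struct} asks for. The \emph{existence} of the factorization is transparent from the through-strand picture; what takes care is pinning down the symmetric-group ambiguity precisely and keeping straight which of the two $\fG_0$-actions appears on which side. Beyond the through-strand decomposition of a Brauer diagram, no genuinely new idea should be needed.
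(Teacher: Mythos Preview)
Your proposal is correct and matches the paper's approach: the paper defines $\fU$ and $\fD$ exactly as your $\fG_+$ and $\fG_-$, and verifies (T3) via the same through-strand factorization you describe, invoking Proposition~\ref{prop:tri-crit} to pass from ``unique factorization up to $\fM$-isomorphism'' to the tensor-product isomorphism that (T3) literally demands. The one point you flag as needing care---translating the factorization picture into the exact algebraic statement of the axiom---is precisely what that proposition packages, so your anticipated obstacle is already handled in the paper's general theory.
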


The ``upwards'' (resp.\ ``downwards'') morphisms in the triangular structure come from those Brauer diagrams that have no horizontal edge in the bottom (resp.\ top) row. The general theory of triangular categories can now be applied to $\fG$-modules. We record the most important consequences in the following theorem.

\begin{theorem}
Let $\Lambda$ denote the set of all integer partitions.
\begin{enumerate}
\item The simple $\fG$-modules are naturally indexed by $\Lambda$. Precisely, if $\lambda$ is a partition of $n$ then there is a unique (up to isomorphism) simple $\fG$-module $L_{\lambda}$ such that $L_{\lambda}([n])$ is isomorphic (as an $\fS_n$-representation) to the Specht module corresponding to $\lambda$, and $L_{\lambda}([m])=0$ for $m<n$.
\item The simple module $L_{\lambda}$ admits a projective cover $P_{\lambda}$. The $P_{\lambda}$ account for all of the finitely generated indecomposable projective $\fG$-modules.
\item For each $\lambda \in \Lambda$ there is a standard $\fG$-module $\Delta_{\lambda}$ such that:
\begin{enumerate}
\item The simple constituents of $\Delta_{\lambda}$ have the form $L_{\mu}$ with $\vert \mu \vert \ge \vert \lambda \vert$; moreover, $L_{\lambda}$ occurs with multiplicity one (as a quotient).
\item $P_{\lambda}$ admits a finite filtration $0=F_0 \subset \cdots \subset F_r=P_{\lambda}$ such that $F_i/F_{i-1} \cong \Delta_{\mu(i)}$ for some $\mu(i) \in \Lambda$; moreover, $\mu(r)=\lambda$ and $\vert \mu(i) \vert<\vert \lambda \vert$ for $i<r$.
\end{enumerate}
\end{enumerate}
\end{theorem}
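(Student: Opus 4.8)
\emph{Strategy.} The plan is to derive the whole theorem from the general theory of triangular categories developed in this paper, applied to $\fG$ via the preceding theorem; the only genuinely $\fG$-specific work is translating the abstract combinatorial data into the language of partitions. First I would identify the Cartan subcategory of $\fG$---the morphisms that are simultaneously upwards and downwards---with the category $\FB$ of finite sets and bijections, since a Brauer diagram with no horizontal edge in either row is precisely a bijection. A simple $\FB$-module is concentrated at a single object $[n]$, where it is a simple $\fS_n$-representation, i.e.\ a Specht module; hence the weight set attached to $\fG$ is the set $\Lambda$ of all partitions, a partition $\lambda$ corresponding to the Specht module $\mathbf{S}^\lambda$ placed at the object $[\vert\lambda\vert]$. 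I would then unwind the partial order produced by the general theory and record that it satisfies $\nu<\tau \implies \vert\nu\vert>\vert\tau\vert$: the only order relations come from composing with morphisms that lie in exactly one of the two triangular directions, and such morphisms strictly change the cardinality, while the Cartan category $\FB$, being a groupoid, contributes no relations within a fixed cardinality. (One should double-check here that the sign conventions of the general construction make $\vert\cdot\vert$ decrease, rather than increase, along the order---this is exactly what parts (3a)--(3b) of the statement demand.)

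\emph{Deductions.} Granting this dictionary, the three parts follow. For (1): by the general classification the simple $\fG$-modules are the $L_\lambda$, $\lambda\in\Lambda$, where $L_\lambda$ is the simple quotient of the standard module $\Delta_\lambda$; from the explicit construction $\Delta_\lambda$ is generated in degree $\vert\lambda\vert$, supported in degrees $\ge\vert\lambda\vert$, with $\Delta_\lambda([\vert\lambda\vert])\cong\mathbf{S}^\lambda$ as an $\fS_{\vert\lambda\vert}$-representation. Since $\mathbf{S}^\lambda$ is already simple over $\fS_{\vert\lambda\vert}$ (we work over $\bC$), the quotient map $\Delta_\lambda\to L_\lambda$ is an isomorphism in the bottom degree, giving $L_\lambda([\vert\lambda\vert])\cong\mathbf{S}^\lambda$ and $L_\lambda([m])=0$ for $m<\vert\lambda\vert$. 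Uniqueness is built into the general classification: any simple module $M$ has a well-defined minimal nonzero degree $n$, and $M([n])$---on which the downwards morphisms act by zero since $M$ vanishes below degree $n$---is a simple $\fS_n$-representation, hence $\mathbf{S}^\lambda$ for a unique $\lambda\vdash n$, which forces $M\cong L_\lambda$. For (3a): the radical of $\Delta_\lambda$ is zero in degree $\vert\lambda\vert$ (that degree is $\mathbf{S}^\lambda$, which maps isomorphically onto $L_\lambda([\vert\lambda\vert])$), so it is supported in degrees $>\vert\lambda\vert$; a subquotient inherits this vanishing degreewise and $L_\mu$ is nonzero in degree $\vert\mu\vert$, so a constituent $L_\mu$ of $\Delta_\lambda$ other than the top satisfies $\vert\mu\vert>\vert\lambda\vert$, while $L_\lambda$ occurs exactly once as a quotient because the top of $\Delta_\lambda$ is simple. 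For (2) and (3b): the general theory provides the projective cover $P_\lambda$ of $L_\lambda$ and a finite filtration of $P_\lambda$ by standard modules in which $\Delta_\lambda$ occurs exactly once and as the top quotient and every other subquotient is a $\Delta_\mu$ with $\mu>\lambda$; by the order recorded above, $\mu>\lambda$ strictly forces $\vert\mu\vert<\vert\lambda\vert$, which gives (3b). The general theory also shows every finitely generated projective $\fG$-module is a finite direct sum of indecomposable projectives and that an indecomposable projective has a simple top, hence is some $P_\lambda$; combined with the existence of the $P_\lambda$, this is (2).

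\emph{Main obstacle.} I expect the real obstacle to lie not in these deductions---bookkeeping, once the general machinery is in hand---but in the general theory itself, and in particular in the finiteness claims invoked for (2) and (3b): the existence of projective covers, the finiteness of the standard filtration of $P_\lambda$, and the Krull--Schmidt property for finitely generated projectives all ultimately rest on $\Mod_\fG$ being locally noetherian, which is the point where \cite{sym2noeth} enters and which is not purely formal. The remaining delicate point, flagged above, is to confirm that the concrete description of the Cartan subcategory and of the strictly one-directional morphisms reproduces precisely the weight poset needed---a finer order, or the opposite sign, would change part (3).
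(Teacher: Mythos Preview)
Your overall strategy---deduce everything from the general theory of triangular categories once the Brauer category is shown to be one---is exactly what the paper does, and your identification of $\fM$ with $\bC[\FB]$ and of $\Lambda$ with the set of partitions is correct.

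Two points need correction. First, the order convention: in the paper's setup (axiom (T2) and \S\ref{ss:weights}), $\fU$ is upwards and the weight order is inherited from supports, so for the Brauer category one has $\lambda<\mu \iff |\lambda|<|\mu|$, not the reverse. Correspondingly, the general results read: simple constituents of $\Delta_\lambda$ other than $L_\lambda$ are $L_\mu$ with $\mu>\lambda$ (hence $|\mu|>|\lambda|$), while the standard pieces of $P_\lambda$ are $\Delta_\mu$ with $\mu\le\lambda$ (hence $\mu=\lambda$ or $|\mu|<|\lambda|$). You had both the order and the direction of the filtration result inverted, and the two errors cancelled; your flagged caution about ``sign conventions'' was warranted.

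Second, your ``main obstacle'' paragraph misidentifies the source of finiteness. The existence of projective covers, the \emph{finiteness} of the standard filtration of $P_\lambda$, and the Krull--Schmidt property for finitely generated projectives do \emph{not} rely on local noetherianity or on \cite{sym2noeth}. They follow from the triangular axioms alone: (T0) gives finite-dimensional $\Hom$ spaces (hence finite-dimensional $\End$, so Krull--Schmidt via Proposition~\ref{prop:decomp}), and (T2)(a)---only finitely many $y\le x$---is what makes the standard filtration of $P_\lambda$ finite (see Propositions~\ref{prop:Ptilde} and~\ref{prop:Plambda}). The noetherianity theorem is a separate, deeper result used elsewhere in the series, but not here.
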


This theorem gives a coarse picture of the representation theory of $\fG$ that serves as a foundation for the subsequent papers in this series. We next establish the following result:

\begin{theorem}
The category $\Mod_{\fG}$ is locally noetherian. In other words, any submodule of a finitely generated $\fG$-module is again finitely generated.
\end{theorem}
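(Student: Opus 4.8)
The plan is to reduce the statement to a known noetherianity result about twisted commutative algebras. Recall (as mentioned in the Relation-to-previous-work section) that modules over the \emph{upwards} Brauer category $\fG_+$ are equivalent to modules over the tca $A = \Sym(\Sym^2(\bV))$, where $\bV$ is the standard species, and that $A$ is known to be noetherian by \cite{sym2noeth}; equivalently, $\Mod_{\fG_+}$ is locally noetherian. So the first step is to set up a precise comparison between $\fG$-modules and $\fG_+$-modules. Concretely, there is a restriction functor $\Phi \colon \Mod_{\fG} \to \Mod_{\fG_+}$ induced by the inclusion $\fG_+ \hookrightarrow \fG$, and since $\fG_+$ and $\fG$ have the same objects, $\Phi$ is exact and faithful, and detects finite generation in the following sense: if $M$ is a $\fG$-module and $\Phi(M)$ is finitely generated over $\fG_+$, then $M$ is finitely generated over $\fG$ (a generating set for $\Phi(M)$ generates $M$, since any $\fG$-morphism is a composite involving $\fG_+$-morphisms and downwards morphisms, and the latter only make weights smaller). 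The point is therefore to show that $\Phi$ sends finitely generated $\fG$-modules to finitely generated $\fG_+$-modules, and that $\Phi$ sends submodules to submodules (which is clear, as $\Phi$ is just restriction along a subcategory).

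Given that, the argument runs as follows. Let $M$ be a finitely generated $\fG$-module and $N \subseteq M$ a $\fG$-submodule. First I would show $\Phi(M)$ is a finitely generated $\fG_+$-module. This is the crux: a priori a $\fG$-module finitely generated using all (upwards and downwards) morphisms need not be finitely generated using only upwards morphisms. Here one uses the triangular structure established in the previous theorems: $M$ has a finite filtration whose graded pieces are quotients of standard modules $\Delta_\lambda$ (or, dually, one can argue via a presentation of $M$ by projectives $P_\lambda$, each of which has a finite $\Delta$-filtration). So it suffices to check that each $\Phi(\Delta_\lambda)$ is finitely generated over $\fG_+$. But by the construction of standard modules in the triangular-category formalism, $\Delta_\lambda$ is built from the $\fh$-weight space $V_\lambda$ (a Specht module at level $|\lambda|$) by inducing up along the upwards morphisms only --- that is, $\Delta_\lambda$ is, as a $\fG_+$-module, the module generated in a single weight by $V_\lambda$, hence a finitely generated $\fG_+$-module (it is a quotient of $P^{\fG_+}_\lambda$, which is $A$-finitely-generated by the tca comparison). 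Thus $\Phi(M)$ is a finite extension of finitely generated $\fG_+$-modules, hence finitely generated over $\fG_+$.

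Now since $\Mod_{\fG_+}$ is locally noetherian (via \cite{sym2noeth}), the submodule $\Phi(N) \subseteq \Phi(M)$ is a finitely generated $\fG_+$-module. Choose finitely many elements of $\Phi(N)$ generating it over $\fG_+$; regarded as elements of $N$, they generate a $\fG$-submodule $N' \subseteq N$ with $\Phi(N') = \Phi(N)$, and since $\Phi$ is faithful and exact (so detects the zero object), $N' = N$. Hence $N$ is finitely generated over $\fG$, which is exactly local noetherianity of $\Mod_{\fG}$.

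The main obstacle I anticipate is the middle step: controlling the restriction of standard (or projective) $\fG$-modules to the upwards subcategory, i.e.\ showing that "finitely generated over $\fG$" upgrades to "finitely generated over $\fG_+$" for the building blocks. This requires a genuine input --- either the explicit description of $\Delta_\lambda$ as an upwards-induced module together with the finiteness of upwards-induced modules from \cite{sym2noeth}, or equivalently a direct argument that the downwards morphisms out of a given weight space contribute only finitely much. One must also be slightly careful that the filtration of $M$ by standard-module quotients exists for \emph{arbitrary} finitely generated $M$ (not just projectives); the cleanest route is probably to first handle the case $M = P_\lambda$ using the $\Delta$-filtration from the structure theorem, then deduce the general case by writing $M$ as a quotient of a finite direct sum of $P_\lambda$'s and using that finite generation over $\fG_+$ passes to quotients.
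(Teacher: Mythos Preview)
Your proposal is correct and follows the same overall strategy as the paper: restrict to the upwards category $\fU$ (your $\fG_+$), use that restriction preserves finite generation, and then invoke local noetherianity of $\Mod_{\fU}$ from \cite{sym2noeth}. The paper packages this as Proposition~\ref{prop:Unoeth}, whose proof is the ascending-chain version of your final paragraph.

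Where you differ is in the ``crux'' step, showing that $i^*$ preserves finite generation. You route this through the highest-weight machinery: write $M$ as a quotient of a finite sum of $P_\lambda$'s, use the standard filtration of each $P_\lambda$, and then check that $i^*\Delta_\lambda$ is finitely generated over $\fU$. The paper (Proposition~\ref{prop:ij-finite}(a)) is more direct: it applies the base change isomorphism $i^*(\bP_{\fG,x}) = i^* j_!(\bP_{\fD,x}) \cong j'_!\bigl((i')^*\bP_{\fD,x}\bigr)$ to the \emph{principal} projectives, observes that $(i')^*\bP_{\fD,x}$ is finite length (since $\fD$ is downwards), and concludes immediately. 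Your argument for $i^*\Delta_\lambda$ is really the same base-change computation specialized to $S_\lambda$ rather than $\bP_{\fD,x}$, so the detour through $P_\lambda$ and standard filtrations is unnecessary overhead; it also forces you to worry (as you note) about whether arbitrary finitely generated $M$ admit suitable filtrations, a concern that simply does not arise if you work with principal projectives from the start.
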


This theorem is of fundamental importance, and will be used constantly in the following papers. Indeed, finitely generated $\fG$-modules typically do not have finite length, so the noetherian result is important for ensuring that various constructions preserve finite generation. The theorem follows rather easily from our previous results \cite{sym2noeth} on noetherianity of some twisted commutative algebras. In principle, the noetherianity result for $\fG$-modules should be much easier than that for the tca, but this is the only proof we know.

The Brauer category has two other notable pieces of structure. First, it is naturally equivalent to its opposite category. From this, we obtain a duality functor $(-)^{\vee}$ on $\Mod_{\fG}$. This functor should be thought of as an analog of Pontryagin duality: for instance, it interchanges (principal) projectives and injectives, but does not preserve finite generation. In \cite{brauercat5} we will encounter a more subtle duality that does preserve finite generation.

Second, $\fG$ has a natural monoidal structure, given by taking the disjoint union of Brauer diagrams. This induces a tensor product $\otimes_{\fG}$ on $\Mod_{\fG}$ known as Day convolution. As discussed in \S \ref{ss:main}, this enables us to import various notions from commutative algebra, such as ``ideal,'' to the theory of $\fG$-modules. This will play a prominent role in the later papers in this series. In the present paper, we prove only one non-trivial proposition about the tensor product: the tensor product of standard modules admits a standard filtration, and the higher Tor's vanish.

Finally, we remark that when the parameter $\delta$ is an integer there is an important family of $\fG$-modules $T_{p|q}$ that we call the \emph{tautological modules}. The existence of these modules is seemingly unrelated to the triangular structure, but closely connected to the tensor product: indeed, these are exactly the $\fG$-modules that are symmetric monoidal functors (this is closely related to the classification of 1-dimensional TQFT's, see \cite[Theorem~3.1]{carqueville}). In the context of \S \ref{ss:main}, the tautological modules are intimately related to the commutative algebra side of the picture, and provide the bridge to $\Rep(\OSp(p|q))$.

\subsection{Relation to previous work}

The idea of abstracting properties of the triangular decomposition of a semisimple Lie algebra is not new; for example, see \cite[\S 2]{ggor}. 

As discussed, the module category of a triangular category is closely related to the notion of a highest weight category, first introduced in \cite{CPS} (see also the closely related notion of BGG algebra in \cite{irving}).

After completing this paper, we learned of the recent paper \cite{stroppel5} that develops highest weight theory in more general settings. Many of the general ideas here are present there in some form (for example, \cite[Definition~5.24]{stroppel5} is essentially the same as a triangular category with trivial endomorphism rings). However, the language and emphasis of the two papers are quite different.

\subsection{Outline}

In \S\ref{sec:rep-cat}, we develop some basic properties of representations of categories, including change of category and tensor products. In \S\ref{sec:tri-cat}, we introduce triangular categories and develop their theory. In \S\ref{sec:brauer}, we explain the example of the Brauer category in detail. In \S \ref{sec:partition}, we treat the partition category; it is similar to the Brauer category in some ways, but different enough to receive special attention. In \S \ref{sec:other}, we list a number of other prominent examples of triangular categories with little detail. Finally, in \S\ref{sec:catO} we explain how to realize the triangular decomposition of a semisimple Lie algebra as a triangular category, and also discuss an extension to positive characteristic representations.

\section{Representations of categories} \label{sec:rep-cat}

\subsection{General definitions}

Fix a field $\bk$. Let $\fC$ be an essentially small $\bk$-linear category. A {\bf $\fC$-module} is a $\bk$-linear functor $\fC \to \Vec$, where $\Vec$ denotes the category of $\bk$-vector spaces. For an object $x$ and a morphism $\alpha \colon x \to y$, we write $\alpha_* \colon M(x) \to M(y)$ for the linear map $M(\alpha)$. A {\bf morphism} of $\fC$-modules is a natural transformation of functors. For $\fC$-modules $M$ and $N$, we write $\Hom_{\fC}(M, N)$ for the space of morphisms $M \to N$ of $\fC$-modules. We also write $\Hom_{\fC}(x,y)$ for the space of morphisms $x \to y$ for $x,y \in \fC$; this should not cause confusion, since we always use lowercase letters for objects of $\fC$ and uppercase letters for $\fC$-modules. We let $\Mod_{\fC}$ denote the category of $\fC$-modules. It is a Grothendieck abelian category.

\subsection{Finiteness conditions}

We now introduce a number of finiteness conditions on $\fC$-modules. Let $M$ be a $\fC$-module.
\begin{itemize}
\item Given a collection $S$ of elements in various $M(x)$'s, the submodule of $M$ generated by $S$ is the smallest submodule containing each element of $S$. We say that $M$ is {\bf finitely generated} if it is generated by a finite collection of elements. We write $\Mod_{\fC}^{\rm fg}$ for the category of finitely generated modules. It need not be an abelian subcategory.
\item We say that a $\fC$-module $M$ is {\bf pointwise finite} if $M(x)$ is a finite dimensional vector space for all $x \in \fC$. It is not difficult to see that if $M$ is finitely generated and all $\Hom$ spaces in $\fC$ are finite dimensional then $M$ is pointwise finite. (This will become more clear in \S  \ref{ss:principal}.) We write $\Mod_{\fC}^{\pf}$ for the category of pointwise finite modules. It is an abelian subcategory of $\Mod_{\fC}$.
\item We say that $M$ is {\bf noetherian} if every submodule of $M$ is finitely generated. We say that $\fC$ is {\bf noetherian} if every finitely generated $\fC$-module is noetherian. In this case, $\Mod_{\fC}^{\rm fg}$ is an abelian subcategory of $\Mod_{\fC}$.
\end{itemize}

\subsection{Duality} \label{ss:duality}

Let $\fC^{\op}$ be the opposite category of $\fC$. Let $M$ be a $\fC$-module. We define a $\fC^{\op}$-module $M^{\vee}$ by $M^{\vee}(x)=M(x)^*$, where $(-)^*$ denotes the dual vector space. This construction defines an exact functor $\Mod_{\fC}^{\op} \to \Mod_{\fC^{\op}}$. There is a canonical morphism $M \to (M^{\vee})^{\vee}$ that is an isomorphism if $M$ is pointwise finite. It follows that duality induces an equivalence of categories $(\Mod_{\fC}^{\pf})^{\op} \cong \Mod_{\fC^{\op}}^{\pf}$.

\begin{proposition} \label{prop:dual-adjoint}
Let $M$ be a $\fC$-module and let $N$ be a $\fC^{\op}$-module. Then we have a canonical isomorphism
\begin{displaymath}
\Hom_{\fC}(M, N^{\vee}) = \Hom_{\fC^{\op}}(N, M^{\vee})
\end{displaymath}
This holds even if $M$ and $N$ are not pointwise finite.
\end{proposition}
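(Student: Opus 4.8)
The plan is to construct the isomorphism directly on pointwise data, using the elementary vector-space identification $\Hom_{\bk}(V, W^*) \cong \Hom_{\bk}(W, V^*)$ (both sides being the space of bilinear forms $V \times W \to \bk$), and then to verify that the naturality constraints defining the two $\Hom$-spaces correspond to one another under this identification. No finiteness hypothesis will be needed, precisely because this route never passes through a double-dual.

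First I would unwind the definitions, being careful about variances. For a morphism $\alpha \colon x \to y$ in $\fC$, write $\alpha_* \colon M(x) \to M(y)$ for the action on $M$ and $\alpha^* \colon N(y) \to N(x)$ for the action on $N$ (viewing $\alpha$ as a morphism $y \to x$ in $\fC^{\op}$). Then the $\fC$-module $N^{\vee}$ acts on $\alpha$ by the transpose of $\alpha^*$, a map $N(x)^* \to N(y)^*$, and the $\fC^{\op}$-module $M^{\vee}$ acts on $\alpha$ by the transpose of $\alpha_*$, a map $M(y)^* \to M(x)^*$. A morphism $f \colon M \to N^{\vee}$ is thus a family of linear maps $f_x \colon M(x) \to N(x)^*$, and evaluating the naturality square on elements shows it amounts to the relation
\[ f_x(m)(\alpha^* n) = f_y(\alpha_* m)(n) \qquad \text{for all } m \in M(x),\ n \in N(y). \]
Symmetrically, a morphism $g \colon N \to M^{\vee}$ is a family of linear maps $g_x \colon N(x) \to M(x)^*$ subject to
\[ g_y(n)(\alpha_* m) = g_x(\alpha^* n)(m) \qquad \text{for all } m \in M(x),\ n \in N(y). \]

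Next I would set up the correspondence: to a family $(f_x)$ assign the family $(g_x)$ characterized by $g_x(n)(m) = f_x(m)(n)$. By the vector-space adjunction above this is a bijection between families $(f_x)$ and families $(g_x)$, valid with no finiteness assumption, since both are identified with families of bilinear pairings $M(x) \times N(x) \to \bk$. Writing $\langle m, n\rangle_x$ for the pairing attached to $x$, the two displayed conditions both become the single identity $\langle \alpha_* m, n\rangle_y = \langle m, \alpha^* n\rangle_x$; hence $f$ is a morphism of $\fC$-modules if and only if the corresponding $g$ is a morphism of $\fC^{\op}$-modules. The resulting bijection $\Hom_{\fC}(M, N^{\vee}) \to \Hom_{\fC^{\op}}(N, M^{\vee})$ is plainly $\bk$-linear, and the construction makes naturality in $M$ and $N$ evident.

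I do not anticipate a genuine obstacle here: the statement is a bookkeeping exercise, and the only point demanding attention is keeping the variances straight when forming $N^{\vee}$, $M^{\vee}$ and their structure maps from the transposes of the relevant linear maps. It is perhaps worth remarking on why pointwise finiteness is irrelevant: the argument never invokes the canonical map $M \to (M^{\vee})^{\vee}$, which is an isomorphism only in the pointwise finite case. Instead it matches the two $\Hom$-spaces through the symmetric notion of a ``pairing between $M$ and $N$,'' which is available in complete generality. (One could phrase this more abstractly by introducing a bifunctor of ``natural pairings $M \times N \to \bk$'' and observing that each side of the claimed equality represents it, but the hands-on verification above is shorter.)
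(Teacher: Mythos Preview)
Your proof is correct. It differs from the paper's in one notable way: the paper constructs the map $\Hom_{\fC}(M,N^\vee) \to \Hom_{\fC^{\op}}(N,M^\vee)$ by applying the duality functor $(-)^\vee$ to $f \colon M \to N^\vee$ to obtain $f^\vee \colon (N^\vee)^\vee \to M^\vee$ and then precomposing with the canonical map $N \to (N^\vee)^\vee$; the inverse is built symmetrically, and one checks the two composites are identities using the triangle identity $\eta_N^\vee \circ \eta_{N^\vee} = \id_{N^\vee}$. Your route instead identifies both $\Hom$-spaces with the same set of compatible bilinear pairings $M(x)\times N(x)\to\bk$, so the bijection is immediate and symmetric from the outset. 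Your approach makes the irrelevance of pointwise finiteness more transparent, since the double-dual map never appears; the paper's approach is slightly more functorial but requires one to recognize that the canonical map $N\to (N^\vee)^\vee$, while not an isomorphism in general, still participates in the needed triangle identity.
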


\begin{proof}
Given a morphism of $\fC$-modules $f \colon M \to N^\vee$, we can take its pointwise dual to get a morphism of $\fC^\op$-modules $f^\vee \colon (N^\vee)^\vee \to M^\vee$ which can be precomposed with the natural map $N \to (N^\vee)^\vee$. Similarly, we can define a map in the other direction. A straightforward check shows that they are inverse to one another.
\end{proof}

\subsection{Projectives and injectives} \label{ss:principal}

For an object $x$ of $\fC$, we define a $\fC$-module $\bP_x=\bP_{\fC,x}$, called the {\bf principal projective} at $x$, by $\bP_x(y)=\Hom_{\fC}(x, y)$. We also define a $\fC$-module $\bI_x=\bI_{\fC,x}$, called the {\bf principal injective} at $x$, by $\bI_x(y)=\Hom_{\fC}(y, x)^*$. Note that $\bI_{\fC,x}=\bP_{\fC^{\op},x}^{\vee}$.

\begin{proposition}\label{prop:yoneda}
Let $M$ be a $\fC$-module. Then $\Hom_{\fC}(\bP_x, M)=M(x)$ and $\Hom_{\fC}(M, \bI_x)=M(x)^*$. In particular, $\bP_x$ is a projective $\fC$-module and $\bI_x$ is an injective $\fC$-module.
\end{proposition}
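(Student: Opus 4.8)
The plan is to prove the first identity via the (enriched) Yoneda lemma, deduce the second by duality, and then read off projectivity and injectivity from exactness.

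First I would construct a map $\Hom_{\fC}(\bP_x, M) \to M(x)$ sending a natural transformation $f$ to $f_x(\mathrm{id}_x)$, where $\mathrm{id}_x \in \bP_x(x) = \Hom_{\fC}(x,x)$. In the other direction, given $m \in M(x)$, I would define a morphism $\bP_x \to M$ whose component at $y$ is the linear map $\Hom_{\fC}(x,y) \to M(y)$, $\alpha \mapsto \alpha_* m$. The content to verify is: (i) this collection of maps is natural in $y$, which is immediate from functoriality of $M$, since for $\beta \colon y \to y'$ one has $\beta_*(\alpha_* m) = (\beta\alpha)_* m$; (ii) the two assignments are mutually inverse, which follows because in $\bP_x$ one has $\alpha = \alpha_*(\mathrm{id}_x)$, and naturality of $f$ gives $f_y(\alpha_*(\mathrm{id}_x)) = \alpha_* f_x(\mathrm{id}_x)$; and (iii) both assignments are $\bk$-linear, which is clear. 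This yields the natural isomorphism $\Hom_{\fC}(\bP_x, M) = M(x)$.

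Next I would obtain the second identity by passing to the opposite category. Since $\bI_{\fC,x} = \bP_{\fC^{\op},x}^{\vee}$ (as noted just before the statement), Proposition~\ref{prop:dual-adjoint} gives
\begin{displaymath}
\Hom_{\fC}(M, \bI_{\fC,x}) = \Hom_{\fC}(M, \bP_{\fC^{\op},x}^{\vee}) = \Hom_{\fC^{\op}}(\bP_{\fC^{\op},x}, M^{\vee}),
\end{displaymath}
and the Yoneda identity just established, applied to the $\fC^{\op}$-module $M^{\vee}$, identifies the last group with $M^{\vee}(x) = M(x)^*$. One should check these identifications are natural in $M$, but this is routine.

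Finally, projectivity and injectivity fall out of exactness: the evaluation functor $M \mapsto M(x)$ on $\Mod_{\fC}$ is exact, since kernels, cokernels, and images of morphisms of $\fC$-modules are computed pointwise. Hence $\Hom_{\fC}(\bP_x,-)$, being isomorphic to evaluation at $x$, is exact, so $\bP_x$ is projective; and $\Hom_{\fC}(-,\bI_x)$, being the composite of evaluation at $x$ with the exact duality functor $(-)^*$ on $\Vec$, is exact, so $\bI_x$ is injective. There is no genuine obstacle here; the only point requiring care is the naturality bookkeeping in the Yoneda argument and checking that the bijections are $\bk$-linear rather than merely bijections of sets.
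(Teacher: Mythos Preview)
Your proof is correct and follows essentially the same approach as the paper: invoke Yoneda for the first identity, use the duality adjunction (Proposition~\ref{prop:dual-adjoint}) together with Yoneda on $\fC^{\op}$ for the second, and conclude projectivity and injectivity from exactness of evaluation. The only difference is that you spell out the Yoneda argument in detail where the paper simply cites it.
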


\begin{proof}
The identification of $\Hom_{\fC}(\bP_x, M)$ with $M(x)$ is Yoneda's lemma: note that $\bP_x$ is the functor represented by $x$. We have
\begin{displaymath}
\Hom_{\fC}(M, \bI_x) = \Hom_{\fC}(M, \bP_{\fC^{\op},x}^{\vee}) = \Hom_{\fC^{\op}}(\bP_{\fC^{\op},x}, M^{\vee}) = M^{\vee}(x)=M(x)^*,
\end{displaymath}
where in the first step we used the identification $\bI_x=\bP_{\fC^{\op},x}^{\vee}$ stated before the proposition, in the second step we used Proposition~\ref{prop:dual-adjoint}, and in the third step we used the mapping property for $\bP_{\fC^{\op},x}$ just established. Since $M \mapsto M(x)$ and $M \mapsto M(x)^*$ are exact functors of $M$, we see that $\bP_x$ is projective and $\bI_x$ is injective.
\end{proof}

As a corollary, we see that a $\fC$-module is finitely generated if and only if it is isomorphic to a quotient of a finite direct sum of principal projectives. We also see that $\Mod_{\fC}$ has enough injectives and projectives. The proposition shows that the duals of principal projectives are injectives. This holds more generally:

\begin{proposition} \label{prop:proj-dual}
Let $P$ be a projective $\fC$-module. Then $P^{\vee}$ is an injective $\fC^{\op}$-module.
\end{proposition}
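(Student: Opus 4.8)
The plan is to reduce to the case of principal projectives, for which the statement is essentially built into the definition of $\bI_x$. First I would recall that $\Mod_{\fC}$ has enough projectives of a very explicit form: by the corollary to Proposition~\ref{prop:yoneda}, every $\fC$-module is a quotient of a direct sum of principal projectives $\bP_x$ (take the surjection indexed by a generating set). Consequently, if $P$ is projective, pick a surjection $F \twoheadrightarrow P$ with $F = \bigoplus_{i \in I} \bP_{\fC, x_i}$; since $P$ is projective, the identity map $P \to P$ lifts along $F \twoheadrightarrow P$, exhibiting $P$ as a direct summand of $F$.

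Next I would apply the duality functor. Since $(-)^{\vee}$ is exact and contravariant by \S\ref{ss:duality}, it carries the split surjection $F \twoheadrightarrow P$ to a split injection $P^{\vee} \hookrightarrow F^{\vee}$, so $P^{\vee}$ is a direct summand of $F^{\vee}$ as a $\fC^{\op}$-module. It therefore suffices to show that $F^{\vee}$ is injective, since a direct summand of an injective object is injective.

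For this I would compute $F^{\vee}$ pointwise: for each object $y$ we have $F^{\vee}(y) = \bigl(\bigoplus_{i} \bP_{\fC,x_i}(y)\bigr)^{*} = \prod_{i} \bP_{\fC,x_i}(y)^{*}$, compatibly with the morphism maps, so $F^{\vee} \cong \prod_{i \in I} \bP_{\fC,x_i}^{\vee}$. Now $\bP_{\fC,x}^{\vee} = \bI_{\fC^{\op},x}$: applying the identity $\bI_{\fC,x} = \bP_{\fC^{\op},x}^{\vee}$ (stated just before Proposition~\ref{prop:yoneda}) with $\fC$ replaced by $\fC^{\op}$ gives $\bI_{\fC^{\op},x} = \bP_{\fC,x}^{\vee}$. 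By Proposition~\ref{prop:yoneda} applied to $\fC^{\op}$, each $\bI_{\fC^{\op},x_i}$ is an injective $\fC^{\op}$-module, and a product of injective objects is injective because $\Hom_{\fC^{\op}}(-, \prod_i I_i) = \prod_i \Hom_{\fC^{\op}}(-, I_i)$ is a product of exact functors, hence exact. This shows $F^{\vee}$ is injective and completes the argument.

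The argument is essentially formal, so I do not anticipate a genuine obstacle; the one point that needs a moment's care is that duality converts the direct sum defining a free module into a direct \emph{product}, which is exactly why one needs ``products of injectives are injective'' rather than a statement about coproducts. Everything else is a bookkeeping chase through the identifications already established in \S\ref{ss:duality} and \S\ref{ss:principal}.
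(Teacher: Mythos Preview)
Your proof is correct, but it takes a different route from the paper's. The paper argues directly: given an injection $M \to N$ of $\fC^{\op}$-modules, it uses the adjunction identity of Proposition~\ref{prop:dual-adjoint} to identify $\Hom_{\fC^{\op}}(N,P^{\vee}) \to \Hom_{\fC^{\op}}(M,P^{\vee})$ with $\Hom_{\fC}(P,N^{\vee}) \to \Hom_{\fC}(P,M^{\vee})$, and the latter is surjective because $N^{\vee} \to M^{\vee}$ is surjective and $P$ is projective. This handles an arbitrary projective $P$ in one stroke, without ever invoking the structure of projectives or passing through principal ones. Your argument instead reduces to the principal case via the summand-of-a-free trick, then uses that duals of principal projectives are principal injectives and that products of injectives are injective. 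Both are short; the paper's version is slightly more conceptual (it shows the result is a formal consequence of the $\Hom$--dual adjunction alone), while yours makes the dependence on the explicit generators $\bP_x$ and $\bI_x$ visible and correctly flags the direct-sum-to-product subtlety.
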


\begin{proof}
Let $M \to N$ be an injection of $\fC^{\op}$-modules. Consider the diagram
\begin{displaymath}
\xymatrix{
\Hom_{\fC^{\op}}(N, P^{\vee}) \ar[r] \ar@{=}[d] & \Hom_{\fC^{\op}}(M, P^{\vee}) \ar@{=}[d] \\
\Hom_{\fC}(P, N^{\vee}) \ar[r] & \Hom_{\fC}(P, M^{\vee}) }
\end{displaymath}
where the vertical identifications come from Proposition~\ref{prop:dual-adjoint}. One readily verifies that the diagram commutes. Since $N^{\vee} \to M^{\vee}$ is surjective and $P$ is projective, the bottom arrow is a surjection. Thus the top arrow is a surjection as well, and so $P^{\vee}$ is injective.
\end{proof}

\begin{remark} \label{rmk:dual-proj}
The analog of this proposition for injective modules is not true in general. Here is a simple example. Let $\fC$ be the linearization of the category associated to the poset $(\bN, \le)$. Let $I$ be the principal projective $\bP_0$; this takes all objects of $\fC$ to $\bk$ and all morphisms to the identity. In fact, $I$ is an injective $\fC$-module. Indeed, if $M$ is a $\fC$-module then giving a map $M \to I$ is equivalent to giving maps $M_n \to \bk$ for all $n$ compatible with transition maps; but this is exactly a map $\colim M \to \bk$. We thus see that $\Hom_{\fC}(M, I) = (\colim M)^*$, which is an exact functor of $M$. A similar analysis shows that for a $\fC^{\op}$-module $N$ we have $\Hom_{\fC^{\op}}(I^{\vee}, N)=\lim N$, which is not an exact functor of $N$. Thus $I$ is injective but $I^{\vee}$ is not projective.

This example demonstrates one additional phenomenon. Duality provides an equivalence of $\Mod_{\fC}^{\pf}$ with the opposite category of $\Mod_{\fC^{\op}}^{\pf}$. Since $I$ is injective in $\Mod_{\fC}^{\pf}$, it follows that $I^{\vee}$ \emph{is} projective in $\Mod_{\fC^{\op}}^{\pf}$. (This can be seen directly, as if $N$ is a pointwise finite $\fC^{\op}$-module then it obviously satisfies the Mittag--Leffler condition and so $\rR^1 \lim N=0$.) This shows that projective objects of $\Mod_{\fD}^{\pf}$ need not remain projective in $\Mod_{\fD}$, even for very nice categories $\fD$. If $\fD$ has finite $\Hom$ spaces then injective modules objects of $\Mod_{\fD}^{\pf}$ do remain injective in $\Mod_{\fD}$, by a version of Baer's criterion.
\end{remark}

\begin{proposition}
Let $M$ be a $\fC$-module and let $N$ be a $\fC^{\op}$-module. Then we have a natural isomorphism $\Ext^i_{\fC}(M, N^{\vee}) = \Ext^i_{\fC^{\op}}(N, M^{\vee})$.
\end{proposition}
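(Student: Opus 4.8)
The plan is to compute both sides from a single projective resolution. Since $\Mod_{\fC}$ has enough projectives (Proposition~\ref{prop:yoneda} and the remark that every finitely generated module is a quotient of principal projectives, which extends to arbitrary modules), choose a projective resolution $P_\bullet \to M$ in $\Mod_{\fC}$, so that $\Ext^i_{\fC}(M, N^{\vee}) = H^i(\Hom_{\fC}(P_\bullet, N^{\vee}))$ by definition. Now dualize: since $(-)^{\vee}$ is exact (\S\ref{ss:duality}), applying it to the exact augmented complex $\cdots \to P_1 \to P_0 \to M \to 0$ produces an exact complex $0 \to M^{\vee} \to P_0^{\vee} \to P_1^{\vee} \to \cdots$ of $\fC^{\op}$-modules, and by Proposition~\ref{prop:proj-dual} each $P_j^{\vee}$ is an injective $\fC^{\op}$-module. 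Thus $M^{\vee} \to P_\bullet^{\vee}$ is an injective resolution of $M^{\vee}$, and therefore $\Ext^i_{\fC^{\op}}(N, M^{\vee}) = H^i(\Hom_{\fC^{\op}}(N, P_\bullet^{\vee}))$.

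The remaining point is to identify the two cochain complexes $\Hom_{\fC}(P_\bullet, N^{\vee})$ and $\Hom_{\fC^{\op}}(N, P_\bullet^{\vee})$. Proposition~\ref{prop:dual-adjoint} supplies, for each $j$, a canonical isomorphism $\Hom_{\fC}(P_j, N^{\vee}) \cong \Hom_{\fC^{\op}}(N, P_j^{\vee})$; because this isomorphism is natural in the first variable, it intertwines the differentials induced by the maps $P_{j+1} \to P_j$, so the collection of these isomorphisms is an isomorphism of cochain complexes. Passing to cohomology yields $\Ext^i_{\fC}(M, N^{\vee}) \cong \Ext^i_{\fC^{\op}}(N, M^{\vee})$. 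Naturality in $M$ (via lifting maps to morphisms of projective resolutions) and in $N$ (directly) is then routine, using that the isomorphism of Proposition~\ref{prop:dual-adjoint} is natural in both arguments, together with the standard fact that $\Ext$ computed from a resolution is independent of the resolution.

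The argument is essentially formal, so there is no serious obstacle; the only thing requiring a little care is the claim that $P_\bullet^{\vee}$ is a genuine injective resolution of $M^{\vee}$, which rests on combining the exactness of duality with Proposition~\ref{prop:proj-dual}, and the verification that the termwise isomorphisms of Proposition~\ref{prop:dual-adjoint} assemble into a chain map, which is exactly naturality in the first variable.
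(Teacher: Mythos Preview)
Your proof is correct and follows essentially the same approach as the paper's: take a projective resolution $P_\bullet \to M$, dualize to obtain an injective resolution $M^{\vee} \to P_\bullet^{\vee}$ using exactness of $(-)^{\vee}$ and Proposition~\ref{prop:proj-dual}, and then identify the two Hom complexes via Proposition~\ref{prop:dual-adjoint}. The paper's argument is more terse, but the substance is identical.
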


\begin{proof}
Let $P_{\bullet} \to M$ be a projective resolution of $M$ as a $\fC$-module. Since $(-)^{\vee}$ is exact, we see from the previous proposition that $M^{\vee} \to P_{\bullet}^{\vee}$ is an injective resolution of $M^{\vee}$ as a $\fC^{\op}$-module. Thus $\Ext^{\bullet}_{\fC}(M, N^{\vee})$ is computed by the complex $\Hom_{\fC}(P_{\bullet}, N^{\vee})$, while $\Ext^{\bullet}_{\fC^{\op}}(N, M^{\vee})$ is computed by the complex $\Hom_{\fC^{\op}}(N, P_{\bullet}^{\vee})$. These complexes are isomorphic by Proposition~\ref{prop:dual-adjoint}.
\end{proof}

\subsection{Tensoring over $\fC$} \label{ss:tensorC}

Let $M$ be a $\fC$-module and let $N$ be a $\fC^{\op}$-module. We define a $\bk$-vector space $N \odot_{\fC} M$ by a mapping property, as follows. To give a linear map $f$ from $N \odot_{\fC} M$ to a vector space $V$ is the same as giving linear maps $f_x \colon N(x) \otimes M(x) \to V$ for all $x \in \fC$ such that for $m \in M(x)$, $n \in N(y)$, and a morphism $\alpha \colon x \to y$, we have
\[
  f_y(n \otimes \alpha_* m)=f_x(\alpha_* n \otimes m).
\]
Note that $\alpha$ defines a morphism $y \to x$ in $\fC^{\op}$, and thus induces a linear map $\alpha_* \colon N(y) \to N(x)$. One can construct $N \odot_{\fC} M$ as a quotient of $\bigoplus_{x \in \fC} N(x) \otimes M(x)$ by appropriate relations. (Actually, one should use a skeletal subcategory of $\fC$ so that the direct sum is small.) One readily verifies that $\odot_{\fC}$ is cocontinuous in each variable.

\begin{proposition} \label{prop:tensor-hom-dual}
Let $M$ and $N$ be as above. Then there is a natural map
\begin{displaymath}
(N \odot_{\fC} M)^* \to \Hom_{\fC}(M, N^{\vee})
\end{displaymath}
that is an isomorphism if $N$ is pointwise finite.
\end{proposition}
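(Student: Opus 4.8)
I would read everything off the defining mapping property of $\odot_{\fC}$ (applied with target $\bk$) together with the tensor--hom adjunction in $\Vec$ applied objectwise; the only non-formal point is matching the compatibility constraint built into $\odot_{\fC}$ against naturality for a morphism of $\fC$-modules.

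\textbf{Construction of the map.} By the mapping property of $N \odot_{\fC} M$, a functional $\phi \in (N \odot_{\fC} M)^{*}$ is the same datum as a family of linear maps $\phi_{x} \colon N(x) \otimes M(x) \to \bk$ satisfying $\phi_{y}(n \otimes \alpha_{*} m) = \phi_{x}(\alpha_{*} n \otimes m)$ for every $\alpha \colon x \to y$ in $\fC$. Currying $\phi_{x}$ in the $M(x)$-variable gives a linear map $g_{x} \colon M(x) \to N(x)^{*} = N^{\vee}(x)$ with $g_{x}(m)(n) = \phi_{x}(n \otimes m)$. Since $N^{\vee}(\alpha) \colon N^{\vee}(x) \to N^{\vee}(y)$ is the transpose of $\alpha_{*} \colon N(y) \to N(x)$, evaluating the naturality square for $g$ against $m \in M(x)$ and $n \in N(y)$ turns the equality $g_{y} \circ \alpha_{*} = N^{\vee}(\alpha) \circ g_{x}$ into exactly the compatibility relation above. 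Hence $g = (g_{x})_{x}$ is a morphism $M \to N^{\vee}$ of $\fC$-modules, and $\phi \mapsto g$ is the asserted natural map (naturality in $M$ and $N$ being visible from the formula).

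\textbf{Bijectivity.} Injectivity requires no hypothesis: $N \odot_{\fC} M$ is a quotient of $\bigoplus_{x} N(x) \otimes M(x)$, so $\phi$ is determined by the family $(\phi_{x})_{x}$, and each $\phi_{x}$ is recovered from $g_{x}$ by uncurrying; thus $g = 0$ forces $\phi = 0$. For surjectivity I would run the construction in reverse: given $g \in \Hom_{\fC}(M, N^{\vee})$, put $\phi_{x}(n \otimes m) = g_{x}(m)(n)$, note that naturality of $g$ returns the compatibility relations, and appeal to the mapping property to glue the $\phi_{x}$ into a functional $\phi$ with $\phi \mapsto g$. This is the step in which the hypothesis that $N$ is pointwise finite should be invoked, to pass cleanly between functionals on the $N(x) \otimes M(x)$ and morphisms $M(x) \to N^{\vee}(x)$ (equivalently, to identify $(N(x) \otimes M(x))^{*}$ with $N^{\vee}(x) \otimes M(x)^{*}$ and so recognize each $g_{x}$ as such a functional).

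\textbf{Expected obstacle.} Nothing deep is involved; the only place that needs care is the bookkeeping in the construction step---aligning the relation defining $N \odot_{\fC} M$ with the naturality square for a morphism $M \to N^{\vee}$, keeping the variances straight and remembering that $N^{\vee}(\alpha)$ is a transpose. Once that matching is set up, both halves of the bijection follow formally from the mapping property of $\odot_{\fC}$.
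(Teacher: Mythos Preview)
Your proposal is correct and follows the same route as the paper: unpack the mapping property of $\odot_{\fC}$ with target $\bk$, curry each $\phi_x$ to a map $g_x \colon M(x) \to N(x)^*$, and verify that the defining relation on the $\phi_x$ is exactly naturality of $g$. One small quibble: your parenthetical identifying $(N(x)\otimes M(x))^*$ with $N^{\vee}(x)\otimes M(x)^*$ is not the identification actually in play---what you (and the paper) use is the currying isomorphism $(N(x)\otimes M(x))^* \cong \Hom(M(x),N(x)^*)$, and it is there that the paper invokes pointwise finiteness of $N$.
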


\begin{proof}
By the mapping property, giving an element of $(N \odot_{\fC} M)^*$ is equivalent to giving maps $f_x \colon N(x) \otimes M(x) \to \bk$ for all $x$, satisfying certain relations. The map $f_x$ determines a map $g_x \colon M(x) \to N(x)^*$. One easily verifies that the relations on the $f$'s translate to the $g$'s defining a morphism of $\fC$-modules. If $N(x)$ is finite dimensional for all $x$, then $(M(x) \otimes N(x))^*$ is identified with $\Hom(M(x), N(x)^*)$, and the construction is reversible.
\end{proof}

\begin{proposition} \label{prop:tensor-eval}
Let $M$ be a $\fC$-module and let $x \in \fC$. Then we have a natural isomorphism $\bP_{\fC^{\op},x} \odot_{\fC} M = M(x)$. Similarly, if $N$ is a $\fC^\op$-module, then we have a natural isomorphism $N \odot_\fC \bP_{\fC,x} = N(x)$.
\end{proposition}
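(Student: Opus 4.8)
The plan is to verify the claimed isomorphism directly from the mapping property defining $\odot_{\fC}$, using the Yoneda-type computation of maps out of a principal projective. Recall $\bP_{\fC^{\op},x}$ is the $\fC^{\op}$-module $y \mapsto \Hom_{\fC^{\op}}(x,y) = \Hom_{\fC}(y,x)$, i.e.\ $\bP_{\fC^{\op},x}(y) = \Hom_{\fC}(y,x)$, with the transition map along a morphism $\alpha \colon x' \to y$ of $\fC$ (hence $y \to x'$ of $\fC^{\op}$) given by precomposition $\Hom_{\fC}(y,x) \to \Hom_{\fC}(x',x)$, $\beta \mapsto \beta \circ \alpha$.

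First I would fix a vector space $V$ and unwind the mapping property: a linear map $f \colon \bP_{\fC^{\op},x} \odot_{\fC} M \to V$ is the same as a collection of linear maps $f_y \colon \Hom_{\fC}(y,x) \otimes M(y) \to V$ for all $y \in \fC$, such that for every $\alpha \colon y \to z$ in $\fC$, every $m \in M(y)$, and every $\beta \in \Hom_{\fC}(z,x)$, we have $f_z(\beta \otimes \alpha_* m) = f_y(\beta\alpha \otimes m)$ (here $\alpha_* \colon \Hom_{\fC}(z,x) \to \Hom_{\fC}(y,x)$ in $\fC^{\op}$ is precomposition by $\alpha$). The key observation is that such a compatible family is determined by, and freely determined by, its value at $y = x$ on the identity: define $\phi_f \colon M(x) \to V$ by $\phi_f(m) = f_x(\id_x \otimes m)$. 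Conversely, given any linear $\phi \colon M(x) \to V$, set $f_y(\beta \otimes m) := \phi(\beta_* m)$ for $\beta \in \Hom_{\fC}(y,x)$, $m \in M(y)$; one checks this is linear in $\beta \otimes m$ and satisfies the compatibility condition, since $f_z(\beta \otimes \alpha_* m) = \phi(\beta_*\alpha_* m) = \phi((\beta\alpha)_* m) = f_y(\beta\alpha \otimes m)$ by functoriality of $M$. That these two constructions are mutually inverse is a one-line check: starting from $f$, one recovers $f_y(\beta \otimes m) = f_x(\id_x \otimes \beta_* m)$ using the compatibility with $\alpha = \beta$ and $\id_x \in \Hom_{\fC}(x,x)$. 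Hence $\Hom_{\bk}(\bP_{\fC^{\op},x}\odot_{\fC}M, V) \cong \Hom_{\bk}(M(x),V)$ naturally in $V$, and by Yoneda $\bP_{\fC^{\op},x}\odot_{\fC}M \cong M(x)$.

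For the second isomorphism I would argue by symmetry: $\odot_{\fC}$ is defined symmetrically in the $\fC$-module and $\fC^{\op}$-module slots, so $N \odot_{\fC} \bP_{\fC,x} = \bP_{(\fC^{\op})^{\op},x} \odot_{\fC^{\op}} N$ after identifying $\fC = (\fC^{\op})^{\op}$, and the first part applied to the category $\fC^{\op}$ and the $\fC^{\op}$-module $N$ gives $N(x)$. (Alternatively one can repeat the same identity-element argument verbatim.) Naturality in $M$ (resp.\ $N$) and in $x$ is immediate from the construction, since everything was built out of the transition maps of $M$ and composition in $\fC$. I do not anticipate a genuine obstacle here; the only point requiring mild care is keeping the variance straight---tracking that the map $\alpha_*$ appearing in the mapping property acts on $\bP_{\fC^{\op},x}$ as precomposition---and confirming that the bijection of $\Hom$-sets is natural in $V$ so that Yoneda applies. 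One could also phrase the whole argument as: $\odot_{\fC}$ is cocontinuous in each variable (as noted before the proposition) and $\bP_{\fC^{\op},x}$ corepresents evaluation at $x$ by Proposition~\ref{prop:yoneda}, but the direct check above is shorter and self-contained.
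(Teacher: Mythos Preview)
Your proof is correct and follows essentially the same approach as the paper: both unwind the mapping property for $\odot_{\fC}$, observe that a compatible family $(f_y)$ is determined by $f_x(\id_x \otimes -)$, and conclude that $M(x)$ satisfies the same universal property as $\bP_{\fC^{\op},x} \odot_{\fC} M$. Your version is in fact more carefully written than the paper's (which has some minor variable-name slips), and your treatment of the second isomorphism by symmetry matches the paper's ``similar'' remark.
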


\begin{proof}
  By the tensor product relations, giving a map $f \colon \bP_{\fC^\op,x} \odot_\fC M \to V$ is the same as giving maps $f_y \colon \bP_{\fC^\op,x}(y) \otimes M(y) \to V$ such that for all $\alpha \colon y \to z$, $m \in M(x)$, and $n \colon y \to x$, we have
  \[
    f_y(n \circ \alpha, m) = f_z( n \otimes \alpha_*(m)).
  \]
  In particular, we can define $f' \colon M(x) \to V$ by $f'(m) = f_x(1_x \otimes m)$ and this captures all of the above data. So $M(x)$ satisfies the same universal property as $\bP_{\fC^\op,x} \odot_\fC M$, which gives the desired identification. The proof of the other isomorphism is similar.
\end{proof}

\subsection{Pushforward and pullback}

Let $f \colon \fC \to \fD$ be a $\bk$-linear functor. If $M$ is a $\fD$-module then $f^*(M)=M \circ f$ is a $\fC$-module. This construction defines a functor $f^* \colon \Mod_{\fD} \to \Mod_{\fC}$. It is clear that $f^*$ preserves pointwise finiteness. Since limits and colimits in these module categories are computed pointwise, it follows that $f^*$ is both continuous and cocontinuous. It therefore has a left adjoint $f_!$ and a right adjoint $f_*$. These functors are sometimes called the left and right {\bf Kan extensions} of $f$.

\begin{proposition} \label{prop:kan-proj}
The functor $f_!$ takes projectives to projectives, while $f_*$ takes injectives to injectives. Moreover, for $x \in \fC$ we have natural isomorphisms
\begin{displaymath}
f_!(\bP_{\fC,x}) = \bP_{\fD,f(x)}, \qquad
f_*(\bI_{\fC,x}) = \bI_{\fD,f(x)}.
\end{displaymath}
\end{proposition}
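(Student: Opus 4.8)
The plan is to deduce everything from the adjunction $(f_!, f^*)$ together with the Yoneda-type identifications from Proposition~\ref{prop:yoneda}. First I would prove the identification $f_!(\bP_{\fC,x}) = \bP_{\fD,f(x)}$. For any $\fD$-module $N$, the adjunction gives $\Hom_{\fD}(f_!(\bP_{\fC,x}), N) = \Hom_{\fC}(\bP_{\fC,x}, f^*(N))$, and by Proposition~\ref{prop:yoneda} the right-hand side is $f^*(N)(x) = N(f(x)) = \Hom_{\fD}(\bP_{\fD,f(x)}, N)$, again by Proposition~\ref{prop:yoneda}. All of these identifications are natural in $N$, so by Yoneda (for the functor category $\Mod_{\fD}$) we conclude $f_!(\bP_{\fC,x}) \cong \bP_{\fD,f(x)}$.

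Next I would show $f_!$ takes projectives to projectives. The cleanest route is the standard abstract nonsense: $f_!$ is left adjoint to $f^*$, and $f^*$ is exact (since limits and colimits in module categories are computed pointwise, as noted just before the proposition), so the left adjoint of an exact functor preserves projectives. Concretely, for a projective $\fC$-module $P$, the functor $N \mapsto \Hom_{\fD}(f_!(P), N) = \Hom_{\fC}(P, f^*(N))$ is a composite of the exact functor $f^*$ with the exact functor $\Hom_{\fC}(P, -)$, hence exact; therefore $f_!(P)$ is projective. (One could alternatively use that $f_!$ is cocontinuous and every projective is a summand of a direct sum of principal projectives, combined with the previous paragraph, but the adjunction argument is shorter.)

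The statements for $f_*$ are formally dual. For injectives: $f_*$ is right adjoint to $f^*$, and $f^*$ is exact, so $f_*$ preserves injectives by the same reasoning applied to $\Hom_{\fD}(N, f_*(I)) = \Hom_{\fC}(f^*(N), I)$. For the identification of principal injectives, I would either run the dual of the first paragraph's Yoneda argument using the second mapping property $\Hom_{\fC}(M, \bI_{\fC,x}) = M(x)^*$ from Proposition~\ref{prop:yoneda}, or — perhaps more slickly — deduce it from the case of projectives via duality: applying $f$ to $\fC^{\op} \to \fD^{\op}$, one has $\bI_{\fC,x} = \bP_{\fC^{\op},x}^{\vee}$, and one checks that $f_*$ on $\Mod_{\fC}$ corresponds to $f_!$ on $\Mod_{\fC^{\op}}$ under the duality $(-)^{\vee}$ (using Proposition~\ref{prop:dual-adjoint} to match the adjunctions). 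Then $f_*(\bI_{\fC,x}) = f_*(\bP_{\fC^{\op},x}^{\vee}) = (f_!(\bP_{\fC^{\op},x}))^{\vee} = \bP_{\fD^{\op},f(x)}^{\vee} = \bI_{\fD,f(x)}$.

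I do not anticipate a serious obstacle here; this is essentially a formal consequence of adjunction, exactness of $f^*$, and Yoneda. The only point requiring a little care is making the Yoneda argument precise — one must check that the chain of natural isomorphisms $\Hom_{\fD}(f_!(\bP_{\fC,x}), -) \cong \Hom_{\fD}(\bP_{\fD,f(x)}, -)$ is natural in the $\fD$-module argument, so that it is induced by an actual isomorphism of $\fD$-modules rather than merely an abstract isomorphism of $\Hom$-sets. This follows because every step (the adjunction unit/counit, the Yoneda identifications, and the equality $f^*(N)(x) = N(f(x))$) is natural, but it is worth stating explicitly. If one prefers the more hands-on route via summands of sums of principal projectives, the mild extra ingredient is that $f_!$, being cocontinuous, commutes with direct sums and preserves retracts — also routine.
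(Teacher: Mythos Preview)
Your proposal is correct and matches the paper's approach essentially verbatim: the paper also argues that $f_!$ preserves projectives because it is left adjoint to the exact functor $f^*$, then identifies $f_!(\bP_{\fC,x})$ with $\bP_{\fD,f(x)}$ via the adjunction and Yoneda, and states that the arguments for $f_*$ are similar. Your extra remarks on naturality and the alternative duality route are fine elaborations but not needed.
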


\begin{proof}
Since $f_!$ is left adjoint to an exact functor, it takes projectives to projectives. Given a $\fD$-module $M$, we have
  \[
    \hom_\fD(f_! \bP_{\fC,x}, M) = \hom_\fC(\bP_{\fC,x}, f^*M) = M(f(x))
  \]
  so that $f_! \bP_{\fC,x}$ represents the same functor as $\bP_{\fD,f(x)}$. The arguments for $f_*$ are similar.
\end{proof}

\begin{proposition} \label{prop:!-finite}
The functor $f_!$ takes finitely generated $\fC$-modules to finitely generated $\fD$-modules.
\end{proposition}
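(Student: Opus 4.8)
The plan is to reduce immediately to the case of principal projectives, using two facts established above: the characterization of finite generation in terms of principal projectives, and the right exactness of $f_!$.

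First I would recall from \S\ref{ss:principal} that a $\fC$-module $M$ is finitely generated precisely when it admits a surjection $\pi \colon \bigoplus_{i=1}^n \bP_{\fC,x_i} \to M$ from a finite direct sum of principal projectives, for some objects $x_1,\dots,x_n$ of $\fC$. So let $M$ be finitely generated and fix such a $\pi$.

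Next, since $f_!$ is left adjoint to $f^*$, it is cocontinuous; in particular it commutes with finite direct sums and is right exact, hence sends epimorphisms to epimorphisms. Applying $f_!$ to $\pi$ therefore produces a surjection $\bigoplus_{i=1}^n f_!(\bP_{\fC,x_i}) \to f_!(M)$. By Proposition~\ref{prop:kan-proj} we have $f_!(\bP_{\fC,x_i}) = \bP_{\fD,f(x_i)}$, so $f_!(M)$ is a quotient of the finite direct sum $\bigoplus_{i=1}^n \bP_{\fD,f(x_i)}$ of principal $\fD$-modules, and is therefore finitely generated.

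There is no real obstacle here: the statement is a formal consequence of the results already in hand. The only point worth flagging is the appeal to right exactness of $f_!$ in order to conclude that it carries the surjection $\pi$ to a surjection; this is automatic, since a functor possessing a right adjoint preserves all colimits, in particular cokernels, and hence preserves epimorphisms.
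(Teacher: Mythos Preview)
Your proof is correct and follows exactly the same approach as the paper's: the paper's one-line proof simply records that $f_!$ is right exact and takes principal projectives to principal projectives, which is precisely the argument you have spelled out in detail.
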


\begin{proof}
Indeed, $f_!$ is right-exact and takes principal projectives to principal projectives.
\end{proof}

\begin{proposition} \label{prop:tensor-adjunction}
Let $M$ be a $\fC$-module and let $N$ be a $\fD^{\op}$-module. Then we have a natural identification
\begin{displaymath}
(f^{\op})^*N \odot_{\fC} M = N \odot_{\fD} f_!M.
\end{displaymath}
\end{proposition}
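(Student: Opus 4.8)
The plan is to construct an explicit comparison map and then check it is an isomorphism by reducing to principal projectives. First I would build the map using the universal property of $\odot_\fC$: a linear map $(f^{\op})^*N \odot_{\fC} M \to V$ amounts to a family of maps $N(f(x)) \otimes M(x) \to V$ for $x \in \fC$, compatible with the relations coming from morphisms of $\fC$. Taking $V = N \odot_{\fD} f_! M$, I would use the family obtained by composing $N(f(x)) \otimes M(x) \to N(f(x)) \otimes (f_! M)(f(x))$ — the second factor coming from the unit $M \to f^* f_! M$ of the adjunction $f_! \dashv f^*$ evaluated at $x$ — with the canonical map $N(f(x)) \otimes (f_! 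M)(f(x)) \to N \odot_{\fD} f_! M$. Verifying that this family satisfies the relations defining $\odot_\fC$ comes down to matching the relation $f_y(n \otimes \alpha_* m) = f_x(\alpha_* n \otimes m)$ for a morphism $\alpha \colon x \to y$ of $\fC$ against the corresponding relation for $\odot_{\fD}$ applied to the morphism $f(\alpha)$ of $\fD$, using naturality of the unit; this produces a map $\Phi \colon (f^{\op})^*N \odot_{\fC} M \to N \odot_{\fD} f_! M$ that is manifestly natural in $M$.

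Next I would argue that $\Phi$ is an isomorphism. Since $\odot_\fC$ and $\odot_{\fD}$ are cocontinuous in each variable and $f_!$ is cocontinuous (being a left adjoint), both the source and target of $\Phi$ are cocontinuous functors of $M$; in particular they are right exact and commute with arbitrary direct sums. Every $\fC$-module $M$ admits a presentation $Q_1 \to Q_0 \to M \to 0$ with $Q_0$ and $Q_1$ direct sums of principal projectives, so a short diagram chase with this presentation reduces the claim to the case $M = \bP_{\fC,x}$. In that case Propositions~\ref{prop:kan-proj} and~\ref{prop:tensor-eval} give $N \odot_{\fD} f_!(\bP_{\fC,x}) = N \odot_{\fD} \bP_{\fD,f(x)} = N(f(x))$, while Proposition~\ref{prop:tensor-eval} applied over $\fC$ gives $(f^{\op})^*N \odot_{\fC} \bP_{\fC,x} = ((f^{\op})^*N)(x) = N(f(x))$; tracing through these identifications shows $\Phi$ is exactly this equality.

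I expect the only real work to be bookkeeping: confirming that the family of maps defining $\Phi$ genuinely descends through $\odot_\fC$, and that on principal projectives $\Phi$ coincides with the composite isomorphism produced by Propositions~\ref{prop:kan-proj} and~\ref{prop:tensor-eval}. A more formal alternative avoids the explicit construction altogether: for a vector space $V$, the assignment $x \mapsto \Hom_{\bk}(N(f(x)), V)$ underlies a $\fC$-module, and the universal property of $\odot_\fC$ identifies $\Hom_{\bk}((f^{\op})^*N \odot_{\fC} M, V)$ with $\Hom_{\fC}(M, -)$ of that module; on the other hand, the universal property of $\odot_{\fD}$ together with the adjunction $\Hom_{\fD}(f_! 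M, -) = \Hom_{\fC}(M, f^*(-))$ identifies $\Hom_{\bk}(N \odot_{\fD} f_! M, V)$ with the same space. Hence the two tensor products corepresent the same functor of $V$ and are canonically isomorphic, naturally in $M$.
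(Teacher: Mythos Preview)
Your main argument is essentially the paper's own proof: both construct the comparison map via the unit $M \to f^*f_!M$ composed with the canonical map into $N \odot_{\fD} f_!M$, reduce to principal projectives via a presentation, and identify both sides with $N(f(x))$ using Propositions~\ref{prop:kan-proj} and~\ref{prop:tensor-eval}. The alternative corepresentability argument you sketch at the end is a pleasant bonus not in the paper, but your primary approach matches it step for step.
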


\begin{proof}
First, we have a natural map $(f^\op)^* N \odot_\fC f^* f_!M \to N \odot_\fD f_! M$ by the universal property of tensor products. We precompose this with the adjunction map $M  \to f^* f_! M$ tensored with the identity on $(f^\op)^* N$ to get $\rho_M \colon (f^{\op})^*N \odot_{\fC} M \to N \odot_{\fD} f_!M$. We will prove that this is an isomorphism for all $M$.
  
First suppose that $M=\bP_{\fC,x}$ is a principal projective. The composite defined above evaluated at $y \in \fC$ is
\[
 ((f^\op)^* N)(y) \otimes \bP_{\fC,x}(y) \to N(f(y)) \otimes f_! \bP_{\fC,x}(f(y))
\]
which can be rewritten as
\[
\Phi_y \colon  N(f(y)) \otimes \hom_\fC(x,y) \to N(f(y)) \otimes \hom_\fD(f(x),f(y)), \qquad n \otimes \alpha \mapsto n \otimes f(\alpha).
  \]
Next, we have
  \[
    N \odot_\fD f_! \bP_{\fC,x} = N \odot_\fD \bP_{\fD,f(x)} = N(f(x)) = (f^\op)^* N \odot_\fC \bP_{\fC,x}.
  \]
  where the first equality is Proposition~\ref{prop:kan-proj} and the last two equalities are Proposition~\ref{prop:tensor-eval}. The elements $N(f(x)) \otimes 1_x$ in the domain and target of $\Phi_x$ are identified with $N(f(x))$ under the previous isomorphism, and hence we see from the form of $\Phi_x$ that $\rho_{\bP_{\fC,x}}$ is an isomorphism. To see that $\rho_M$ is an isomorphism in general, we can pick a projective presentation for $M$ and use a diagram chase.
\end{proof}

\begin{proposition} \label{prop:pushfwd-formula}
Let $M$ be a $\fC$-module, and let $y \in \fD$. Then we have natural isomorphisms
\begin{displaymath}
(f_!M)(y) = (f^{\op})^*\bP_{\fD^{\op},y} \odot_{\fC} M, \qquad
(f_*M)(y) = \Hom_{\fC}(f^*\bP_{\fD,y}, M).
\end{displaymath}
\end{proposition}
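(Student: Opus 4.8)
The plan is to deduce both identities formally from the evaluation/Yoneda lemmas already established, together with the adjunctions $f_! \dashv f^* \dashv f_*$ and, for the first formula, the projection formula of Proposition~\ref{prop:tensor-adjunction}. Note that $(f^{\op})^*\bP_{\fD^{\op},y}$ is in general \emph{not} a principal projective of $\fC^{\op}$ (it is the $\fC^{\op}$-module $z \mapsto \Hom_{\fD}(f(z),y)$), so one cannot simply invoke Proposition~\ref{prop:tensor-eval} directly; the projection formula is exactly what bridges this gap.

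For the pushforward $f_!M$: first apply Proposition~\ref{prop:tensor-eval} to the $\fD$-module $f_!M$ to get a natural isomorphism $(f_!M)(y) = \bP_{\fD^{\op},y} \odot_{\fD} f_!M$, where $y \mapsto \bP_{\fD^{\op},y}$ is the functor $\fD \to \Mod_{\fD^{\op}}$ sending $y$ to the $\fD^{\op}$-module $z \mapsto \Hom_{\fD}(z,y)$. Now apply Proposition~\ref{prop:tensor-adjunction} with the $\fD^{\op}$-module $N = \bP_{\fD^{\op},y}$ and the $\fC$-module $M$; this gives $(f^{\op})^*\bP_{\fD^{\op},y} \odot_{\fC} M = \bP_{\fD^{\op},y} \odot_{\fD} f_!M$. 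Composing the two isomorphisms yields $(f_!M)(y) = (f^{\op})^*\bP_{\fD^{\op},y} \odot_{\fC} M$. Naturality in $y$ is automatic, since each step is functorial in $y$ and all four objects are covariant functors of $y \in \fD$.

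For the pushforward $f_*M$: apply Proposition~\ref{prop:yoneda} to the $\fD$-module $f_*M$ to get the natural isomorphism $(f_*M)(y) = \Hom_{\fD}(\bP_{\fD,y}, f_*M)$. Since $f_*$ is right adjoint to $f^*$, the right-hand side is canonically $\Hom_{\fC}(f^*\bP_{\fD,y}, M)$, and chaining the two gives the claimed formula. Here $y \mapsto \bP_{\fD,y}$ is the functor $\fD^{\op} \to \Mod_{\fD}$, so all the displayed objects are covariant in $y$ and the identification is natural.

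There is essentially no obstacle here: the proposition is a purely formal consequence of the machinery set up in this section. The only thing to watch is bookkeeping — keeping track of which category's principal projectives are being used ($\bP_{\fD^{\op},y}$ versus $\bP_{\fD,y}$), which adjunction is being invoked, and checking that the variances in $y$ match up on both sides so that the isomorphisms are genuinely natural.
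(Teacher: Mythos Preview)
Your proof is correct and follows essentially the same approach as the paper: for the first identity you use Proposition~\ref{prop:tensor-eval} and then Proposition~\ref{prop:tensor-adjunction}, and for the second you use Proposition~\ref{prop:yoneda} and then the $f^* \dashv f_*$ adjunction, exactly as the paper does.
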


\begin{proof}
  By Propositions~\ref{prop:tensor-eval} and~\ref{prop:tensor-adjunction},
  \[
    (f^\op)^* \bP_{\fD^\op, y} \odot_\fC M = \bP_{\fD^\op,y} \odot_\fD f_! M = (f_! M)(y),
  \]
  which proves the first identity. The second follows from adjunction and Proposition~\ref{prop:yoneda}.
\end{proof}

\begin{proposition} \label{prop:!-exact}
  The following are equivalent:
\begin{enumerate}
\item $f_*$ is exact.
\item $f^*(\bP_{\fD,y})$ is a projective $\fC$-module for all $y \in \fD$.
\item $f^*$ takes projective $\fD$-modules to projective $\fC$-modules.
\end{enumerate}
Exactness of $f_!$ is similarly related to $(f^{\rm op})^*$ preserving projectives, or $f^*$ preserving injectives.
\end{proposition}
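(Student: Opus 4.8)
The plan is to prove (b) $\Leftrightarrow$ (c) by a formal summand argument, then (a) $\Leftrightarrow$ (b) using the evaluation formula for $f_*$, and finally to obtain the last sentence by applying the result to the opposite functor $f^{\op}$.

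First I would handle (b) $\Leftrightarrow$ (c). The implication (c) $\Rightarrow$ (b) is immediate, as $\bP_{\fD,y}$ is projective by Proposition~\ref{prop:yoneda}. For (b) $\Rightarrow$ (c), recall from the discussion after Proposition~\ref{prop:yoneda} that every $\fD$-module is a quotient of a direct sum of principal projectives; in particular a projective $\fD$-module $P$ is a direct summand of some $\bigoplus_i \bP_{\fD,y_i}$. Since $f^*$ is cocontinuous, $f^*P$ is a direct summand of $\bigoplus_i f^*\bP_{\fD,y_i}$; as each $f^*\bP_{\fD,y_i}$ is projective by hypothesis, arbitrary direct sums of projectives are projective, and summands of projectives are projective, we conclude that $f^*P$ is projective.

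Next, (a) $\Leftrightarrow$ (b). Proposition~\ref{prop:pushfwd-formula} gives a natural isomorphism $(f_*M)(y)=\Hom_\fC(f^*\bP_{\fD,y},M)$. Kernels and cokernels in $\Mod_\fD$ are computed pointwise, so a sequence of $\fD$-modules is exact if and only if it becomes exact after evaluation at each object $y$; hence $f_*$ is exact if and only if $M\mapsto\Hom_\fC(f^*\bP_{\fD,y},M)$ is exact for every $y$. Since $\Hom_\fC(Q,-)$ is always left exact, exactness of this functor is equivalent to its right exactness, which is precisely the assertion that $f^*\bP_{\fD,y}$ is projective. This completes the three-way equivalence.

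For the final sentence, I would apply the equivalence just proved to $f^{\op}\colon\fC^{\op}\to\fD^{\op}$, which shows that $(f^{\op})_*$ is exact if and only if $(f^{\op})^*$ preserves projectives. One then links $f_!$ to $(f^{\op})_*$ through the natural isomorphism $(f_!M)^{\vee}\cong(f^{\op})_*(M^{\vee})$ --- verified from Proposition~\ref{prop:dual-adjoint} and the two adjunctions --- together with the fact that $(-)^{\vee}$ is exact and faithful, hence reflects exactness; alternatively, to avoid duality, one rewrites $(f_!M)(y)=(f^{\op})^*\bP_{\fD^{\op},y}\odot_\fC M$ using Proposition~\ref{prop:pushfwd-formula}, so that exactness of $f_!$ means each $(f^{\op})^*\bP_{\fD^{\op},y}$ is flat for $\odot_\fC$, which is automatic once it is projective since principal projectives are flat by Proposition~\ref{prop:tensor-eval}. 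Finally, ``$(f^{\op})^*$ preserves projectives'' is matched with ``$f^*$ preserves injectives'' via $\bI_{\fC,x}=\bP_{\fC^{\op},x}^{\vee}$, the identity $f^*\bI_{\fD,y}\cong\big((f^{\op})^*\bP_{\fD^{\op},y}\big)^{\vee}$, Proposition~\ref{prop:proj-dual}, and the observation that every injective is a retract of a product of principal injectives. The one genuinely delicate point lies here rather than in the three-way equivalence: because $(-)^{\vee}$ is an equivalence only on pointwise finite modules, I would take care to route everything either through the explicit isomorphism $(f_!M)^{\vee}\cong(f^{\op})_*(M^{\vee})$ (and the fact that vector-space duality detects exactness) or through the flatness reformulation above.
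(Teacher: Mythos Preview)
Your proof of the three-way equivalence is correct and essentially the same as the paper's. The paper organizes it as (a) $\Rightarrow$ (c) $\Rightarrow$ (b) $\Rightarrow$ (a), using for (a) $\Rightarrow$ (c) the one-line observation that $f^*$, being left adjoint to the exact $f_*$, preserves projectives; you instead establish (a) $\Leftrightarrow$ (b) via the evaluation formula and then (b) $\Rightarrow$ (c) by a summand argument. Both routes are fine; the paper's is marginally shorter.

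For the final sentence you work harder than the paper, which just says ``proved similarly.'' The direct dual argument runs: $f_!$ exact $\Rightarrow$ $f^*$ (being right adjoint to an exact functor) preserves injectives $\Rightarrow$ each $f^*\bI_{\fD,y}$ is injective; conversely, from Proposition~\ref{prop:yoneda} and adjunction one has $(f_!M)(y)^*=\Hom_{\fD}(f_!M,\bI_{\fD,y})=\Hom_{\fC}(M,f^*\bI_{\fD,y})$, which is exact in $M$ once $f^*\bI_{\fD,y}$ is injective, and vector-space duality reflects exactness. Your detour through $(f^{\op})_*$ and the isomorphism $(f_!M)^{\vee}\cong(f^{\op})_*(M^{\vee})$ is correct in spirit but, as you yourself flag, delicate because $(-)^{\vee}$ is not essentially surjective on all modules. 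The flatness alternative you offer cleanly gives the implication ``$(f^{\op})^*\bP_{\fD^{\op},y}$ projective $\Rightarrow$ $f_!$ exact,'' but not the converse on its own (flat need not imply projective), so you would still need the adjunction argument for that direction. None of this is wrong, just more circuitous than needed.
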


\begin{proof}
If $f_*$ is exact then $f^*$, being its left adjoint, takes projectives to projectives. Thus (a) implies (c), which obviously implies (b). From Proposition~\ref{prop:pushfwd-formula}, we have $(f_*M)(y)=\Hom_{\fC}(f^*\bP_{\fD,y},M)$ for a $\fC$-module $M$. Thus, if $f^*\bP_{\fD,y}$ is projective then $M \mapsto (f_*M)(y)$ is an exact functor of $M$. We therefore see that (b) implies (a). The analogous results in the $f_!$ case are proved similarly.
\end{proof}

\begin{proposition} \label{prop:pushfwd-finite}
We have the following:
\begin{enumerate}
\item If $f^*$ preserves finite generation, then $f_*$ preserves pointwise finiteness and the functor $f_* \colon \Mod_{\fC}^{\pf} \to \Mod_{\fD}^{\pf}$ is the right adjoint of the functor $f^* \colon \Mod_{\fD}^{\pf} \to \Mod_{\fC}^{\pf}$.
\item If $(f^\op)^*$ preserves finite generation, then $f_!$ preserves pointwise finiteness and the functor $f_! \colon \Mod_{\fC}^{\pf} \to \Mod_{\fD}^{\pf}$ is the left adjoint of the functor $f^* \colon \Mod_{\fD}^{\pf} \to \Mod_{\fC}^{\pf}$.
\end{enumerate}
\end{proposition}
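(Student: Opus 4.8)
The plan is to handle (a) and (b) in parallel: in each case I first use the explicit formula from Proposition~\ref{prop:pushfwd-formula} to reduce the pointwise-finiteness claim to a statement about a single principal projective, and then observe that the adjunction assertion is essentially formal.

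For (a), I would start from the fact that the principal projective $\bP_{\fD,y}$ is finitely generated (it is generated by $1_y$). By hypothesis $f^*$ preserves finite generation, so $f^*\bP_{\fD,y}$ is a finitely generated $\fC$-module, hence a quotient of a finite direct sum $\bigoplus_{i=1}^k \bP_{\fC,x_i}$ of principal projectives. Applying the left-exact functor $\Hom_{\fC}(-,M)$ and invoking Yoneda (Proposition~\ref{prop:yoneda}) produces an injection
\[
(f_*M)(y) = \Hom_{\fC}(f^*\bP_{\fD,y},M) \hookrightarrow \bigoplus_{i=1}^k M(x_i),
\]
the equality on the left being Proposition~\ref{prop:pushfwd-formula}. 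Thus $f_*M$ is pointwise finite whenever $M$ is. Since $f^*$ always preserves pointwise finiteness, both functors restrict to the pointwise-finite subcategories, and the global adjunction isomorphism $\Hom_{\fC}(f^*N,M)=\Hom_{\fD}(N,f_*M)$, natural in $N$ and $M$, restricts verbatim to give the claimed adjunction.

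For (b) I would run the dual argument with $\odot_{\fC}$ in place of $\Hom_{\fC}$. Here $\bP_{\fD^{\op},y}$ is finitely generated, so by hypothesis $(f^{\op})^*\bP_{\fD^{\op},y}$ is a finitely generated $\fC^{\op}$-module, hence a quotient of some $\bigoplus_{i=1}^k \bP_{\fC^{\op},x_i}$. Because $\odot_{\fC}$ is cocontinuous, and hence right exact, in its first variable, tensoring with $M$ yields a surjection
\[
\bigoplus_{i=1}^k \bigl( \bP_{\fC^{\op},x_i} \odot_{\fC} M \bigr) \twoheadrightarrow (f^{\op})^*\bP_{\fD^{\op},y} \odot_{\fC} M = (f_!M)(y),
\]
the identification on the right again being Proposition~\ref{prop:pushfwd-formula}. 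By Proposition~\ref{prop:tensor-eval} the $i$th summand on the left is $M(x_i)$, so $(f_!M)(y)$ is a quotient of $\bigoplus_{i=1}^k M(x_i)$, hence finite dimensional when $M$ is. As in (a), the global adjunction $\Hom_{\fD}(f_!M,N)=\Hom_{\fC}(M,f^*N)$ then restricts to the pointwise-finite subcategories.

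I do not expect a serious obstacle here. The only points requiring care are that the finite-generation hypothesis must be applied precisely to the principal projective occurring in Proposition~\ref{prop:pushfwd-formula}, and that one exploits left-exactness of $\Hom_{\fC}(-,M)$ in (a) and right-exactness of $-\odot_{\fC}M$ in (b) to transport the finiteness through the chosen presentation. Once pointwise-finiteness is in hand, the adjunction statements are immediate, since an adjunction between two categories restricts to any pair of full subcategories each of which is preserved by the relevant functor.
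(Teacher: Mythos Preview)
Your argument is correct, and part (a) is essentially identical to the paper's proof. For part (b) you take a slightly different route: the paper dualizes, passing from the surjection $\bigoplus_i \bP_{\fC^{\op},x_i} \to (f^{\op})^*\bP_{\fD^{\op},y}$ to an injection $f^*\bI_{\fD,y} \hookrightarrow \bigoplus_i \bI_{\fC,x_i}$, and then applies $\Hom_{\fC}(M,-)$ together with the identity $(f_!M)(y)^* = \Hom_{\fC}(M,f^*\bI_{\fD,y})$. Your approach via $\odot_{\fC}$ and right exactness is more direct and avoids the detour through duals; the paper's version has the mild advantage of making (b) look formally parallel to (a) (both become injections into a finite sum), but substantively the two arguments are interchangeable.
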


\begin{proof}
  Let $M$ be a pointwise finite $\fC$-module.

(a)  Suppose that $f^*$ preserves finite generation. Let $y$ be an object of $\fC$. Then the $\fC$-module $f^* \bP_{\fD,y}$ is finitely generated, and so we can find a surjection $\bigoplus_{i=1}^n \bP_{\fC,x_i} \to f^* \bP_{\fD,y}$ for some choice of objects $x_1, \ldots, x_n \in \fC$. We therefore have an injection
\begin{displaymath}
(f_*M)(y) = \Hom_{\fC}(f^*\bP_{\fD,y},M) \to \bigoplus_{i=1}^n \Hom_{\fC}(\bP_{\fC,x_i}, M) = \bigoplus_{i=1}^n M(x_i).
\end{displaymath}
Since each $M(x_i)$ is finite dimensional, the space on the right side is finite dimensional, and so $(f_*M)(y)$ is finite dimensional. Thus $f_*M$ is pointwise finite. The adjointness statement follows easily.

(b) Now suppose instead that $(f^\op)^*$ preserves finite generation. Let $y$ be an object of $\fC^\op$. Then we have a surjection $\bigoplus_{i=1}^m \bP_{\fC^\op,y_i} \to f^* \bP_{\fD^\op,y}$ and hence by taking pointwise duals, we have an injection $f^* \bI_{\fD,y} \to \bigoplus_{i=1}^m \bI_{\fC,y_i}$. This gives an injection
\[
  (f_!M)(y)^* = \Hom_\fC(M,f^*\bI_{\fD,y})\to \bigoplus_{i=1}^m \Hom_\fC(M, \bI_{\fC,y_i}) \to \bigoplus_{i=1}^m M(y_i)^*.
\]
The rest of the argument is the same as the previous case.
\end{proof}

\begin{proposition} \label{prop:dual-pushfwd}
Suppose that $\Hom_\fC(x,y)$ is finite-dimensional for all objects $x,y$, and let $M$ be a $\fC$-module.
\begin{enumerate}
\item The natural map $(f_!M)^{\vee} \to f^{\op}_*(M^{\vee})$ is an isomorphism.
\item If $M$ and $f_!^{\op}(M^{\vee})$ are pointwise finite, then there is a canonical isomorphism $(f_*M)^{\vee} \to f_!^{\op}(M^{\vee})$.
\end{enumerate}
\end{proposition}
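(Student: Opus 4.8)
The plan is to establish (a) first, by reducing to the case of a principal projective, and then to derive (b) by applying (a) to the opposite functor $f^{\op}\colon\fC^{\op}\to\fD^{\op}$. The natural map in (a) is the one obtained by dualizing the unit $M\to f^*f_!M$ of the adjunction $f_!\dashv f^*$: using the pointwise identification $(f^*f_!M)^\vee=(f^{\op})^*\bigl((f_!M)^\vee\bigr)$, this produces a morphism $(f^{\op})^*\bigl((f_!M)^\vee\bigr)\to M^\vee$ of $\fC^{\op}$-modules, and transposing along $(f^{\op})^*\dashv f^{\op}_*$ gives $(f_!M)^\vee\to f^{\op}_*(M^\vee)$. (One checks this agrees with the map one reads off directly from Proposition~\ref{prop:pushfwd-formula}.)

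To prove this is an isomorphism, I would choose a presentation $\bP_1\to\bP_0\to M\to 0$ by (possibly infinite) direct sums of principal projectives. Since $f_!$ is right exact and $(-)^\vee$ is exact and contravariant, $0\to(f_!M)^\vee\to(f_!\bP_0)^\vee\to(f_!\bP_1)^\vee$ is exact; since $f^{\op}_*$ is left exact, $0\to f^{\op}_*(M^\vee)\to f^{\op}_*(\bP_0^\vee)\to f^{\op}_*(\bP_1^\vee)$ is exact. By naturality of the map and a diagram chase comparing kernels, it suffices to treat the $\bP_i$, and since $f_!$, $(-)^\vee$ and $f^{\op}_*$ all carry the relevant (co)products to (co)products, it suffices to treat a single principal projective $M=\bP_{\fC,x}$. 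Here Proposition~\ref{prop:kan-proj} gives $f_!\bP_{\fC,x}=\bP_{\fD,f(x)}$, so $(f_!\bP_{\fC,x})^\vee=\bP_{\fD,f(x)}^\vee=\bI_{\fD^{\op},f(x)}$; on the other side $\bP_{\fC,x}^\vee=\bI_{\fC^{\op},x}$, and Proposition~\ref{prop:kan-proj} applied to $f^{\op}$ gives $f^{\op}_*\bigl(\bI_{\fC^{\op},x}\bigr)=\bI_{\fD^{\op},f(x)}$. Evaluating at an object $y$ and using Propositions~\ref{prop:pushfwd-formula}, \ref{prop:tensor-eval} and~\ref{prop:yoneda}, both sides become $\Hom_\fD(f(x),y)^*$, naturally in $y$, and one checks that the natural map is the identity under this identification. (When $\fD$ too has finite-dimensional $\Hom$-spaces one can bypass the reduction: by Proposition~\ref{prop:pushfwd-formula}, $(f_!M)^\vee(y)=\bigl((f^{\op})^*\bP_{\fD^{\op},y}\odot_\fC M\bigr)^*$, and Propositions~\ref{prop:tensor-hom-dual} and~\ref{prop:dual-adjoint}, together with the formula of Proposition~\ref{prop:pushfwd-formula} for $f^{\op}_*$, identify this with $(f^{\op}_*(M^\vee))(y)$, the relevant pullback of a principal projective being pointwise finite.)

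For (b), apply (a) to $f^{\op}\colon\fC^{\op}\to\fD^{\op}$ and the $\fC^{\op}$-module $M^\vee$: this gives a canonical isomorphism $\bigl(f_!^{\op}(M^\vee)\bigr)^\vee\cong f_*\bigl((M^\vee)^\vee\bigr)$, and since $M$ is pointwise finite the biduality map identifies the right-hand side canonically with $f_*M$. Dualizing once more and invoking the hypothesis that $f_!^{\op}(M^\vee)$ is pointwise finite---so that its biduality map $f_!^{\op}(M^\vee)\to\bigl(f_!^{\op}(M^\vee)\bigr)^{\vee\vee}$ is an isomorphism---produces the desired canonical isomorphism $(f_*M)^\vee\cong f_!^{\op}(M^\vee)$.

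The only real work is the base case of (a): confirming that the abstractly defined natural transformation restricts to the identity on principal projectives under the identifications above. This is routine bookkeeping rather than a conceptual difficulty, but it requires care to keep straight which of $\fC$, $\fD$, $\fC^{\op}$, $\fD^{\op}$ and which $\Hom$-space is in play at each step.
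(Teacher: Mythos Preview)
Your proof is correct. For part (b) your argument matches the paper's exactly. For part (a), however, the paper takes precisely the direct route you sketch in your parenthetical remark rather than your main reduction-to-principal-projectives argument: it writes $(f_!M)^{\vee}(y) = ((f^{\op})^*\bP_{\fD^{\op},y} \odot_{\fC} M)^*$ via Proposition~\ref{prop:pushfwd-formula}, applies Proposition~\ref{prop:tensor-hom-dual} to get $\Hom_{\fC}(M, ((f^{\op})^*\bP_{\fD^{\op},y})^{\vee})$, then uses Proposition~\ref{prop:dual-adjoint} and Proposition~\ref{prop:pushfwd-formula} once more to reach $(f^{\op}_*(M^{\vee}))(y)$.

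Your main approach has a genuine advantage: the identifications $f_!\bP_{\fC,x}=\bP_{\fD,f(x)}$ and $f^{\op}_*\bI_{\fC^{\op},x}=\bI_{\fD^{\op},f(x)}$ from Proposition~\ref{prop:kan-proj} hold unconditionally, so once the bookkeeping on the natural map is done, your argument establishes (a) without any finite-dimensionality hypothesis. By contrast, the paper's direct computation invokes the isomorphism clause of Proposition~\ref{prop:tensor-hom-dual}, which requires the $\fC^{\op}$-module $(f^{\op})^*\bP_{\fD^{\op},y}$ to be pointwise finite---and that is a condition on $\Hom_{\fD}(f(x),y)$, as you correctly observed. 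In the paper's applications both $\fC$ and $\fD$ have finite-dimensional $\Hom$ spaces, so the distinction is harmless there, but your version is cleaner in this respect. The trade-off is that the paper's argument is a three-line chain of identifications with no diagram chase and no case reduction.
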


\begin{proof}
(a) Let $y \in \fD$. We have natural maps and identifications
\begin{align*}
(f_!M)^{\vee}(y)
&= ((f_!M)(y))^*
= (f^* \bP_{\fD^{\op},y} \odot_{\fC} M)^*
\to \Hom_{\fC}(M, (f^* \bP_{\fD^{\op},y})^{\vee}) \\
&= \Hom_{\fC^{\op}}(f^* \bP_{\fD^{\op},y}, M^{\vee})
= (f_* M^{\vee})(y).
\end{align*}
By our finite-dimensionality assumption, the map above is an isomorphism (see Proposition~\ref{prop:tensor-hom-dual}).

(b) Follows by applying (a) to $M^{\vee}$ and identifying $(f^{\op}_!(M^{\vee}))^{\vee}{}^{\vee}$ with $f^{\op}_!(M^{\vee})$.
\end{proof}

\subsection{Indecomposable decompositions} \label{ss:decomp}

We now investigate how $\fC$-modules decompose under direct sum. Our main result is:
 
\begin{proposition} \label{prop:decomp}
Let $M$ be a pointwise finite $\fC$-module. Then $M$ decomposes into a direct sum of indecomposable $\fC$-modules. If additionally $\End_{\fC}(M)$ is finite dimensional then this decomposition is finite and unique up to permutation and isomorphism.
\end{proposition}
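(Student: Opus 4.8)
The plan is to treat existence and uniqueness separately; existence is the delicate part, and the key tool is a lemma that every nonzero pointwise finite module has an indecomposable direct summand.

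\emph{Existence.} To prove the lemma, fix an object $x$ with $M(x)\neq 0$ and consider the poset $\mathcal{T}$ of decompositions $M=N\oplus P$ with $N(x)\neq 0$, ordered by declaring $(N',P')\le(N,P)$ when $N'\subseteq N$, $P\subseteq P'$ and $N=N'\oplus(N\cap P')$ (so ``going down'' refines toward a smaller $N$). Given a descending chain $(N_t,P_t)$ I would take $N=\bigcap_t N_t$ and $P=\sum_t P_t$. The crucial point is pointwise finiteness: for each object $y$ the subspaces $N_t(y)\subseteq M(y)$ form a descending chain in a finite-dimensional space, hence stabilize, and then the complements $P_t(y)$ stabilize as well; since submodules and the operations $\cap,+$ are computed objectwise, this gives $M(y)=N(y)\oplus P(y)$ for all $y$, so $M=N\oplus P$ and $(N,P)$ is a lower bound (and $N(x)\neq 0$ because the $N_t(x)$ are nonzero and stabilize to something nonzero). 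Zorn then produces a minimal element $(N^{\ast},P^{\ast})$, and $N^{\ast}$ must be indecomposable: a splitting $N^{\ast}=A\oplus B$ with, say, $A(x)\neq 0$ would give $(A,B\oplus P^{\ast})\le(N^{\ast},P^{\ast})$, forcing $B=0$ by minimality.

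Granting the lemma, I would run a second Zorn argument on the set of pairs consisting of a family $(M_i)_{i\in I}$ of nonzero indecomposable submodules of $M$ whose internal sum is direct, together with a submodule $P$ with $M=\bigl(\bigoplus_{i\in I}M_i\bigr)\oplus P$, ordered by extension of the family. For a chain, take $I=\bigcup_t I_t$ and $P=\bigcap_t P_t$: directedness localizes any finite linear relation among the $M_i$ into a single stage of the chain, so the total sum remains direct, and pointwise finiteness again makes the complements stabilize over each object, so this is an upper bound. A maximal element must have $P=0$ by the lemma, yielding $M=\bigoplus_{i\in I}M_i$.

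\emph{Uniqueness.} Assume $\End_{\fC}(M)$ is finite-dimensional. In any decomposition $M=\bigoplus_{i\in I}M_i$ into nonzero indecomposables the projections $\pi_i$ are nonzero pairwise orthogonal idempotents of $\End_{\fC}(M)$, hence linearly independent, so $\lvert I\rvert\le\dim_{\bk}\End_{\fC}(M)<\infty$ and the decomposition is finite. Moreover $\End_{\fC}(M_i)\cong\pi_i\End_{\fC}(M)\pi_i$ is a finite-dimensional $\bk$-algebra with no nontrivial idempotents (as $M_i$ is indecomposable), hence a local ring by standard facts about finite-dimensional algebras. Uniqueness up to permutation and isomorphism then follows from the Krull--Remak--Schmidt--Azumaya theorem.

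The main obstacle is existence: the naive attempt to produce a maximal direct-sum decomposition by Zorn fails because the union of an ascending chain of direct summands need not be a direct summand, so one is forced to carry complements along and exploit pointwise finiteness to make the bookkeeping stabilize separately over each object. Within this, the lemma producing an indecomposable summand — via a \emph{downward} Zorn argument anchored at a single object where $M$ is nonzero — is the cleverest step.
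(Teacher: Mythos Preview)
Your proof is correct and follows essentially the same approach as the paper: both isolate a lemma that a nonzero pointwise finite module has an indecomposable summand via a Zorn argument on decompositions $M=A\oplus B$ anchored at a fixed object, using pointwise finiteness to bound chains objectwise, then run a second Zorn argument to build the full decomposition, and finally invoke local endomorphism rings plus Krull--Schmidt for uniqueness. The only differences are cosmetic---you minimize the component that becomes indecomposable while the paper maximizes its complement (these are the same argument with the two summands swapped), and your extra ordering condition $N=N'\oplus(N\cap P')$ is automatically satisfied whenever $M=N\oplus P=N'\oplus P'$ with $N'\subseteq N$ and $P\subseteq P'$, so it could be dropped.
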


We note that if $M$ is pointwise finite and either $M$ or $M^{\vee}$ is finitely generated then $\End_{\fC}(M)$ is finite dimensional. We require some preliminaries before proving the proposition. For a $\fC$-module $M$, let $\Sigma(M)$ be the set of pairs $(A,B)$ where $A$ and $B$ are submodules of $M$ such that $M=A \oplus B$. We define a partial order $\le$ on $\Sigma(M)$ by $(A,B) \le (A',B')$ if $A \subset A'$ and $B' \subset B$.

\begin{lemma} \label{lem:decomp-1}
Let $M$ be a pointwise finite $\fC$-module and let $\{(A_i,B_i)\}_{i \in I}$ be a chain in $\Sigma(M)$. Let $A=\bigcup_{i \in I} A_i$ and $B=\bigcap_{i \in I} B_i$. Then $(A,B) \in \Sigma(M)$.
\end{lemma}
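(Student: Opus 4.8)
The plan is to verify the two conditions that membership $(A,B) \in \Sigma(M)$ entails: namely $A \cap B = 0$ and $A + B = M$, each checked as an equality of submodules of $M$, hence objectwise. First I note that $A = \bigcup_i A_i$ is a submodule, being a union of a chain of submodules, and $B = \bigcap_i B_i$ is a submodule, being an intersection; and since colimits and limits in $\Mod_{\fC}$ are computed pointwise, we have $A(x) = \bigcup_i A_i(x)$ and $B(x) = \bigcap_i B_i(x)$ for every object $x$. So it suffices to show $A(x) \cap B(x) = 0$ and $A(x) + B(x) = M(x)$ for all $x$.

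The first of these needs no finiteness hypothesis. If $m \in A(x) \cap B(x)$, then $m \in A_i(x)$ for some $i$ because the $A_i(x)$ form a chain whose union contains $m$, and $m \in B(x) \subseteq B_i(x)$; since $M = A_i \oplus B_i$ we conclude $m \in A_i(x) \cap B_i(x) = 0$.

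The second is where pointwise finiteness enters, and it is really the only point of the argument. Fix $x$; then $M(x)$ is finite dimensional and $\{A_i(x)\}_{i \in I}$ is a chain of subspaces of it. Choose $i_0 \in I$ with $\dim A_{i_0}(x)$ maximal; since the chain is totally ordered by inclusion, maximality of dimension forces $A_i(x) \subseteq A_{i_0}(x)$ for all $i$, so $A(x) = A_{i_0}(x)$. As $\dim A_i(x) + \dim B_i(x) = \dim M(x)$ is independent of $i$ (each $(A_i,B_i)$ being a decomposition of $M$), the subspace $B_{i_0}(x)$ has minimal dimension among the $B_i(x)$, and again totality gives $B_{i_0}(x) \subseteq B_i(x)$ for all $i$, whence $B(x) = B_{i_0}(x)$. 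Therefore $A(x) + B(x) = A_{i_0}(x) + B_{i_0}(x) = M(x)$ because $M = A_{i_0} \oplus B_{i_0}$. Running over all $x$ yields $A + B = M$, so $(A,B) \in \Sigma(M)$.

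I anticipate no genuine obstacle: the whole content is that finite-dimensionality collapses the chain $\{A_i(x)\}$ to a single maximal term and, dually, $\{B_i(x)\}$ to a single minimal term, occurring at the same index $i_0$ by the constancy of $\dim A_i(x) + \dim B_i(x)$. The only things requiring a moment's care are identifying $A$ and $B$ with the submodules having the pointwise union and intersection as values, which is immediate from the pointwise computation of (co)limits.
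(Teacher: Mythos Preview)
Your proof is correct and follows essentially the same approach as the paper: both reduce to showing $M(x)=A(x)\oplus B(x)$ pointwise by using finite-dimensionality of $M(x)$ to find a single index at which the chains $A_i(x)$ and $B_i(x)$ have already reached their union and intersection, respectively. Your version is slightly more explicit in selecting $i_0$ via maximal $\dim A_{i_0}(x)$ and using the identity $\dim A_i(x)+\dim B_i(x)=\dim M(x)$ to see the same $i_0$ works for $B$, whereas the paper simply says both chains stabilize ``for $i\gg 0$''; the content is the same.
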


\begin{proof}
Fix $x \in \fC$. We must show that $M(x)=A(x) \oplus B(x)$. Since $M(x)$ is finite dimensional, the ascending chain $A_i(x)$ stabilizes, and the descending chain $B_i(x)$ stabilizes. Since limits and colimits of $\fC$-modules are computed pointwise, we have $A(x)=A_i(x)$ and $B(x)=B_i(x)$ for $i \gg 0$. Since $M(x)=A_i(x) \oplus B_i(x)$ for all $i$ by assumption, the result follows.
\end{proof}

\begin{lemma} \label{lem:decomp-2}
Let $M$ be a non-zero pointwise finite $\fC$-module. Then $M$ admits an indecomposable summand.
\end{lemma}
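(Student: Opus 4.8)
The plan is to produce an indecomposable summand as a \emph{minimal} nonzero summand, obtained via Zorn's lemma applied to a suitable subposet of $\Sigma(M)$. Working directly with the poset of \emph{all} nontrivial decompositions $(A,B)$ of $M$ (those with $A\neq 0$) does not work: the intersection $\bigcap_i A_i$ of a descending chain of nonzero summands can vanish (for instance, over a discrete category with $M=\bigoplus_{n\ge 0}S_n$ and $A_i=\bigoplus_{n\ge i}S_n$), so such a poset need not have minimal elements. To avoid this I would anchor the decompositions at a single object.

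Concretely: since $M\neq 0$, first choose $x\in\fC$ with $M(x)\neq 0$, and set $Q=\{(A,B)\in\Sigma(M):A(x)\neq 0\}$, which is nonempty since $(M,0)\in Q$. The key point to verify is that every chain $\{(A_i,B_i)\}_i$ in $Q$ has a lower bound in $Q$; the natural candidate is $(A,B)=(\bigcap_i A_i,\bigcup_i B_i)$. That this lies in $\Sigma(M)$ follows by applying Lemma~\ref{lem:decomp-1} to the chain $\{(B_i,A_i)\}_i$ (still a chain in $\Sigma(M)$, since $(B,A)\in\Sigma(M)$ whenever $(A,B)\in\Sigma(M)$), which yields $(\bigcup_i B_i,\bigcap_i A_i)\in\Sigma(M)$, and hence $(A,B)\in\Sigma(M)$ by the same symmetry; it is visibly a lower bound for the chain. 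To see $(A,B)\in Q$, I would use that submodule intersections are computed pointwise, so $A(x)=\bigcap_i A_i(x)$; as $M(x)$ is finite dimensional, the totally ordered family of subspaces $\{A_i(x)\}$ has a member of least dimension, which is then contained in all the others and so equals $A(x)$, and this is nonzero because each $A_i(x)$ is. Zorn's lemma then produces a minimal element $(A,B)$ of $Q$.

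It remains to check indecomposability of this $A$. It is nonzero because $A(x)\neq 0$. If $A=A'\oplus A''$ with $A',A''$ both nonzero, then $A(x)=A'(x)\oplus A''(x)\neq 0$, so one summand—say $A'$—has $A'(x)\neq 0$; then $(A',A''\oplus B)\in Q$ and $A'\subsetneq A$, so $(A',A''\oplus B)<(A,B)$ in $Q$, contradicting minimality. Hence $A$ is an indecomposable summand.

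The only genuine subtlety—and the reason the argument takes this shape—is the chain-bound step: one must exploit pointwise finiteness of $M$ at the anchoring object $x$ to keep the intersection of a descending chain of ``marked'' summands nonzero. It is also worth noting that anchoring at an \emph{object} $x$ (the condition $A(x)\neq 0$) rather than at an \emph{element} $v\in M(x)$ (the condition $v\in A(x)$) is exactly what makes the final indecomposability step go through: an element $v$ could split nontrivially across $A'$ and $A''$, whereas the condition $A(x)\neq 0$ necessarily persists in one of the two summands.
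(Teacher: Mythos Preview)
Your proof is correct and follows essentially the same approach as the paper's: both anchor at an object $x$ with $M(x)\neq 0$, apply Zorn's lemma (via Lemma~\ref{lem:decomp-1}) to the subposet of decompositions whose distinguished summand is nonzero at $x$, and then check that the extremal summand is indecomposable. The only cosmetic difference is that the paper imposes the nonvanishing condition on the \emph{second} coordinate $B$ and takes a maximal element (so that $B$ is the indecomposable summand), whereas you impose it on the \emph{first} coordinate $A$ and take a minimal element---these are related by the swap $(A,B)\mapsto(B,A)$ on $\Sigma(M)$.
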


\begin{proof}
Let $x$ be such that $M(x)$ is non-zero. Let $\Sigma'$ be the subset of $\Sigma(M)$ consisting of pairs $(A,B)$ with $B(x) \ne 0$. The set $\Sigma'$ contains $(0, M)$ and is therefore non-empty. Suppose that $\{(A_i,B_i)\}$ is a chain in $\Sigma'$ and let $A=\bigcup A_i$ and $B=\bigcap B_i$. Then $(A,B) \in \Sigma(M)$ by Lemma~\ref{lem:decomp-1}. Since $M(x)$ is finite dimensional, the descending chain $B_i(x)$ stabilizes, and so $B(x)=B_i(x)$ for $i \gg 0$; in particular, $B(x)$ is non-zero. Thus $(A,B) \in \Sigma'$.

By Zorn's lemma, $\Sigma'$ has a maximal element, say $(A,B)$. Suppose $B$ is decomposable, say $B=B' \oplus B''$ with both $B'$ and $B''$ non-zero. Without loss of generality, suppose $B''(x) \ne 0$. Then $(A \oplus B', B'')$ belongs to $\Sigma'$ and is strictly larger than $(A,B)$, a contradiction. Thus $B$ is indecomposable, which completes the proof.
\end{proof}

\begin{proof}[Proof of Proposition~\ref{prop:decomp}]
  Let $\{E_j\}_{j \in J}$ be the set of all indecomposable summands of $M$. We say that a subset $U$ of $J$ is \emph{independent} if the map $\bigoplus_{j \in U} E_j \to M$ is injective; we then write $A_U$ for the image of this map.

Let $\Sigma'$ be the set of pairs $(U, B)$ where $U$ is an independent subset of $J$ and $B$ is a submodule of $M$ such that $M=A_U \oplus B$, i.e., $(A_U, B) \in \Sigma(M)$. The set $\Sigma'$ contains $(\emptyset, M)$, and is therefore non-empty. We partially order $\Sigma'$ by $(U,B) \le (U',B')$ if $U \subset U'$ and $B' \subset B$. Suppose that $\{(U_i,B_i)\}_{i \in I}$ is a chain in $\Sigma'$. Let $U=\bigcup_{i \in I} U_i$ and $B=\bigcap_{i \in I} B_i$. Then $U$ is independent and $A_U=\bigcup_{i \in I} A_{U_i}$. Thus $(A_U, B) \in \Sigma(M)$ by Lemma~\ref{lem:decomp-1}, and so $(U,B) \in \Sigma'$.

By Zorn's lemma, $\Sigma'$ contains a maximal element, say $(U, B)$. Suppose $B \ne 0$. Then $B$ contains an indecomposable summand $E$ by Lemma~\ref{lem:decomp-2}. Since $B$ is a summand of $M$, it follows that $E$ is a summand of $M$, and thus equal to $E_j$ for some $j \in J$. Note that since $E_j$ belongs to a complementary submodule to $A_U$, the set $U'=U \cup \{j\}$ is independent, and $A_{U'}=A_U \oplus E_j$. Writing $B=E_j \oplus B'$, we have $M=A_U \oplus B=A_{U'} \oplus B'$, and so $(U', B') \in \Sigma'$. Since $(U,B)<(U',B')$ we have a contradiction. Thus $B=0$, and so $M=A_U$ is a direct sum of indecomposable modules.

Finally, suppose that $\End_{\fC}(M)$ is finite dimensional. Then the indecomposable decomposition of $M$ is finite: indeed, if $M=N_1 \oplus \cdots \oplus N_r$ is any decomposition with each $N_i$ non-zero then $\dim \End_{\fC}(M) \ge r$. If $E$ is any indecomposable summand of $M$ then $\End_{\fC}(E)$ is finite dimensional and has no non-trivial idempotents, and is thus local in the sense of \cite[\S 3]{krause} (see \cite[Corollary 19.19]{lam}). The uniqueness therefore follows from \cite[Theorem~4.2]{krause}
\end{proof}

\subsection{Upwards and downwards categories} \label{ss:upwards}

Let $|\fC|$ be the set of isomorphism classes in $\fC$. Suppose that $|\fC|$ is given a partial ordering $\le$. We say that $\fC$ is {\bf upwards} if, whenever there is a non-zero morphism $x \to y$, we have $x \le y$. We say that $\fC$ is {\bf downwards} if, whenever there is a non-zero morphism $x \to y$, we have $x \ge y$.

\begin{proposition} \label{prop:low-proj}
  Let $\fC$ be an upwards category and let $P$ be a projective $\fC$-module.
  Suppose that $\{x \in |\fC| \mid P(x) \ne 0\}$ has a minimal element $x_0$. Let $M$ be an $\End_\fC(x_0)$-summand of $P(x_0)$. Then the $\fC$-submodule of $P$ generated by $M$ is a summand of $P$; in particular, it is projective.
\end{proposition}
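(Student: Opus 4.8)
\medskip
\noindent\textbf{Proof plan.} The plan is to isolate inside $P$ the part ``generated in degree $x_0$'', realize it as a direct summand, and then cut out the piece coming from $M$. Set $R=\End_\fC(x_0)$ and let $\fD\subset\fC$ be the full subcategory on the single object $x_0$, so $\Mod_\fD$ is the category of (left) $R$-modules. Let $g\colon\fD\to\fC$ be the inclusion. Then $g^*\colon\Mod_\fC\to\Mod_\fD$ is evaluation at $x_0$; its left adjoint satisfies $(g_!V)(y)=\Hom_\fC(x_0,y)\otimes_R V$ (Proposition~\ref{prop:pushfwd-formula}), and there is a natural map $\varepsilon\colon g_!g^*P\to P$ (adjoint to $\mathrm{id}_{g^*P}$), which on $y$ sends $\alpha\otimes p$ to $\alpha_*p$. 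I would reduce everything to the claim that $\varepsilon$ is a split monomorphism. Granting this: $g^*P=P(x_0)=M\oplus M'$ as $R$-modules, so $g_!g^*P=g_!M\oplus g_!M'$; since $\varepsilon$ is injective with image a direct summand of $P$, the submodule $\varepsilon(g_!M)$ is a direct summand of $\varepsilon(g_!g^*P)$, hence of $P$; and unwinding $\varepsilon$ shows $\varepsilon(g_!M)(y)=\sum_{\alpha\colon x_0\to y}\alpha_*(M)$, which is exactly the submodule of $P$ generated by $M$. Being a direct summand of a projective module, it is projective.

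So the crux is to show $\varepsilon$ is split mono. First I would write $P$ as a direct summand of a coproduct of principal projectives $F=\bigoplus_i\bP_{z_i}$ in which every $z_i$ has $P(z_i)\neq 0$ (take one copy of $\bP_z$ for each element of a basis of $P(z)$, using Yoneda, and split the resulting surjection since $P$ is projective); fix $s\colon P\to F$ and $\pi\colon F\to P$ with $\pi s=\mathrm{id}_P$. The key observation is that, because $\fC$ is upwards and $x_0$ is minimal among $\{x\mid P(x)\neq 0\}$, a nonzero morphism $z_i\to x_0$ would force $z_i\le x_0$ and then $z_i=x_0$; hence $\Hom_\fC(z_i,x_0)=0$ whenever $z_i\neq x_0$. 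Thus $F=F_0\oplus F_1$, where $F_0$ is the sub-coproduct of those $\bP_{z_i}$ with $z_i=x_0$ and $F_1(x_0)=0$. Evaluating at $x_0$ gives $g^*F=F_0(x_0)$, a free $R$-module, and a routine computation shows that under the identifications $g_!g^*F=g_!\big(F_0(x_0)\big)=F_0$ the map $\varepsilon_F\colon g_!g^*F\to F$ is simply the inclusion of the summand $F_0$; in particular it is split by the projection $\lambda\colon F\twoheadrightarrow F_0=g_!g^*F$.

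Finally I would transfer the splitting from $F$ to $P$ using naturality of $\varepsilon$ and functoriality of $g_!g^*$. Naturality gives $s\,\varepsilon_P=\varepsilon_F\,(g_!g^*s)$ and $\varepsilon_P\,(g_!g^*\pi)=\pi\,\varepsilon_F$, while applying $g_!g^*$ to $\pi s=\mathrm{id}_P$ gives $(g_!g^*\pi)(g_!g^*s)=\mathrm{id}$. Then $(g_!g^*\pi)\circ\lambda\circ s$ is a retraction of $\varepsilon_P$, since $(g_!g^*\pi)\,\lambda\,s\,\varepsilon_P=(g_!g^*\pi)\,\lambda\,\varepsilon_F\,(g_!g^*s)=(g_!g^*\pi)(g_!g^*s)=\mathrm{id}$. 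The step I expect to be the real obstacle is precisely this transfer: a direct summand of $F$ need not respect the decomposition $F=F_0\oplus F_1$, so one cannot just ``intersect $P$ with $F_0$'' inside $F$; passing through the functor $g_!g^*$ and using only naturality of $\varepsilon$ sidesteps this. The two facts that make the argument go through are the identification of the relevant summand as $\varepsilon(g_!g^*P)$ and the vanishing $\Hom_\fC(z_i,x_0)=0$, which uses upwardness together with the minimality of $x_0$.
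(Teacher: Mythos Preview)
Your proof is correct. Both your argument and the paper's work through the full subcategory $\fD$ on $x_0$ and identify the submodule generated by $M$ with the image of $g_!M\to P$; they differ only in how the splitting is produced. The paper regards $M$ as a $\fC$-module concentrated at $x_0$, notes (using upwardness and minimality) that it is a common quotient of both $P$ and $Q=g_!M$, lifts the surjection $P\to M$ through $Q\to M$ using projectivity of $P$, and then checks that the resulting composite $Q\to P\to Q$ is the identity at $x_0$, hence everywhere since $\End_\fC(Q)=\End_R(M)$. Your approach instead embeds $P$ as a retract of a free module $F=F_0\oplus F_1$ with $F_1(x_0)=0$, observes that $\varepsilon_F$ is the summand inclusion $F_0\hookrightarrow F$, and transfers the retraction back via naturality of $\varepsilon$. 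The paper's route is shorter and uses only the abstract projectivity of $P$ without choosing a presentation; yours is more hands-on and, as a side benefit, makes explicit why $P(x_0)$ is a projective $R$-module---a fact the paper asserts but does not justify.
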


\begin{proof}
Let $\fX$ be the full subcategory of $\fC$ spanned by $x_0$ and let $\iota \colon \fX \to \fC$ be the inclusion; we identify $\fX$-modules with $\End_{\fC}(x_0)$-modules. Then $P(x_0) = \iota^* P$ is a projective $\fX$-module and hence so is $M$. Thus $Q = \iota_! M$ is a projective $\fC$-module with a map $Q \to P$.

  Next, $M$ is a quotient of $P$ and $Q$ by our assumption of minimality of $x_0$ and the fact that $\fC$ is upwards. Hence we lift $P \to M$ along $Q \to M$ to get a map $P \to Q$ such that the composition $Q \to P \to Q$ is the identity in degree $x_0$. This shows that $Q \to P$ is a split inclusion and hence its cokernel is projective.
\end{proof}

The following are analogous and we omit the proofs.

\begin{proposition}
  Let $\fC$ be an upwards category and let $I$ be an injective $\fC$-module. Suppose that $\{x \in |\fC| \mid I(x) \ne 0\}$ has a maximal element $x_0$. If $I' \subset I$ is the largest submodule such that $I'(x_0) = 0$, then $I'$ and $I/I'$ are injective.
\end{proposition}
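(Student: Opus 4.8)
The statement is dual to Proposition~\ref{prop:low-proj}, and I would prove it by dualizing that argument. First, $I'$ is well-defined: the sum of any family of submodules of $I$ whose value at $x_0$ vanishes again has vanishing value at $x_0$. As in the proof of Proposition~\ref{prop:low-proj}, let $\fX$ be the full subcategory of $\fC$ on $x_0$, let $\iota\colon \fX\to\fC$ be the inclusion, and identify $\fX$-modules with modules over $R:=\End_\fC(x_0)$, so that $\iota^*I=I(x_0)$.

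The first step---the one I expect to be the real content---is to show that $\iota^*I$ is an injective $R$-module. For every object $y$ and every functional on $I(y)$, Proposition~\ref{prop:yoneda} produces a morphism $I\to\bI_{\fC,y}$; together these exhibit $I$ as a subobject of a product $\prod_i\bI_{\fC,y_i}$ in which each $y_i$ lies in the support $\{y:I(y)\ne 0\}$, and this inclusion splits because $I$ is injective. Applying $\iota^*$ (which commutes with products, computed pointwise) and using that $\iota^*\bI_{\fC,y}=\Hom_\fC(x_0,y)^*$ vanishes unless $\Hom_\fC(x_0,y)\ne 0$---which, as $\fC$ is upwards, forces $x_0\le y$, hence $y=x_0$ by maximality of $x_0$ in the support---we realize $\iota^*I$ as a summand of a product of copies of $\iota^*\bI_{\fC,x_0}=R^*$. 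Since $R^*$ is an injective $R$-module and products and summands of injectives are injective, $\iota^*I$ is injective.

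Next I would set $J:=\iota_*(\iota^*I)$, which is injective by Proposition~\ref{prop:kan-proj}, and let $\eta\colon I\to\iota_*\iota^*I=J$ be the unit of the adjunction $(\iota^*,\iota_*)$. For a submodule $K\subseteq I$, the restriction $\eta|_K$ corresponds under adjunction to the inclusion $K(x_0)\hookrightarrow I(x_0)$, so $\eta|_K=0$ if and only if $K(x_0)=0$; hence $\ker\eta=I'$, and $\eta$ induces an injection $I/I'\hookrightarrow J$. To finish I would produce a section of $\eta$. Since $\fC$ is upwards, $J(y)=\Hom_R(\Hom_\fC(y,x_0),\iota^*I)$ vanishes unless $y\le x_0$, so the submodule $J_0\subseteq J$ generated by $J(x_0)$ is concentrated in degree $x_0$, with $J_0(x_0)=J(x_0)$. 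The map $J_0\to I$ which is $\eta_{x_0}^{-1}$ in degree $x_0$---here $\eta_{x_0}$ is the canonical isomorphism $I(x_0)\xrightarrow{\sim}J(x_0)$, by a triangle identity---and $0$ in all other degrees is a morphism of $\fC$-modules: the only naturality squares that need checking involve morphisms $x_0\to z$ with $z>x_0$, and those are trivial because $I(z)=0$ by maximality of $x_0$. Extending $J_0\to I$ along $J_0\hookrightarrow J$, using injectivity of $I$, yields $\rho\colon J\to I$ with $\rho_{x_0}=\eta_{x_0}^{-1}$.

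Finally, $\eta\rho\colon J\to J$ is the identity in degree $x_0$. But the unit $u_J\colon J\to\iota_*\iota^*J$ is an isomorphism---by a triangle identity, since $\iota$ is fully faithful and $J$ is in the image of $\iota_*$---and naturality of $u$ then forces every endomorphism of $J$ that is the identity in degree $x_0$ to equal $\mathrm{id}_J$; hence $\eta\rho=\mathrm{id}_J$. So $\eta$ is a split epimorphism, giving $I\cong I'\oplus\rho(J)$ with $\rho(J)\cong J$ and $I/I'\cong J$. Since $I$ is injective, its summand $I'$ is injective; since $J$ is injective, so is $I/I'\cong J$. The main obstacle is the injectivity of $\iota^*I$ in the second paragraph, which is precisely where ``upwards'' and the maximality of $x_0$ in the support are used; everything afterward is formal manipulation of the adjunction $(\iota^*,\iota_*)$, mirroring the proof of Proposition~\ref{prop:low-proj}.
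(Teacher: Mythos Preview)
Your proof is correct and is precisely the dualization of the paper's proof of Proposition~\ref{prop:low-proj}, which is exactly what the paper intends (it omits the proof, stating only that it is analogous). You even supply the argument for injectivity of $\iota^*I$ that the paper leaves implicit in the projective case; one small quibble is that the map $J_0\to I$ should be defined as $\eta_y^{-1}$ on every object $y$ in the isomorphism class of $x_0$ (or one should first pass to a skeleton), not literally ``$0$ in all other degrees,'' but this is a trivial fix.
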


We have analogous results for downwards categories which we state without proof (or note that the opposite of a downwards category is an upwards category).

\begin{proposition} 
  Let $\fC$ be a downwards category and let $I$ be an injective $\fC$-module.
  Suppose that $\{x \in |\fC| \mid I(x) \ne 0\}$ has a maximal element $x_0$. If $I' \subset I$ is the largest submodule such that $I'(x_0)=0$, then $I'$ and $I/I'$ are injective.
\end{proposition}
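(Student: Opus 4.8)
The plan is to dualize the proof of Proposition~\ref{prop:low-proj}, using the preceding ``upwards $+$ injective'' proposition as the model: one systematically exchanges ``upwards''\,/\,``downwards'', ``projective''\,/\,``injective'', ``minimal''\,/\,``maximal'', ``generated in degree $x_0$''\,/\,``cogenerated in degree $x_0$'', and $\iota_!$\,/\,$\iota_*$. At the level of the category the first exchange is just passage to $\fC^{\op}$, which is upwards whenever $\fC$ is downwards, with the same partial order on $|\fC|$ (a nonzero morphism $x\to y$ in $\fC^{\op}$ is a nonzero morphism $y\to x$ in $\fC$, forcing $y\ge x$).

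Concretely, let $\fX\subseteq\fC$ be the full subcategory on $x_0$ with inclusion $\iota\colon\fX\to\fC$, and identify $\fX$-modules with $\End_\fC(x_0)$-modules. First I would pin down $I'$. Because $\fC$ is downwards and $x_0$ is maximal in $\{x\mid I(x)\ne 0\}$, a nonzero morphism $z\to x_0$ with $z\not\cong x_0$ would force $z>x_0$, hence $I(z)=0$; so the submodule $I^{\flat}\subseteq I$ generated by $\bigcup_{z\not\cong x_0}I(z)$ satisfies $I^{\flat}(x_0)=0$, and since every submodule vanishing at $x_0$ is generated in degrees $\ne x_0$, we get $I'=I^{\flat}$, with $I/I'$ concentrated in degree $x_0$ (the extension by zero of the $\End_\fC(x_0)$-module $I(x_0)$). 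Next, let $J=\iota_*\iota^*I$; from the formula $J(y)=\Hom_{\End_\fC(x_0)}(\Hom_\fC(y,x_0),I(x_0))$ of Proposition~\ref{prop:pushfwd-formula} one checks directly that $J$ is \emph{cogenerated in degree $x_0$} (any submodule of $J$ vanishing at $x_0$ is $0$), and that the unit $u\colon I\to J$, which in degree $y$ sends $v$ to $(\alpha\mapsto\alpha_*v)$, has kernel exactly $I'$. Thus $I/I'\hookrightarrow J$, and the proposition reduces to showing that $I'\hookrightarrow I$ is a split inclusion: then $I\cong I'\oplus(I/I')$ exhibits $I'$ and $I/I'$ as summands of the injective module $I$, hence as injective.

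This last step is the crux, and it is where the three hypotheses on $I$ combine, dually to Proposition~\ref{prop:low-proj}: there a retraction of $\iota_!M\hookrightarrow P$ is built from projectivity of $P$ and the fact that $\iota_!M$ is generated in degree $x_0$; here one must build the splitting from injectivity of $I$, the fact that $J$ is cogenerated in degree $x_0$ (so that an endomorphism of $J$ which is the identity in degree $x_0$ is the identity), and the hypothesis controlling $I(x_0)$. I expect the main obstacle to be precisely that the dualization is not purely formal here: unlike the projective case, one cannot simply cite Proposition~\ref{prop:low-proj} for $\fC^{\op}$ and apply $(-)^{\vee}$, since by Remark~\ref{rmk:dual-proj} duals of injectives need not be projective — the argument must be rerun with arrows reversed. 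Moreover, dually to the use of ``$\iota^*P$ is a projective $\fX$-module'' in Proposition~\ref{prop:low-proj}, one needs $\iota^*I=I(x_0)$ to be an injective $\End_\fC(x_0)$-module (which is what makes the extension by zero $I/I'$, and hence $I'$, injective), true in the situations of interest — e.g.\ when $\End_\fC(x_0)$ is semisimple, as for the Brauer category, where these endomorphism rings are symmetric-group algebras over $\bC$. Beyond these points, the remaining verifications (the explicit descriptions of $u$ and $J$, and that the composite splitting map is the identity) are routine.
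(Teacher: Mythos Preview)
Your reduction is sound—$I'=\ker u$ for $u\colon I\to J:=\iota_*\iota^*I$, the quotient $I/I'$ is the extension by zero of $I(x_0)$, and it would suffice to show $I'\hookrightarrow I$ splits—but this last step cannot be completed, because the proposition as stated is false. Let $\fC$ have two objects $0,1$ with $\End_\fC(0)=\End_\fC(1)=\bk$, $\Hom_\fC(1,0)=\bk$, and $\Hom_\fC(0,1)=0$; with the order $0<1$ this category is downwards. A $\fC$-module is a pair of spaces $V_0,V_1$ together with a linear map $f\colon V_1\to V_0$. Take $I=\bI_0$, so $I(0)=I(1)=\bk$ with $f$ an isomorphism; the support has maximal element $x_0=1$, and the largest submodule vanishing there is $I'$ with $I'(0)=\bk$, $I'(1)=0$. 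But $\Hom_\fC(M,I')\cong(\coker f_M)^*$, which is not exact in $M$, so $I'$ is not injective (concretely, the only morphism $I\to I'$ is zero, so the inclusion $I'\hookrightarrow I$ does not split). Since $\End_\fC(x_0)=\bk$ is semisimple, the extra hypothesis you flag does not rescue the statement.

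The dualization breaks at one identifiable step. Write $N$ for the extension by zero of $I(x_0)$. In the upwards--injective model, the way one uses injectivity of $I$ is to extend an inclusion $N\hookrightarrow I$ along $N\hookrightarrow J$, producing $J\to I$ with $J\to I\xrightarrow{u}J=\id_J$; this is the dual of lifting $P\to M$ along $Q\twoheadrightarrow M$ in Proposition~\ref{prop:low-proj}. The inclusion $N\hookrightarrow I$ exists in the upwards case because morphisms out of $x_0$ go to objects $\ge x_0$, hence outside the support of $I$ when $x_0$ is maximal. In a \emph{downwards} category morphisms out of $x_0$ go to objects $\le x_0$, which for $x_0$ maximal may well lie in the support: in the example the map $I(1)\to I(0)$ is an isomorphism, so $N$ is not a submodule of $I$ and there is nothing to extend. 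The combination that restores $N\hookrightarrow I$ is ``downwards and $x_0$ minimal'' (parallel to ``upwards and $x_0$ maximal''); with that correction your splitting argument runs, though $I/I'\cong J$ is then no longer concentrated at $x_0$.
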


\begin{proposition}
  Let $\fC$ be a downwards category and let $P$ be a projective $\fC$-module. Suppose that $\{x \in |\fC| \mid P(x) \ne 0\}$ has a minimal element $x_0$. If $P' \subset P$ is the submodule generated by $P(x_0)$, then $P'$ and $P/P'$ are projective.
\end{proposition}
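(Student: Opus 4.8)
The plan is to mimic the proof of Proposition~\ref{prop:low-proj} with the direction of every morphism reversed; conceptually this is legitimate because $\fC^{\op}$ is an upwards category and, by Proposition~\ref{prop:proj-dual}, a projective $\fC$-module gives rise under $(-)^{\vee}$ to an injective $\fC^{\op}$-module, so the assertion is the ``opposite'' of the injective statement for upwards categories. I will argue directly on $\fC$. Let $M = P(x_0)$, regarded as an $\End_{\fC}(x_0)$-module; let $\fX$ be the full subcategory of $\fC$ spanned by $x_0$, and $\iota \colon \fX \to \fC$ the inclusion, so $\fX$-modules are the same as $\End_{\fC}(x_0)$-modules.

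First I would check that $M = \iota^* P$ is a projective $\End_{\fC}(x_0)$-module, exactly as in Proposition~\ref{prop:low-proj}: since $\fC$ is downwards and $x_0$ is minimal in the support of $P$, any principal projective meeting $x_0$ in a presentation of $P$ may be taken to be $\bP_{\fC,x_0}$, whose restriction to $\fX$ is free. Thus $Q = \iota_! M$ is a projective $\fC$-module, and the counit $Q = \iota_! \iota^* P \to P$, which is generated in degree $x_0$ and restricts there to an isomorphism $Q(x_0) \cong P(x_0)$, has image exactly the submodule $P'$ generated by $P(x_0)$. Moreover, since $\fC$ is downwards and $x_0$ is minimal in the support, every non-identity morphism out of $x_0$ acts by $0$ on $P$ (its target lies strictly below $x_0$, hence off the support), so $P'$ coincides with the $\fC$-module $\underline{M}$ supported at $x_0$ with value $M$. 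Both $P$ and $Q$ admit surjections onto $\underline{M}$ restricting to the identity on $M$ in degree $x_0$ — the map $Q \to \underline{M}$ is formal. Granting the surjection $P \to \underline{M}$, projectivity of $P$ produces a lift $P \to Q$ of $P \to \underline{M}$ along $Q \to \underline{M}$; the composite $Q \to P \to Q$ is then the identity in degree $x_0$, hence, since $Q = \iota_! M$ is generated in degree $x_0$, it is the identity. Therefore $Q \to P$ is a split monomorphism with image $P'$, so $P' \cong Q$ is projective and $P/P' \cong \operatorname{coker}(Q \to P)$ is a summand of $P$, hence projective.

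The step I expect to be the main obstacle is the construction of the surjection $P \to \underline{M}$, equivalently the verification that collapsing the degree-$x_0$ part of $P$ respects the $\fC$-action; this comes down to showing $P(\alpha) = 0$ for every non-isomorphism $\alpha \colon y \to x_0$. In Proposition~\ref{prop:low-proj} the analogous point (``$M$ is a quotient of $P$'') is immediate, since there the source $y$ of such a morphism lies strictly below $x_0$ and hence outside the support. Here the downwards condition instead forces $y$ to lie strictly \emph{above} $x_0$, where $P$ can be non-zero, so the vanishing of $P(\alpha)$ cannot be read off from the support and must be deduced from projectivity of $P$ together with the minimality of $x_0$. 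This is the crux of the argument; everything else is a routine transcription of Proposition~\ref{prop:low-proj}.
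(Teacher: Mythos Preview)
Your identified obstacle is not merely the crux---it is fatal, because the proposition as stated is false. Take $\fC$ to be the $\bk$-linearization of the two-object category with a single non-identity morphism $\alpha \colon 1 \to 0$; this is downwards for the order $0<1$. Let $P=\bP_{\fC,1}$, so $P(0)=P(1)=\bk$ and $P(\alpha)$ is an isomorphism. The minimal element of the support is $x_0=0$. Since $\Hom_\fC(0,1)=0$, the submodule $P'$ generated by $P(0)$ has $P'(0)=\bk$ and $P'(1)=0$, so $P/P'$ is the simple module concentrated at~$1$. The sequence $0\to P'\to P\to P/P'\to 0$ cannot split: $\alpha_*$ is an isomorphism on $P$ but vanishes on any module of the shape $P'\oplus P/P'$. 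Hence $P/P'$ is not projective. Notice that precisely the map you flagged, $P(\alpha)\colon P(1)\to P(x_0)$, is nonzero here, and this is exactly what obstructs the quotient $P\to\underline{M}$.

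The statement is almost certainly a slip: ``minimal'' should read ``maximal'' (and symmetrically in the companion injective proposition for downwards categories, ``maximal'' should read ``minimal''). In a downwards category the principal projective $\bP_{\fC,x}$ has $x$ as the \emph{maximal} element of its support, so that is the natural hypothesis. With $x_0$ maximal, any nonzero $\alpha\colon y\to x_0$ has $y\ge x_0$, hence $y$ lies off the support of $P$ and $P(\alpha)=0$; the quotient $P\to\underline{M}$ then exists and your splitting argument goes through verbatim. (Your side remark that a principal projective meeting $x_0$ ``may be taken to be $\bP_{\fC,x_0}$'' is likewise false when $x_0$ is minimal---in a downwards category $\bP_{\fC,y}(x_0)=\Hom_\fC(y,x_0)$ is nonzero for every $y\ge x_0$---but becomes correct once $x_0$ is maximal.) The paper gives no proof beyond the hint that the opposite of a downwards category is upwards; that reduction matches the corrected statement, not the one printed.
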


\begin{proposition} \label{prop:min-res}
  Let $\fC$ be an upwards category such that $\End_\fC(x)$ is semi-simple for all objects $x$ and such that $|\fC|$ contains no infinite decreasing sequences. If $M$ is a $\fC$-module, then it admits a minimal projective resolution $P_\bullet \to M \to 0$, i.e., for every simple $\fC$-module $S$, the differentials of $\Hom(P_\bullet,S)$ are identically $0$.

  In particular, if $\Ext^1_\fC(M,S)=0$ for all simple $\fC$-modules $S$, then $M$ is projective.
\end{proposition}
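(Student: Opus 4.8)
The plan is to construct the resolution one step at a time by iterating a notion of \emph{projective cover}, from which minimality will be automatic; the first ingredient is a radical adapted to this situation. For a $\fC$-module $M$ and an object $x$, let $\mathrm{rad}_x(M) \subseteq M(x)$ be the sum of the images of all maps $\alpha_* \colon M(y) \to M(x)$, taken over objects $y$ not isomorphic to $x$ and morphisms $\alpha \in \Hom_\fC(y,x)$; since $\fC$ is upwards this only involves $y < x$, and $\mathrm{rad}_x(M)$ is an $\End_\fC(x)$-submodule of $M(x)$. Because $\End_\fC(x)$ is semisimple, $M(x)$ is a semisimple $\End_\fC(x)$-module, so $\mathrm{rad}_x(M)$ is a direct summand; I fix a complement together with the resulting splitting $M(x)/\mathrm{rad}_x(M) \xrightarrow{\ \sim\ } C_x \hookrightarrow M(x)$. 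Let $\mathrm{rad}(M) \subseteq M$ be the subfunctor generated by the $\mathrm{rad}_x(M)$. A short computation with the functoriality shows $(\mathrm{rad}(M))(x) = \mathrm{rad}_x(M)$ and that all transition maps of $M/\mathrm{rad}(M)$ between non-isomorphic objects vanish, so $M/\mathrm{rad}(M)$ is semisimple; using that $|\fC|$ has no infinite decreasing sequence (pick a minimal element of the support) one sees $M/\mathrm{rad}(M) \ne 0$ whenever $M \ne 0$, and in fact $M/\mathrm{rad}(M)$ is the maximal semisimple quotient of $M$, so $\mathrm{rad}(M)$ is the intersection of the kernels of all maps from $M$ to simple $\fC$-modules.

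Second, I would build a projective cover of an arbitrary $M$. Let $\iota_x \colon \fX_x \to \fC$ be the inclusion of the full subcategory on $x$, so that $\iota_x^*$ is ``evaluate at $x$'' and $\iota_x^* M = M(x)$ as an $\End_\fC(x)$-module. Put $P_0 = \bigoplus_x \iota_{x,!}(M(x)/\mathrm{rad}_x(M))$, the sum over isomorphism classes in a skeleton of $\fC$. Each $M(x)/\mathrm{rad}_x(M)$ is projective over the semisimple ring $\End_\fC(x)$, so each summand, hence $P_0$, is projective by Proposition~\ref{prop:kan-proj}. The splittings $M(x)/\mathrm{rad}_x(M) \to M(x)$ together with the $(\iota_{x,!}, \iota_x^*)$-adjunction produce a map $\pi_0 \colon P_0 \to M$, and I claim it is a projective cover: surjective with $\ker\pi_0 \subseteq \mathrm{rad}(P_0)$. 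Surjectivity: the image of $\pi_0$ contains every $C_x$, and $M$ is generated by the $C_x$ --- if not, the submodule $N$ they generate is proper, $M/N$ has a minimal element $x$ in its support, but then $N(x) \supseteq C_x$ and, since $N(y)=M(y)$ for all $y<x$, also $N(x) \supseteq \mathrm{rad}_x(M)$, forcing $N(x)=M(x)$, a contradiction. For the radical statement, a bookkeeping computation gives $\mathrm{hd}(\iota_{x,!} W) \cong \widehat{W}$, the $\fC$-module equal to $W$ at $x$ and $0$ elsewhere, so that $\mathrm{hd}(P_0) \cong \bigoplus_x \widehat{M(x)/\mathrm{rad}_x(M)} \cong \mathrm{hd}(M)$; under this identification the composite $P_0 \xrightarrow{\pi_0} M \to \mathrm{hd}(M)$ is exactly the quotient map $P_0 \to \mathrm{hd}(P_0)$, so $\ker\pi_0$ dies in $\mathrm{hd}(P_0)$, i.e.\ $\ker\pi_0 \subseteq \mathrm{rad}(P_0)$.

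Third, I would iterate. Set $K_{-1} = M$, and given $K_{i-1}$ take a projective cover $\pi_i \colon P_i \to K_{i-1}$ and put $K_i = \ker\pi_i$; composing $P_i \twoheadrightarrow K_{i-1} \hookrightarrow P_{i-1}$ defines differentials $d_i$ and a projective resolution $\cdots \to P_1 \xrightarrow{d_1} P_0 \xrightarrow{\pi_0} M \to 0$. It is minimal because $\operatorname{im}(d_i) = K_{i-1} \subseteq \mathrm{rad}(P_{i-1})$, so precomposing with $d_i$ annihilates every map $P_{i-1} \to S$ with $S$ simple; thus the differentials of $\Hom_\fC(P_\bullet, S)$ all vanish. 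For the final assertion, minimality gives $\Ext^1_\fC(M,S) \cong \Hom_\fC(P_1, S)$ for every simple $S$, so if all these vanish then $\mathrm{hd}(P_1) = 0$, hence $P_1 = 0$ (a nonzero $\fC$-module has a nonzero semisimple quotient), and therefore $M \cong \operatorname{coker}(d_1) = P_0$ is projective.

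I expect the crux to be the projective cover step --- in particular, verifying $\ker\pi_0 \subseteq \mathrm{rad}(P_0)$ --- since that is where all three hypotheses genuinely enter: semisimplicity of the $\End_\fC(x)$ lets $\mathrm{rad}_x(M)$ split off and makes the summands of $P_0$ projective; the upwards condition controls supports and transition maps; and well-foundedness of $|\fC|$ is used both in the surjectivity argument and to keep the relevant radicals proper, so that heads detect nonvanishing. Everything else is formal manipulation with the Kan extensions $\iota_{x,!}$ and the adjunctions developed in \S\ref{sec:rep-cat}.
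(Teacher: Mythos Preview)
Your proof is correct and follows essentially the same approach as the paper's: both define $N(x)=M(x)/\mathrm{rad}_x(M)$, set $P_0=\bigoplus_x \iota_{x,!}N(x)$, prove surjectivity of $P_0\to M$ by picking a minimal object in the support of the cokernel, and then iterate. Your write-up is more explicit than the paper's---in particular you spell out the radical/head formalism and verify $\ker\pi_0\subseteq\mathrm{rad}(P_0)$ carefully, whereas the paper compresses this into one sentence (and in fact the phrase ``$\Hom(M,S)\to\Hom(P,S)$ is $0$'' in the paper's proof appears to be a typo for ``is an isomorphism'').
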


\begin{proof}
  It suffices to show that there is a map from a projective module $P \to M$ such that $\Hom(M,S)\to\Hom(P,S)$ is 0 for every simple $\fC$-module $S$. Let $\fC'$ be the subcategory of $\fC$ with all non-endomorphisms removed and let $i \colon \fC' \to \fC$ be the inclusion. For each object $x$, let $N(x)$ be the quotient of $M(x)$ by the subspace generated by the images of $M(y) \to M(x)$ for morphisms $y \to x$ with $y<x$. Then $\bigoplus_x N(x)$ is a projective $\fC'$-module by semi-simplicity, and we take $P = \bigoplus_x i_! N(x)$. We get a map $P \to M$ which is surjective: if $m \in M(x)$, then the set of $y$ such that there is a morphism $y \to x$ so that $m$ is in the span of the images of $M(y) \to M(x)$ has minimal elements, and so $m$ is in the image of the sum of the corresponding projective modules.

  For the last statement, if $P_\bullet \to M \to 0$ is a minimal projective resolution, then $\Ext^1_\fC(M,S)=0$ for all simple $\fC$-modules $S$ implies that $P_1=0$, and hence $P_0 \to M$ is an isomorphism.
\end{proof}

\subsection{Base change} \label{ss:base-change}

Consider a diagram of $\bk$-linear categories
\begin{displaymath}
\xymatrix{
\fA \ar[r]^{g'} \ar[d]_{f'} & \fB \ar[d]^f \\
\fC \ar[r]^g & \fD }
\end{displaymath}
that is commutative up to isomorphism.

\begin{proposition} \label{prop:bc}
We have the following:
\begin{enumerate}
\item For any $\fB$-module $M$, there is a natural map (the \textbf{base change map}) of $\fC$-modules
\begin{displaymath}
\phi_M \colon f'_! (g')^* M \to g^* f_! M.
\end{displaymath}
\item For any $b \in \fB$ and $c \in \fC$ there is a natural map
\begin{displaymath}
\psi_{b,c} \colon (f'^\op)^* \bP_{\fC^{\op},c} \odot_\fA (g')^* \bP_{\fB,b}  \to \Hom_{\fD}(f(b), g(c))
\end{displaymath}
\item $\phi_M$ is surjective for all $M$ if and only if $\psi_{b,c}$ is surjective for all $b$ and $c$.
\item $\phi_M$ is an isomorphism for all $M$ if and only if $\psi_{b,c}$ is an isomorphism for all $b$ and $c$.
\end{enumerate}
\end{proposition}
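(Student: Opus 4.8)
The plan is: construct $\phi_M$ by adjunction; observe that $\psi_{b,c}$ is nothing but $\phi_{\bP_{\fB,b}}$ evaluated at $c$, after the identifications supplied by Propositions~\ref{prop:kan-proj} and~\ref{prop:pushfwd-formula}; and then deduce (c) and (d) from the fact that the two functors involved are right exact and cocontinuous.

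\textbf{Part (a).} Fix a natural isomorphism $\theta\colon f\circ g'\xrightarrow{\ \sim\ }g\circ f'$ witnessing the commutativity of the square. Since $f'_!$ is left adjoint to $f'^*$, giving a map $f'_!(g')^*M\to g^*f_!M$ is the same as giving a map $(g')^*M\to f'^*g^*f_!M$. Now $f'^*g^*=(g\circ f')^*$, and $\theta$ induces an isomorphism $(g\circ f')^*\cong(f\circ g')^*=(g')^*f^*$, so it suffices to produce a map $(g')^*M\to(g')^*f^*f_!M$; for this we apply $(g')^*$ to the unit $M\to f^*f_!M$ of the $(f_!,f^*)$-adjunction. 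This defines $\phi_M$, manifestly natural in $M$.

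\textbf{Part (b).} By Proposition~\ref{prop:pushfwd-formula} applied to $f'\colon\fA\to\fC$, for any $\fA$-module $N$ and any $c\in\fC$ we have $(f'_!N)(c)=(f'^{\op})^*\bP_{\fC^{\op},c}\odot_\fA N$. Taking $N=(g')^*\bP_{\fB,b}$ identifies the source of $\psi_{b,c}$ with $\bigl(f'_!(g')^*\bP_{\fB,b}\bigr)(c)$. On the other hand $f_!\bP_{\fB,b}=\bP_{\fD,f(b)}$ by Proposition~\ref{prop:kan-proj}, so $\bigl(g^*f_!\bP_{\fB,b}\bigr)(c)=\bP_{\fD,f(b)}(g(c))=\Hom_\fD(f(b),g(c))$, the target of $\psi_{b,c}$. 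We define $\psi_{b,c}$ to be $\phi_{\bP_{\fB,b}}$ evaluated at $c$ under these identifications. Unwinding the adjunction, $\psi_{b,c}$ carries the class of $\alpha\otimes\beta$, with $\alpha\colon f'(a)\to c$ in $\fC$ and $\beta\colon b\to g'(a)$ in $\fB$, to $g(\alpha)\circ\theta_a\circ f(\beta)$; naturality of $\theta$ in $a$ is precisely what makes this well defined on $\odot_\fA$.

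\textbf{Parts (c) and (d).} If $\phi_M$ is surjective (resp.\ an isomorphism) for all $M$, then in particular $\phi_{\bP_{\fB,b}}$ is, and evaluation at $c$ preserves surjectivity and isomorphy, so $\psi_{b,c}$ is surjective (resp.\ an isomorphism). Conversely, assume each $\psi_{b,c}$ is surjective (resp.\ an isomorphism). Since surjectivity (resp.\ being an isomorphism) of a morphism of $\fC$-modules is checked objectwise, $\phi_{\bP_{\fB,b}}$ has this property for every $b$; since the functors $M\mapsto f'_!(g')^*M$ and $M\mapsto g^*f_!M$ commute with arbitrary direct sums (Kan extensions are left adjoints, and pullback functors are cocontinuous), $\phi_P$ has this property for every direct sum $P$ of principal projectives of $\fB$. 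Both functors are also right exact. Given an arbitrary $\fB$-module $M$, choose a presentation $P_1\to P_0\to M\to0$ with $P_0,P_1$ direct sums of principal projectives; applying the two functors gives a commutative ladder with exact rows $F_\bullet$ over $G_\bullet$ in which $\phi_{P_0}$ and $\phi_{P_1}$ are surjective (resp.\ isomorphisms). In the surjective case, the composite $f'_!(g')^*P_0\to g^*f_!M$ is surjective and factors through $\phi_M$, so $\phi_M$ is surjective. In the isomorphism case, $f'_!(g')^*M$ and $g^*f_!M$ are the cokernels of the maps over $P_1\to P_0$, and an isomorphism between those maps induces an isomorphism on cokernels, namely $\phi_M$.

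\textbf{Main point of care.} Nothing here is deep; the only step requiring real attention is the identification in (b) — that the adjunction-defined $\phi_{\bP_{\fB,b}}$, transported through Propositions~\ref{prop:kan-proj} and~\ref{prop:pushfwd-formula}, really is the map $\psi_{b,c}$ one writes down by hand on generators. In practice this is finessed by taking the transported map as the definition of $\psi_{b,c}$, after which (c) and (d) are essentially formal consequences of right exactness.
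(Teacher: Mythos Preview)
Your proof is correct and follows essentially the same approach as the paper. The only cosmetic difference is in part~(b): where you invoke Proposition~\ref{prop:pushfwd-formula} to identify $(f'_!(g')^*\bP_{\fB,b})(c)$ with the $\odot_\fA$-tensor product, the paper instead spells out the equivalent chain via Propositions~\ref{prop:tensor-adjunction} and~\ref{prop:tensor-eval}; but since Proposition~\ref{prop:pushfwd-formula} is itself proved from those two propositions, the arguments are the same up to packaging.
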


\begin{proof}
  Consider the following sequence of maps (the equalities follow by adjunctions and using $gf' = fg'$)
  \begin{align*}
    \hom_\fD(f_!M,f_!M) &= \hom_\fB(M,f^*f_!M)\\
                        &\to \hom_\fA(g'^*M, g'^* f^* f_! M)\\
                        &=\hom_\fA(g'^*M, f'^*g^*f_!M)\\
                        &= \hom_\fC(f'_! g'^*M, g^*f_!M)
  \end{align*}
  Taking the image of the identity on $f_!M$ gives $\phi_M$.

  We define $\psi_{b,c}$ as the following composition
  \begin{align*}
    (f'^\op)^*\bP_{\fC^{\op},c} \odot_\fA (g')^* \bP_{\fB,b} &= \bP_{\fC^\op,c} \odot_\fC f'_! (g')^* \bP_{\fB,b}\\
                                                       &\to \bP_{\fC^\op,c} \odot_\fC g^* f_! \bP_{\fB,b}\\
                                                       &= \bP_{\fC^\op,c} \odot_\fC g^* \bP_{\fD,f(b)}\\
    &= g^* \bP_{\fD,f(b)}(c)\\
    &= \hom_\fD(f(b), g(c))
  \end{align*}
  where the first equality is Proposition~\ref{prop:tensor-adjunction}, the next map uses functoriality of tensor products and the map $\phi_{\bP_{\fB,b}}$ from (a), the second equality is Proposition~\ref{prop:kan-proj}, the third equality is Proposition~\ref{prop:tensor-eval}, and the last equality follows from definitions.

From the constructions, we see that $\psi_{b,c}$ is surjective, respectively an isomorphism, if and only if the same is true for $1_{\bP_{\fC^\op,c}} \otimes \phi_{\bP_{\fB,b}}$. For any map $f \colon M \to N$ of $\fC$-modules, tensoring with $\bP_{\fC^\op,c}$ gives $f(c) \colon M(c) \to N(c)$ by Proposition~\ref{prop:tensor-eval}. Hence $f$ is surjective, respectively an isomorphism, if and only if the same is true after tensoring with $\bP_{\fC^\op,c}$ for all objects $c$. In particular, $\psi_{b,c}$ is surjective, respectively an isomorphism, for all $b \in \fB$ and $c \in \fC$ if and only if $\phi_{\bP_{\fB,b}}$ is surjective, respectively an isomorphism, for all $b \in \fB$. Now consider an arbitrary $\fB$-module $M$ and pick a presentation $P_1 \to P_0 \to M \to 0$ where each $P_i$ is a sum of principal projectives. By exactness of pullback and right-exactness of left Kan extensions, we get
  \[
    \xymatrix{f'_! (g')^* P_1 \ar[r] \ar[d] & f'_! (g')^* P_0 \ar[r] \ar[d] & f'_! (g')^* M \ar[r] \ar[d] & 0 \\
      g^* f_! P_1 \ar[r] &       g^* f_! P_0 \ar[r] &       g^* f_! M \ar[r] &       0 } 
  \]
  Each $P_i$ is a direct sum of principal projectives, so by a diagram chase, we see that $\phi_P$ is surjective, respectively an isomorphism, for all principal projective modules $P$ if and only if the same is true for $\phi_M$ for all $\fB$-modules $M$.
\end{proof}

We also have a version using $f_*$. The proof is similar to the one above, so we omit it.

\begin{proposition} \label{prop:bc-alt}
We have the following:
\begin{enumerate}
\item For any $\fB$-module $M$, there is a natural map (the \textbf{base change map}) of $\fC$-modules
\begin{displaymath}
\phi'_M \colon g^* f_* M \to f'_* (g')^* M .
\end{displaymath}
\item For any $b \in \fB$ and $c \in \fC$ there is a natural map
\begin{displaymath}
\psi'_{b,c} \colon \hom_\fD(g(c), f(b))^* \to \hom_\fA((f')^*\bP_{\fC,c}, (g')^* \bI_{\fB,b})
\end{displaymath}
\item $\phi'_M$ is injective for all $M$ if and only if $\psi'_{b,c}$ is injective for all $b$ and $c$.
\item $\phi'_M$ is an isomorphism for all $M$ if and only if $\psi'_{b,c}$ is an isomorphism for all $b$ and $c$.
\end{enumerate}
\end{proposition}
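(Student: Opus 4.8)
The plan is to mimic the proof of Proposition~\ref{prop:bc}, dualizing projective presentations to injective copresentations throughout. For part~(a), I construct $\phi'_M$ as the image of the identity $1_{f_*M}$ under the chain of maps
\begin{align*}
\Hom_\fD(f_*M, f_*M) &= \Hom_\fB(f^*f_*M, M) \\
&\to \Hom_\fA((g')^*f^*f_*M, (g')^*M) \\
&= \Hom_\fA((f')^*g^*f_*M, (g')^*M) \\
&= \Hom_\fC(g^*f_*M, f'_*(g')^*M),
\end{align*}
where the first and last equalities are the adjunctions $(f^*,f_*)$ and $((f')^*,f'_*)$, the middle arrow applies the functor $(g')^*$, and the penultimate equality uses $(f')^*g^* = (gf')^* \cong (fg')^* = (g')^*f^*$, the middle isomorphism coming from $gf' \cong fg'$. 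Equivalently, $\phi'_M$ is the transpose along $((f')^*,f'_*)$ of the map $(g')^*(\epsilon_M) \colon (g')^*f^*f_*M \to (g')^*M$, where $\epsilon$ is the counit of $(f^*,f_*)$; naturality of $\phi'$ in $M$ is clear from this description.

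For part~(b), I define $\psi'_{b,c}$ to be $\phi'_{\bI_{\fB,b}}$ evaluated at the object $c$, after two identifications. On the source, $f_*\bI_{\fB,b} = \bI_{\fD,f(b)}$ by Proposition~\ref{prop:kan-proj}, so $(g^*f_*\bI_{\fB,b})(c) = \bI_{\fD,f(b)}(g(c)) = \Hom_\fD(g(c),f(b))^*$. On the target, $(f'_*(g')^*\bI_{\fB,b})(c) = \Hom_\fA((f')^*\bP_{\fC,c},(g')^*\bI_{\fB,b})$ by Proposition~\ref{prop:pushfwd-formula}. Thus $(\phi'_{\bI_{\fB,b}})(c)$ is a map of exactly the asserted form.

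For parts~(c) and~(d), I proceed in two steps. Since kernels and cokernels of maps of $\fC$-modules are computed objectwise, $\psi'_{b,c}$ is injective (resp.\ an isomorphism) for all $b,c$ if and only if $\phi'_{\bI_{\fB,b}}$ is injective (resp.\ an isomorphism) for all $b$; the ``only if'' directions of (c) and (d) then follow by specializing $M = \bI_{\fB,b}$ and evaluating at $c$. For the ``if'' directions, I choose an injective copresentation $0 \to M \to I_0 \to I_1$ in which $I_0$ and $I_1$ are products of principal injectives of $\fB$ (every module embeds in such a product, since functionals on $M(b)$ correspond to maps $M \to \bI_{\fB,b}$). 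The functors $g^*f_*$ and $f'_*(g')^*$ are left exact ($g^*$ and $(g')^*$ are exact, $f_*$ and $f'_*$ are right adjoints) and preserve products ($g^*,(g')^*$ objectwise, $f_*,f'_*$ as right adjoints), so applying them to the copresentation yields a commutative ladder with exact rows and vertical maps $\phi'_M$, $\phi'_{I_0}$, $\phi'_{I_1}$; naturality of $\phi'$ identifies $\phi'_{I_0}$ and $\phi'_{I_1}$ with products of maps $\phi'_{\bI_{\fB,b}}$. A product of injections is injective and a product of isomorphisms is an isomorphism, so $\phi'_{I_0}$ and $\phi'_{I_1}$ inherit the property in question, and a diagram chase then transfers it to $\phi'_M$: injectivity of $\phi'_{I_0}$ alone forces injectivity of $\phi'_M$, and for part~(d) one additionally uses $\phi'_{I_1}$ exactly as in the five lemma.

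I do not anticipate a genuine obstacle, since the argument runs parallel to that of Proposition~\ref{prop:bc}. The only points needing care are dualizing consistently --- injective copresentations and left exactness in place of projective presentations and right exactness --- and checking that $g^*f_*$ and $f'_*(g')^*$ commute with the (possibly infinite) products occurring in an injective copresentation, which is exactly what lets the diagram chase reduce everything to principal injectives.
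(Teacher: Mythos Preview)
Your proposal is correct and follows exactly the approach the paper intends: the paper omits the proof entirely, stating only that ``the proof is similar to the one above,'' and your argument is precisely the dualization of the proof of Proposition~\ref{prop:bc}, replacing projective presentations by injective copresentations, direct sums by products, right exactness by left exactness, and $(-)_!$ by $(-)_*$ throughout. The one point worth making explicit (which you do) is that $g^*$ and $(g')^*$ preserve products because they are computed pointwise, so the reduction to principal injectives goes through.
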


\subsection{Tensor products} \label{ss:tensor}

Suppose that $\fC$ has a monoidal operation $\amalg$; we use this notation since in the examples of interest to us the monoidal structure is given by disjoint union. We let $\emptyset$ denote the unit object for $\amalg$. Let $M$ and $N$ be $\fC$-modules. We define $M \otimes_{\fC} N$ to be the $\fC$-module $\amalg_!(M \boxtimes N)$. Here $M \boxtimes N$ denotes the $(\fC \times \fC)$-module given by $(x,y) \mapsto M(x) \otimes N(y)$, and $\amalg_!$ is the pushforward along the monoidal operation $\amalg \colon \fC \times \fC \to \fC$. We also define $M \otimes_{\fC,*} N$ by $\amalg_*(M \boxtimes N)$, though this will be less used. When there is no danger of ambiguity, we write $\otimes$ in place of $\otimes_{\fC}$. These tensor products are sometimes called {\bf Day convolution}.

\begin{proposition} \label{prop:tensor-prod} 
We have the following:
\begin{enumerate}
\item The tensor product $\otimes$ is cocontinuous (and thus right exact) in each variable.
\item The tensor product $\otimes$ naturally gives $\Mod_{\fC}$ the structure of a monoidal category, with unit object $\bP_{\emptyset}$.
\item If $\amalg$ is a symmetric monoidal operation then so is $\otimes$.
\item For objects $x,y \in \fC$, we have a natural isomorphism $\bP_x \otimes \bP_y \cong \bP_{x \amalg y}$.
\item If $M$ and $N$ are finitely generated then so is $M \otimes N$.
\end{enumerate}
Analogous dual statements hold for $\otimes_*$.
\end{proposition}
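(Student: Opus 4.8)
The plan is to prove the parts in the order (a), (d), (e), (b), (c), since the monoidal structure of (b) is most cleanly obtained from the behavior of $\otimes$ on principal projectives together with the universal property of $\Mod_{\fC}$. For (a), the functor $N \mapsto M \boxtimes N$ from $\Mod_{\fC}$ to $\Mod_{\fC \times \fC}$ is cocontinuous, since colimits of modules are computed pointwise and $\otimes_{\bk}$ on $\Vec$ is cocontinuous, while $\amalg_!$ is cocontinuous as a left adjoint (to $\amalg^*$). Hence $M \otimes_{\fC} (-) = \amalg_!(M \boxtimes (-))$ is cocontinuous, and symmetrically in the first variable; a cocontinuous functor preserves cokernels, so $\otimes$ is right exact in each variable.

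For (d), one checks directly that $\bP_x \boxtimes \bP_y = \bP_{\fC \times \fC, (x,y)}$: evaluating at $(z,w)$, both sides equal $\Hom_{\fC}(x,z) \otimes_{\bk} \Hom_{\fC}(y,w)$. Applying Proposition~\ref{prop:kan-proj} to $\amalg \colon \fC \times \fC \to \fC$ then gives $\bP_x \otimes \bP_y = \amalg_!(\bP_{\fC \times \fC,(x,y)}) = \bP_{\fC, x \amalg y}$; in particular $\bP_{\emptyset} \otimes \bP_x \cong \bP_{\emptyset \amalg x} \cong \bP_x$, and likewise on the other side. For (e), choose surjections onto $M$ and onto $N$ from finite direct sums of principal projectives; by right exactness of $\otimes$ in each variable (part (a)) we obtain a surjection from $\bigoplus_{i,j} \bP_{x_i} \otimes \bP_{y_j}$ onto $M \otimes N$, and by (d) this source is a finite direct sum of principal projectives, so $M \otimes N$ is finitely generated. (Alternatively, $M \boxtimes N$ is a finitely generated $(\fC \times \fC)$-module and $\amalg_!$ preserves finite generation by Proposition~\ref{prop:!-finite}.)

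For (b) and (c), I would exploit the fact that $\Mod_{\fC}$ is the free $\bk$-linear cocompletion of $\fC^{\op}$, with the principal projectives $\bP_x$ forming (the image of) the Yoneda embedding: a functor of one or more variables that is cocontinuous in each variable is then determined by, and freely generated from, its values on tuples of principal projectives, and likewise for natural transformations between such functors. Since (d) identifies $\otimes$ on principal projectives with $\amalg$ on $\fC^{\op}$, one transports the associativity, unit, and (when $\amalg$ is symmetric) braiding isomorphisms of $(\fC,\amalg)$ --- namely $\bP_{(x \amalg y) \amalg z} \cong \bP_{x \amalg (y \amalg z)}$, $\bP_{\emptyset \amalg x} \cong \bP_x$, and $\bP_{x \amalg y} \cong \bP_{y \amalg x}$ --- uniquely to $\Mod_{\fC}$; the coherence axioms (pentagon, triangle, hexagons), being equalities of natural transformations between functors cocontinuous in each variable, may be checked on principal projectives, where they reduce to those of $(\fC,\amalg)$. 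This is exactly the classical construction of Day convolution, and one may instead just cite it. The dual statements for $\otimes_*$ are obtained by replacing $\bP$ with $\bI$, $\amalg_!$ with $\amalg_*$, cokernels with kernels, and colimits with limits throughout, using the $f_*$-half of Proposition~\ref{prop:kan-proj}.

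The \textbf{main obstacle} is part (b). If one does not wish to invoke the theory of Day convolution as a black box, the genuinely non-formal point is justifying carefully that a natural transformation between two- or three-variable functors into $\Mod_{\fC}$ that are cocontinuous in each variable is determined by its restriction to tuples of principal projectives, and extends uniquely from any prescribed such restriction; this is a density / Kan-extension argument that requires writing each module as a cokernel of a map between direct sums of principal projectives and checking well-definedness and naturality. Granting this, (b) reduces to the coherence axioms of $(\fC,\amalg)$, and the remaining parts (a), (c), (d), (e) follow directly from the Kan-extension formalism already in place (Propositions~\ref{prop:kan-proj} and~\ref{prop:!-finite}).
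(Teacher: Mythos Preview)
Your arguments for (a), (d), and (e) are essentially the same as the paper's (your (d) uses Proposition~\ref{prop:kan-proj} directly, while the paper phrases it as ``represents the same functor,'' but these are the same observation).

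For (b) and (c) you take a genuinely different route. You propose to build the associator, unitor, and symmetry by prescribing them on tuples of principal projectives and then extending by cocontinuity/density, checking coherence on projectives. The paper instead constructs these structural isomorphisms \emph{directly} from the functoriality of left Kan extension: since $(M \otimes N) \otimes P = \amalg_!(\amalg \times 1)_!(M \boxtimes N \boxtimes P)$ and $M \otimes (N \otimes P) = \amalg_!(1 \times \amalg)_!(M \boxtimes N \boxtimes P)$, the associator for $\amalg$ (a natural isomorphism $\amalg \circ (\amalg \times 1) \cong \amalg \circ (1 \times \amalg)$) induces an isomorphism of the composite pushforwards, defined uniformly for all $M,N,P$ at once; the symmetry is handled the same way via $\amalg \cong \amalg \circ \tau$. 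The unit isomorphism $\bP_{\emptyset} \otimes M \cong M$ is obtained by a short representable-functor computation. This bypasses entirely the density/extension argument you flag as the ``main obstacle.'' Your approach is correct and is exactly how Day convolution is usually packaged, but the paper's is shorter and avoids the bookkeeping of extending natural transformations off generators.
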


\begin{proof}
  (a) This follows since $\boxtimes$ is bi-cocontinuous and $\amalg_!$ is cocontinuous, being a left adjoint.

  (b)   Since $\amalg$ is monoidal, we have a natural isomorphism $\amalg \circ (\amalg \times 1) \to \amalg \circ (1 \times \amalg)$ as functors $\fC\times \fC \times \fC \to \fC$.  Let $M,N,P$ be $\fC$-modules. Then we have an associator defined by
  \[
    (M \otimes N) \otimes P = \amalg_!(\amalg \times 1)_! (M \boxtimes N \boxtimes P) \to \amalg_!(1 \times \amalg)_!(M \boxtimes N \boxtimes P) = M \otimes (N \otimes P).
  \]
  The pentagon axiom transfers from its validity for $\amalg$.
  
  Let $M,N$ be $\fC$-modules. Then we have
  \[
    \hom_\fC(\bP_\emptyset \otimes M, N) = \hom_{\fC \times \fC}(\bP_\emptyset \boxtimes M, \amalg^* N).
  \]
  Hence a choice of map $\bP_\emptyset \otimes M \to N$ is a choice of maps $\bP_\emptyset(x) \otimes M(y) \to N(x \amalg y)$ for all $x,y \in \fC$ compatible with the action of morphisms. In particular, $x=\emptyset$ determines the case of arbitrary $x$, and so it is determined by maps $M(y) \to N(y)$ compatible with morphisms, i.e., a choice of map $M \to N$. Hence $\bP_\emptyset \otimes M \cong M$ since they represent the same functor. We leave verification of the remaining axioms to the reader.

  (c) If $\amalg$ has a symmetry, i.e., a natural isomorphism $\amalg \to \amalg \circ \tau$ where $\tau$ is the switching map, then we use it to define a symmetry for $\otimes$:
  \[
    M\otimes N = \amalg_!(M \boxtimes N) \to \amalg_!\tau_!(M \boxtimes N) = \amalg_!(N \boxtimes M) = N \otimes M.
  \]

  (d) The argument is similar to the one used in (b): $\bP_x \otimes \bP_y$ represents the same functor as $\bP_{x \amalg y}$.

  (e) Given surjections $\bigoplus_i P_{x_i} \to M$ and $\bigoplus_j P_{y_j} \to N$, we have a surjection $\bigoplus_{i,j} P_{x_i \amalg y_j} \to M\otimes N$ using (a) and (d) (all sums finite).
\end{proof}

We define $\Tor^{\fC}_{\bullet}(-, -)$ to be the derived functor of $\otimes_{\fC}$. The usual argument shows that this is balanced, i.e., one can compute $\Tor$ by using resolutions in either variable.

Since $\bP_{\emptyset}$ is the unit object for $\otimes$, we have a natural isomorphism $\bP_{\emptyset} \otimes \bP_{\emptyset} \to \bP_{\emptyset}$. We can thus regard $\bP_{\emptyset}$ as an algebra object of $\Mod_{\fC}$. We define an {\bf ideal} of $\bP_{\emptyset}$ (or of $\fC$) to be a $\fC$-submodule of $\bP_{\emptyset}$. Let $\fa$ be an ideal and let $M$ be a $\fC$-module. We define $\fa M$ to be the image of the composite map $\fa \otimes M \to \bP_{\emptyset} \otimes M \to M$, where the first map is induced by the inclusion $\fa \to \bP_{\emptyset}$ and the second is the canonical isomorphism. In particular, if $\fa$ and $\fb$ are ideals then we have a product ideal $\fa\fb$.

We now give an exactness criterion for tensor products. For simplicity, we assume that $\fC$ is the linearization of a category $\cC$ with finite $\Hom$ sets that is closed under $\amalg$. Consider the following condition on an object $x \in \fC$:
\begin{itemize}
\item[$(S_x)$] There exist maps $\{ \phi_i \colon x \to y_i \amalg z_i\}_{i \in I}$ in $\cC$ with the following property: given any map $\psi \colon x \to y \amalg z$ in $\cC$ there exists a factorization $\psi = (\alpha \amalg \beta) \circ \phi_i$ for some $i$ and morphisms $\alpha \colon y_i \to y$, and $\beta \colon z_i \to z$ in $\cC$; moreover, if $\psi=(\alpha' \amalg \beta') \circ \phi_j$ is a second such factorization then $i=j$ and $(\alpha,\beta)=(\alpha',\beta') \circ \sigma$ for some $\sigma \in \Aut_{\cC}(y_i) \times \Aut_{\cC}(z_i)$ fixing $\phi_i$.
\end{itemize}

\begin{proposition} \label{prop:amalg-proj}
Suppose $(S_x)$ holds, and let $G_i \subset \Aut_{\cC \times \cC}(y_i,z_i)$ be the stabilizer of $\phi_i$. Then $\amalg^*(\bP_x) \cong \bigoplus_{i \in I} (\bP_{(y_i,z_i)})_{G_i}$, where the subscript denotes coinvariants. In particular, if $\bk$ has characteristic~$0$ then $\amalg^*(\bP_x)$ is projective.
\end{proposition}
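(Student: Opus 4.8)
The plan is to read off $\amalg^*(\bP_x)$ from its values and match it with the right-hand side, the point being that condition $(S_x)$ is designed precisely so that the values decompose as claimed. Since $\fC$ is the linearization of $\cC$, we have $(\amalg^*\bP_x)(y,z) = \bP_x(y \amalg z) = \bk[\Hom_{\cC}(x, y \amalg z)]$, and $\bP_{(y_i,z_i)}(y,z) = \Hom_{\fC \times \fC}((y_i,z_i),(y,z)) = \bk[\Hom_{\cC}(y_i,y) \times \Hom_{\cC}(z_i,z)]$. So the whole statement is a linearization of the set-level decomposition encoded by $(S_x)$, bundled into a statement about $(\fC \times \fC)$-modules.

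First I would construct a morphism of $(\fC\times\fC)$-modules $\Theta \colon \bigoplus_{i \in I} (\bP_{(y_i,z_i)})_{G_i} \to \amalg^*(\bP_x)$. For each $i$, the element $\phi_i \in \Hom_{\cC}(x, y_i \amalg z_i) = (\amalg^* \bP_x)(y_i, z_i)$ corresponds, by the Yoneda lemma (Proposition~\ref{prop:yoneda}) applied in $\fC \times \fC$, to a morphism $\theta_i \colon \bP_{(y_i,z_i)} \to \amalg^*(\bP_x)$; explicitly, on $(y,z)$ it sends $(\alpha,\beta)$ to $(\alpha \amalg \beta) \circ \phi_i$. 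The group $G_i$ acts on $\bP_{(y_i,z_i)}$ by precomposition, and $\theta_i$ is $G_i$-invariant: for $\sigma = (\sigma_1,\sigma_2) \in G_i$ one has $((\alpha\sigma_1) \amalg (\beta\sigma_2)) \circ \phi_i = (\alpha \amalg \beta) \circ ((\sigma_1 \amalg \sigma_2)\circ \phi_i) = (\alpha \amalg \beta)\circ \phi_i$, using exactly that $\sigma$ fixes $\phi_i$. Hence $\theta_i$ factors through $(\bP_{(y_i,z_i)})_{G_i}$, and summing over $i$ produces $\Theta$.

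To check $\Theta$ is an isomorphism it suffices to check this pointwise, since kernels and cokernels in $\Mod_{\fC\times\fC}$ are computed objectwise. Fix $(y,z)$. Using that the coinvariants of a permutation representation $\bk[S]$ under a group $G$ is the free vector space $\bk[S/G]$, the map $\Theta(y,z)$ is the $\bk$-linearization of the set map
\[
\coprod_{i \in I} \bigl(\Hom_{\cC}(y_i,y) \times \Hom_{\cC}(z_i,z)\bigr)/G_i \longrightarrow \Hom_{\cC}(x, y \amalg z), \qquad [(\alpha,\beta)]_i \longmapsto (\alpha \amalg \beta)\circ \phi_i.
\]
The existence part of $(S_x)$ says this is surjective, and the ``moreover'' clause says that a second factorization of the same $\psi$ forces the same index $i$ and differs by a $G_i$-translate, i.e.\ the map is injective. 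So $\Theta(y,z)$ is a bijection on bases, hence an isomorphism, and therefore $\Theta$ is an isomorphism of $(\fC\times\fC)$-modules.

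For the final assertion, note that each $G_i \subseteq \Aut_{\cC}(y_i) \times \Aut_{\cC}(z_i)$ is a finite group because $\cC$ has finite $\Hom$ sets. If $\bk$ has characteristic $0$, the averaging element $e_i = |G_i|^{-1} \sum_{\sigma \in G_i} \sigma \in \End_{\fC\times\fC}(\bP_{(y_i,z_i)})$ is an idempotent whose image is $(\bP_{(y_i,z_i)})^{G_i}$, and the natural map from invariants to coinvariants is an isomorphism in characteristic $0$; thus $(\bP_{(y_i,z_i)})_{G_i}$ is a direct summand of the projective module $\bP_{(y_i,z_i)}$, hence projective, and a direct sum of projectives is projective.

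I do not expect a genuine obstacle. The only things requiring care are the bookkeeping of the $G_i$-action (precomposition versus postcomposition, and verifying that ``$\sigma$ fixes $\phi_i$'' is exactly the condition making $\theta_i$ descend to coinvariants), together with the elementary identity $\bk[S]_G \cong \bk[S/G]$; the rest is a pointwise verification driven directly by $(S_x)$.
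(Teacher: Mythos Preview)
Your proof is correct and follows essentially the same approach as the paper: construct the Yoneda map $\bP_{(y_i,z_i)} \to \amalg^*(\bP_x)$ from $\phi_i$, observe it factors through $G_i$-coinvariants, and then check pointwise that the resulting map is the linearization of the set-level bijection guaranteed by $(S_x)$. Your write-up is in fact more detailed than the paper's on the $G_i$-invariance verification and the identification $\bk[S]_G \cong \bk[S/G]$, but the underlying argument is identical.
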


\begin{proof}
The morphism $\phi_i$ induces a morphism $\bP_{(y_i,z_i)} \to \amalg^*(\bP_x)$ that factors through $(\bP_{(y_i,z_i)})_{G_i}$ since $\phi_i$ is $G_i$-invariant. We thus have a map $f \colon \bigoplus_{i \in I} (\bP_{(y_i,z_i)})_{G_i} \to \amalg^*(\bP_x)$. At an object $(y,z) \in \fC \times \fC$, the map $f$ is the linearization of the map
\begin{displaymath}
\coprod_{i \in I} \Hom_{\cC \times \cC}((y_i,z_i),(y,z))/G_i \to \Hom_{\cC}(x, y \amalg z).
\end{displaymath}
The condition $(S_x)$ exactly ensures that the above map is a bijection, which proves that $f$ is an isomorphism. If $\bk$ has characteristic~0 then $(\bP_{(y_i,z_i)})_{G_i}$ is a summand of $\bP_{(y_i,z_i)}$, and therefore projective, and so $\amalg^*(\bP_x)$ is projective. (Note: we have assumed $\cC$ has finite $\Hom$ sets, so the group $G_i$ is finite.)
\end{proof}

\begin{proposition} \label{prop:tensor-exact}
Suppose $(S_x)$ holds for all $x$ and $\bk$ has characteristic~$0$. Then $\otimes_*$ is exact on $\Mod_{\fC}$, and $\otimes$ is exact on $\Mod_{\fC^{\op}}$
\end{proposition}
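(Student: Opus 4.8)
The plan is to reduce both statements to Propositions~\ref{prop:!-exact} and~\ref{prop:amalg-proj}. Write $m \colon \fC \times \fC \to \fC$ for the monoidal functor $\amalg$, so that by definition $M \otimes_{\fC,*} N = m_*(M \boxtimes N)$; likewise, since $\fC^{\op}$ inherits the monoidal operation, the tensor product on $\Mod_{\fC^{\op}}$ is $(m^{\op})_!(M \boxtimes N)$, where $m^{\op} \colon \fC^{\op} \times \fC^{\op} \to \fC^{\op}$ is the opposite functor and we identify $\fC^{\op} \times \fC^{\op} = (\fC \times \fC)^{\op}$. First I would note that $\boxtimes$ sends a short exact sequence in either variable to a short exact sequence of $(\fC \times \fC)$-modules: this is checked pointwise, where it is just exactness of $- \otimes_{\bk} -$. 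Hence it suffices to show that $m_*$ and $(m^{\op})_!$ are exact.

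Next, for $m_*$: by Proposition~\ref{prop:!-exact}, $m_*$ is exact if and only if $m^*$ carries projective $\fC$-modules to projective $(\fC \times \fC)$-modules. Every projective $\fC$-module is a direct summand of a direct sum of principal projectives $\bP_{\fC,x}$, and $m^*$ preserves direct sums and direct summands, so it is enough to know that $m^*(\bP_{\fC,x})$ is projective for each $x \in \fC$. This is exactly Proposition~\ref{prop:amalg-proj}, which applies since $(S_x)$ holds for all $x$ and $\bk$ has characteristic~$0$. Therefore $m_*$ is exact, and $\otimes_*$ is exact on $\Mod_{\fC}$.

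For $(m^{\op})_!$: by the $f_!$ half of Proposition~\ref{prop:!-exact}, $(m^{\op})_!$ is exact if and only if $\bigl((m^{\op})^{\op}\bigr)^* = m^*$ preserves projectives, which is precisely the condition established in the previous paragraph. Hence $(m^{\op})_!$ is exact, and $\otimes$ is exact on $\Mod_{\fC^{\op}}$.

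Since $\otimes$ and $\otimes_*$ are already right exact in each variable by Proposition~\ref{prop:tensor-prod}(a), the content of the above is left exactness. The hard part has already been dispatched in Proposition~\ref{prop:amalg-proj}; what remains is only the bookkeeping with opposite categories, namely keeping straight that the monoidal structure on $\fC^{\op}$ is $m^{\op}$ and that the variance in Proposition~\ref{prop:!-exact} is invoked correctly. I do not anticipate any real obstacle beyond this.
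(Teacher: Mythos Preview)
Your proof is correct and follows the same route as the paper: invoke Proposition~\ref{prop:amalg-proj} to see that $\amalg^*(\bP_x)$ is projective for all $x$, then apply Proposition~\ref{prop:!-exact} to conclude that $\amalg_*$ and $\amalg^{\op}_!$ are exact. One cosmetic slip in your final paragraph: $\otimes_*$ is \emph{left} exact by default (it is built from $m_*$, a right adjoint), not right exact, so the content for $\otimes_*$ is right exactness; this does not affect your argument, which establishes full exactness of $m_*$ directly.
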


\begin{proof}
Since $\amalg^*(\bP_x)$ is projective for all $x$ (Proposition~\ref{prop:amalg-proj}), we see that $\amalg_*$ and $\amalg^{\op}_!$ are exact (Proposition~\ref{prop:!-exact}), from which the result follows.
\end{proof}

\section{Triangular categories} \label{sec:tri-cat}

\subsection{The definition}

We assume in what follows that the field $\bk$ is algebraically closed. Let $\fG$ be a $\bk$-linear category satisfying the following condition:
\begin{itemize}
\item[(T0)] The category $\fG$ is essentially small, and all $\Hom$ spaces are finite dimensional.
\end{itemize}
We denote the set of isomorphism classes in $\fG$ by $\vert \fG \vert$. Recall that a subcategory is {\bf wide} if it contains all objects. The following are the central definitions of this paper:

\begin{definition} \label{defn:tri-struct}
A {\bf triangular structure} on $\fG$ is a pair $(\fU, \fD)$ of wide subcategories of $\fG$ such that the following axioms hold: 
\begin{itemize}
\item[(T1)] We have $\End_{\fU}(x)=\End_{\fD}(x)$ for all objects $x$, and this ring is semi-simple.
\item[(T2)] There exists a partial order $\le$ on the set $\vert \fG \vert$ such that:
\begin{enumerate}
\item For all $x \in \vert \fG \vert$ there are only finitely many $y \in \vert \fG \vert$ with $y \le x$.
\item The category $\fU$ is upwards with respect to $\le$ (see \S \ref{ss:upwards}).
\item The category $\fD$ is downwards with respect to $\le$.
\end{enumerate}
\item[(T3)] For all $x,z \in \fG$, the natural map
\begin{displaymath}
\bigoplus_{y \in \vert \fG \vert} \Hom_{\fU}(y, z) \otimes_{\End_{\fU}(y)} \Hom_{\fD}(x, y) \to \Hom_{\fG}(x,z)
\end{displaymath}
is an isomorphism. \qedhere
\end{itemize}
\end{definition}

\begin{definition} \label{defn:tri-cat}
A {\bf triangular category} is a $\bk$-linear category satisfying (T0) equipped with a triangular structure.
\end{definition}

We refer to an order as in (T2) as an {\bf admissible order}. Admissible orders are not unique in general. However, there is a unique weakest admissible order $\preceq$, which can be defined by $x \preceq y$ if $x \le y$ for all admissible orders $\le$; we refer to $\preceq$ as the {\bf canonical order}.

Suppose that $\fG$ and $\fG'$ are triangular categories. A functor $\fG \to \fG'$ is called {\bf triangular} if it carries $\fU$ into $\fU'$ and $\fD$ into $\fD'$. (Warning: a triangular functor need not induce an order-preserving map $\vert \fG \vert \to \vert \fG' \vert$.) A {\bf triangular equivalence} $\fG \to \fG'$ is an equivalence that is triangular and whose quasi-inverse is also triangular; a triangular equivalence induces equivalences $\fU \to \fU'$ and $\fD \to \fD'$, and a bijection $\vert \fG \vert \to \vert \fG' \vert$ that is compatible with the canonical orders.

Suppose we have a triangular structure on $\fG$. Let $\fM=\fU \cap \fD$. Thus $\fM$ contains all objects of $\fG$ and a morphism belongs to $\fM$ if and only if it belongs to both $\fU$ and $\fD$. By axiom (T2), we see that if $x \to y$ is a non-zero morphism in $\fM$ then $x$ and $y$ are isomorphic. Thus $\fM$ is essentially determined by its endomorphism rings, and these are given by $\End_{\fM}(x)=\End_{\fU}(x)=\End_{\fD}(x)$. In axiom (T3) above, we could instead tensor over $\End_{\fM}(y)$, and this gives the axiom a more symmetrical appearance. We think of $\fU$ as an upper triangular parabolic subalgebra, $\fD$ as a lower triangular parabolic subalgebra, and $\fM$ as the common Levi factor. Axiom (T3) can be seen as a Poincar\'e--Birkhoff--Witt decomposition.

\begin{remark}
The assumption that $\bk$ is algebraically closed can be relaxed: it is enough that every simple $\fM$-module is absolutely simple.
\end{remark}

\subsection{Pushforwards and pullbacks}

Fix a triangular category $\fG$ with subcategories $\fU$, $\fD$, and $\fM$, and an admissible order $\le$. We name the various inclusion functors as follows:
\begin{displaymath}
\xymatrix@C=4em@R=3em{
\fM \ar[r]^{j'} \ar[d]_{i'} \ar[rd]^k & \fU \ar[d]^i \\
\fD \ar[r]^j & \fG }
\end{displaymath}

\begin{proposition} \label{prop:tri-pushfwd}
We have the following:
\begin{enumerate}
\item The base change map $j'_!(i')^*M \to i^*j_!M$ is an isomorphism for any $M \in \Mod_{\fD}$.
\item The base change map $j^*i_*M \to i'_*(j')^*M$ is an isomorphism for any $M \in \Mod_{\fU}$.
\item If $M \in \Mod_{\fG}$ is a summand of an object in the essential image of $j_!$ then $i^*(M)$ is a projective $\fU$-module. In particular, $i^*$ takes projective $\fG$-modules to projective $\fU$-modules.
\item If $M \in \Mod_{\fG}$ is a summand of an object in the essential image of $i_*$ then $j^*(M)$ is an injective $\fD$-module. In particular, $j^*$ takes injective $\fG$-modules to injective $\fD$-modules.
\item The functors $i_*$ and $j_!$ are exact.
\end{enumerate}
\end{proposition}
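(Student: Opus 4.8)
The plan is to derive parts (a) and (b) from the general base change machinery (Propositions~\ref{prop:bc} and~\ref{prop:bc-alt}) by checking that the relevant ``$\psi$-map'' is an isomorphism, which will turn out to be precisely axiom (T3); parts (c) and (d) then follow essentially formally, and (e) is a consequence of (c) and (d). The one preliminary input I need is that $\Mod_{\fM}$ is semisimple, so that every $\fM$-module is both projective and injective. To see this, replace $\fG$ by a skeleton; then distinct objects of $\fM$ are non-isomorphic in $\fG$, hence (since $\fM$ is both upwards and downwards by (T2)) admit no nonzero morphisms between them, so $\fM$ is the disjoint union of the one-object categories $\End_{\fM}(x)=\End_{\fU}(x)$, each a semisimple ring by (T1). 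Thus $\Mod_{\fM}\cong\prod_x\Mod_{\End_{\fM}(x)}$ is semisimple.

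For (a), I apply Proposition~\ref{prop:bc} with $(f',g',g,f)=(j',i',i,j)$; the base change map it produces is exactly $j'_!(i')^*M\to i^*j_!M$. By Proposition~\ref{prop:bc}(d) it suffices to show that $\psi_{b,c}$ is an isomorphism for all $b\in\fD$, $c\in\fU$. Unwinding its construction---using $(i')^*\bP_{\fD,b}(y)=\Hom_{\fD}(b,y)$ and $((j')^{\op})^*\bP_{\fU^{\op},c}(y)=\Hom_{\fU}(y,c)$, Proposition~\ref{prop:tensor-eval}, and the fact (from the previous paragraph) that $\odot_{\fM}$ collapses to a direct sum of tensor products over the rings $\End_{\fM}(y)$---one identifies $\psi_{b,c}$ with the composition map
\[
\bigoplus_{y\in\vert\fG\vert}\Hom_{\fU}(y,c)\otimes_{\End_{\fM}(y)}\Hom_{\fD}(b,y)\to\Hom_{\fG}(b,c),
\]
which is an isomorphism by (T3). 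Part (b) is entirely parallel, using Proposition~\ref{prop:bc-alt} with $(f,g,f',g')=(i,j,i',j')$: via the finite-dimensionality of $\Hom$ spaces (T0) and the same reduction of $\odot_{\fM}$ and $\Hom_{\fM}$ to operations over the semisimple rings $\End_{\fM}(y)$, the map $\psi'_{b,c}$ unwinds to the $\bk$-linear dual of the isomorphism (T3), hence is itself an isomorphism.

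For (c): if $M$ is a summand of $j_!N$ for some $N\in\Mod_{\fD}$, then $i^*M$ is a summand of $i^*j_!N\cong j'_!(i')^*N$ by (a); now $(i')^*N$ is an $\fM$-module, hence projective, and $j'_!$ preserves projectives (Proposition~\ref{prop:kan-proj}), so $i^*M$ is projective. The ``in particular'' holds since every projective $\fG$-module is a summand of a direct sum of modules $\bP_{\fG,x}=j_!\bP_{\fD,x}$, hence of an object in the essential image of $j_!$. Part (d) is dual: if $M$ is a summand of $i_*N$ then $j^*M$ is a summand of $j^*i_*N\cong i'_*(j')^*N$ by (b), and $(j')^*N$ is an injective $\fM$-module while $i'_*$ preserves injectives, so $j^*M$ is injective; and every injective $\fG$-module is a summand of a product of the injective cogenerators $\bI_{\fG,x}=i_*\bI_{\fU,x}$, which lies in the essential image of $i_*$ since $i_*$ preserves products. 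Finally (e): by (c), $i^*$ sends projective $\fG$-modules to projective $\fU$-modules, so $i_*$ is exact by Proposition~\ref{prop:!-exact}; by (d), $j^*$ sends injective $\fG$-modules to injective $\fD$-modules, so $j_!$ is exact by the analogous criterion for $f_!$ in that proposition.

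The main obstacle is the identification in (a) (and its dual in (b)) of the abstractly defined base change map $\psi_{b,c}$ with the map in (T3). This is where all the bookkeeping concentrates: tracking the pullbacks of principal projectives along $i'$ and $j'$, invoking Proposition~\ref{prop:tensor-eval}, and---crucially---using that $\fM$ carries only ``diagonal'' morphisms so that both $\odot_{\fM}$ and $\Hom_{\fM}$ reduce to the corresponding constructions over the semisimple endomorphism rings $\End_{\fM}(y)$. Once this identification is made, everything else reduces to formal manipulation of adjunctions.
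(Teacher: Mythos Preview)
Your proof is correct and follows essentially the same approach as the paper: both reduce (a) (and its dual (b)) to the base change criterion of Proposition~\ref{prop:bc} (resp.\ \ref{prop:bc-alt}), identify the map $\psi_{b,c}$ with the composition map of axiom (T3), and then deduce (c)--(e) formally using semisimplicity of $\Mod_{\fM}$ and Proposition~\ref{prop:!-exact}. Your write-up is in fact slightly more detailed than the paper's in justifying the ``in particular'' clauses of (c) and (d) via $\bP_{\fG,x}=j_!\bP_{\fD,x}$ and $\bI_{\fG,x}=i_*\bI_{\fU,x}$.
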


\begin{proof}
  (a) follows from Proposition~\ref{prop:bc}: we have to check that the maps
  \[
    (i'^\op)^* \bP_{\fD^\op,d} \odot_\fM j'^* \bP_{\fU,u} \to \hom_\fG(i(u), j(d))
  \]
  are isomorphisms for all $d \in \fD$ and $u \in \fU$. Since all morphisms in any skeletal category of $\fM$ are endomorphisms, we can rewrite the map as
  \[
    \bigoplus_{x \in |\fM|} \hom_\fD(x,d) \otimes_{\End_\fM(x)} \hom_\fU(u, x) \to \hom_\fG(u,d)
  \]
  where the components are given by composition. But this is an isomorphism by the axioms of a triangular category. (b) is similar.

  For (c), suppose $M=j_!(N)$. Then by (a), we have $i^*(M) \cong j'_!((i')^*(N))$, and anything in the essential image of $j'_!$ is projective (as $j'_!$ takes projectives to projectives and $\Mod_{\fM}$ is semi-simple). (d) is similar. (e) now follows from Proposition~\ref{prop:!-exact}.
\end{proof}

The following proposition summarizes some basic finiteness properties of pushforwards and pullbacks.

\begin{proposition} \label{prop:ij-finite}
We have the following:
\begin{enumerate}
\item The functors $i^*$ and $(i')^*$ preserve finite generation.
\item The functors $i_*$, $i'_*$, $j_!$, and $j'_!$ preserve pointwise finiteness.
\end{enumerate}
\end{proposition}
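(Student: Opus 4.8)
The plan is to reduce part (b) to part (a) via Proposition~\ref{prop:pushfwd-finite}, and to prove part (a) — plus two ``opposite'' variants needed for (b) — by inspecting what happens on principal projectives. In detail: Proposition~\ref{prop:pushfwd-finite}(a) shows that if $i^*$ (resp.\ $(i')^*$) preserves finite generation then $i_*$ (resp.\ $i'_*$) preserves pointwise finiteness, and Proposition~\ref{prop:pushfwd-finite}(b) shows that if $(j^{\op})^*$ (resp.\ $((j')^{\op})^*$) preserves finite generation then $j_!$ (resp.\ $j'_!$) preserves pointwise finiteness. So it suffices to show that each of $i^*$, $(i')^*$, $(j^{\op})^*$, $((j')^{\op})^*$ preserves finite generation. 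Since pullback is exact and a module is finitely generated exactly when it is a quotient of a finite direct sum of principal projectives (as recalled after Proposition~\ref{prop:yoneda}), in each case it is enough to check this on a single principal projective.

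For $i^* \colon \Mod_\fG \to \Mod_\fU$, I would evaluate $i^*\bP_{\fG,x}$ at $u \in \fU$ and feed it into axiom (T3):
\[
(i^*\bP_{\fG,x})(u) = \Hom_\fG(x,u) = \bigoplus_{y \in |\fG|} \Hom_\fU(y,u) \otimes_{\End_\fU(y)} \Hom_\fD(x,y).
\]
The $y$-summand is functorially $\bP_{\fU,y}(u) \otimes_{\End_\fU(y)} \Hom_\fD(x,y)$, and it vanishes unless $\Hom_\fD(x,y)\neq 0$, which forces $y\le x$ since $\fD$ is downwards; by (T2)(a) there are only finitely many such $y$. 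For each, $\Hom_\fD(x,y)$ is finite dimensional by (T0), hence — $\End_\fU(y)$ being semisimple by (T1) — a summand of $\End_\fU(y)^{\oplus n_y}$. Thus $i^*\bP_{\fG,x}$ is a summand, hence a quotient, of the finite direct sum $\bigoplus_{y\le x}\bP_{\fU,y}^{\oplus n_y}$ of principal projectives, so it is finitely generated. The case of $(i')^* \colon \Mod_\fD \to \Mod_\fM$ is softer: $(i')^*\bP_{\fD,d}$ takes the value $\Hom_\fD(d,m)$ at $m$, which vanishes unless $m \le d$ (again since $\fD$ is downwards), so it is supported on the finitely many isomorphism classes below $d$ and is pointwise finite dimensional; because $\fM$ is, up to equivalence, a disjoint union of one-object categories with finite-dimensional endomorphism rings, any such $\fM$-module is finitely generated.

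The statements for $(j^{\op})^*$ and $((j')^{\op})^*$ are the previous two ``with $\fU$ and $\fD$ swapped and the order reversed'', and I would handle them by the same computations — but here care is needed, since $\fG^{\op}$ (with $(\fD^{\op},\fU^{\op})$) is typically not a triangular category, as (T2)(a) is not symmetric. The point is that the computations above only used that \emph{down-sets in $\fG$ are finite}, which is still available. Concretely, $(j^{\op})^*\bP_{\fG^{\op},x}$ has value $\Hom_\fG(d,x)$ at $d\in\fD$, and (T3) (with $x$ in the role of the target) writes this as $\bigoplus_y \Hom_\fU(y,x)\otimes_{\End_\fU(y)} \bP_{\fD^{\op},y}(d)$; now the surviving $y$ are those with $\Hom_\fU(y,x)\neq 0$, i.e.\ $y\le x$ since $\fU$ is upwards, a finite set by (T2)(a), and the same semisimplicity argument presents $(j^{\op})^*\bP_{\fG^{\op},x}$ as a summand of a finite sum of principal projectives over $\fD^{\op}$. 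Similarly $((j')^{\op})^*\bP_{\fU^{\op},u}$ has value $\Hom_\fU(m,u)$ at $m\in\fM$, vanishing unless $m\le u$, hence is supported on finitely many isomorphism classes and pointwise finite dimensional, thus finitely generated over $\fM^{\op}$. Combining with Proposition~\ref{prop:pushfwd-finite} yields all of (b).

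The one step I expect to require real attention is precisely this asymmetry: it is tempting to say ``apply part (a) to the opposite categories'', but $\fG^{\op}$ and $\fU^{\op}$ need not be triangular, so one must instead observe that the $\fU/\fD$-symmetry of (T3) together with $\fU$ being upwards routes the necessary finiteness back to (T2)(a) for $\fG$ itself. The remaining content — matching the $y$-summand of (T3) with $\bP_{\fU,y}\otimes_{\End_\fU(y)}\Hom_\fD(x,y)$, and the bimodule bookkeeping for the $\End_\fU(y)$-actions — is routine.
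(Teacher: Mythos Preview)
Your proof is correct, but the approach differs from the paper's. For part~(a), the paper first handles $(i')^*$ by noting that finitely generated $\fD$-modules have finite length (since $\fD$ is downwards) and that $(i')^*$ preserves simples, hence finite length; it then treats $i^*$ via the base change isomorphism $i^*\bP_{\fG,x} = i^*j_!\bP_{\fD,x} \cong j'_!(i')^*\bP_{\fD,x}$ from Proposition~\ref{prop:tri-pushfwd}, together with the fact that $j'_!$ preserves finite generation. For part~(b), the paper uses Proposition~\ref{prop:pushfwd-finite} for $i_*$ and $i'_*$ as you do, but handles $j'_!$ by a direct pointwise argument (writing $j'_!(M)(y)$ as a quotient of a finite sum) and then $j_!$ again via base change: $i^*j_!(M) \cong j'_!(i')^*(M)$. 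Your route---unpacking (T3) directly on principal projectives and then passing to the opposite side for $(j^{\op})^*$, $((j')^{\op})^*$---is more uniform and avoids invoking Proposition~\ref{prop:tri-pushfwd}, at the cost of repeating essentially the same computation four times; the paper's route leverages the base change machinery already in hand.

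One remark on your caveat: your worry that $\fG^{\op}$ with $(\fD^{\op},\fU^{\op})$ ``is typically not a triangular category'' is actually misplaced. The paper shows in \S\ref{ss:tridual} (which comes after this proposition) that the \emph{same} partial order $\le$ is admissible for $\widehat{\fG}=\fG^{\op}$, since $\fD^{\op}$ is upwards and $\fU^{\op}$ is downwards with respect to $\le$ itself---one does not reverse the order, so (T2)(a) holds automatically. Your direct argument is nonetheless fine and self-contained; you simply did a bit more work than necessary.
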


\begin{proof}
(a) Since $(i')^*$ takes simple $\fD$-modules to simple $\fM$-modules, it follows that $(i')^*$ takes finite length $\fD$-modules to finite length $\fM$-modules. Since the category $\fD$ is downwards, any finitely generated $\fD$-module has finite length. Thus $(i')^*$ preserves finite generation.

We have
\begin{displaymath}
i^*(\bP_{\fG,x})=i^*(j_!(\bP_{\fD,x}))=j'_!((i')^*\bP_{\fD,x}).
\end{displaymath}
Since $(i')^*\bP_{\fD,x}$ is a finitely generated $\fM$-module (see above) and $j'_!$ preserves finite generation (Proposition~\ref{prop:!-finite}), we see that $i^*(\bP_{\fG,x})$ is finitely generated, and so $i^*$ preserves finite generation.

(b) The statements for $i_*$ and $i'_*$ follow from (a) and Proposition~\ref{prop:pushfwd-finite}.

Suppose $M$ is a pointwise finite $\fM$-module. We can then realize $M$ as a quotient of $\bigoplus_{x \in \fG} \bP_{\fM,x}^{\oplus n(x)}$ for $n(x) \in \bN$. We thus see that $j'_!(M)$ is a quotient of $\bigoplus_{x \in \fG} \bP_{\fU,x}^{\oplus n(x)}$, and so $j'_!(M)(y)$ is a quotient of $\bigoplus_{x \in \fG} \bP_{\fU,x}^{\oplus n(x)}(y)$. This sum is finite since for $\bP_{\fU,x}(y)$ is non-zero only if $x \le y$, by the upwards property of $\fU$. Thus $j'_!(M)$ is pointwise finite.

Finally, suppose that $M$ is a pointwise finite $\fD$-module. We have $i^*(j_!(M)) = j'_!((i')^*(M))$ by Proposition~\ref{prop:tri-pushfwd}. Of course, $(i')^*(M)$ is pointwise finite, and so $j'_!((i')^*(M))$ is pointwise finite by the previous paragraph. Thus $i^*(j_!(M))$ is pointwise finite, and so $j_!(M)$ is pointwise finite too (since $i$ is bijective on objects).
\end{proof}

From the above proposition, we obtain the following useful noetherianity criterion:

\begin{proposition} \label{prop:Unoeth}
If $\Mod_{\fU}$ is locally noetherian then so is $\Mod_{\fG}$.
\end{proposition}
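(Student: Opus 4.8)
The plan is to transfer noetherianity along the restriction functor $i^* \colon \Mod_{\fG} \to \Mod_{\fU}$. The two ingredients are already available: first, $i^*$ preserves finite generation by Proposition~\ref{prop:ij-finite}(a) (this is where the triangular/PBW structure is really used, via $i^*\bP_{\fG,x} = j'_!((i')^*\bP_{\fD,x})$ and finiteness of length for modules over the downwards category $\fD$); second, $i^*$ is exact (restriction is computed pointwise) and $i$ is bijective on objects because $\fU$ is wide, so a $\fG$-submodule is the same data as its underlying $\fU$-submodule at the level of objects, and finite generation of a $\fG$-module can be tested after applying $i^*$.

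Concretely, let $M$ be a finitely generated $\fG$-module and let $N \subseteq M$ be a $\fG$-submodule; I want to show $N$ is finitely generated. By Proposition~\ref{prop:ij-finite}(a), $i^*M$ is a finitely generated $\fU$-module, hence noetherian by the hypothesis that $\Mod_{\fU}$ is locally noetherian. Since $i^*$ is exact, $i^*N$ is a $\fU$-submodule of $i^*M$, so $i^*N$ is finitely generated over $\fU$; pick generators $n_1 \in N(x_1), \dots, n_k \in N(x_k)$.

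It remains to observe that $n_1, \dots, n_k$ generate $N$ as a $\fG$-module. Let $N'$ be the $\fG$-submodule of $M$ they generate. For each object $x$ of $\fG$ we have, on one hand, $N'(x) \subseteq N(x)$ since $N$ is a $\fG$-submodule and each $n_j \in N(x_j)$; on the other hand, because $\fU$ is wide (so every object of $\fG$ is an object of $\fU$) and the $n_j$ generate $i^*N$ over $\fU$, the span of $\{\alpha_* n_j : 1 \le j \le k,\ \alpha \in \Hom_{\fU}(x_j, x)\}$ is already all of $N(x)$, and this span is contained in $N'(x)$. Hence $N'(x) = N(x)$ for every $x$, so $N' = N$ and $N$ is finitely generated, as desired.

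Given that Proposition~\ref{prop:ij-finite} has already done the substantive work, I do not expect a genuine obstacle here; the only subtlety is the (routine) point that finite generation of a $\fG$-module is detected after restriction to the wide subcategory $\fU$, which is precisely the feature that lets the noetherian hypothesis propagate.
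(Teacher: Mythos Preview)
Your proof is correct and follows essentially the same approach as the paper: both use Proposition~\ref{prop:ij-finite}(a) to get that $i^*M$ is finitely generated, then exploit that $i$ is bijective on objects (since $\fU$ is wide) to transfer noetherianity back from $\Mod_{\fU}$ to $\Mod_{\fG}$. The only cosmetic difference is that the paper phrases the argument via stabilization of ascending chains, while you argue directly that $\fU$-generators of $i^*N$ are also $\fG$-generators of $N$; these are equivalent formulations of the same idea.
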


\begin{proof}
Suppose $M$ is a finitely generated $\fG$-module and $N_1 \subset N_2 \subset \cdots$ is an ascending chain of submodules. Then $i^*(M)$ is finitely generated by Proposition~\ref{prop:ij-finite}(a), and so the chain $i^*(N_1) \subset i^*(N_2) \subset \cdots$ stabilizes since $\Mod_{\fU}$ is locally noetherian. Since $i$ is bijective on objects, the original chain stabilizes too. Thus $M$ is noetherian, and so $\Mod_{\fG}$ is locally noetherian.
\end{proof}

Finally, we examine how pushforwards interact with $\Ext$.

\begin{proposition} \label{prop:ext-compare}
Let $M$ be a $\fG$-module, let $N$ be a $\fD$-module, and let $N'$ be a $\fU$-module. Then we have natural isomorphisms
\begin{displaymath}
\Ext^r_{\fG}(j_!N, M) = \Ext^r_{\fD}(N, j^*M), \qquad
\Ext^r_{\fG}(M, i_*N') = \Ext^r_{\fU}(i^*M, N')
\end{displaymath}
for all $r \ge 0$. 
\end{proposition}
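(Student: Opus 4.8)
The plan is to deduce both statements from the adjunctions $(j_!, j^*)$ and $(i^*, i_*)$ together with the exactness and projective/injective-preservation properties of $j_!$ and $i_*$ recorded in Proposition~\ref{prop:tri-pushfwd}. The point is that an adjunction between $\Hom$ functors upgrades to an agreement of $\Ext$ groups as soon as one of the adjoint functors is exact and preserves the relevant class of acyclic objects, so that it carries a resolution to a resolution of the same type.

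For the first identity, I would start with a projective resolution $P_\bullet \to N$ of $N$ in $\Mod_{\fD}$. By Proposition~\ref{prop:tri-pushfwd}(e) the functor $j_!$ is exact, so $j_! P_\bullet \to j_! N$ is again a resolution; and since $j_!$ is a left adjoint of the (exact) functor $j^*$, each $j_! P_i$ is a projective $\fG$-module. Hence $j_! P_\bullet \to j_! N$ is a projective resolution in $\Mod_{\fG}$, and
\begin{displaymath}
\Ext^r_{\fG}(j_! N, M) = H^r\bigl(\Hom_{\fG}(j_! P_\bullet, M)\bigr) = H^r\bigl(\Hom_{\fD}(P_\bullet, j^* M)\bigr) = \Ext^r_{\fD}(N, j^* M),
\end{displaymath}
where the middle equality is the $(j_!, j^*)$-adjunction applied termwise (it is compatible with the differentials since the adjunction is natural).

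For the second identity, the argument is dual: take an injective resolution $N' \to I^\bullet$ in $\Mod_{\fU}$. Again $i_*$ is exact by Proposition~\ref{prop:tri-pushfwd}(e), so $i_* N' \to i_* I^\bullet$ is a resolution; and $i_*$, being a right adjoint of the exact functor $i^*$, sends injectives to injectives. Thus $i_* N' \to i_* I^\bullet$ is an injective resolution in $\Mod_{\fG}$, and computing $\Ext^r_{\fG}(M, i_* N')$ with it and applying the $(i^*, i_*)$-adjunction termwise gives $\Ext^r_{\fU}(i^* M, N')$. The only ingredient that is not formal is the exactness of $j_!$ and $i_*$, i.e.\ Proposition~\ref{prop:tri-pushfwd}(e), which is where axiom (T3) (the PBW decomposition of $\Hom$ spaces) enters; everything else is a routine naturality check that the termwise adjunction isomorphisms commute with the differentials of the chosen resolutions.
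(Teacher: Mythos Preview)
Your argument is correct and is essentially identical to the paper's own proof: both take a projective resolution of $N$ in $\Mod_{\fD}$, push it forward via the exact, projective-preserving functor $j_!$ to obtain a projective resolution of $j_!N$, and then compare $\Hom$ complexes via the $(j_!,j^*)$-adjunction (with the $i_*$ statement handled dually). The only difference is that you spell out the dual injective-resolution argument for $i_*$, whereas the paper simply says ``the result for $i$ is similar.''
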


\begin{proof}
Let $P_{\bullet} \to N$ be a projective resolution of $N$ as a $\fD$-module. Since $j_!$ is exact and takes projectives to projectives, it follows that $j_!P_{\bullet} \to j_!N$ is a projective resolution of $\fG$-modules. Thus the complex $\Hom_{\fD}(P_{\bullet}, j^*M)$ computes $\Ext^{\bullet}_{\fD}(N, j^*M)$, while the complex $\Hom_{\fG}(j_!P_{\bullet}, M)$ computes $\Ext^{\bullet}_{\fG}(j_!N, M)$. These complexes are isomorphic by adjunction, and so the result for $j$ follows. The result for $i$ is similar.
\end{proof}

\subsection{Weights} \label{ss:weights}

Define $\Lambda$ to be the set of isomorphism classes of simple $\fM$-modules. We refer to elements of $\Lambda$ as {\bf weights}. Given $\lambda \in \Lambda$, we choose a representative simple $\fM$-module $S_{\lambda}$ of the class of $\lambda$. From the structure of $\fM$, we see that $S_{\lambda}$ is non-zero on a unique isomorphism class of $\fG$. We refer to this class as the \textbf{support} of $\lambda$, and denote it by $\supp(\lambda)$. Since $\End_{\fM}(x)$ is finite dimensional for all $x$, there are only finitely many weights with a given support. We partially order $\Lambda$ through the ordering on supports: that is, we define $\lambda<\mu$ if $\supp(\lambda)<\supp(\mu)$. For any $\mu$, there are only finitely many $\lambda$ with $\lambda \le \mu$.

Given an $\fM$-module $M$ and a weight $\lambda$, we define the {\bf multiplicity} of $\lambda$ in $M$, denoted $m_{\lambda}(M)$ to be the multiplicity of $S_{\lambda}$ in $M$, which we regard as an element of $\bN \cup \{\infty\}$. We say that $\lambda$ {\bf occurs} in $M$ if its multiplicity is non-zero. We note that $M$ is pointwise finite if and only if $m_{\lambda}(M)$ is finite for all $\lambda$. We extend this notation and terminology to $\fD$-, $\fU$- and $\fG$-modules by simply restricting to $\fM$, e.g., we say that $\lambda$ occurs in a $\fG$-module if it occurs in its restriction to $\fM$.

We regard $S_{\lambda}$ as a $\fU$- or $\fD$- module by letting all maps between non-isomorphic objects act by zero. When clarity is required, we write $S_{\fM,\lambda}$, $S_{\fU,\lambda}$, or $S_{\fD,\lambda}$ to indicate the relevant module category.

\subsection{Duality} \label{ss:tridual}

Let $\widehat{\fG}=\fG^{\op}$ be the opposite category of $\fG$, and put $\widehat{\fU}=\fD^{\op}$ and $\widehat{\fD}=\fU^{\op}$. It is clear that $\widehat{\fG}$ satisfies (T0).

\begin{proposition}
The pair $(\widehat{\fU}, \widehat{\fD})$ is a triangular structure on $\widehat{\fG}$.
\end{proposition}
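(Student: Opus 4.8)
The plan is to verify the three axioms (T1), (T2), (T3) for the pair $(\widehat{\fU}, \widehat{\fD}) = (\fD^{\op}, \fU^{\op})$ on $\widehat{\fG} = \fG^{\op}$, transporting each one directly from the corresponding axiom for $(\fU, \fD)$ on $\fG$ by passing to opposite categories. The key observation throughout is that $\Hom_{\fC^{\op}}(x,y) = \Hom_{\fC}(y,x)$, so that every structural statement about $\fG$ dualizes cleanly; the subtlety is only in keeping track of which role ``upwards'' and ``downwards'' play after the flip, and this is exactly why we set $\widehat{\fU} = \fD^{\op}$ rather than $\fU^{\op}$.

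First I would check (T1). We have $\End_{\widehat{\fU}}(x) = \End_{\fD^{\op}}(x) = \End_{\fD}(x)^{\op}$ and $\End_{\widehat{\fD}}(x) = \End_{\fU^{\op}}(x) = \End_{\fU}(x)^{\op}$. By (T1) for $\fG$, $\End_{\fU}(x) = \End_{\fD}(x)$, so these two opposite rings agree; and the opposite of a semisimple ring is semisimple (semisimplicity is left-right symmetric), so (T1) holds for $\widehat{\fG}$. Next, (T2): take the admissible order $\le$ witnessing (T2) for $\fG$ and reverse it, i.e.\ set $x \le' y$ in $\vert \widehat{\fG}\vert = \vert \fG \vert$ iff $y \le x$. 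Condition (a) — finitely many $y$ with $y \le' x$, i.e.\ $x \le y$ — does \emph{not} follow from (T2)(a) for $\fG$ directly, so here I would instead keep the \emph{same} order $\le$ on $\vert\fG\vert$ and check that $\widehat\fU = \fD^{\op}$ is upwards and $\widehat\fD = \fU^{\op}$ is downwards with respect to it: a nonzero morphism $x \to y$ in $\fD^{\op}$ is a nonzero morphism $y \to x$ in $\fD$, which by the downwards property of $\fD$ gives $y \ge x$, i.e.\ $x \le y$, as needed for $\widehat\fU$ to be upwards; symmetrically for $\widehat\fD = \fU^{\op}$ using that $\fU$ is upwards. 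Condition (a) is then literally (T2)(a) for $\fG$, since the underlying ordered set is unchanged. This is the step where one must be careful not to reverse the order, and it is the only place where a genuine choice is being made.

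Finally, (T3). For $x, z \in \widehat{\fG}$ we must show that the natural map
\begin{displaymath}
\bigoplus_{y \in \vert \widehat{\fG} \vert} \Hom_{\widehat{\fU}}(y, z) \otimes_{\End_{\widehat{\fU}}(y)} \Hom_{\widehat{\fD}}(x, y) \to \Hom_{\widehat{\fG}}(x, z)
\end{displaymath}
is an isomorphism. Rewriting each term via $\Hom_{\fC^{\op}}(a,b) = \Hom_{\fC}(b,a)$, the left side becomes $\bigoplus_{y} \Hom_{\fD}(z, y) \otimes_{\End_{\fD}(y)^{\op}} \Hom_{\fU}(y, x)$, and the target is $\Hom_{\fG}(z, x)$. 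Using $\End_{\fD}(y) = \End_{\fM}(y) = \End_{\fU}(y)$ (from the discussion of $\fM$ after Definition~\ref{defn:tri-struct}) and the standard identification of $A^{\op}$-bimodule tensor products, $N \otimes_{A^{\op}} M \cong M \otimes_A N$, this is exactly the (T3) isomorphism for $\fG$ applied to the pair $(z, x)$ — note the two factors have swapped sides, which is precisely the effect of $(-)^{\op}$ and matches the $A^{\op}$ in the coefficient ring. So (T3) for $\widehat\fG$ is a formal consequence of (T3) for $\fG$.

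I expect the main (indeed only) obstacle to be purely bookkeeping: making sure the variance is handled consistently in (T3) — that the swap of the two Hom-factors in the PBW decomposition is correctly matched with the appearance of $\End(y)^{\op}$ — and, in (T2), resisting the temptation to reverse the order on $\vert\fG\vert$, which would break the finiteness condition (a). Once one fixes the convention $\widehat\fU = \fD^{\op}$, $\widehat\fD = \fU^{\op}$ and keeps the same admissible order, every verification is a one-line application of $\Hom_{\fC^{\op}}(a,b) = \Hom_{\fC}(b,a)$ together with the already-established facts about $\fM$.
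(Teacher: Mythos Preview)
Your proof is correct and follows essentially the same route as the paper's own argument: both keep the \emph{same} admissible order $\le$ on $\vert\fG\vert$ (rather than reversing it), verify that $\widehat{\fU}=\fD^{\op}$ becomes upwards and $\widehat{\fD}=\fU^{\op}$ becomes downwards for this order, and obtain (T3) by rewriting the $\widehat{\fG}$-decomposition via $\Hom_{\fC^{\op}}(a,b)=\Hom_{\fC}(b,a)$ and the swap $N\otimes_{A^{\op}}M\cong M\otimes_A N$ to recover the original (T3) at the pair $(z,x)$. Your treatment is in fact slightly more careful than the paper's on two points---you explicitly flag why one must not reverse the order (it would destroy (T2)(a)), and you spell out the left/right module bookkeeping in the tensor swap---but the underlying argument is identical.
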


\begin{proof}
Axiom (T1) formally holds. Let $\le$ be an admissible order on $|\fG|$. We claim that $\le$ is also an admissible order on $|\widehat{\fG}|$. Condition (a) is automatic. If $x \to y$ is a morphism in $\widehat{\fD} = \fU^\op$, then $x \le y$ and so $\widehat{\fD}$ is downwards. Similarly, $\widehat{\fU}$ is upwards, so the claim follows. Thus (T2) holds. Finally, we have a commutative diagram
\begin{displaymath}
\xymatrix{
\bigoplus_{y \in |\fG^\op|} \Hom_{\widehat{\fU}}(y,z) \otimes_{\End_{\widehat{\fM}}(y)} \Hom_{\widehat{\fD}}(z,x) \ar[r] \ar@{=}[d] & \Hom_{\fG^\op}(x,z) \ar@{=}[d] \\
\bigoplus_{y \in |\fG|} \Hom_{\fD}(z,y) \otimes_{\End_{\fM}(y)^\op} \Hom_{\fU}(x,z) \ar[r] & \Hom_{\fG}(z,x) }
\end{displaymath}
(Note that left and right modules are interchanged upon going from a ring to its opposite.) The bottom map is an isomorphism by (T3) for $(\fU, \fD)$, and so the top map is an isomorphism as well. Thus $(\widehat{\fU}, \widehat{\fD})$ satisfies (T3).
\end{proof}

\begin{definition}
We call $\widehat{\fG}$, equipped with $(\widehat{\fU}, \widehat{\fD})$, the {\bf dual triangular category} to $\fG$.
\end{definition}

We use hats to denote the constructions associated to $\widehat{\fG}$, such as $\widehat{\imath}$, $\widehat{\jmath}$, and $\widehat{\Lambda}$. We note that there is a natural bijection $\Lambda \to \widehat{\Lambda}$, which we denote by $\lambda \mapsto \lambda^{\vee}$.

\begin{definition} \label{defn:transpose}
A \textbf{transpose} on $\fG$ is a triangular equivalence $\tau \colon \fG \to \widehat{\fG}$ such that
\begin{enumerate}
\item $\widehat{\tau} \circ \tau$ is isomorphic to $\id_{\fG}$; and 
\item the induced bijection $\tau \colon \Lambda \to \widehat{\Lambda}$ coincides with $(-)^{\vee}$. \qedhere
\end{enumerate}
\end{definition}

Given a transpose functor, we get a (covariant) equivalence $\Mod_{\fG} \cong \Mod_{\widehat{\fG}}$; thus, for a $\fG$-module $M$, we can (and usually do) regard $M^{\vee}$ as a $\fG$-module.

\subsection{Standard modules}

We now come to an important definition. For $\lambda \in \Lambda$, define the \textbf{standard module} associated to $\lambda$ by
\[
  \Delta_{\lambda} = j_!(S_{\fD,\lambda}),
\]
To explain the significance of these modules, we introduce another concept. We say that a $\fG$-module $M$ is a \textbf{lowest weight module} with lowest weight $\lambda \in \Lambda$ if the following conditions hold:
\begin{enumerate}
\item The weight $\lambda$ occurs in $M$ with multiplicity one.
\item $M$ is generated as a $\fG$-module by the unique copy of $S_{\lambda}$ that it contains.
\item If a weight $\mu$ occurs in $M$ then $\lambda \le \mu$.
\end{enumerate}

\begin{proposition}
We have the following:
\begin{enumerate}[\rm \indent (a)]
\item $\Delta_{\lambda}$ is a lowest weight module of lowest weight $\lambda$.
\item Every other lowest weight module with lowest weight $\lambda$ is a quotient of $\Delta_{\lambda}$.
\item There is a unique simple quotient $L_{\lambda}$ of $\Delta_{\lambda}$.
\item Every simple $\fG$-module is isomorphic to $L_{\lambda}$ for a unique $\lambda$.
\item The simple constituents of $\Delta_{\lambda}$, other than $L_{\lambda}$, have the form $L_{\mu}$ with $\mu>\lambda$.
\end{enumerate}
\end{proposition}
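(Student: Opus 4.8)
Throughout, write $d=\supp(\lambda)$. The crux is part~(a); once it is in hand, parts~(b)--(e) are fairly routine highest-weight bookkeeping. The plan for~(a) is to understand the restriction $i^*\Delta_\lambda$ to $\fU$. First observe that $(i')^*S_{\fD,\lambda}=S_{\fM,\lambda}$ (both sides are $S_\lambda$ placed at $d$ and zero elsewhere), so the base-change map $j'_!(i')^*M\to i^*j_!M$, which is an isomorphism by Proposition~\ref{prop:tri-pushfwd}, gives $i^*\Delta_\lambda\cong j'_!(S_{\fM,\lambda})$. Since $\End_\fM(d)$ is semisimple, $S_{\fM,\lambda}$ is a direct summand of $\bP_{\fM,d}$, and hence $i^*\Delta_\lambda$ is a direct summand of $j'_!(\bP_{\fM,d})=\bP_{\fU,d}$ (Proposition~\ref{prop:kan-proj}). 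As $\fU$ is upwards, $\bP_{\fU,d}(y)=0$ unless $y\ge d$, so $\Delta_\lambda$ vanishes off $\{y\ge d\}$, which is the support condition in the definition of a lowest weight module. Evaluating $j'_!(S_{\fM,\lambda})$ at $d$ (only $\Hom_\fU(d,d)=\End_\fM(d)$ contributes) gives $\Delta_\lambda(d)\cong S_\lambda$ as an $\End_\fM(d)$-module, so $\lambda$ occurs with multiplicity one. For the generation condition, apply the right-exact functor $j_!$ to a surjection $\bP_{\fD,d}\to S_{\fD,\lambda}$ to obtain a surjection $\bP_{\fG,d}\to\Delta_\lambda$; thus $\Delta_\lambda$ is generated in degree $d$, and $\Delta_\lambda(d)=S_\lambda$ is exactly the unique copy of $S_\lambda$ it contains. (A purely hands-on computation of $\Delta_\lambda(d)$ from axiom~(T3) is possible but has a subtlety: one must notice that the relations coming from strictly downward $\fD$-morphisms kill the ``extra'' part of $\Hom_\fG(d,d)$; routing through base change avoids this.)

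For~(b), if $M$ has lowest weight $\lambda$ then $M(x)=0$ for all $x<d$, since any weight supported on such an $x$ is $<\lambda$ and therefore does not occur in $M$. Hence the inclusion of $\End_\fM(d)$-modules $S_\lambda\to M(d)$ extends to a map of $\fD$-modules $S_{\fD,\lambda}\to j^*M$ (the only compatibility to check concerns $\fD$-morphisms $d\to x$ with $x<d$, which is vacuous), and under the $(j_!,j^*)$-adjunction this corresponds to a map $\Delta_\lambda\to M$ restricting on the object $d$ to the chosen inclusion; it is surjective because $M$ is generated by that copy of $S_\lambda$. For~(c), a submodule $N\subseteq\Delta_\lambda$ is proper iff $N(d)=0$ (as $\Delta_\lambda(d)=S_\lambda$ is simple over $\End_\fM(d)$ and generates $\Delta_\lambda$), so the sum of all proper submodules is again proper; this unique maximal submodule $R$ yields the unique simple quotient $L_\lambda=\Delta_\lambda/R$.

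For~(d), let $T$ be a simple $\fG$-module. Using that $T$ is generated by any one of its nonzero vectors, axiom~(T3) (which factors any $\fG$-morphism as a $\fU$-morphism after a $\fD$-morphism), and axiom~(T2)(a), one shows that $\{x:T(x)\ne 0\}$ has a minimal element $d$ --- if $T$ is generated by $w\in T(x_1)$, then $\{x\le x_1:T(x)\ne 0\}$ is finite. Choosing a simple $\End_\fM(d)$-submodule $S_\lambda\subseteq T(d)$ and repeating the construction of~(b) (now using $T(x)=0$ for $x<d$) produces a nonzero, hence surjective, map $\Delta_\lambda\to T$, so $T\cong L_\lambda$ by~(c); uniqueness of $\lambda$ holds because $\lambda$ is the minimum weight occurring in $L_\lambda$ (its weights are $\ge\lambda$ by~(a), and $\lambda$ itself occurs), so isomorphic simples share the same $\lambda$. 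Finally, for~(e), a subquotient $L_\mu$ of $\Delta_\lambda$ has support contained in that of $\Delta_\lambda$, so $\supp(\mu)\ge d$; and if $\supp(\mu)=d$, then $S_\mu$ is a subquotient of the simple $\End_\fM(d)$-module $\Delta_\lambda(d)=S_\lambda$, forcing $\mu=\lambda$. Hence every constituent of $\Delta_\lambda$ other than $L_\lambda$ is $L_\mu$ with $\mu>\lambda$. The main obstacle throughout is~(a): its proof is where the base-change machinery of \S\ref{sec:rep-cat} and the semisimplicity of $\fM$ are genuinely used; a minor secondary point is the existence of a minimal element in the support of a simple module in~(d).
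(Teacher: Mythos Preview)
Your proof is correct and follows essentially the same approach as the paper's: base change for~(a), the $(j_!,j^*)$-adjunction for~(b), the unique maximal proper submodule for~(c), locating a minimal support object for~(d), and a support argument for~(e). You have simply spelled out what the paper compresses into ``clear from the base change formula'' in~(a), and given a more careful justification for the existence of a minimal support element in~(d); the paper's proof is terser but the underlying ideas are identical.
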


\begin{proof}
(a) This is clear from the base change formula.

(b) If $M$ is a lowest weight module of weight $\lambda$, then we have a $\fD$-module map $S_\lambda \to j^*M$. By adjunction, this gives a $\fG$-module map $\Delta_\lambda \to M$. Since it is an isomorphism in degree $\supp(\lambda)$ and $M$ is generated in this degree, this map is surjective.

(c) Any submodule of $\Delta_\lambda$ that is nonzero in degree $\supp(\lambda)$ is all of $\Delta_\lambda$. Hence, the sum of all submodules which are zero in degree $\supp(\lambda)$ is the unique largest submodule of $\Delta_\lambda$, and $L_\lambda$ is the quotient by this submodule.

(d) Let $M$ be a simple $\fG$-module and let $x \in \vert \fG \vert$ be minimal such that $M_x \ne 0$ (this exists by axiom (b) of triangular categories). If $N$ is a $\End_{\fU}(x)$-submodule of $M_x$ then the $\fG$-submodule of $M$ generated by $N$ is just $N$ in degree $x$; thus, if $N$ is non-zero then $N=M_x$ by simplicity. We thus see that $M_x$ is a simple $\End_{\fU}(x)$-module, and thus corresponds to some weight $\lambda \in \Lambda$. Hence $M$ is a quotient of $\Delta_\lambda$, and must be $L_{\lambda}$ since that is the unique simple quotient.

(e) The kernel of the quotient $\Delta_\lambda \to L_\lambda$ is concentrated in degrees strictly larger than $\supp(\lambda)$ since it is an isomorphism in degree $\supp(\lambda)$.
\end{proof}

\begin{proposition}
We have $L_{\lambda}^{\vee} \cong L_{\lambda^{\vee}}$.
\end{proposition}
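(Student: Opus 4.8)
The plan is to read the assertion as an isomorphism of $\widehat{\fG}$-modules $L_\lambda^\vee \cong L_{\widehat{\fG},\lambda^\vee}$ — from which the version inside $\Mod_\fG$, via a transpose, follows formally — and to prove it by first showing $L_\lambda^\vee$ is simple and then identifying its lowest weight.

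First I would verify that $L_\lambda$ is pointwise finite: it is a quotient of $\Delta_\lambda = j_!(S_{\fD,\lambda})$, and $j_!$ preserves pointwise finiteness by Proposition~\ref{prop:ij-finite}(b) while $S_{\fD,\lambda}$ is plainly pointwise finite. By the duality equivalence of \S\ref{ss:duality} — an exact anti-equivalence $(\Mod_\fG^{\pf})^{\op}\cong\Mod_{\widehat\fG}^{\pf}$ — the module $L_\lambda^\vee$ is then a simple $\widehat\fG$-module. Since $\widehat\fG$ carries the dual triangular structure, part~(d) of the preceding proposition applied to $\widehat\fG$ gives $L_\lambda^\vee \cong L_{\widehat\fG,\nu}$ for a unique $\nu\in\widehat\Lambda$, and the whole problem reduces to showing $\nu=\lambda^\vee$.

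Second, I would pin down $\nu$ by a multiplicity computation on $\fM$. Because $\fM$ is semisimple, the restriction of any module to $\fM$ is a direct sum of simples, so multiplicities are unambiguous; pointwise finiteness of $L_\lambda$ makes each multiplicity finite, so dualization commutes with the decomposition object by object, and by the very definition of $\lambda\mapsto\lambda^\vee$ we have $(S_{\fM,\mu})^\vee\cong S_{\widehat\fM,\mu^\vee}$. Hence $m_{\mu^\vee}(L_\lambda^\vee)=m_\mu(L_\lambda)$ for every $\mu$. Now $L_\lambda$, being the simple quotient of $\Delta_\lambda$ via a map that is an isomorphism in degree $\supp(\lambda)$, is a lowest weight module of lowest weight $\lambda$ (using parts~(a) and~(e) of the preceding proposition), so $m_\lambda(L_\lambda)=1$ and every occurring $\mu$ satisfies $\mu\ge\lambda$. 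Transporting through the identity above, and using that $\supp(\mu^\vee)=\supp(\mu)$ and that the fixed admissible order on $|\fG|$ is also admissible for $\widehat\fG$ (this is exactly what is checked in the proof that $(\widehat\fU,\widehat\fD)$ is a triangular structure), I conclude that $\lambda^\vee$ occurs in $L_\lambda^\vee$ with multiplicity one and is the unique minimal occurring weight. But $L_{\widehat\fG,\nu}$, being a quotient of $\widehat\Delta_\nu$ that is nonzero in degree $\supp(\nu)$, has $\nu$ as its unique minimal occurring weight. Therefore $\nu=\lambda^\vee$. If a transpose $\tau$ is present, one then transports along $\tau^*$, using that $\tau^*\widehat\Delta_{\lambda^\vee}\cong\Delta_\lambda$ — a base-change computation via Proposition~\ref{prop:bc} for the square relating $j$, $\widehat\jmath$, and $\tau$, together with clause~(2) of the definition of a transpose — so that $\tau^*L_{\widehat\fG,\lambda^\vee}\cong L_\lambda$.

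I expect no conceptual difficulty; the only place a genuine choice of argument arises is the identification of $\nu$. A tempting alternative is to dualize the surjection $\Delta_\lambda\twoheadrightarrow L_\lambda$ to an injection $L_\lambda^\vee\hookrightarrow\Delta_\lambda^\vee$ and then rewrite $\Delta_\lambda^\vee\cong\widehat\imath_*(S_{\widehat\fU,\lambda^\vee})$ via Proposition~\ref{prop:dual-pushfwd}(a) (using $\widehat\imath=j^{\op}$ and $S_{\fD,\lambda}^\vee=S_{\widehat\fU,\lambda^\vee}$); but this route needs the ``dual standard'' fact that $\widehat\imath_*(S_{\widehat\fU,\mu})$ has a unique simple submodule, namely $L_{\widehat\fG,\mu}$, which is not yet in the text. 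So the multiplicity argument is the cleaner route, and the main (mild) obstacle is purely bookkeeping: keeping the $\op$-dualizations straight — $\widehat\fU=\fD^{\op}$, $\widehat\imath=j^{\op}$, and the transport of weights, supports, and orders — after which each step is a one-line consequence of a result already proved.
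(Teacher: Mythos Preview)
Your proof is correct and follows essentially the same approach as the paper: show that $L_\lambda^\vee$ is a simple $\widehat{\fG}$-module and identify its lowest weight as $\lambda^\vee$, then invoke the classification of simples. The paper compresses this into one sentence (``$L_\lambda^\vee$ is simple and has lowest weight $\lambda^\vee$''), while you spell out the pointwise-finiteness check and the multiplicity computation $m_{\mu^\vee}(L_\lambda^\vee)=m_\mu(L_\lambda)$ that justifies the lowest-weight claim; your extra paragraph on transposes is not needed for the proposition as stated, since $L_{\lambda^\vee}$ here already denotes the $\widehat\fG$-simple.
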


\begin{proof}
The $\widehat{\fG}$-module $L_{\lambda}^{\vee}$ is simple and has lowest weight $\lambda^{\vee}$, and so it must be isomorphic to $L_{\lambda^{\vee}}$.
\end{proof}

There is a dual notion to standard modules: we define the {\bf costandard module} by
\[
  \nabla_{\lambda} = i_*(S_{\fU,\lambda}).
\]
As expected, duality interchanges standard and costandard modules:

\begin{proposition}
We have $\Delta_{\lambda}^{\vee} \cong \nabla_{\lambda^{\vee}}$.
\end{proposition}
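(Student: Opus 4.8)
The plan is to deduce this from Proposition~\ref{prop:dual-pushfwd}(a), applied to the inclusion functor $j \colon \fD \to \fG$. First I would note that the finiteness hypothesis of that proposition holds automatically: since $\fG$ satisfies (T0), all $\Hom$ spaces in the wide subcategory $\fD$ are finite dimensional. Hence the natural map $(j_! S_{\fD,\lambda})^\vee \to (j^\op)_*\bigl((S_{\fD,\lambda})^\vee\bigr)$ is an isomorphism, that is, $\Delta_\lambda^\vee \cong (j^\op)_*\bigl((S_{\fD,\lambda})^\vee\bigr)$ as $\widehat{\fG}$-modules.

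It remains to recognize the two ingredients on the right. For the functor: by the definition of the dual triangular category, $\widehat{\fG} = \fG^\op$, $\widehat{\fU} = \fD^\op$, and the inclusion $\widehat{\imath} \colon \widehat{\fU} \hookrightarrow \widehat{\fG}$ is literally $j^\op$, so $(j^\op)_* = \widehat{\imath}_*$. For the module: I claim $(S_{\fD,\lambda})^\vee \cong S_{\widehat{\fU},\lambda^\vee}$. By definition $S_{\fD,\lambda}$ is supported on the isomorphism class $\supp(\lambda)$, where it is the chosen simple $\End_{\fM}(\supp\lambda) = \End_{\fD}(\supp\lambda)$-module $S_\lambda$, with morphisms between non-isomorphic objects acting by zero. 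Dualizing pointwise, $(S_{\fD,\lambda})^\vee$ is supported on the same isomorphism class, where it is $S_\lambda^*$ viewed as a (simple) module over $\End_{\fD}(\supp\lambda)^\op = \End_{\widehat{\fM}}(\supp\lambda)$, still with morphisms between non-isomorphic objects acting by zero. Since the bijection $\lambda \mapsto \lambda^\vee \colon \Lambda \to \widehat{\Lambda}$ is by construction vector-space duality of simple $\fM$-modules, and $\supp(\lambda^\vee) = \supp(\lambda)$, this $\widehat{\fU}$-module is precisely $S_{\widehat{\fU},\lambda^\vee}$.

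Combining the two observations yields $\Delta_\lambda^\vee \cong \widehat{\imath}_*(S_{\widehat{\fU},\lambda^\vee}) = \nabla_{\lambda^\vee}$, the costandard module of $\widehat{\fG}$ — as with the isomorphism $L_\lambda^\vee \cong L_{\lambda^\vee}$ above, I suppress the hat, transporting along a transpose when one has been fixed. The only place requiring attention is the identification $(S_{\fD,\lambda})^\vee \cong S_{\widehat{\fU},\lambda^\vee}$, but the issue there is purely bookkeeping — keeping track of which module category and which endomorphism ring each simple object is taken over, and remembering that $\Lambda \to \widehat{\Lambda}$ is duality — so I do not anticipate a genuine obstacle. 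The dual statement $\nabla_\lambda^\vee \cong \Delta_{\lambda^\vee}$ follows in the same way, or from this one by applying $(-)^\vee$.
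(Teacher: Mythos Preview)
Your proof is correct and follows essentially the same approach as the paper: apply Proposition~\ref{prop:dual-pushfwd}(a) to $j$, identify $j^{\op}$ with $\widehat{\imath}$, and recognize $(S_{\fD,\lambda})^\vee$ as $S_{\widehat{\fU},\lambda^\vee}$. The paper's proof is the one-line chain $\Delta_{\lambda}^{\vee}=(j_! S_{\lambda})^{\vee}=\widehat{\imath}_*(S_{\lambda^{\vee}}) = \nabla_{\lambda^{\vee}}$; you have simply unpacked the bookkeeping behind each equality, which is entirely appropriate.
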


\begin{proof}
We have
\begin{displaymath}
\Delta_{\lambda}^{\vee}=(j_! S_{\lambda})^{\vee}=\widehat{\imath}_*(S_{\lambda^{\vee}}) = \nabla_{\lambda^{\vee}},
\end{displaymath}
where in the second step we used Proposition~\ref{prop:dual-pushfwd}. (Recall that $\widehat{\imath}=j^{\op}$.)
\end{proof}

\begin{proposition}
The costandard module $\nabla_{\lambda}$ has a unique simple submodule. It is isomorphic to $L_{\lambda}$, and every simple constituent of $\nabla_{\lambda}/L_{\lambda}$ has the form $L_{\mu}$ with $\mu > \lambda$.
\end{proposition}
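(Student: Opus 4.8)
\emph{Proof proposal.}
The plan is to obtain every assertion by transporting the already-established structure of standard modules across the duality functor $(-)^\vee$ of \S\ref{ss:duality}, carried out in the \emph{dual} triangular category $\widehat{\fG}$. I would begin by recording two routine bookkeeping facts. (i) The bijection $(-)^\vee \colon \Lambda \to \widehat{\Lambda}$ is order-preserving: dualizing a vector space does not change in which degree an $\fM$-module is supported, so $\supp(\lambda^\vee)$ and $\supp(\lambda)$ are the same isomorphism class, and the partial order on $|\widehat{\fG}|$ is the one on $|\fG|$. (ii) Under the identification $\widehat{\widehat{\fG}}=\fG$, the composite $\Lambda \to \widehat{\Lambda} \to \widehat{\widehat{\Lambda}}=\Lambda$ is the identity, since it is induced by the canonical double-duality isomorphism of $\fM$-modules.

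Next, $\nabla_\lambda = i_*(S_{\fU,\lambda})$ is pointwise finite by Proposition~\ref{prop:ij-finite}(b), so the canonical map $\nabla_\lambda \to (\nabla_\lambda^\vee)^\vee$ is an isomorphism. Applying the already-proven isomorphism $\Delta_\mu^\vee \cong \nabla_{\mu^\vee}$ inside the triangular category $\widehat{\fG}$, with $\mu=\lambda^\vee$ (so that $\mu^\vee=\lambda$ by (ii)), gives $\widehat{\Delta}_{\lambda^\vee}^\vee \cong \nabla_\lambda$, hence $\nabla_\lambda^\vee \cong \widehat{\Delta}_{\lambda^\vee}$.

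Now I would dualize the structure proposition for the standard module $\widehat{\Delta}_{\lambda^\vee}$ of $\widehat{\fG}$: it has a unique maximal submodule $R$ with $\widehat{\Delta}_{\lambda^\vee}/R \cong \widehat{L}_{\lambda^\vee}$, and every simple constituent of $R$ has the form $\widehat{L}_\nu$ with $\nu>\lambda^\vee$. Since $(-)^\vee$ is exact and restricts to an anti-equivalence on pointwise finite modules, applying it to $0 \to R \to \widehat{\Delta}_{\lambda^\vee} \to \widehat{L}_{\lambda^\vee} \to 0$ yields a short exact sequence $0 \to \widehat{L}_{\lambda^\vee}^\vee \to \nabla_\lambda \to R^\vee \to 0$ in which $\widehat{L}_{\lambda^\vee}^\vee$ is simple and is the unique simple submodule of $\nabla_\lambda$ (an anti-equivalence carries ``simple head / unique maximal submodule'' to ``simple socle / unique minimal submodule''), while every simple constituent of $\nabla_\lambda/\widehat{L}_{\lambda^\vee}^\vee \cong R^\vee$ is the $(-)^\vee$-dual of a simple constituent of $R$. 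Finally, applying the isomorphism $L_\mu^\vee \cong L_{\mu^\vee}$ in $\widehat{\fG}$ identifies $\widehat{L}_{\lambda^\vee}^\vee \cong L_{(\lambda^\vee)^\vee}=L_\lambda$ and $\widehat{L}_\nu^\vee \cong L_{\nu^\vee}$, and by (i) the condition $\nu>\lambda^\vee$ becomes $\nu^\vee>(\lambda^\vee)^\vee=\lambda$. This is exactly the claimed statement.

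The only genuine subtlety — and the step I would be most careful about — is keeping the two layers of $(-)^\vee$ straight: the duality functor on module categories versus the induced involution on weights (which must be checked to be self-inverse under $\widehat{\widehat{\fG}}=\fG$), and making sure duality is invoked at the level where it is a true anti-equivalence (pointwise finite modules), so that ``unique simple quotient'' really does dualize to ``unique simple submodule'' and the collection of simple constituents is transported correctly.
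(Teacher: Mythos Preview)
Your proposal is correct and takes essentially the same approach as the paper, which simply says ``This follows from duality.'' You have written out in careful detail precisely the argument the paper leaves implicit: dualize the structure of the standard module $\widehat{\Delta}_{\lambda^\vee}$ in $\widehat{\fG}$ using the already-established identifications $\Delta_\mu^\vee \cong \nabla_{\mu^\vee}$ and $L_\mu^\vee \cong L_{\mu^\vee}$, together with the fact that $(-)^\vee$ is an anti-equivalence on pointwise finite modules.
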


\begin{proof}
This follows from duality.
\end{proof}

\begin{remark} \label{rmk:transp-duals}
Suppose $\fG$ has a transpose $\tau$. Then, under the identification $\Mod_{\fG} \cong \Mod_{\widehat{\fG}}$ provided by $\tau$, we have $L_{\lambda} \leftrightarrow L_{\lambda^{\vee}}$ and $\Delta_{\lambda} \leftrightarrow \Delta_{\lambda^{\vee}}$ and $\nabla_{\lambda} \leftrightarrow \nabla_{\lambda^{\vee}}$. Thus if we regard $(-)^{\vee}$ as a contravariant endofunctor of $\Mod_{\fG}$ then $L_{\lambda}^{\vee}=L_{\lambda}$ and $\Delta_{\lambda}^{\vee}=\nabla_{\lambda}$ and $\nabla_{\lambda}^{\vee}=\Delta_{\lambda}$.
\end{remark}

\begin{proposition} \label{prop:hom-std-costd}
We have 
\begin{displaymath}
\dim \Hom_{\fG}(\Delta_{\lambda}, \nabla_{\mu}) = \delta_{\lambda,\mu}.
\end{displaymath}
Moreover, the image of any non-zero map $\Delta_{\lambda} \to \nabla_{\lambda}$ is $L_{\lambda}$.
\end{proposition}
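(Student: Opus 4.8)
The plan is to compute $\Hom_{\fG}(\Delta_{\lambda},\nabla_{\mu})$ explicitly by a short chain of adjunctions, reducing it to a space of maps between simple $\fM$-modules, and then to extract the statement about images purely formally from the resulting one-dimensionality.

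First I would unwind the definitions $\Delta_{\lambda}=j_!(S_{\fD,\lambda})$ and $\nabla_{\mu}=i_*(S_{\fU,\mu})$. The $(j_!,j^*)$-adjunction gives $\Hom_{\fG}(\Delta_{\lambda},\nabla_{\mu})=\Hom_{\fD}(S_{\fD,\lambda},\,j^*i_*S_{\fU,\mu})$. Next I would apply the base-change isomorphism $j^*i_*\cong i'_*(j')^*$ from Proposition~\ref{prop:tri-pushfwd}(b), noting that restricting $S_{\fU,\mu}$ along $j'\colon\fM\to\fU$ yields exactly $S_{\fM,\mu}$ — this is immediate from the definition of $S_{\fU,\mu}$ as the module supported on a single isomorphism class with all non-isomorphisms acting by zero, since $\fM=\fU\cap\fD$ shares the relevant endomorphism ring. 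Thus $\Hom_{\fG}(\Delta_{\lambda},\nabla_{\mu})=\Hom_{\fD}(S_{\fD,\lambda},\,i'_*S_{\fM,\mu})$, and the $((i')^*,i'_*)$-adjunction rewrites this as $\Hom_{\fM}((i')^*S_{\fD,\lambda},\,S_{\fM,\mu})=\Hom_{\fM}(S_{\fM,\lambda},S_{\fM,\mu})$, using again that $(i')^*S_{\fD,\lambda}=S_{\fM,\lambda}$. Finally, two weights are either supported on different isomorphism classes of $\fG$ (so there are no non-zero maps) or they correspond to simple modules over the single semisimple ring $\End_{\fM}(\supp\lambda)$; since $\bk$ is algebraically closed, each $S_{\lambda}$ is absolutely simple, so Schur's lemma yields $\dim\Hom_{\fM}(S_{\fM,\lambda},S_{\fM,\mu})=\delta_{\lambda,\mu}$. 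This proves the dimension formula.

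For the statement about images, I would compose the canonical surjection $\Delta_{\lambda}\twoheadrightarrow L_{\lambda}$ (the unique simple quotient of $\Delta_{\lambda}$) with the canonical inclusion $L_{\lambda}\hookrightarrow\nabla_{\lambda}$ (the unique simple submodule of $\nabla_{\lambda}$), both produced in the earlier propositions. This composite is a non-zero element of $\Hom_{\fG}(\Delta_{\lambda},\nabla_{\lambda})$ whose image is $L_{\lambda}$. By the dimension formula just proved, $\Hom_{\fG}(\Delta_{\lambda},\nabla_{\lambda})$ is one-dimensional, so any non-zero map $\Delta_{\lambda}\to\nabla_{\lambda}$ is a non-zero scalar multiple of this composite and hence also has image $L_{\lambda}$.

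I do not expect a serious obstacle: once Proposition~\ref{prop:tri-pushfwd}(b) is available, the whole argument is bookkeeping with adjunctions. The one point deserving explicit care is the identification of $(j')^*S_{\fU,\mu}$ with $S_{\fM,\mu}$ and of $(i')^*S_{\fD,\lambda}$ with $S_{\fM,\lambda}$; this is routine but is what makes the two adjunctions glue together correctly. If one wished to avoid appealing to one-dimensionality for the image claim, one could instead trace the above chain of isomorphisms at the object $\supp(\lambda)$ to see that a non-zero $f\colon\Delta_{\lambda}\to\nabla_{\lambda}$ is an isomorphism in degree $\supp(\lambda)$; then $\ker f$ is zero in that degree and hence contained in the maximal submodule of $\Delta_{\lambda}$, while $\nabla_{\lambda}/L_{\lambda}$ vanishes in that degree, and comparing these two facts forces the image of $f$ to be $L_{\lambda}$.
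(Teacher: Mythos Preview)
Your proof is correct and essentially mirrors the paper's argument. The only difference is that the paper applies the $(i^*,i_*)$ adjunction first and then base change via Proposition~\ref{prop:tri-pushfwd}(a) (i.e., $i^*\Delta_\lambda = i^*j_!S_\lambda \cong j'_!S_\lambda$, landing in $\Mod_\fU$), whereas you apply the $(j_!,j^*)$ adjunction first and then base change via Proposition~\ref{prop:tri-pushfwd}(b), landing in $\Mod_\fD$; the two routes are formally dual and yield the same reduction to $\Hom_\fM(S_\lambda,S_\mu)$, and your treatment of the image statement is identical to the paper's.
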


\begin{proof}
We have
\begin{displaymath}
\Hom_{\fG}(\Delta_{\lambda}, \nabla_{\mu})
=\Hom_{\fU}(i^*(\Delta_{\lambda}), S_{\mu})
=\Hom_{\fU}(j'_!(S_{\lambda}), S_{\mu})
=\Hom_{\fM}(S_{\lambda}, S_{\mu}).
\end{displaymath}
As $\Hom_{\fM}(S_{\lambda}, S_{\mu})$ has dimension $\delta_{\lambda,\mu}$, the first statement is established. Since $\Hom_{\fG}(\Delta_{\lambda}, \nabla_{\lambda})$ is one-dimensional, it must be spanned by the composite map $\Delta_{\lambda} \to L_{\lambda} \to \nabla_{\lambda}$, from which the second statement follows.
\end{proof}

\subsection{Multiplicities} \label{ss:mult}

For a $\fG$-module $M$, we write $[M:L_{\lambda}]$ for the multiplicity of $L_{\lambda}$ in $M$, which we regard as an element of $\bN \cup \{\infty\}$. Precisely, let $S$ be the set of natural numbers $n$ for which there exists a chain of $\fG$-submodules
\begin{displaymath}
0 \subseteq F_1 \subsetneq G_1 \subseteq F_2 \subsetneq G_2 \subseteq \cdots \subseteq F_n \subsetneq G_n \subseteq M
\end{displaymath}
with $G_i/F_i \cong L_{\lambda}$ for all $i$. Then $[M:L_{\lambda}]$ is defined to be the supremum of $S$. This notion of multiplicity has the expected properties, namely:
\begin{itemize}
\item $[-:L_{\lambda}]$ is additive in short exact sequences.
\item If $N$ is a submodule of $M$ then $[N:L_{\lambda}] \le [M:L_{\lambda}]$.
\item If $M=\bigcup_{i \in I} M_i$ (directed union) then $[M:L_{\lambda}]=\sup_{i \in I} [M_i \colon L_{\lambda}]$.
\end{itemize}
For proofs, see, e.g., \cite[\S A.1]{increp}. We note that since $\lambda$ has multiplicity one in $L_{\lambda}$, we have $[M:L_{\lambda}] \le m_{\lambda}(M)$. The following technical proposition is helpful when studying multiplicities in infinite length modules.

\begin{proposition} \label{prop:nice-filt}
Let $M$ be a pointwise finite $\fG$-module and let $\Xi$ be a finite set of weights. We can then find a filtration $0=F_0 \subset \cdots \subset F_r=M$ such that for each $i$ the module $F_i/F_{i-1}$ is either simple or does not contain any weight in $\Xi$.
\end{proposition}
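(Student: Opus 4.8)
The plan is to build the filtration by induction, peeling off one "layer" of $M$ at a time, where each layer is controlled relative to the finite set $\Xi$ of weights. The key point is that since $\Xi$ is finite and $M$ is pointwise finite, the total multiplicity $\sum_{\lambda \in \Xi} m_\lambda(M)$ is a finite natural number, and this will serve as the induction parameter.

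First I would dispose of the trivial case: if $M$ contains no weight in $\Xi$, then $0 \subset M$ already works (with $r=1$ and $F_1/F_0 = M$ of the second type). So assume some $\lambda \in \Xi$ occurs in $M$. Among all weights in $\Xi$ that occur in $M$, choose one, call it $\lambda$, whose support $\supp(\lambda)$ is minimal (possible since the supports are partially ordered and there are finitely many relevant weights). Now consider an element of $M(\supp(\lambda))$ transforming in the $S_\lambda$-isotypic piece, and let $N$ be the $\fG$-submodule it generates. I claim $N$ can be arranged to be a lowest weight module with lowest weight $\lambda$: indeed, pick a simple $\End_{\fU}(\supp(\lambda))$-submodule of the $S_\lambda$-isotypic part of $M(\supp(\lambda))$, generate by it, and among its weights $\mu$ we have $\supp(\mu) \ge \supp(\lambda)$ only if... — more carefully, one should instead first pass to a submodule generated in minimal degree, so that $N$ is a lowest weight module (by the argument in part (d) of the standard-module proposition). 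The submodule $N$ is then a lowest weight module of lowest weight $\lambda$, hence a quotient of $\Delta_\lambda$; its unique simple quotient is $L_\lambda$. Set $F_1 \subset N$ to be a submodule with $N/F_1 \cong L_\lambda$. Wait — I want the first \emph{nonzero} step of my filtration to be \emph{simple}, so instead I would take the \emph{unique simple submodule}: but $N$ need not have a unique simple submodule. The cleaner route: since $N$ is a lowest weight module, it has a filtration whose first piece is a simple submodule $L$ of $N$; pass to that. So: let $L \subset M$ be \emph{any} simple submodule contained in the submodule generated by the lowest-weight vectors (such exists by pointwise finiteness — any nonzero finitely generated submodule has a simple submodule).

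Thus I set $F_1 = L$, a simple submodule of $M$, and pass to $M/F_1$. This is again pointwise finite. I must check the induction parameter strictly decreases: if $L \cong L_\mu$, then $\mu$ occurs in $M$, and $[M/F_1 : L_\nu] = [M:L_\nu] - \delta_{\mu,\nu}$ for the relevant finite multiplicities; more to the point, $m_\nu(M/F_1) \le m_\nu(M)$ for all $\nu$, with strict inequality for at least one $\nu \in \Xi$ \emph{provided} $\mu \in \Xi$. Here is the subtlety, and the step I expect to be the main obstacle: I must guarantee that the simple submodule I peel off has its label \emph{in} $\Xi$, or else the parameter $\sum_{\lambda\in\Xi} m_\lambda(M)$ does not drop. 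This is exactly why I chose $\lambda \in \Xi$ with minimal support and generated from that degree: the submodule $N$ generated in degree $\supp(\lambda)$ has all its weights $\mu$ satisfying $\supp(\mu) \ge \supp(\lambda)$, and $\lambda$ itself occurs in $N$ with multiplicity one at the bottom; its socle, however, could in principle involve $L_\mu$ with $\mu \notin \Xi$. To fix this, rather than peeling a simple submodule I peel off the whole lowest-weight submodule in one step only when it is simple, and otherwise I argue as follows: the submodule $N$ generated by a single simple $\End_{\fU}(\supp\lambda)$-isotypic piece in degree $\supp(\lambda)$ is itself a lowest weight module, so $N(\supp(\lambda)) \cong S_\lambda$ and this copy of $S_\lambda$ is \emph{not} in $N'$ for any proper submodule $N'$ that is zero in degree $\supp(\lambda)$; hence $N$ has a \emph{simple quotient} $L_\lambda$, not a simple submodule. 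So I should filter $M$ from the \emph{top}: realize $M$ as a directed union, no — better, use the dual perspective.

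Cleanest fix: induct on $n(M) := \sum_{\lambda \in \Xi} m_\lambda(M) \in \bN$. If $n(M)=0$ we are done. Otherwise pick $\lambda \in \Xi$ occurring in $M$ with $\supp(\lambda)$ minimal among such, and let $N \subseteq M$ be a nonzero lowest weight submodule of lowest weight $\lambda$ (generate by a simple isotypic piece in degree $\supp(\lambda)$ and, if necessary, replace by the submodule generated in that minimal degree to ensure the lowest-weight property). Let $K \subsetneq N$ be the maximal submodule with $K(\supp(\lambda)) = 0$, so $N/K \cong L_\lambda$. Apply the inductive hypothesis to $K$ with the same $\Xi$: since $K$ is zero in degree $\supp(\lambda)$ but all weights of $N$ (hence of $K$) have support $\ge \supp(\lambda)$, and $\lambda$ had minimal support among $\Xi$-weights of $M$, the only way a weight $\nu \in \Xi$ occurs in $K$ is if $\supp(\nu) > \supp(\lambda)$ or $\supp(\nu)$ incomparable; in any case $n(K) \le n(M) - 1$ because the unique copy of $S_\lambda$ (which is in $N$ but not $K$) contributed to $n(M)$. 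This gives a filtration of $K$; extend by $K \subset N$ (quotient $L_\lambda$, simple) and then apply the inductive hypothesis to $M/N$, whose $\Xi$-multiplicity also dropped by at least the one copy of $S_\lambda$. Splicing the two filtrations around the simple layer $N/K$ yields the desired filtration of $M$. The one genuinely delicate point, which I would spell out carefully, is the bookkeeping showing $n(K) + n(M/N) \le n(M) - 1$ using additivity of $m_\nu(-)$ in short exact sequences together with $m_\lambda(N) = 1$; everything else is routine.
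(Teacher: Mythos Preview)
Your overall strategy matches the paper's almost exactly: induct on the total $\Xi$-multiplicity $n(M)=\sum_{\lambda\in\Xi} m_\lambda(M)$, find a lowest weight submodule $N$ of weight $\lambda$ for some $\lambda\in\Xi$, let $K$ be its maximal proper submodule, and splice filtrations of $K$, the simple layer $N/K\cong L_\lambda$, and $M/N$. The bookkeeping you flag at the end is fine.

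The gap is in producing $N$. You choose $\lambda\in\Xi$ with $\supp(\lambda)$ minimal among $\Xi$-weights occurring in $M$, then generate by a copy of $S_\lambda$ in degree $\supp(\lambda)$. But nothing prevents $M$ from having weights $\nu\notin\Xi$ with $\supp(\nu)<\supp(\lambda)$; downward morphisms in $\fG$ can then carry your generators to nonzero elements in lower degrees, so the submodule you generate is \emph{not} a lowest weight module of weight $\lambda$. Equivalently, a nonzero map $\Delta_\lambda\to M$ need not exist: by adjunction $\Hom_\fG(\Delta_\lambda,M)=\Hom_\fD(S_{\fD,\lambda},j^*M)$, and this requires a copy of $S_\lambda$ in $M(\supp(\lambda))$ that is annihilated by all strictly downward maps, which is not guaranteed by $\lambda$ merely occurring in $M$. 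Your parenthetical fix (``replace by the submodule generated in that minimal degree'') produces a lowest weight module, but of some weight $\mu$ with $\supp(\mu)<\supp(\lambda)$, and $\mu$ need not lie in $\Xi$; peeling off $L_\mu$ then fails to decrease $n(M)$, and the induction stalls.

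The paper's one-line repair: since there are only finitely many weights below any given weight (axiom (T2)(a)), first enlarge $\Xi$ to its downward closure, which is still finite. Now $\lambda$ minimal among $\Xi$-weights occurring in $M$ is automatically a minimal weight of $M$, so $M(y)=0$ for all $y<\supp(\lambda)$, and a nonzero map $\Delta_\lambda\to M$ exists; its image is the desired $N$. With that adjustment your argument goes through and is essentially identical to the paper's.
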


\begin{proof}
  We are free to replace $\Xi$ with a larger finite set, so we may as well assume it is downwards closed (i.e., $\lambda \in \Xi$ and $\mu<\lambda$ implies $\mu \in \Xi$). Define the \emph{total $\Xi$-multiplicity} of $M$ to be the sum of all multiplicities of weights in $\Xi$. We proceed by induction on this quantity. If the total $\Xi$-multiplicity is~0 there is nothing to prove, so suppose this is not the case. Let $\lambda$ be a minimal element of $\Xi$ occurring in $M$. Since $\Xi$ is downwards closed, it follows that $\lambda$ is a minimal weight of $M$. We can thus find a non-zero map $\phi \colon \Delta_{\lambda} \to M$.

  Let $K$ be the maximal proper submodule of $\Delta_{\lambda}$, so that $\Delta_{\lambda}/K=L_{\lambda}$. Since $\phi$ is non-zero, its kernel is a proper submodule of $\Delta_{\lambda}$, and thus contained in $K$. Consider the filtration $0 \subset \phi(K) \subset \phi(\Delta_{\lambda}) \subset M$. Now, $\phi(K)$ is a submodule of $M$ with strictly smaller total $\Xi$-multiplicity (since the multiplicity of $\lambda$ in $\phi(K)$ is~0); thus, by induction, $\phi(K)$ has a filtration of the desired kind. The quotient $\phi(\Delta_{\lambda})/\phi(K)$ is isomorphic to $L_{\lambda}$, and so it too trivially admits a filtration of the desired kind. Finally, $M/\phi(\Delta_{\lambda})$ also has smaller total $\Xi$-multiplicity than $M$, since $\phi(\Delta_{\lambda})$ has $\lambda$ in it, and so by induction it too admits a filtration of the desired kind. Splicing these filtrations together gives the desired filtration on $M$.
\end{proof}

\begin{proposition} \label{prop:mult-formula}
Let $M$ be an arbitrary $\fG$-module. Then
\begin{displaymath}
m_{\lambda}(M) = \sum_{\mu \le \lambda} m_{\lambda}(L_{\mu}) [M:L_{\mu}].
\end{displaymath}
\end{proposition}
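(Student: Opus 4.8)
The plan is to first reduce to the case where $M$ is pointwise finite, and then to run an induction along the filtration produced by Proposition~\ref{prop:nice-filt}.

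\emph{Reduction.} Every $\fG$-module is the directed union of its finitely generated submodules, and since $\fG$ satisfies (T0), each such submodule is pointwise finite. For a directed union $M=\bigcup_i M_i$ one has $m_\lambda(M)=\sup_i m_\lambda(M_i)$ (the restriction to $\fM$ is semisimple, and its $S_\lambda$-isotypic component is the directed union of those of the $M_i$) and $[M:L_\mu]=\sup_i [M_i:L_\mu]$ by the properties recalled in \S\ref{ss:mult}. Because $\{\mu\in\Lambda\mid \mu\le\lambda\}$ is finite (\S\ref{ss:weights}) and each $i\mapsto [M_i:L_\mu]$ is monotone along the directed system, the supremum commutes with the finite sum $\sum_{\mu\le\lambda} m_\lambda(L_\mu)[M_i:L_\mu]$; this also uses that $m_\lambda(L_\mu)$ is finite, which holds since $L_\mu$ is a quotient of $\Delta_\mu=j_!(S_{\fD,\mu})$ and hence pointwise finite by Proposition~\ref{prop:ij-finite}(b). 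So it suffices to treat pointwise finite $M$, in which case every quantity in sight is finite.

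\emph{The pointwise finite case.} Apply Proposition~\ref{prop:nice-filt} with $\Xi=\{\mu\in\Lambda\mid \mu\le\lambda\}$ to obtain a finite filtration $0=F_0\subset\cdots\subset F_r=M$ whose successive quotients $N_i=F_i/F_{i-1}$ are each either simple or contain no weight $\le\lambda$. Restriction to $\fM$ is exact and $\Mod_{\fM}$ is semisimple, so $m_\lambda$ is additive in short exact sequences; hence $m_\lambda(M)=\sum_i m_\lambda(N_i)$, and likewise $[M:L_\mu]=\sum_i [N_i:L_\mu]$ by the additivity recalled in \S\ref{ss:mult}. It therefore remains to verify $m_\lambda(N)=\sum_{\mu\le\lambda} m_\lambda(L_\mu)[N:L_\mu]$ for each $N=N_i$. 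If $N$ has no weight $\le\lambda$, then $m_\lambda(N)=0$, and for every $\mu\le\lambda$ we have $[N:L_\mu]\le m_\mu(N)=0$ by the inequality recalled in \S\ref{ss:mult}, so both sides vanish. If $N=L_\nu$ is simple, then $[L_\nu:L_\mu]=\delta_{\nu,\mu}$, so the right-hand side equals $m_\lambda(L_\nu)$ when $\nu\le\lambda$ and equals $0$ otherwise; but the left-hand side is also $0$ when $\nu\not\le\lambda$, since every weight of $L_\nu$ is $\ge\nu$ and hence distinct from $\lambda$. Thus the two sides agree for every $N_i$, and summing over $i$ finishes the proof.

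\emph{Main obstacle.} The only step requiring genuine care is the reduction: one must check that both sides of the identity are compatible with directed colimits and that the colimit may be interchanged with the finite sum over $\{\mu\le\lambda\}$ (which is where finiteness of that index set and of each $m_\lambda(L_\mu)$ enters). Everything afterward is routine bookkeeping with the additivity of $m_\lambda$ and $[-:L_\mu]$ and the minimality properties already established.
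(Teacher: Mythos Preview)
Your proof is correct and follows essentially the same approach as the paper: reduce to the pointwise finite case via directed unions, then use the filtration from Proposition~\ref{prop:nice-filt} and additivity of both sides. The only cosmetic difference is that the paper takes $\Xi=\{\lambda\}$ rather than your $\Xi=\{\mu\mid\mu\le\lambda\}$, handling the ``no weight $\lambda$'' case by observing that any constituent $L_\mu$ of such an $N$ must satisfy $m_\lambda(L_\mu)=0$; your choice of $\Xi$ makes that step slightly more direct.
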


\begin{proof}
  Let $\alpha(M)=m_{\lambda}(M)$ and $\beta(M)=\sum_{\mu \le \lambda} m_{\lambda}(L_{\mu}) [M:L_{\mu}]$. We must show $\alpha(M)=\beta(M)$ for all $M$. The equality clearly holds if $M$ is simple (since if $\lambda$ appears in $L_{\mu}$ then $\mu \le \lambda$), or if $m_{\lambda}(M)=0$ (since we must then have $m_{\lambda}(L_{\mu})=0$ if $L_{\mu}$ is a constituent of $M$).
  
  Suppose now that $M$ is pointwise finite. Let $0=F_0 \subset \cdots \subset F_r=M$ be a filtration as in Proposition~\ref{prop:nice-filt} with $\Xi=\{\lambda\}$. Since $\alpha(F_i/F_{i-1})=\beta(F_i/F_{i-1})$ for each $i$, and both $\alpha$ and $\beta$ are additive in short exact sequences, it follows that $\alpha(M)=\beta(M)$.

  Finally, let $M$ be arbitrary. Write $M=\bigcup_{i \in I} M_i$ where $\{M_i\}_{i \in I}$ is a directed family of pointwise finite submodules of $M$. Then $\alpha(M_i)=\beta(M_i)$ for each $i$. Since $\alpha(M)=\sup \alpha(M_i)$ and $\beta(M)=\sup \beta(M_i)$, it follows that $\alpha(M)=\beta(M)$. This completes the proof. 
\end{proof}

\begin{corollary}
Let $M$ be a $\fG$-module. Then $M$ is pointwise finite if and only if $[M:L_{\lambda}]$ is finite for all $\lambda$.
\end{corollary}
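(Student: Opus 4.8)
The plan is to deduce this directly from Proposition~\ref{prop:mult-formula}, together with the two elementary facts recorded earlier: that $M$ is pointwise finite if and only if $m_{\lambda}(M)$ is finite for all $\lambda$ (see \S\ref{ss:weights}), and that $[M:L_{\lambda}] \le m_{\lambda}(M)$ (see \S\ref{ss:mult}).

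First I would handle the easy direction. If $M$ is pointwise finite, then $m_{\lambda}(M)<\infty$ for every $\lambda$, and hence $[M:L_{\lambda}] \le m_{\lambda}(M) < \infty$ for every $\lambda$.

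For the converse, suppose $[M:L_{\mu}]<\infty$ for all $\mu$. By Proposition~\ref{prop:mult-formula} we have $m_{\lambda}(M) = \sum_{\mu \le \lambda} m_{\lambda}(L_{\mu})\,[M:L_{\mu}]$. The indexing set is finite, since for a fixed $\lambda$ there are only finitely many $\mu$ with $\mu \le \lambda$ (see \S\ref{ss:weights}). It therefore suffices to show each term is finite, i.e. that $m_{\lambda}(L_{\mu})<\infty$; equivalently, that each simple module $L_{\mu}$ is pointwise finite. But $S_{\fD,\mu}$ is pointwise finite (it is a finite-dimensional $\fM$-module supported at a single isomorphism class), so $\Delta_{\mu}=j_!(S_{\fD,\mu})$ is pointwise finite by Proposition~\ref{prop:ij-finite}(b), and $L_{\mu}$, being a quotient of $\Delta_{\mu}$, is pointwise finite as well. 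Hence every term $m_{\lambda}(L_{\mu})\,[M:L_{\mu}]$ is a product of two finite numbers, the sum is finite, and so $m_{\lambda}(M)<\infty$ for all $\lambda$; that is, $M$ is pointwise finite.

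There is no real obstacle here; the only point requiring a moment's care is the finiteness of $m_{\lambda}(L_{\mu})$, which reduces to pointwise finiteness of the standard modules via Proposition~\ref{prop:ij-finite}(b).
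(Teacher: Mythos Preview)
Your proof is correct and follows essentially the same approach as the paper's own proof: both directions rely on the inequality $[M:L_{\lambda}] \le m_{\lambda}(M)$ and on Proposition~\ref{prop:mult-formula}. The only difference is that the paper leaves implicit the finiteness of the coefficients $m_{\lambda}(L_{\mu})$, whereas you spell it out via $L_{\mu} \subset \Delta_{\mu}$ and Proposition~\ref{prop:ij-finite}(b); this extra care is welcome but not a different argument.
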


\begin{proof}
If $M$ is pointwise finite then $m_{\lambda}(M)$ is finite, and so $[M:L_{\lambda}]$ is too, since it is bounded above by $m_{\lambda}(M)$. Conversely, if $[M:L_{\lambda}]$ is finite for all $\lambda$ then the proposition shows that $m_{\lambda}(M)$ is finite for all $\lambda$, and so $M$ is pointwise finite.
\end{proof}

\begin{corollary}
Let $M$ be a $\fG$-module such that $[M:L_{\lambda}]=0$ for all $\lambda$. Then $M=0$.
\end{corollary}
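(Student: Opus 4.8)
The plan is to deduce this immediately from the multiplicity formula of Proposition~\ref{prop:mult-formula} together with the semisimplicity built into axiom (T1). The first step is to feed the hypothesis into that formula: since $[M:L_\mu]=0$ for every weight $\mu$, every summand $m_\lambda(L_\mu)[M:L_\mu]$ of $\sum_{\mu\le\lambda} m_\lambda(L_\mu)[M:L_\mu]$ vanishes, and hence $m_\lambda(M)=0$ for all $\lambda\in\Lambda$. (The sum runs over the finite set $\{\mu\le\lambda\}$, so there is nothing to worry about regarding convergence.)

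The second step is to translate the vanishing of all the $m_\lambda(M)$ back into the vanishing of $M$ itself. By definition $m_\lambda(M)$ is the multiplicity of the simple $\fM$-module $S_\lambda$ in the restriction $k^*M$ of $M$ to $\fM$. As recalled in \S\ref{ss:weights}, $\fM$ has the property that non-zero morphisms occur only between isomorphic objects and each $\End_\fM(x)$ is semisimple; consequently an $\fM$-module is nothing more than a family of semisimple $\End_\fM(x)$-modules $M(x)$ indexed by $\vert\fG\vert$, and such a family is zero precisely when no simple $\fM$-module occurs in it. Therefore $m_\lambda(M)=0$ for all $\lambda$ forces $M(x)=0$ for every object $x$, i.e.\ $M=0$.

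I do not expect any real obstacle here: the entire content sits in Proposition~\ref{prop:mult-formula}, whose proof already handled the passage through pointwise-finite submodules and directed unions. The only points requiring (minor) care are that the multiplicity formula genuinely applies to an arbitrary $\fG$-module, not just a pointwise-finite one (which is exactly what that proposition asserts), and that "every simple $\fM$-module has multiplicity zero" really does imply the $\fM$-module is zero, which is where semisimplicity of the endomorphism rings is used.
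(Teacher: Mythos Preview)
Your proof is correct and follows exactly the same approach as the paper's own proof, which simply says that Proposition~\ref{prop:mult-formula} gives $m_\lambda(M)=0$ for all $\lambda$, whence $M=0$. You have merely unpacked both steps in more detail than the paper does.
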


\begin{proof}
The proposition shows that $m_{\lambda}(M)=0$ for all $\lambda$, which implies $M=0$.
\end{proof}

\subsection{Formal characters}

Let $\bQ \lbb \Lambda \rbb$ denote the set of all formal sums $\sum_{\lambda \in \Lambda} a(\lambda) [\lambda]$ where $a(\lambda)$ is a rational number. Given a pointwise finite $\fM$-module $M$, we define its {\bf formal character}, denoted $\Theta_M$, to be the element of $\bQ \lbb \Lambda \rbb$ given by $\sum_{\lambda \in \Lambda} m_{\lambda}(M) [\lambda]$. We define the formal character of a pointwise finite $\fU$-, $\fD$-, or $\fG$-module to be the formal character of its restriction to $\fM$.

\begin{proposition} \label{prop:std-char}
We have the following:
\begin{enumerate}
\item Let $\{M_{\lambda}\}_{\lambda \in \Lambda}$ be a family of pointwise finite modules such that $m_{\lambda}(M_{\lambda}) \ne 0$ and $m_{\mu}(M_{\lambda})$ is non-zero only for $\mu \ge \lambda$. Then the formal characters $\Theta_{M_{\lambda}}$ for $\lambda \in \Lambda$ are linearly independent.
\item The formal characters of the standard modules are linearly independent.
\item The formal characters of the simple modules are linearly independent.
\end{enumerate}
\end{proposition}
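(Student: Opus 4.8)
The plan is to prove (a) by a straightforward triangularity (``unitriangularity'') argument, and then obtain (b) and (c) by observing that the standard modules and the simple modules each form a family to which (a) applies.

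For (a), suppose for contradiction that there is a nontrivial finite relation $\sum_{\lambda \in F} c_{\lambda} \Theta_{M_{\lambda}} = 0$, where $F \subseteq \Lambda$ is finite and the $c_{\lambda} \in \bQ$ are not all zero; here and below ``formal characters are linearly independent'' is of course understood in the sense of finite linear combinations, since $\Lambda$ may be infinite. Let $F' = \{\lambda \in F : c_{\lambda} \ne 0\}$, a nonempty finite subset of the poset $\Lambda$, and choose an element $\lambda_0 \in F'$ that is minimal among the elements of $F'$ (such an element exists because $\le$ restricted to the finite set $F'$ has a minimal element; note that $\Lambda$ itself need not contain minimal elements). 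Extracting the coefficient of $[\lambda_0]$ from the relation $\sum_{\lambda \in F'} m_{\lambda_0}(M_{\lambda}) \, c_{\lambda} = 0$: by hypothesis $m_{\lambda_0}(M_{\lambda}) \ne 0$ forces $\lambda \le \lambda_0$, and combined with the minimality of $\lambda_0$ in $F'$ this leaves only the term $\lambda = \lambda_0$. Hence the coefficient of $[\lambda_0]$ equals $c_{\lambda_0} m_{\lambda_0}(M_{\lambda_0})$, which is nonzero since $c_{\lambda_0} \ne 0$ and $m_{\lambda_0}(M_{\lambda_0}) \ne 0$ by hypothesis. This contradiction proves (a).

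For (b): the standard module $\Delta_{\lambda} = j_!(S_{\fD,\lambda})$ is pointwise finite, since $S_{\fD,\lambda}$ is pointwise finite (as $\End_{\fM}(\supp \lambda)$ is finite dimensional) and $j_!$ preserves pointwise finiteness by Proposition~\ref{prop:ij-finite}(b). Moreover $\Delta_{\lambda}$ is a lowest weight module of lowest weight $\lambda$, so $m_{\lambda}(\Delta_{\lambda}) = 1 \ne 0$ and $m_{\mu}(\Delta_{\lambda}) \ne 0$ implies $\mu \ge \lambda$; thus the family $\{\Delta_{\lambda}\}_{\lambda \in \Lambda}$ satisfies the hypotheses of (a) and the claim follows. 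For (c): each $L_{\lambda}$ is a quotient of $\Delta_{\lambda}$, hence pointwise finite; every weight occurring in $L_{\lambda}$ occurs in $\Delta_{\lambda}$ and is therefore $\ge \lambda$; and $m_{\lambda}(L_{\lambda}) = 1$ because the surjection $\Delta_{\lambda} \to L_{\lambda}$ is an isomorphism in degree $\supp(\lambda)$ (its kernel being concentrated in strictly higher degrees). So $\{L_{\lambda}\}_{\lambda \in \Lambda}$ again satisfies the hypotheses of (a). The argument presents no real obstacle; the only point requiring care is the interpretation of linear independence over a possibly infinite poset and the need to select the minimal index inside the finite support $F'$ of a purported relation rather than in $\Lambda$.
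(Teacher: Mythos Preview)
Your proof is correct and follows essentially the same approach as the paper's: pick a minimal $\lambda$ in the support of a purported relation, read off the coefficient of $[\lambda]$, and obtain a contradiction from $c_\lambda m_\lambda(M_\lambda)\ne 0$. You are somewhat more explicit than the paper about the finiteness bookkeeping (working inside the finite support $F'$ and verifying pointwise finiteness of $\Delta_\lambda$ and $L_\lambda$ before invoking (a)), but the argument is the same.
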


\begin{proof}
(a) Suppose $\sum_\lambda c_\lambda \Theta_{M_\lambda}=0$ is a non-trivial linear dependence. Let $\lambda$ be minimal with $c_{\lambda} \ne 0$. If $\mu<\lambda$ then $c_{\mu}=0$ and if $\mu \nlet \lambda$ then $m_{\lambda}(M_{\mu})=0$. Thus the coefficient of $[\lambda]$ in the sum is $c_{\lambda} m_{\lambda}(M_{\lambda}) \ne 0$, a contradiction. (b) and (c) follow immediately from (a).
\end{proof}

\begin{proposition}
Let $M$ and $N$ be pointwise finite $\fG$-modules. Then $\Theta_M=\Theta_N$ if and only if $[M:L_{\lambda}]=[N:L_{\lambda}]$ for all $\lambda$.
\end{proposition}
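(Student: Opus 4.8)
The plan is to reduce the claim to inverting the unitriangular linear system recorded in Proposition~\ref{prop:mult-formula}. By definition of the formal character, $\Theta_M = \Theta_N$ holds precisely when $m_\lambda(M) = m_\lambda(N)$ for every weight $\lambda$. So the task is to show that the family $(m_\lambda(M))_{\lambda \in \Lambda}$ and the family $([M:L_\lambda])_{\lambda \in \Lambda}$ determine one another. The direction ``$[M:L_\lambda] = [N:L_\lambda]$ for all $\lambda$ implies $\Theta_M = \Theta_N$'' is immediate: Proposition~\ref{prop:mult-formula} then computes $m_\lambda(M)$ and $m_\lambda(N)$ by one and the same formula.

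For the converse, assume $m_\lambda(M) = m_\lambda(N)$ for all $\lambda$. Since $M$ and $N$ are pointwise finite, every multiplicity $[M:L_\mu]$ and $[N:L_\mu]$ is a finite natural number (as recorded in the first corollary to Proposition~\ref{prop:mult-formula}; alternatively, each term $m_\lambda(L_\mu)[M:L_\mu]$ is bounded by the finite number $m_\lambda(M)$), so the subtractions below are legitimate. I would prove $[M:L_\lambda] = [N:L_\lambda]$ by Noetherian induction on $\lambda$, which is valid because for each $\lambda$ the set $\{\mu \in \Lambda : \mu \le \lambda\}$ is finite (see \S\ref{ss:weights}), hence contains no infinite strictly descending chain. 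Fix $\lambda$ and suppose $[M:L_\mu] = [N:L_\mu]$ for all $\mu < \lambda$. Isolating the $\mu = \lambda$ term in Proposition~\ref{prop:mult-formula} and using $m_\lambda(L_\lambda) = 1$ gives
\[
[M:L_\lambda] = m_\lambda(M) - \sum_{\mu < \lambda} m_\lambda(L_\mu)\,[M:L_\mu],
\]
and the same identity for $N$. The first term on the right agrees for $M$ and $N$ by hypothesis, and each summand agrees by the inductive hypothesis, so $[M:L_\lambda] = [N:L_\lambda]$, completing the induction.

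There is no serious obstacle here: the heart of the matter, Proposition~\ref{prop:mult-formula}, is already in hand, and the rest is the standard observation that a unitriangular system is invertible. The only points needing care are that $\le$ is merely a partial order, so one must invoke the finiteness of lower sets in $\Lambda$ to run the induction, and that the multiplicities $[M:L_\lambda]$ must be known to be finite before one can cancel them — both of which follow from pointwise finiteness.
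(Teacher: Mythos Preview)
Your proof is correct and follows essentially the same approach as the paper: both directions are handled identically, with the nontrivial implication proved by induction on $\lambda$ using Proposition~\ref{prop:mult-formula} and $m_\lambda(L_\lambda)=1$ to isolate $[M:L_\lambda]$, and both note that pointwise finiteness is needed to make the subtraction legitimate.
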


\begin{proof}
Clearly, if $[M:L_{\lambda}]=[N:L_{\lambda}]$ for all $\lambda$ then $m_{\lambda}(M)=m_{\lambda}(N)$ for all $\lambda$ (even without pointwise finiteness) by Proposition~\ref{prop:mult-formula}, and so $\Theta_M=\Theta_N$. Conversely, suppose that $\Theta_M=\Theta_N$, i.e., $m_{\lambda}(M)=m_{\lambda}(N)$ for all $\lambda$. We show that $[M:L_{\lambda}]=[N:L_{\lambda}]$ by induction on $\lambda$. Thus let $\lambda$ be given, and suppose that $[M:L_{\mu}]=[N:L_{\mu}]$ for all $\mu<\lambda$. Using  Proposition~\ref{prop:mult-formula}, we have
\begin{align*}
m_{\lambda}(L_{\lambda})[M:L_{\lambda}]
&= m_{\lambda}(M)-\sum_{\mu<\lambda} m_{\lambda}(L_{\mu})[M:L_{\mu}] \\
&= m_{\lambda}(N)-\sum_{\mu<\lambda} m_{\lambda}(L_{\mu})[N:L_{\mu}] \\
&=m_{\lambda}(L_{\lambda})[N:L_{\lambda}]
\end{align*}
Note that pointwise finiteness is crucial here to ensure that all quantities are finite so that we can subtract. Since $m_{\lambda}(L_{\lambda})=1$, we thus find $[M:L_{\lambda}]=[N:L_{\lambda}]$, as desired.
\end{proof}

\subsection{Standard filtrations}

Let $M$ be a $\fG$-module. A {\bf standard filtration} on $M$ is a filtration $0=F_0 \subset \cdots \subset F_r=M$ such that for each $1 \le i \le r$ we have $F_i/F_{i-1} \cong \Delta_{\mu(i)}$ for some $\mu(i) \in \Lambda$. We say that $M$ is {\bf semistandard} if it admits a standard filtration. Co-standard filtrations and co-semistandard objects are defined analogously. Since the two notions are dual, we concentrate on semistandard objects.

\begin{proposition} \label{prop:sstd}
Let $M$ be a finitely generated $\fG$-module. The following are equivalent:
\begin{enumerate}
\item $M$ is semistandard.
\item $i^*M$ is a projective $\fU$-module.
\item $\Ext^1_{\fG}(M, \nabla_{\lambda})=0$ for all $\lambda$.
\item $\Ext^r_{\fG}(M, N)=0$ for all co-semistandard $N$ and all $r \ge 1$.
\end{enumerate}
\end{proposition}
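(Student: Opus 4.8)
The strategy is to prove the cycle $(a)\Rightarrow(b)\Rightarrow(a)$ together with $(b)\Rightarrow(d)\Rightarrow(c)\Rightarrow(b)$, all passing through condition (b); the only hard step is $(b)\Rightarrow(a)$. For $(a)\Rightarrow(b)$: the exact functor $i^*$ carries a standard filtration of $M$ to a finite filtration of $i^*M$ with subquotients $i^*\Delta_\mu=j'_!(S_{\fM,\mu})$ (Proposition~\ref{prop:tri-pushfwd}(a)), and each of these is a projective $\fU$-module since $\fM$ is semisimple and $j'_!$ preserves projectives (Proposition~\ref{prop:kan-proj}); a finite filtration with projective subquotients splits, so $i^*M$ is projective. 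For the block $(b)\Rightarrow(d)\Rightarrow(c)\Rightarrow(b)$, the key is the identification $\Ext^r_{\fG}(M,\nabla_\mu)=\Ext^r_{\fG}(M,i_*S_{\fU,\mu})=\Ext^r_{\fU}(i^*M,S_{\fU,\mu})$ of Proposition~\ref{prop:ext-compare}: if $i^*M$ is projective these vanish for $r\ge1$, and since every co-semistandard module has a finite filtration by costandard modules, a long exact sequence argument gives $\Ext^r_{\fG}(M,N)=0$ for all co-semistandard $N$ and $r\ge1$, which is $(b)\Rightarrow(d)$; and $(d)\Rightarrow(c)$ is the special case $N=\nabla_\lambda$, $r=1$. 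For $(c)\Rightarrow(b)$, one first checks (by the argument classifying the simple $\fG$-modules, applied to the upwards category $\fU$ with its semisimple endomorphism rings) that every simple $\fU$-module is some $S_{\fU,\mu}$; then (c) says $\Ext^1_{\fU}(i^*M,S)=0$ for all simple $\fU$-modules $S$, so $i^*M$ is projective by Proposition~\ref{prop:min-res} (whose hypotheses hold: $\fU$ is upwards, has semisimple endomorphism rings by (T1), and $\vert\fG\vert$ has no infinite descending chains by (T2)(a)).

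It remains to prove $(b)\Rightarrow(a)$. As $M$ is finitely generated it is pointwise finite, so $i^*M$ is a finitely generated (Proposition~\ref{prop:ij-finite}(a)), pointwise finite $\fU$-module, and hence has a finite indecomposable decomposition (Proposition~\ref{prop:decomp}); I would induct on the number of its summands. If there are none then $M=0$ and we are done. Otherwise $M\ne0$; by (T2)(a) the set $\{x:M(x)\ne0\}$ is well-founded, so it has a minimal element $x_0$, and the semisimple $\End_{\fU}(x_0)$-module $M(x_0)\ne0$ has a simple summand isomorphic to $S_\lambda$ for some weight $\lambda$ with $\supp(\lambda)=x_0$. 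Using minimality of $x_0$, the inclusion $S_\lambda\hookrightarrow M(x_0)$ extends by zero to a map of $\fD$-modules $S_{\fD,\lambda}\to j^*M$, hence by adjunction to a map $\phi\colon\Delta_\lambda\to M$; since $\Delta_\lambda$ is generated in degree $x_0$ with $\Delta_\lambda(x_0)=S_\lambda$ simple over $\End_{\fU}(x_0)$ (base change for $\Delta_\lambda$), $\phi$ is injective in degree $x_0$. Set $M'$ equal to the image of $\phi$.

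The crux is that $M'\cong\Delta_\lambda$. Applying the exact functor $i^*$ and using $i^*\Delta_\lambda=j'_!(S_{\fM,\lambda})$, the module $i^*M'$ is the $\fU$-submodule $Q$ of $i^*M$ generated by $S_\lambda\subseteq M(x_0)$; by Proposition~\ref{prop:low-proj} (with $\fU$ upwards, $i^*M$ projective, $x_0$ minimal, and $S_\lambda$ an $\End_{\fU}(x_0)$-summand of $(i^*M)(x_0)$), $Q$ is a direct summand of $i^*M$, in particular projective. Hence the surjection $i^*\Delta_\lambda\twoheadrightarrow i^*M'=Q$ coming from $\Delta_\lambda\twoheadrightarrow M'$ splits, exhibiting $Q$ as a summand of $i^*\Delta_\lambda=j'_!(S_{\fM,\lambda})$; but this module is indecomposable, its endomorphism ring being $\Hom_{\fM}(S_{\fM,\lambda},S_{\fM,\lambda})=\bk$ since $\bk$ is algebraically closed, and $Q\ne0$, so the surjection is an isomorphism and therefore (as $i$ is bijective on objects) $\phi\colon\Delta_\lambda\to M'$ is an isomorphism. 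Then $M/M'$ is finitely generated with $i^*(M/M')=i^*M/Q$ the complementary summand of $i^*M$, hence projective, and with one fewer indecomposable summand; by induction $M/M'$ is semistandard, and lifting a standard filtration of $M/M'$ along $M\twoheadrightarrow M/M'$ and prepending $0\subset M'\cong\Delta_\lambda$ gives a standard filtration of $M$. The main obstacle is exactly this identification $M'\cong\Delta_\lambda$: a priori $M'$ is only a lowest-weight quotient of $\Delta_\lambda$, and what rules out a proper quotient is the interplay of Proposition~\ref{prop:low-proj} (making $Q$ a summand of a projective) with the indecomposability of $i^*\Delta_\lambda$; the remaining steps — the $\Ext$ comparisons, the classification of simple $\fU$-modules, and the construction of $\phi$ — are routine bookkeeping.
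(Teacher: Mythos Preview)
Your proof is correct and follows essentially the same approach as the paper: the same cycle of implications, the same use of Proposition~\ref{prop:ext-compare} and Proposition~\ref{prop:min-res} for the $\Ext$ steps, and the same induction on the number of indecomposable summands of $i^*M$ for $(b)\Rightarrow(a)$. The only difference is in how you establish injectivity of $\phi$: the paper invokes Proposition~\ref{prop:low-proj} directly (implicitly using that its proof shows $j'_!S_\lambda\to i^*M$ is a split inclusion), whereas you deduce it from the indecomposability of $i^*\Delta_\lambda=j'_!S_{\fM,\lambda}$ (its endomorphism ring being $\Hom_{\fM}(S_\lambda,S_\lambda)=\bk$) together with the fact that $Q$ is a nonzero projective summand of it---a slightly more self-contained argument that uses only the statement of Proposition~\ref{prop:low-proj}.
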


\begin{proof}
(a) $\Rightarrow$ (b).
Since the standard modules are projective as $\fU$-modules, any semistandard module is also projective as a $\fU$-module.

(b) $\Rightarrow$ (a).
Suppose that $i^*M$ is projective. Since $i^*M$ is finitely generated by Proposition~\ref{prop:ij-finite}, it is a direct sum of finitely many indecomposable projectives by Proposition~\ref{prop:decomp} and the number of summands depends only on $i^*M$. We prove by induction on this number that $M$ is semistandard. If the number is 0, there is nothing to show. Otherwise, let $\lambda$ be a minimal weight occurring in $M$, and let $\phi \colon \Delta_{\lambda} \to M$ be the corresponding map. Since $\Delta_{\lambda}$ and $M$ are projective as $\fU$-modules, it follows from Proposition~\ref{prop:low-proj} that $\phi$ is injective and $i^*\coker(\phi)$ is projective as a $\fU$-module. By uniqueness of decompositions, we have $i^*M \cong i^* \Delta_\lambda \oplus i^*\coker(\phi)$, and so $i^*\coker(\phi)$ has fewer indecomposable summands than $M$. By induction, $\coker(\phi)$ is semistandard, and thus $M$ is semistandard.

(b) $\Rightarrow$ (d).
Suppose that $i^*M$ is projective. Then $\Ext^r_{\fG}(M, \nabla_{\lambda})=\Ext^r_{\fU}(i^*M, S_{\lambda})$ for all $r$ by Proposition~\ref{prop:ext-compare}, and this vanishes for $r>0$ since $i^*M$ is projective by Proposition~\ref{prop:tri-pushfwd}(c). Thus, by d\'evissage, $\Ext^r_{\fG}(M, N)=0$ for any co-semistandard $N$ and $r>0$.

(d) $\Rightarrow$ (c). Obvious.

(c) $\Rightarrow$ (b). Suppose that $\Ext^1_{\fG}(M, \nabla_{\lambda})=0$ for all $\lambda$. Then, by Proposition~\ref{prop:ext-compare}, we have $\Ext^1_{\fU}(i^*M, S_{\lambda})=0$ for all $\lambda$. By Proposition~\ref{prop:min-res}, it follows that $i^*M$ is projective.
\end{proof}

\begin{corollary} \label{cor:semistd-summand}
Any summand of a semistandard $\fG$-module is semistandard.
\end{corollary}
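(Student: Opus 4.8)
The plan is to deduce this immediately from the characterization of semistandard modules in Proposition~\ref{prop:sstd}. Suppose $M$ is a semistandard $\fG$-module and $M \cong N \oplus N'$; we must show that $N$ is semistandard. First I would observe that $M$ is finitely generated: by definition a semistandard module admits a finite filtration with standard subquotients, and standard modules are finitely generated, so $M$ is a finite iterated extension of finitely generated modules. Choosing finitely many generators of $M$ and projecting them onto the summand $N$ then exhibits $N$ as finitely generated, so Proposition~\ref{prop:sstd} applies to $N$.

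The main point is that the pullback functor $i^*$ is additive, hence $i^*M \cong i^*N \oplus i^*N'$. Since $M$ is semistandard, Proposition~\ref{prop:sstd} (condition (a) implies (b)) tells us that $i^*M$ is a projective $\fU$-module; a direct summand of a projective module is again projective, so $i^*N$ is projective. Applying Proposition~\ref{prop:sstd} in the other direction (condition (b) implies (a)) now gives that $N$ is semistandard, as desired.

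I do not expect any real obstacle: all the content is packaged in the equivalence of conditions (a) and (b) of Proposition~\ref{prop:sstd}, and the only auxiliary point is the finite generation of the summand, which is routine. One could equally well argue via condition (c): the functor $\Ext^1_{\fG}(-, \nabla_{\lambda})$ is additive, so $\Ext^1_{\fG}(N, \nabla_{\lambda})$ is a summand of $\Ext^1_{\fG}(M, \nabla_{\lambda}) = 0$ for every $\lambda$, whence $N$ is semistandard.
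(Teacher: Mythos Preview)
Your proposal is correct and matches the paper's intent: the corollary is stated immediately after Proposition~\ref{prop:sstd} with no proof, so it is meant to follow directly from the equivalent characterizations there, exactly as you argue via condition~(b) (or equally well via~(c)). Your observation that one must check the summand is finitely generated before invoking Proposition~\ref{prop:sstd} is a detail the paper leaves implicit, and your justification is fine.
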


Suppose that $M$ is semistandard, and let $0=F_0 \subset \cdots \subset F_r = M$ be a standard filtration. For $\mu \in \Lambda$, let $n(\mu)$ be the number of indices $i$ for which $F_i/F_{i-1}$ is isomorphic to $\Delta_{\mu}$. Then $\Theta_M = \sum_{\mu} n(\mu) \Theta_{\Delta_{\mu}}$. Since the formal characters of standard modules are linearly independent (Proposition~\ref{prop:std-char}), it follows that one can recover $n(\mu)$ from $\Theta_M$; the quantity $n(\mu)$ is therefore independent of the standard filtration. We define the {\bf multiplicity} of $\Delta_{\mu}$ in $M$, denoted $[M:\Delta_{\mu}]$, to be $n(\mu)$. We similarly define the multiplicity of $\nabla_{\mu}$ in a co-semistandard module. The following proposition gives a useful way of computing these multiplicities.

\begin{proposition} \label{prop:Delta-mult}
Let $M$ be a semistandard $\fG$-module. Then
\begin{displaymath}
[M:\Delta_{\lambda}] = \dim \Hom_{\fG}(M, \nabla_{\lambda})
\end{displaymath}
\end{proposition}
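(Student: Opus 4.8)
The plan is to induct along a standard filtration of $M$, using the vanishing of $\Ext^1$ from standard modules into costandard modules together with the computation $\dim\Hom_{\fG}(\Delta_{\mu},\nabla_{\lambda})=\delta_{\mu,\lambda}$ of Proposition~\ref{prop:hom-std-costd}.

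Fix a standard filtration $0=F_0 \subset F_1 \subset \cdots \subset F_r=M$ with $F_i/F_{i-1} \cong \Delta_{\mu(i)}$, so that by definition $[M:\Delta_{\lambda}]=\#\{i : \mu(i)=\lambda\}$. Each $\Delta_{\mu(i)}=j_!(S_{\fD,\mu(i)})$ is finitely generated, since $S_{\fD,\mu(i)}$ is finitely generated (being pointwise finite and supported at a single isomorphism class) and $j_!$ preserves finite generation by Proposition~\ref{prop:!-finite}; moreover $\Delta_{\mu(i)}$ is trivially semistandard. Hence Proposition~\ref{prop:sstd}, in the implication (a)$\Rightarrow$(d), applies to $\Delta_{\mu(i)}$ and gives $\Ext^1_{\fG}(\Delta_{\mu(i)}, N)=0$ for every co-semistandard module $N$; in particular $\Ext^1_{\fG}(\Delta_{\mu(i)},\nabla_{\lambda})=0$, as $\nabla_{\lambda}$ is co-semistandard.

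Now apply the contravariant functor $\Hom_{\fG}(-,\nabla_{\lambda})$ to the short exact sequence $0 \to F_{i-1} \to F_i \to \Delta_{\mu(i)} \to 0$. Since $\Ext^1_{\fG}(\Delta_{\mu(i)},\nabla_{\lambda})=0$, the long exact sequence truncates to
\[
0 \to \Hom_{\fG}(\Delta_{\mu(i)},\nabla_{\lambda}) \to \Hom_{\fG}(F_i,\nabla_{\lambda}) \to \Hom_{\fG}(F_{i-1},\nabla_{\lambda}) \to 0.
\]
Starting from $\Hom_{\fG}(F_0,\nabla_{\lambda})=0$ and using that $\dim\Hom_{\fG}(\Delta_{\mu(i)},\nabla_{\lambda}) \le 1$ by Proposition~\ref{prop:hom-std-costd}, an induction on $i$ shows that each $\Hom_{\fG}(F_i,\nabla_{\lambda})$ is finite dimensional and that $\dim\Hom_{\fG}(F_i,\nabla_{\lambda}) = \dim\Hom_{\fG}(F_{i-1},\nabla_{\lambda}) + \delta_{\mu(i),\lambda}$. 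Summing over $i=1,\dots,r$ yields $\dim\Hom_{\fG}(M,\nabla_{\lambda}) = \#\{i : \mu(i)=\lambda\} = [M:\Delta_{\lambda}]$. The only points needing care are that $\Delta_{\mu}$ is finitely generated (so that Proposition~\ref{prop:sstd} applies to it) and that the $\Hom$-spaces are finite dimensional (which the exact sequences supply automatically); both are immediate, and all the substantive input has already been established.
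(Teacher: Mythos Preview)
Your proof is correct and follows essentially the same approach as the paper: induction on the length of a standard filtration, using the $\Ext^1$-vanishing from Proposition~\ref{prop:sstd} together with $\dim\Hom_{\fG}(\Delta_{\mu},\nabla_{\lambda})=\delta_{\mu,\lambda}$ from Proposition~\ref{prop:hom-std-costd}. The only cosmetic difference is that you peel off one standard quotient at a time (applying the $\Ext^1$-vanishing to $\Delta_{\mu(i)}$), whereas the paper splits $M$ into two shorter semistandard pieces $N$ and $N'$ and applies the vanishing to $N'$; both routes are equivalent.
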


\begin{proof}
If $M=0$ there is nothing to prove, and if $M$ is standard the result follows from Proposition~\ref{prop:hom-std-costd}. Otherwise, we can find a short exact sequence
\begin{displaymath}
0 \to N \to M \to N' \to 0
\end{displaymath}
such that $N$ and $N'$ have standard filtrations of shorter length than $M$. We have an exact sequence
\begin{displaymath}
0 \to \Hom_{\fG}(N', \nabla_{\lambda}) \to \Hom_{\fG}(M, \nabla_{\lambda}) \to \Hom_{\fG}(N, \nabla_{\lambda}) \to \Ext^1_{\fG}(N', \nabla_{\lambda})
\end{displaymath}
Since this $\Ext^1$ group vanishes by Proposition~\ref{prop:sstd}, we conclude
\begin{displaymath}
\dim \Hom_{\fG}(M, \nabla_{\lambda}) = \dim \Hom_{\fG}(N, \nabla_{\lambda}) + \dim \Hom_{\fG}(N', \nabla_{\lambda}).
\end{displaymath}
Of course, we also have
\begin{displaymath}
[M:\Delta_{\lambda}] = [N:\Delta_{\lambda}] + [N':\Delta_{\lambda}].
\end{displaymath}
Since $[N:\Delta_{\lambda}]=\dim \Hom_{\fG}(N, \nabla_{\lambda})$ by induction on the length of standard filtration, and similarly for $N'$, we see that the same holds for $M$.
\end{proof}

\begin{proposition} \label{prop:jsstd}
Let $M$ be a finite length $\fD$-module. Then $j_!M$ is semistandard and $[j_!M:\Delta_{\lambda}]=m_{\lambda}(M)$ for all $\lambda$.
\end{proposition}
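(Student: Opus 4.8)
The plan is to obtain the standard filtration by pushing a composition series of $M$ forward along $j_!$, and then to read off the multiplicities. The two key inputs are the exactness of $j_!$ (Proposition~\ref{prop:tri-pushfwd}(e)) and the identification $j_!(S_{\fD,\lambda}) = \Delta_\lambda$, which is the very definition of $\Delta_\lambda$.

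First I would pin down the simple $\fD$-modules: they are exactly the $S_{\fD,\lambda}$ for $\lambda \in \Lambda$. This follows by the same argument that classifies the simple $\fG$-modules, using that $\fD$ is a downwards category: if $T$ is a nonzero simple $\fD$-module and $x \in \vert \fG \vert$ is minimal with $T(x) \ne 0$, then the submodule of $T$ generated by $T(x)$ is supported on $\{y \le x\}$ and hence, by minimality, only at $x$; since it must equal $T$, one concludes $T$ is supported at $x$ and $T(x)$ is a simple $\End_{\fD}(x) = \End_{\fM}(x)$-module, i.e.\ $T \cong S_{\fD,\lambda}$ for the corresponding weight. (Equivalently, $(\fM, \fD)$ is itself a triangular structure on $\fD$, and this is the classification of its simple modules.) Granting this, choose a composition series $0 = M_0 \subset \cdots \subset M_n = M$ with $M_i/M_{i-1} \cong S_{\fD,\lambda_i}$; since $M$ has finite length this is possible. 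Applying the exact functor $j_!$ yields a filtration $0 = j_!M_0 \subset \cdots \subset j_!M_n = j_!M$ with subquotients $j_!(S_{\fD,\lambda_i}) = \Delta_{\lambda_i}$, which is a standard filtration. Hence $j_!M$ is semistandard, and $[j_!M : \Delta_\lambda] = \#\{i : \lambda_i = \lambda\}$, this count being independent of the chosen filtration by linear independence of the $\Theta_{\Delta_\mu}$ (Proposition~\ref{prop:std-char}).

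It then remains to identify $\#\{i : \lambda_i = \lambda\}$ with $m_\lambda(M)$. Restricting the composition series along $(i')^* \colon \Mod_{\fD} \to \Mod_{\fM}$ and using that $(i')^* S_{\fD,\mu} = S_{\fM,\mu}$ is simple (as recorded in the proof of Proposition~\ref{prop:ij-finite}), one gets a composition series of the $\fM$-module $(i')^* M$ in which $S_{\fM,\mu}$ occurs exactly $\#\{i : \lambda_i = \mu\}$ times; therefore $m_\lambda(M) = \#\{i : \lambda_i = \lambda\} = [j_!M : \Delta_\lambda]$. (Alternatively, once semistandardness is in hand one can invoke Proposition~\ref{prop:Delta-mult}: $[j_!M:\Delta_\lambda] = \dim \Hom_{\fG}(j_!M, \nabla_\lambda) = \dim \Hom_{\fD}(M, j^* \nabla_\lambda)$ by adjunction, while $j^*\nabla_\lambda = j^* i_* S_{\fU,\lambda} = i'_* (j')^* S_{\fU,\lambda} = i'_* S_{\fM,\lambda}$ by Proposition~\ref{prop:tri-pushfwd}(b), so this equals $\dim \Hom_{\fM}((i')^*M, S_{\fM,\lambda}) = m_\lambda(M)$ by semisimplicity of $\fM$.) The whole argument is routine; the only step demanding a moment's care is the first one, namely checking that the composition factors of $M$ really are of the form $S_{\fD,\lambda_i}$, so that $j_!$ does turn the composition series into a standard filtration.
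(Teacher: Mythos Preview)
Your proof is correct and follows essentially the same approach as the paper: take a composition series of $M$, push it forward along the exact functor $j_!$, and observe that the subquotients become standard modules. The paper's proof is terser (it says ``the result follows'' for the multiplicity identification), whereas you spell out both why the simple $\fD$-modules are the $S_{\fD,\lambda}$ and why the composition multiplicities coincide with $m_\lambda(M)$; your alternative route via Proposition~\ref{prop:Delta-mult} is a pleasant bonus but not needed.
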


\begin{proof}
Let $0=F_0 \subset \cdots \subset F_r=M$ be a filtration such that $F_i/F_{i-1} \cong S_{\mu(i)}$ for some weight $\mu(i)$. Since $j_!$ is exact, it follows that $0=j_!F_0 \subset \cdots \subset j_!F_r=j_!M$ is a filtration with $j_!F_i/j_!F_{i-1}=j_!S_{\mu(i)}=\Delta_{\mu(i)}$. The result follows.
\end{proof}

\subsection{Projectives modules}

Recall that $k \colon \fM \to \fG$ is the inclusion functor. Define
\[
  \wt{P}_{\lambda}=k_!(S_{\lambda}).
\]
The following proposition summarizes the basic properties of this module.

\begin{proposition} \label{prop:Ptilde}
We have the following:
\begin{enumerate}
\item $\wt{P}_{\lambda}$ is a finitely generated projective $\fG$-module.
\item We have $\dim \Hom_{\fG}(\wt{P}_{\lambda}, M)=m_{\lambda}(M)$ for any $\fG$-module $M$.
\item $\wt{P}_{\lambda}$ is semistandard; moreover, $[\wt{P}_{\lambda}:\Delta_{\lambda}]=1$ and $[\wt{P}_{\lambda}:\Delta_{\mu}]$ is non-zero only if $\mu \le \lambda$.
\item The formal characters of the $\wt{P}_{\lambda}$ are linearly independent.
\item Every (finitely generated) $\fG$-module is a quotient of a (finite) sum of $\wt{P}_{\lambda}$'s.
\end{enumerate}
\end{proposition}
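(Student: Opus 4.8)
My plan is to derive all five parts from the functor formalism of \S\ref{sec:rep-cat}--\S\ref{sec:tri-cat}, handling them in the order (a), (b), (c), (e), (d). For (a): the category $\Mod_{\fM}$ is semi-simple, so the simple module $S_{\lambda}$ is a projective $\fM$-module; it is also finitely generated, being supported at the single object $\supp(\lambda)$ where it is finite dimensional. Hence $\wt{P}_{\lambda}=k_!(S_{\lambda})$ is projective by Proposition~\ref{prop:kan-proj} and finitely generated by Proposition~\ref{prop:!-finite}. For (b): the adjunction $(k_!,k^*)$ gives $\Hom_{\fG}(\wt{P}_{\lambda},M)=\Hom_{\fM}(S_{\lambda},k^*M)$, and since $\fM$ is semi-simple and $S_{\lambda}$ is absolutely simple (recall $\bk$ is algebraically closed), this space has dimension equal to the multiplicity of $S_{\lambda}$ in the restriction $k^*M$, which is $m_{\lambda}(M)$ by definition.

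For (c), factor $k=j\circ i'$, so $\wt{P}_{\lambda}=j_!(N)$ with $N:=i'_!(S_{\lambda})$. By Proposition~\ref{prop:!-finite}, $N$ is a finitely generated $\fD$-module, hence of finite length since $\fD$ is downwards (only finitely many objects lie below a given one, and $\Hom$ spaces are finite dimensional); so Proposition~\ref{prop:jsstd} shows $\wt{P}_{\lambda}$ is semistandard with $[\wt{P}_{\lambda}:\Delta_{\mu}]=m_{\mu}(N)$ for all $\mu$. It remains to compute the weights of $N$. Writing $x_0=\supp(\lambda)$, the Kan extension formula (Proposition~\ref{prop:pushfwd-formula}) together with the fact that a skeleton of $\fM$ has only endomorphisms gives $N(y)=\Hom_{\fD}(x_0,y)\otimes_{\End_{\fM}(x_0)}S_{\lambda}$. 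If this is non-zero then $\Hom_{\fD}(x_0,y)\ne 0$, so $y\le x_0$ by the downwards property of $\fD$; hence every simple $\fM$-constituent of $N$ has support $\le x_0$, i.e.\ every weight occurring in $N$ is $\le\lambda$. Taking $y=x_0$ and using $\End_{\fD}(x_0)=\End_{\fM}(x_0)$ from axiom (T1), we get $N(x_0)=S_{\lambda}$, so $m_{\lambda}(N)=1$. Therefore $[\wt{P}_{\lambda}:\Delta_{\lambda}]=1$ and $[\wt{P}_{\lambda}:\Delta_{\mu}]=0$ unless $\mu\le\lambda$.

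For (e), Proposition~\ref{prop:kan-proj} gives $\bP_{\fG,x}=k_!(\bP_{\fM,x})$, and $\bP_{\fM,x}$ is the $\fM$-module supported at $x$ whose value there is $\End_{\fM}(x)$ acting on itself by left multiplication. Since $\bk$ is algebraically closed, $\End_{\fM}(x)$ is a product of matrix algebras over $\bk$, hence a finite direct sum of copies of the simple $\fM$-modules supported at $x$; applying $k_!$, which commutes with direct sums, exhibits $\bP_{\fG,x}$ as a finite direct sum of $\wt{P}_{\mu}$'s. As every $\fG$-module is a quotient of a direct sum of principal projectives, and a finitely generated one is a quotient of a finite such sum (see the discussion after Proposition~\ref{prop:yoneda}), part (e) follows. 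For (d), note $\wt{P}_{\lambda}$ is pointwise finite (it is finitely generated and $\fG$ has finite $\Hom$ spaces), so by (c) we may write $\Theta_{\wt{P}_{\lambda}}=\Theta_{\Delta_{\lambda}}+\sum_{\mu<\lambda}[\wt{P}_{\lambda}:\Delta_{\mu}]\,\Theta_{\Delta_{\mu}}$. Given a finite dependence $\sum_{\lambda\in T}c_{\lambda}\Theta_{\wt{P}_{\lambda}}=0$ with all $c_{\lambda}\ne 0$, pick $\lambda_0$ maximal in $T$; the only $\wt{P}_{\lambda}$ with $\lambda\in T$ whose standard filtration involves $\Delta_{\lambda_0}$ is $\wt{P}_{\lambda_0}$ itself (any other would force $\lambda_0\le\lambda<\lambda_0$), so the coefficient of $\Theta_{\Delta_{\lambda_0}}$ in the total expansion is $c_{\lambda_0}\ne 0$, contradicting the linear independence of $\{\Theta_{\Delta_{\mu}}\}$ from Proposition~\ref{prop:std-char}.

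The only step needing genuine care is (c): identifying $N=i'_!(S_{\lambda})$ explicitly through the Kan extension formula, and reading off its weight multiplicities from the downwards structure of $\fD$ and the identity $\End_{\fD}(x_0)=\End_{\fM}(x_0)$. Once (c) is available, (d) is a short unitriangularity argument against the linear independence of standard characters, (e) is the observation that principal projectives of $\fG$ split as finite sums of $\wt{P}_{\mu}$'s, and (a), (b) are immediate consequences of the Kan-extension adjunctions.
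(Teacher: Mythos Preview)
Your proof is correct and follows essentially the same approach as the paper's. The only tactical differences are in (c), where you compute $N=i'_!(S_{\lambda})$ via the explicit Kan extension formula (Proposition~\ref{prop:pushfwd-formula}) while the paper uses the adjunction chain $\Hom_{\fM}(S_{\lambda},(i')^*N)=\Hom_{\fD}(N,S_{\fD,\lambda})=\Hom_{\fM}(S_{\lambda},S_{\lambda})$, and in (e), where you decompose principal projectives $\bP_{\fG,x}=k_!(\bP_{\fM,x})$ into $\wt{P}_{\mu}$'s whereas the paper constructs a surjection $\bigoplus_{\lambda}\wt{P}_{\lambda}\otimes M_{\lambda}\to M$ directly from the semisimple decomposition of $k^*M$; both routes are short and equivalent in content.
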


\begin{proof}
(a) Since $S_{\lambda}$ is a finitely generated projective $\fM$-module and $k_!$ takes projectives to projectives and preserves finite generation, the claim follows.

(b) We have $\Hom_{\fG}(\wt{P}_{\lambda}, M)=\Hom_{\fM}(S_{\lambda}, k^*(M))$ by adjunction, the dimension of which is $m_{\lambda}(M)$ by definition.

(c) Since $k=j \circ i'$, we have $\wt{P}_{\lambda}=j_!(i'_!(S_{\fM,\lambda}))$. We claim that $m_{\lambda}(i'_!(S_{\fM,\lambda}))=1$, and $m_{\mu}(i'_!(S_{\fM,\lambda}))$ is non-zero only if $\mu \le \lambda$. By the downwards property, $S_{\fD,\lambda}$ is the top of $i'_!(S_{\fM,\lambda})$ and $i'_!(S_{\fM,\lambda})$ is~0 on objects that are not less than $\supp(\lambda)$. Hence by semi-simplicity, $\Hom_\fM(S_{\fM,\lambda}, (i')^* i'_!(S_{\fM,\lambda})) = \Hom_\fD(i'_!(S_{\fM,\lambda}), S_{\fD,\lambda}) = \Hom_\fM(S_{\fM,\lambda}, S_{\fM,\lambda})$, which is 1-dimensional. Thus the claim follows from Proposition~\ref{prop:jsstd}.

(d) This follows from (c) and the corresponding fact for standard modules.

(e) Let $M$ be a $\fG$-module. By adjunction, we have $\Hom_\fG(\wt{P}_\lambda, M) = \Hom_\fM(S_\lambda, k^*(M))$. Hence if $k^*(M) = \bigoplus_\lambda S_\lambda \otimes M_\lambda$, then we have a surjection $\bigoplus_\lambda \wt{P}_\lambda \otimes M_\lambda \to M$. If $M$ is finitely generated, then pick a finite generating set and a finite-dimensional $\fM$-submodule of $k^*M$ that contains it; write this submodule as $\bigoplus_\lambda S_\lambda \otimes M'_\lambda$. Then $\bigoplus_\lambda \wt{P}_\lambda \otimes M'_\lambda$ is a finite direct sum that surjects onto $M$.
\end{proof}

Write $\wt{P}_{\lambda}=P_1 \oplus \cdots \oplus P_r$, where each $P_i$ is an indecomposable projective $\fG$-module (Proposition~\ref{prop:decomp}). Since each $P_i$ is a summand of $\wt{P}_{\lambda}$, it is semistandard by Corollary~\ref{cor:semistd-summand}. We have
\begin{displaymath}
1 = [\wt{P}_{\lambda}:\Delta_{\lambda}] = [P_1:\Delta_{\lambda}] + \cdots + [P_r:\Delta_{\lambda}].
\end{displaymath}
It follows that $[P_i:\Delta_{\lambda}]$ is equal to~1 for exactly one value of $i$. We define $P_{\lambda}$ to be this summand $P_i$. Thus, to summarize, $P_{\lambda}$ is the unique (up to isomorphism) indecomposable summand of $\wt{P}_{\lambda}$ such that $[P_{\lambda}:\Delta_{\lambda}]=1$ (uniqueness is guaranteed by Proposition~\ref{prop:decomp}). The next proposition establishes some other important properties of these modules.

\begin{proposition} \label{prop:Plambda}
We have the following:
\begin{enumerate}
\item $P_{\lambda}$ is semistandard; moreover, $[P_{\lambda}:\Delta_{\lambda}]=1$ and $[P_{\lambda}:\Delta_{\mu}]$ is non-zero only for $\mu \le \lambda$.
\item $P_{\lambda}$ is the projective cover of $\Delta_{\lambda}$ and $L_{\lambda}$.
\item We have $\dim \Hom_{\fG}(P_{\lambda}, M) = [M:L_{\lambda}]$ for any $\fG$-module $M$.
\item The formal characters of the $P_{\lambda}$'s are linearly independent.
\item We have $\wt{P}_{\lambda} \cong \bigoplus_{\mu \le \lambda} P_{\mu}^{\oplus a(\mu)}$, where $a(\mu)=m_{\lambda}(L_{\mu})$ (note $a(\lambda)=1$).
\item Every (finitely generated) $\fG$-module is a quotient of a (finite) sum of $P_{\lambda}$'s.
\item Any finitely generated indecomposable projective $\fG$-module is isomorphic to some $P_{\lambda}$.
\end{enumerate}
\end{proposition}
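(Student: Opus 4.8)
The plan is to deduce everything from the already‑constructed projective $\wt{P}_\lambda=k_!(S_\lambda)$ together with its properties (Proposition~\ref{prop:Ptilde}) and the uniqueness of indecomposable decompositions (Proposition~\ref{prop:decomp}); recall that by (T0) every finitely generated $\fG$-module is pointwise finite with finite-dimensional endomorphism ring, so such modules split uniquely into finitely many indecomposables with local endomorphism rings. Part (a) is immediate: $P_\lambda$ is a summand of the semistandard module $\wt{P}_\lambda$, hence semistandard by Corollary~\ref{cor:semistd-summand}, with $[P_\lambda:\Delta_\lambda]=1$ by construction and $[P_\lambda:\Delta_\mu]\le[\wt{P}_\lambda:\Delta_\mu]$, which vanishes for $\mu\not\le\lambda$ by Proposition~\ref{prop:Ptilde}(c). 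Here I use that $[-:\Delta_\mu]$ is additive over direct sums of semistandard modules, since it records coordinates in the linearly independent family $\{\Theta_{\Delta_\mu}\}$ (Proposition~\ref{prop:std-char}).

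The crux is to identify the simple tops of the indecomposable summands of $\wt{P}_\lambda$. Write $\wt{P}_\lambda=\bigoplus_i Q_i$ with $Q_i$ indecomposable. Since $\bk$ is algebraically closed, each $Q_i$, being a finitely generated indecomposable projective, has a simple top $L_{\sigma_i}$ (its local finite-dimensional endomorphism ring surjects onto the endomorphism ring of its semisimple top, which is therefore a division ring, hence $\bk$). For any semistandard module the top quotient of a standard filtration surjects onto the simple head, so the top standard layer of $Q_i$ must be $\Delta_{\sigma_i}$; hence $[Q_i:\Delta_{\sigma_i}]\ge1$, and since $[Q_i:\Delta_{\sigma_i}]\le[\wt{P}_\lambda:\Delta_{\sigma_i}]$ we get $\sigma_i\le\lambda$ by Proposition~\ref{prop:Ptilde}(c). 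Counting, $\#\{i:\sigma_i=\mu\}=\sum_i\dim\Hom_\fG(Q_i,L_\mu)=\dim\Hom_\fG(\wt{P}_\lambda,L_\mu)=m_\lambda(L_\mu)$ by Proposition~\ref{prop:Ptilde}(b), so there is a unique index $i_0$ with $\sigma_{i_0}=\lambda$; combining $[Q_{i_0}:\Delta_\lambda]\ge1$ with $\sum_i[Q_i:\Delta_\lambda]=[\wt{P}_\lambda:\Delta_\lambda]=1$ forces $[Q_{i_0}:\Delta_\lambda]=1$. Thus $Q_{i_0}$ is exactly the summand singled out in the definition of $P_\lambda$, so $P_\lambda=Q_{i_0}$ has simple top $L_\lambda$. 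Running the same argument over all weights identifies each $Q_i$ as the projective cover of $L_{\sigma_i}$, hence $Q_i\cong P_{\sigma_i}$; this gives (e): $\wt{P}_\lambda\cong\bigoplus_{\mu\le\lambda}P_\mu^{\oplus m_\lambda(L_\mu)}$, a finite direct sum. For (b), $P_\lambda\to L_\lambda$ is an essential surjection because its kernel is the unique maximal submodule of $P_\lambda$, so $P_\lambda$ is the projective cover of $L_\lambda$; lifting this surjection through $\Delta_\lambda\to L_\lambda$ and using that $\Delta_\lambda$ likewise has a unique maximal submodule shows the resulting map $P_\lambda\to\Delta_\lambda$ is also a projective cover.

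For (c), $P_\lambda$ is a summand of the principal projective $\bP_{\fG,\supp(\lambda)}$, since $S_\lambda$ is a summand of $\bP_{\fM,\supp(\lambda)}$ and $k_!$ carries principal projectives to principal projectives (Proposition~\ref{prop:kan-proj}); therefore $\Hom_\fG(P_\lambda,-)$ is exact and commutes with directed unions, and $\dim\Hom_\fG(P_\lambda,L_\mu)=\delta_{\lambda\mu}$. For pointwise finite $M$, take a filtration as in Proposition~\ref{prop:nice-filt} with $\Xi=\{\lambda\}$: on simple layers $L_\mu$ both $\dim\Hom_\fG(P_\lambda,-)$ and $[-:L_\lambda]$ equal $\delta_{\lambda\mu}$, and on a layer $N$ with $m_\lambda(N)=0$ both vanish (for the first, $\dim\Hom_\fG(P_\lambda,N)\le\dim\Hom_\fG(\wt{P}_\lambda,N)=m_\lambda(N)$), so additivity yields $\dim\Hom_\fG(P_\lambda,M)=[M:L_\lambda]$; for general $M$, write $M$ as a directed union of finitely generated (hence pointwise finite) submodules and pass to the limit. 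For (d), we have $\Theta_{P_\lambda}=\sum_{\mu\le\lambda}[P_\lambda:\Delta_\mu]\Theta_{\Delta_\mu}$ with the coefficient of $\Theta_{\Delta_\lambda}$ equal to $1$; in any relation $\sum_\lambda c_\lambda\Theta_{P_\lambda}=0$, taking the largest $\lambda_0$ with $c_{\lambda_0}\ne0$, the coefficient of $\Theta_{\Delta_{\lambda_0}}$ is exactly $c_{\lambda_0}$, which must vanish as the $\Theta_{\Delta_\mu}$ are independent — a contradiction.

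Finally, (f) is immediate from (e) and Proposition~\ref{prop:Ptilde}(e), and (g) follows since a finitely generated indecomposable projective is, by (f), a quotient and hence (by projectivity) a summand of a finite sum of $P_\mu$'s, so isomorphic to one of them by uniqueness of indecomposable decompositions. The one substantive step is the second paragraph: matching the simple tops of the indecomposable projective summands of $\wt{P}_\lambda$ with the standard multiplicities. The two points there needing care are that a finitely generated indecomposable projective has a simple top (this is where the algebraically closed hypothesis, or the relaxed absolute-simplicity condition, is used) and that the simple head of a semistandard module equals the head of the top layer of any standard filtration; granting these, the rest is bookkeeping with triangular multiplicity matrices, exactness of $\Hom_\fG(P_\lambda,-)$, and filtered colimits.
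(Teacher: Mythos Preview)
Your proof is correct and takes a somewhat different route from the paper for parts (b) and (e). The paper first establishes (b) by constructing a surjection $P_\lambda\to\Delta_\lambda\to L_\lambda$ and invoking \cite[Lemma~3.6]{krause} (a surjection from a projective with local endomorphism ring is automatically a projective cover), then proves (c), and only then proves (e) by induction on $\lambda$: writing $\wt P_\lambda=P_\lambda\oplus Q$, one has $[Q:\Delta_\mu]=0$ unless $\mu<\lambda$, so $Q$ is a summand of a sum of $\wt P_\mu$'s with $\mu<\lambda$, and the inductive hypothesis decomposes $Q$ into $P_\mu$'s; the multiplicities $a(\mu)$ are then read off using (c). Your approach instead analyzes all indecomposable summands $Q_i$ of $\wt P_\lambda$ at once, shows each has a simple top, and uses the $\Hom$ count $\dim\Hom_\fG(\wt P_\lambda,L_\mu)=m_\lambda(L_\mu)$ together with uniqueness of projective covers to identify each $Q_i$ with $P_{\sigma_i}$ directly---no induction needed. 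This is arguably more conceptual and yields (b) and (e) in one stroke.

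Two small points of care. First, your justification that each $Q_i$ has a simple top (``the local endomorphism ring surjects onto the endomorphism ring of its semisimple top'') presupposes that the maximal semisimple quotient exists and is semisimple, which is not automatic in this generality; the cleanest fix is exactly the paper's: Krause's lemma shows any surjection $Q_i\to L$ with $L$ simple is a projective cover, hence $Q_i$ has a unique maximal submodule and thus a simple top. Second, in (d) you should say ``a maximal $\lambda_0$'' rather than ``the largest'', since $\le$ is only a partial order; with that change your triangularity argument goes through (and matches the paper's, which simply cites part (a) together with Proposition~\ref{prop:std-char}).
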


\begin{proof}
(a) Since $P_{\lambda}$ is a summand of $\wt{P}_{\lambda}$, it is semistandard by Corollary~\ref{cor:semistd-summand}. We have $[P_{\lambda}:\Delta_{\lambda}]=1$ by definition. Since $[P_{\lambda}:\Delta_{\mu}] \le [\wt{P}_{\lambda}:\Delta_{\mu}]$, this can only be non-zero for $\mu \le \lambda$ by Proposition~\ref{prop:Ptilde}(c).

(b) Since $\Delta_\lambda$ is generated by $S_\lambda$ in degree $\supp(\lambda)$, we have a surjection $P_\lambda \to \Delta_\lambda$, and by composition with $\Delta_\lambda\to L_\lambda$, we also have a surjection $P_\lambda \to L_\lambda$. We note that $\End_{\fG}(P_{\lambda})$ is finite dimensional and has no non-trivial idempotents; it is thus local in the sense of \cite[\S 3]{krause} (see \cite[Corollary~19.19]{lam}). It therefore follows from \cite[Lemma 3.6]{krause} that $P_\lambda \to L_\lambda$ is a projective cover. This implies that $P_\lambda \to \Delta_\lambda$ is a projective cover as well.

(c) Put $\alpha(M)=\dim \Hom_{\fG}(P_{\lambda}, M)$ and $\beta(M)=[M:L_{\lambda}]$. Thus we wish to show $\alpha(M)=\beta(M)$ for all $M$. If $M=L_\lambda$, then both quantities are 1. If $M=L_\mu$ for $\mu \ne \lambda$, then $\beta(M)=0$. If $\alpha(M) \ne 0$, then there is a surjective map $P_\lambda \to L_\mu$. If $K$ is the kernel, then the image of $K$ in $L_\lambda$ is a proper submodule by (b) and hence 0, so this implies that we have a surjection $L_\mu = P_\lambda/K \to L_\lambda$, which is a contradiction, and hence $\alpha(L_\mu)=0$. Hence $\alpha(M)=\beta(M)$ if $M$ is simple.

If $\lambda$ does not occur in $M$, then $\alpha(M)=\beta(M)=0$.

Now, suppose $M$ is pointwise finite. Let $0=F_0 \subset \cdots \subset F_r=M$ be the filtration provided by Proposition~\ref{prop:nice-filt} with $\Xi=\{\lambda\}$. The subquotient $F_i/F_{i-1}$ is either simple or does not contain $\lambda$, and so we have $\alpha(F_i/F_{i-1})=\beta(F_i/F_{i-1})$. Since both $\alpha$ and $\beta$ are additive in short exact sequences, we conclude that $\alpha(M)=\beta(M)$. Finally, let $M$ be arbitrary and write $M=\varinjlim_i M_i$ where the $M_i$ range over the pointwise finite submodules of $M$. We have $\beta(M)=\sup \beta(M_i)$ and $\alpha(M)=\sup \alpha(M_i)$ since $\Hom(P_\lambda,M) = \varinjlim_i \Hom(P_\lambda,M_i)$ and the transition maps are injective. Since $\beta(M_i)=\alpha(M_i)$ for all $i$, it follows that $\alpha(M)=\beta(M)$, which completes the proof.

(d) This follows from (a) and the corresponding fact for standard modules.

(e) We first show that such a decomposition exists. We proceed by induction on $\lambda$. Write $\wt{P}_{\lambda}=P_{\lambda} \oplus Q$ for some projective $Q$. By Proposition~\ref{prop:Ptilde}(c), we have $[Q:\Delta_{\mu}]=0$ unless $\mu<\lambda$; note that $[\wt{P}_{\lambda}:\Delta_{\lambda}]=[P_{\lambda}:\Delta_{\lambda}]=1$, and so $[Q:\Delta_{\lambda}]=0$. It follows that $Q$ is a quotient (and therefore summand) of a sum of $\wt{P}_{\mu}$'s with $\mu<\lambda$. Thus, by the inductive hypothesis, $Q$ is a summand of a sum of such $P_{\mu}$'s, and thus (by Proposition~\ref{prop:decomp}), $Q$ is a sum of such $P_{\mu}$'s.

Now, write $\wt{P}_\lambda = \bigoplus_{\mu \le \lambda} P_\mu^{\oplus a(\mu)}$, which we can do by the previous paragraph. We have $\dim \Hom_\fG(P_\mu,L_\lambda) = \delta_{\mu,\lambda}$ by (c), and so
\begin{displaymath}
a(\mu) = \dim \Hom_\fG(\wt{P}_\lambda, L_\mu) = m_\lambda(L_\mu).
\end{displaymath}
where in the second step we used Proposition~\ref{prop:Ptilde}(b).

(f) This follows from (e) and the corresponding result for $\wt{P}_{\lambda}$.

(g) This follows from (f) and Proposition~\ref{prop:decomp}.
\end{proof}

\begin{proposition}[BGG reciprocity] \label{prop:bgg}
For $\lambda,\mu \in \Lambda$ we have
\begin{displaymath}
[P_{\lambda}:\Delta_{\mu}]=[\nabla_{\mu}:L_{\lambda}].
\end{displaymath}
If $\fG$ admits a transpose then these quantities coincide with $[\Delta_{\mu}:L_{\lambda}]$.
\end{proposition}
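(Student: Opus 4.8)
The plan is to obtain BGG reciprocity by chaining together two multiplicity identities already in hand, with essentially no new work. Since $P_\lambda$ is a summand of the semistandard module $\wt{P}_\lambda$, it is semistandard by Corollary~\ref{cor:semistd-summand}, so Proposition~\ref{prop:Delta-mult} applies and gives
\[
[P_\lambda : \Delta_\mu] = \dim \Hom_\fG(P_\lambda, \nabla_\mu).
\]
On the other hand, Proposition~\ref{prop:Plambda}(c) computes $\dim \Hom_\fG(P_\lambda, M) = [M : L_\lambda]$ for \emph{any} $\fG$-module $M$; applying this with $M = \nabla_\mu$ yields $\dim \Hom_\fG(P_\lambda, \nabla_\mu) = [\nabla_\mu : L_\lambda]$. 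Combining the two displays gives the first assertion $[P_\lambda : \Delta_\mu] = [\nabla_\mu : L_\lambda]$.

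For the last sentence, suppose $\fG$ has a transpose, so that (as in \S\ref{ss:tridual}) $(-)^\vee$ may be regarded as a contravariant endofunctor of $\Mod_\fG$; it is exact, being the composite of the exact pointwise-dualization functor of \S\ref{ss:duality} with the equivalence induced by the transpose, and it restricts to a contravariant self-equivalence of $\Mod_\fG^{\pf}$. By Remark~\ref{rmk:transp-duals} we have $\nabla_\mu^\vee \cong \Delta_\mu$ and $L_\lambda^\vee \cong L_\lambda$. Both $\nabla_\mu = i_*(S_{\fU,\mu})$ and $\Delta_\mu = j_!(S_{\fD,\mu})$ are pointwise finite by Proposition~\ref{prop:ij-finite}(b), so I would argue that the exact contravariant equivalence $(-)^\vee$ carries the chains of submodules used to define $[\nabla_\mu : L_\lambda]$ bijectively to chains computing $[\Delta_\mu : L_\lambda^\vee] = [\Delta_\mu : L_\lambda]$; hence $[\nabla_\mu : L_\lambda] = [\Delta_\mu : L_\lambda]$, and the proposition follows.

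The only point that is not completely immediate — and the one I would actually write out — is this compatibility of the (possibly infinite-length) multiplicity invariant $[-:L_\lambda]$ with contravariant duality: on pointwise finite modules $(-)^\vee$ reverses the lattice of submodules and sends a subquotient $G/F$ to one isomorphic to $(G/F)^\vee$, so a chain $0 \subseteq F_1 \subsetneq G_1 \subseteq \cdots \subseteq F_n \subsetneq G_n \subseteq \nabla_\mu$ with $G_i/F_i \cong L_\lambda$ dualizes to a chain of equal length in $\Delta_\mu$ whose subquotients are $\cong L_\lambda^\vee$, and conversely. Everything else is a formal consequence of the cited results, so I do not anticipate any genuine difficulty.
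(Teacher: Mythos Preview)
Your proof is correct and follows essentially the same approach as the paper: both chain together Proposition~\ref{prop:Delta-mult} and Proposition~\ref{prop:Plambda}(c) via $\dim \Hom_{\fG}(P_{\lambda}, \nabla_{\mu})$, and then invoke the transpose to pass from $\nabla_{\mu}$ to $\Delta_{\mu}$. The paper states the transpose step in one line without justification, so your extra paragraph spelling out why $[-:L_{\lambda}]$ is preserved under the contravariant self-equivalence on pointwise finite modules is a welcome elaboration rather than a deviation.
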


\begin{proof}
By Propositions~\ref{prop:Delta-mult} and \ref{prop:Plambda}(c), we have
\begin{displaymath}
[P_{\lambda}:\Delta_{\mu}] = \dim \Hom_{\fG}(P_{\lambda}, \nabla_{\mu}) = [\nabla_{\mu}:L_{\lambda}].
\end{displaymath}
If $\fG$ admits a transpose then $[\nabla_{\mu}:L_{\lambda}]=[\Delta_{\mu}:L_{\lambda}]$.
\end{proof}

\subsection{Injective modules} \label{ss:inj}

The discussion of the previous section applies equally well to injectives. We briefly summarize the main points. Put $\wt{I}_\lambda = k_*(S_\lambda)$. Write $\wt{I}_\lambda = I_1 \oplus \cdots \oplus I_r$ with each $I_i$ an indecomposable injective (Proposition~\ref{prop:decomp}). We have $[\wt{I}_{\lambda}:\nabla_{\lambda}]=1$, and so $[I_i:\nabla_\lambda]=1$ for exactly one value of $i$. We define $I_\lambda$ to be this summand. Note that $I_{\lambda^{\vee}} = P_{\lambda}^\vee$, since $P_{\lambda}^{\vee}$ is an indecomposable summand of $\wt{I}_{\lambda^{\vee}}=\wt{P}_{\lambda}^{\vee}$ containing $\nabla_{\lambda}$ with multiplicity one.

The following results are dual to those about $\wt{P}_\lambda$ and $P_\lambda$ above, so we omit the proofs.

\begin{proposition}
We have the following:
  \begin{enumerate}
  \item $I_\lambda$ is co-semistandard; $[I_\lambda:\nabla_\lambda]=1$ and $[I_\lambda:\nabla_\mu]$ is non-zero only for $\mu\le\lambda$.
  \item $I_\lambda$ is the injective hull of $\nabla_\lambda$ and $L_\lambda$.
  \item We have $\dim \Hom_\fG(M,I_\lambda) = [M:L_\lambda]$ for any $\fG$-module $M$.
  \item We have $\wt{I}_\lambda \cong \bigoplus_{\mu \le \lambda} I_\mu^{\oplus a(\mu)}$ where $a(\mu) = m_\lambda(L_\mu)$.
  \item Every $\fG$-module is a submodule of a direct product of $I_\lambda$'s.
  \end{enumerate}
\end{proposition}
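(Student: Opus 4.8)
The plan is to obtain all five assertions by transporting the projective statements, Propositions~\ref{prop:Ptilde} and~\ref{prop:Plambda}, through the duality functor $(-)^{\vee}$ applied to the dual triangular category $\widehat{\fG}$. Recall that $\widehat{\fG}$ is again a triangular category, with $\widehat{\fM}=\fM^{\op}$ and $\widehat{k}=k^{\op}$, so the modules $\wt{P}_{\widehat{\fG},\nu}$ and $P_{\widehat{\fG},\nu}$ for $\nu\in\widehat{\Lambda}$ are defined and satisfy the stated properties. The first thing I would pin down is that $\wt{I}_{\lambda}=\wt{P}_{\widehat{\fG},\lambda^{\vee}}^{\vee}$ and $I_{\lambda}=P_{\widehat{\fG},\lambda^{\vee}}^{\vee}$: applying Proposition~\ref{prop:dual-pushfwd}(a) to $k^{\op}$ gives $\bigl(k^{\op}_!(S_{\widehat{\fM},\lambda^{\vee}})\bigr)^{\vee}=k_*\bigl((S_{\widehat{\fM},\lambda^{\vee}})^{\vee}\bigr)=k_*(S_{\fM,\lambda})=\wt{I}_{\lambda}$, using that $(S_{\widehat{\fM},\lambda^{\vee}})^{\vee}\cong S_{\fM,\lambda}$ by construction of the bijection $\Lambda\to\widehat{\Lambda}$; the statement for $I_{\lambda}$ then follows by matching the unique indecomposable summand of $\wt{I}_{\lambda}$ with $\nabla_{\lambda}$-multiplicity one against that of $\wt{P}_{\widehat{\fG},\lambda^{\vee}}$ with $\Delta_{\lambda^{\vee}}$-multiplicity one. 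Since $\wt{P}_{\widehat{\fG},\lambda^{\vee}}$ is finitely generated (Proposition~\ref{prop:Ptilde}(a)) and $\widehat{\fG}$ has finite-dimensional $\Hom$ spaces, it, and hence $\wt{I}_{\lambda}$, is pointwise finite, so by Proposition~\ref{prop:dual-adjoint} duality identifies $\End_{\fG}(\wt{I}_{\lambda})$ with $\End_{\widehat{\fG}}(\wt{P}_{\widehat{\fG},\lambda^{\vee}})^{\op}$; in particular it is finite dimensional and the indecomposable decomposition transfers (Proposition~\ref{prop:decomp}).

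Next I would record the dictionary. The functor $(-)^{\vee}$ is exact and contravariant, carries direct sums to products, interchanges principal projectives and injectives, and—by the results of \S\ref{ss:tridual} and the standard-module subsection—sends $\Delta_{\widehat{\fG},\nu}\mapsto\nabla_{\nu^{\vee}}$, $\nabla_{\widehat{\fG},\nu}\mapsto\Delta_{\nu^{\vee}}$, and $L_{\widehat{\fG},\nu}\mapsto L_{\nu^{\vee}}$, where $\nu\mapsto\nu^{\vee}\colon\widehat{\Lambda}\to\Lambda$ is the canonical bijection, which is an isomorphism of posets since $\supp(\nu^{\vee})=\supp(\nu)$. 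Applying $(-)^{\vee}$ to the decomposition $\wt{P}_{\widehat{\fG},\lambda^{\vee}}\cong\bigoplus_{\nu\le\lambda^{\vee}}P_{\widehat{\fG},\nu}^{\oplus a(\nu)}$ of Proposition~\ref{prop:Plambda}(e), and matching indecomposable summands by their $\nabla$-multiplicities, yields $\wt{I}_{\lambda}\cong\bigoplus_{\mu\le\lambda}I_{\mu}^{\oplus a(\mu^{\vee})}$ together with the identification $I_{\mu}=P_{\widehat{\fG},\mu^{\vee}}^{\vee}$, and $a(\mu^{\vee})=m_{\lambda^{\vee}}(L_{\widehat{\fG},\mu^{\vee}})=m_{\lambda}(L_{\mu})$ because $m_{\nu}(N^{\vee})=m_{\nu^{\vee}}(N)$ for pointwise finite $\fM$-modules $N$ and $L_{\widehat{\fG},\mu^{\vee}}\cong L_{\mu}^{\vee}$. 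This is part (d).

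Parts (a)--(c) and (e) are then the duals of Proposition~\ref{prop:Plambda}(a), (b), (c), (f). For (a): a standard filtration of $P_{\widehat{\fG},\lambda^{\vee}}$ dualizes to a costandard filtration of $I_{\lambda}$ with $[I_{\lambda}:\nabla_{\mu}]=[P_{\widehat{\fG},\lambda^{\vee}}:\Delta_{\mu^{\vee}}]$, so the multiplicity bounds transfer since $\mu\mapsto\mu^{\vee}$ preserves order. For (b): the projective cover $P_{\widehat{\fG},\lambda^{\vee}}\twoheadrightarrow\Delta_{\widehat{\fG},\lambda^{\vee}},\,L_{\widehat{\fG},\lambda^{\vee}}$ dualizes to an injective hull $\nabla_{\lambda},\,L_{\lambda}\hookrightarrow I_{\lambda}$. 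For (c): $\Hom_{\fG}(M,I_{\lambda})\cong\Hom_{\widehat{\fG}}(P_{\widehat{\fG},\lambda^{\vee}},M^{\vee})$ by Proposition~\ref{prop:dual-adjoint}, which has dimension $[M^{\vee}:L_{\widehat{\fG},\lambda^{\vee}}]=[M:L_{\lambda}]$ by Proposition~\ref{prop:Plambda}(c) when $M$ is pointwise finite; for general $M$ I would write $M=\varinjlim_i M_i$ over its pointwise finite submodules, whence $\Hom_{\fG}(M,I_{\lambda})=\varprojlim_i\Hom_{\fG}(M_i,I_{\lambda})$ is an inverse limit of finite-dimensional spaces along surjections (as $I_{\lambda}$ is injective), so its dimension is $\sup_i[M_i:L_{\lambda}]=[M:L_{\lambda}]$. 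For (e): dualize a surjection $\bigoplus_{\alpha}P_{\widehat{\fG},\nu_{\alpha}}\twoheadrightarrow M^{\vee}$ to an injection $M^{\vee\vee}\hookrightarrow\prod_{\alpha}I_{\nu_{\alpha}^{\vee}}$ and precompose with the canonical injection $M\hookrightarrow M^{\vee\vee}$.

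I expect no genuine difficulty here: the argument is entirely a matter of dualizing. The points that need attention are purely organizational—keeping straight the translation between $\fG$- and $\widehat{\fG}$-indexed data (that $\mu\mapsto\mu^{\vee}$ is an order isomorphism $\Lambda\to\widehat{\Lambda}$, that $(S_{\widehat{\fM},\mu^{\vee}})^{\vee}\cong S_{\fM,\mu}$, and that the multiplicity and formal-character functions are compatible with $(-)^{\vee}$)—together with the two assertions (c) and (e) in which $M$ is not assumed pointwise finite, where one cannot use $M^{\vee\vee}\cong M$ and must instead work with the canonical map $M\to M^{\vee\vee}$ and the directed-union argument indicated above.
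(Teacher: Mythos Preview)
Your proposal is correct and follows exactly the approach the paper intends: the paper's own proof reads in full ``The following results are dual to those about $\wt{P}_\lambda$ and $P_\lambda$ above, so we omit the proofs,'' and the identification $I_{\lambda^{\vee}}=P_{\lambda}^{\vee}$ is already recorded just before the proposition. You have simply spelled out the duality in detail, including the care needed for parts (c) and (e) when $M$ is not pointwise finite; these details are accurate and nothing further is required.
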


\subsection{Tensor products} \label{ss:trimonoid}

Let $\fG$ and $\fG'$ be triangular categories. Then $(\fU \times \fU', \fD \times \fD')$ is a triangular structure on $\fG \times \fG'$, and we regard $\fG \times \fG'$ as a triangular category in this way. (Here $\times$ denotes the product in the sense of linear categories.) A {\bf monoidal triangular category} is a triangular category $\fG$ equipped with a monoidal operation $\amalg$ such that the functor $\amalg \colon \fG \times \fG \to \fG$ is triangular and all structure isomorphisms (such as the associator) belong to $\fM$.

Suppose that $\fG$ is a symmetric monoidal triangular category, with monoidal operation $\amalg$. Then $\amalg$ induces symmetric monoidal structures on $\fU$, $\fD$, and $\fM$, and the functors $i$, $i'$, $j$, $j'$, and $k$ are monoidal. The monoidal operation induces tensor products $\otimes_{\fG}$, $\otimes_{\fU}$, $\otimes_{\fD}$, and $\otimes_{\fM}$ on the module categories $\Mod_{\fG}$, $\Mod_{\fU}$, $\Mod_{\fD}$, and $\Mod_{\fM}$ as in \S\ref{ss:tensor}, and the various $(-)_!$ functors are monoidal with respect to these products.

\begin{proposition}
Let $M$ and $N$ be $\fD$-modules. Then we have a natural isomorphism $\Tor^{\fG}_p(j_!(M),j_!(N))=j_!(\Tor^{\fD}_p(M,N))$ for all $p$.
\end{proposition}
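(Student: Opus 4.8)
The plan is to build the comparison from a single projective resolution, using three facts established earlier: that $j_!$ is exact (Proposition~\ref{prop:tri-pushfwd}(e)), that $j_!$ carries projective $\fD$-modules to projective $\fG$-modules (Proposition~\ref{prop:kan-proj}), and that $j_!$ is strong monoidal for the Day convolutions, i.e.\ there is a natural isomorphism $j_!(A) \otimes_{\fG} j_!(B) \cong j_!(A \otimes_{\fD} B)$ for $\fD$-modules $A$, $B$. The last point is the content of the assertion in \S\ref{ss:trimonoid} that the functors $(-)_!$ are monoidal; concretely it holds because $j \colon \fD \to \fG$ is a (strict) monoidal functor, so $\amalg_{\fG} \circ (j \times j) \cong j \circ \amalg_{\fD}$ as functors $\fD \times \fD \to \fG$, whence the left Kan extensions satisfy $\amalg_{\fG,!} \circ (j \times j)_! \cong j_! \circ \amalg_{\fD,!}$; combined with $j_!(A) \boxtimes j_!(B) = (j \times j)_!(A \boxtimes B)$ this gives the displayed isomorphism, natural in both variables.

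With these in hand I would argue as follows. Choose a projective resolution $P_{\bullet} \to M$ of $M$ in $\Mod_{\fD}$. Since $j_!$ is exact and preserves projectives, $j_!P_{\bullet} \to j_!M$ is a projective resolution of $j_!M$ in $\Mod_{\fG}$. Therefore $\Tor^{\fG}_p(j_!M, j_!N)$ is the $p$-th homology of the complex $j_!P_{\bullet} \otimes_{\fG} j_!N$. Applying the strong monoidal isomorphism of $j_!$ levelwise, and using its naturality in the first variable to assemble these into a map of complexes, this complex is isomorphic to $j_!(P_{\bullet} \otimes_{\fD} N)$. Finally, exactness of $j_!$ lets it commute with homology, so
\[
\Tor^{\fG}_p(j_!M, j_!N) = H_p\bigl(j_!(P_{\bullet} \otimes_{\fD} N)\bigr) = j_!\bigl(H_p(P_{\bullet} \otimes_{\fD} N)\bigr) = j_!\bigl(\Tor^{\fD}_p(M,N)\bigr),
\]
where the last equality uses that $P_{\bullet}$ is a projective resolution of $M$ and that $\Tor^{\fD}$ is balanced, so it may be computed by resolving the first variable.

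Almost nothing in this argument is delicate; the only point that deserves care is the strong monoidality of $j_!$, and in particular the naturality of $j_!(A) \otimes_{\fG} j_!(B) \cong j_!(A \otimes_{\fD} B)$ in $A$, which is exactly what lets one pass from a levelwise statement about the terms $j_!P_i \otimes_{\fG} j_!N$ to an isomorphism of complexes. If one prefers not to invoke strong monoidality as a black box, one can instead check the case $p=0$ directly—$j_!$ is right exact and monoidal, hence preserves the coequalizer presentation of the Day convolution, giving $j_!M \otimes_{\fG} j_!N = j_!(M \otimes_{\fD} N)$—and then run the same resolution argument. I expect the write-up to be short.
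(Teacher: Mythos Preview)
Your proof is correct and follows essentially the same route as the paper: take a projective resolution $P_\bullet \to M$, use that $j_!$ is exact and preserves projectives to obtain a projective resolution $j_!P_\bullet \to j_!M$, apply the monoidality of $j_!$ to identify $j_!P_\bullet \otimes_{\fG} j_!N$ with $j_!(P_\bullet \otimes_{\fD} N)$, and then use exactness of $j_!$ to commute it past homology. Your additional remarks unpacking the strong monoidality of $j_!$ are accurate and a bit more detailed than what the paper records, but the argument is the same.
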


\begin{proof}
Let $P_{\bullet} \to M$ be a projective resolution of $\fD$-modules. Since $j_!$ is exact and takes projectives to projectives, we see that $j_!(P_{\bullet}) \to j_!(M)$ is a projective resolution of $\fG$-modules. Thus $\Tor^{\fG}_{\bullet}(j_!(M), j_!(N))$ is computed by $j_!(N) \otimes_{\fG} j_!(P_{\bullet})$. Since $j_!$ is monoidal, this is isomorphic to $j_!(N \otimes_{\fD} P_{\bullet})$. As $N \otimes_{\fD} P_{\bullet}$ is the complex computing $\Tor^{\fD}_{\bullet}(M, N)$ and $j_!$ is exact, the result follows.
\end{proof}

\begin{proposition} \label{prop:ss-tensor}
Suppose $\otimes$ is exact on $\Mod_{\fD}$, and let $M$ and $N$ be semistandard $\fG$-modules. Then $M \otimes_{\fG} N$ is semistandard and $\Tor^{\fG}_p(M, N)=0$ for $p>0$.
\end{proposition}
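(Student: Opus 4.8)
The plan is to reduce the statement to the case of two standard modules by a dévissage along standard filtrations, handling that base case with the isomorphism $\Tor^{\fG}_p(j_!M,j_!N)=j_!(\Tor^{\fD}_p(M,N))$ from the preceding proposition. For the \emph{base case}: given $\lambda,\mu\in\Lambda$ we have $\Delta_\lambda=j_!(S_{\fD,\lambda})$ and $\Delta_\mu=j_!(S_{\fD,\mu})$, so the preceding proposition gives $\Tor^{\fG}_p(\Delta_\lambda,\Delta_\mu)=j_!(\Tor^{\fD}_p(S_{\fD,\lambda},S_{\fD,\mu}))$. Since $\otimes$ is exact on $\Mod_{\fD}$, the higher $\Tor^{\fD}$ vanish, whence $\Tor^{\fG}_p(\Delta_\lambda,\Delta_\mu)=0$ for $p>0$ and $\Delta_\lambda\otimes_{\fG}\Delta_\mu=j_!(S_{\fD,\lambda}\otimes_{\fD}S_{\fD,\mu})$. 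Now $S_{\fD,\lambda}$ and $S_{\fD,\mu}$, being simple, are finitely generated, so $S_{\fD,\lambda}\otimes_{\fD}S_{\fD,\mu}$ is finitely generated by Proposition~\ref{prop:tensor-prod}(e); as $\fD$ is downwards it is then of finite length, and therefore $\Delta_\lambda\otimes_{\fG}\Delta_\mu$ is semistandard by Proposition~\ref{prop:jsstd}.

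The dévissage rests on the elementary observation that an extension $0\to A\to B\to C\to 0$ with $A$ and $C$ semistandard has $B$ semistandard: pull a standard filtration of $C$ back to $B$ and splice it on top of a standard filtration of $A$. I would then run a double induction. First, fixing $\mu$ and inducting on the length of a standard filtration of a semistandard module $M$, I claim $M\otimes_{\fG}\Delta_\mu$ is semistandard and $\Tor^{\fG}_p(M,\Delta_\mu)=0$ for $p>0$: given $0\to M'\to M\to\Delta_\nu\to 0$ with $M'$ of shorter filtration length, apply $-\otimes_{\fG}\Delta_\mu$ and invoke the long exact sequence; the higher $\Tor$ of $M'$ and of $\Delta_\nu$ against $\Delta_\mu$ vanish (inductive hypothesis and base case), so the same holds for $M$, and the tail of the sequence is a short exact sequence $0\to M'\otimes_{\fG}\Delta_\mu\to M\otimes_{\fG}\Delta_\mu\to\Delta_\nu\otimes_{\fG}\Delta_\mu\to 0$ with semistandard outer terms, so the observation applies. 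Second, fixing a semistandard $M$ and inducting on the length of a standard filtration of a semistandard $N$ (using that $\Tor$ is balanced, so there is a long exact sequence in the second variable as well, with base case $N=\Delta_\nu$ furnished by the previous step), the identical argument yields that $M\otimes_{\fG}N$ is semistandard and $\Tor^{\fG}_p(M,N)=0$ for $p>0$.

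The only substantive input is concentrated in the base case: it uses the monoidal triangular structure (so that $j_!$ is monoidal and the $\Tor$-of-$j_!$ formula holds), the hypothesis that $\otimes$ is exact on $\Mod_{\fD}$ (to kill $\Tor^{\fD}_{>0}$), and the downwards property of $\fD$ (so that the $\fD$-tensor product of two simples has finite length, allowing Proposition~\ref{prop:jsstd} to apply). Everything after that is formal homological bookkeeping, so I do not anticipate a genuine obstacle; the one point requiring a little care is keeping the two inductions straight and checking that the higher-$\Tor$ vanishing at each stage really does split the relevant long exact sequence into the short exact sequence needed to apply the extension observation.
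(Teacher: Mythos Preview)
Your proof is correct and follows essentially the same approach as the paper: the base case for two standard modules is handled identically via the preceding proposition and Proposition~\ref{prop:jsstd}, and the reduction to that case is by d\'evissage along standard filtrations using the long exact sequence for $\Tor$. The only cosmetic differences are that the paper organizes the d\'evissage as a single induction on the sum of the filtration lengths (peeling a standard submodule off the left, $0\to\Delta_\lambda\to M\to M'\to 0$) rather than your double induction (peeling a standard quotient off the right), and you are slightly more explicit than the paper about why $S_{\fD,\lambda}\otimes_{\fD}S_{\fD,\mu}$ has finite length.
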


\begin{proof}
First suppose that $M=\Delta_{\lambda}$ and $N=\Delta_{\mu}$ are standard. Thus $M=j_!(S_{\lambda})$ and $N=j_!(S_{\mu})$, and so $M \otimes_{\fG} N=j_!(S_{\lambda} \otimes_{\fD} S_{\mu})$ since $j_!$ is monoidal. We therefore find that $M \otimes_{\fG} N$ is semistandard by Proposition~\ref{prop:jsstd}. By the previous lemma, we find that $\Tor^{\fG}_p(M,N)=\Tor^{\fD}_p(S_{\lambda},S_{\mu})=0$ for $p>0$, since $\otimes_{\fD}$ is exact.

We now treat the general case by induction on the sum of the lengths of the standard filtrations of $M$ and $N$. Suppose $M$ is non-zero and choose a short exact sequence
\begin{displaymath}
0 \to \Delta_{\lambda} \to M \to M' \to 0
\end{displaymath}
with $M'$ semistandard. Since $\Tor_1^{\fG}(M', N)=0$ by the inductive hypothesis, we find that the sequence
\begin{displaymath}
0 \to \Delta_{\lambda} \otimes_{\fG} N \to M \otimes_{\fG} N \to M' \otimes_{\fG} N \to 0
\end{displaymath}
is exact. Since the outer terms are semistandard by the inductive hypothesis, so is the middle term. We also have exact sequences
\begin{displaymath}
\Tor_p^{\fG}(\Delta_{\lambda}, N) \to \Tor_p^{\fG}(M, N) \to \Tor_p^{\fG}(M', N)
\end{displaymath}
for all $p$. The inductive hypothesis shows that the outer terms vanish, and so the middle one does as well.
\end{proof}

\subsection{A criterion for (T3)}

We now give a criterion to simplify the task of verifying the axiom (T3). Let $\fG$ be a $\bk$-linear category with wide subcategories $\fU$ and $\fD$ satisfying (T0), (T1), and (T2). Suppose that for all objects $x,y \in \fG$ we have a subset of $\Hom_{\fG}(x,y)$, whose elements we refer to as {\bf distinguished}, such that the following conditions hold:
\begin{enumerate}
\item The distinguished elements form a $\bk$-basis of $\Hom_{\fG}(x,y)$ for all $x$ and $y$.
\item The basis of distinguished element is adapted to $\Hom_{\fU}(x,y)$ and $\Hom_{\fD}(x,y)$, that is, these spaces are spanned by the distinguished elements they contain.
\item If $\beta \colon x \to y$ is a distinguished morphism in $\fD$ and $\alpha \colon y \to z$ is a distinguished morphism in $\fU$ then $\alpha \circ \beta$ is a distinguished morphism.
\item Any distinguished morphism $\phi \colon x \to z$ can be factored as $\alpha \circ \beta$, where $\beta \colon x \to y$ is a distinguished morphism in $\fD$, and $\alpha \colon y \to z$ is a distinguished morphism in $\fU$. Moreover, this factorization is unique up to $\fM$-isomorphism, in the sense that if $\phi = \alpha' \circ \beta'$, with $\beta' \colon x \to y'$ a distinguished morphism in $\fD$ and $\alpha' \colon y' \to z$ a distinguished morphism in $\fU$, then there exists an isomorphism $i \colon y \to y'$ in $\fM$ such that $\beta'=i \circ \beta$ and $\alpha'=\alpha \circ i^{-1}$.
\end{enumerate}

\begin{proposition} \label{prop:tri-crit}
In the above setting, axiom (T3) holds, and so $(\fU, \fD)$ is a triangular structure.
\end{proposition}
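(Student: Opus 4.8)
The plan is to fix objects $x,z$ and show directly that the composition map
$$\Psi\colon \bigoplus_{y}\Hom_{\fU}(y,z)\otimes_{\End_{\fM}(y)}\Hom_{\fD}(x,y)\longrightarrow \Hom_{\fG}(x,z)$$
is bijective, by building an explicit two‑sided inverse out of the distinguished factorizations of condition~(4). I would first pass to a skeletal subcategory, so that the direct sum runs over honest objects $y$ and so that $\fM$‑isomorphic objects are literally equal; write $H=\Hom_{\fG}(x,z)$ and $T$ for the source. Note that by (T1) (together with $\fM=\fU\cap\fD$) we have $\End_{\fU}(y)=\End_{\fD}(y)=\End_{\fM}(y)$, and the left/right actions appearing in $T$ are post‑ and pre‑composition, so the tensor product is genuinely balanced over this ring.

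Next I would define a candidate inverse $\sigma\colon H\to T$. By condition~(1) the distinguished morphisms form a $\bk$‑basis of $H$; for each distinguished $\phi\colon x\to z$, use condition~(4) to choose a factorization $\phi=\alpha_\phi\circ\beta_\phi$ with $\beta_\phi$ distinguished in $\fD$ and $\alpha_\phi$ distinguished in $\fU$, and set $\sigma(\phi)$ to be the image of $\alpha_\phi\otimes\beta_\phi$ in the corresponding summand of $T$; then extend $\bk$‑linearly. Since $\Psi(\sigma(\phi))=\alpha_\phi\circ\beta_\phi=\phi$ on basis elements, we get $\Psi\circ\sigma=\id_H$; in particular $\Psi$ is already surjective and $\sigma$ is injective.

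The real content is to prove $\sigma$ is \emph{surjective}; granting this, $\Psi\circ\sigma=\id_H$ forces $\sigma$ to be bijective with inverse $\Psi$, and (T3) follows. By condition~(2), for each $y$ the spaces $\Hom_{\fU}(y,z)$ and $\Hom_{\fD}(x,y)$ are spanned by their distinguished elements, so $T$ is spanned by classes $[\alpha\otimes\beta]$ with $\alpha\colon y\to z$ distinguished in $\fU$ and $\beta\colon x\to y$ distinguished in $\fD$. Fix such a class. By condition~(3), $\phi:=\alpha\circ\beta$ is distinguished, so $(\alpha,\beta)$ and $(\alpha_\phi,\beta_\phi)$ are two distinguished factorizations of the same morphism $\phi$; condition~(4) then supplies an $\fM$‑isomorphism $i$ between the two middle objects with $\beta_\phi=i\circ\beta$ and $\alpha_\phi=\alpha\circ i^{-1}$. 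In the skeleton the two middle objects coincide, so $i\in\Aut_{\fM}(y)\subseteq\End_{\fM}(y)$, and balancedness of the tensor product gives $[\alpha_\phi\otimes\beta_\phi]=[\alpha\circ i^{-1}\otimes i\circ\beta]=[\alpha\otimes\beta]$. Hence $[\alpha\otimes\beta]=\sigma(\phi)$ lies in the image of $\sigma$, as desired.

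I expect the only delicate point to be the bookkeeping around $\fM$‑isomorphisms: one must check that passing to a skeleton is harmless, that it is the full group $\Aut_{\fM}(y)$ (not merely the distinguished automorphisms) that intervenes — so that the identity $[\alpha\circ i^{-1}\otimes i\circ\beta]=[\alpha\otimes\beta]$ is legitimate inside the tensor product over the whole ring $\End_{\fM}(y)$ — and that the module structures in $T$ are precisely pre/post‑composition. Everything else is a routine unwinding of definitions.
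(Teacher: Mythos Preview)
Your proposal is correct and follows essentially the same approach as the paper: pass to a skeleton, choose a distinguished factorization for each basis element of $\Hom_{\fG}(x,z)$, and use conditions (b), (c), (d) together with balancedness over $\End_{\fM}(y)$ to show that any tensor $\alpha\otimes\beta$ with $\alpha,\beta$ distinguished coincides with the chosen one. The only cosmetic difference is that the paper phrases the conclusion as ``the chosen tensors span the domain and map bijectively to a basis'' rather than explicitly naming an inverse map $\sigma$, but the content is identical.
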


\begin{proof}
Without loss of generality, assume that $\fG$ is skeletal (isomorphic objects are equal). Pick $x,z \in \fG$ and consider the composition map
\begin{displaymath}
\psi \colon \bigoplus_{y \in \fG} \Hom_{\fU}(y, z) \otimes_{\End_{\fM}(y)} \Hom_{\fD}(x, y) \to \Hom_{\fG}(x,z)
\end{displaymath}
Let $\gamma_1, \ldots, \gamma_n$ be the distinguished elements of $\Hom_{\fG}(x,z)$, which form a basis by (a). For each $i$, choose a factorization $\gamma_i=\alpha_i \circ \beta_i$ where $\alpha_i \colon y_i \to z$ and $\beta_i \colon x \to y_i$ are distinguished morphisms in $\fU$ and $\fD$; this is possible by (d).

We claim that the elements $\alpha_i \otimes \beta_i$ span the domain of $\psi$. By (b), this space is spanned by elements of the form $\alpha \otimes \beta$ where $\alpha \colon y \to z$ and $\beta \colon x \to y$ are distinguished morphisms in $\fU$ and $\fD$, so it suffices to express these elements in the desired form. By (c), the composition $\alpha \circ \beta$ is distinguished, and therefore equal to $\gamma_i$ for some $i$. We thus have $\gamma_i=\alpha \circ \beta = \alpha_i \circ \beta_i$. By the uniqueness property in (d), we have $y=y_i$ and there is an automorphism $\sigma$ of $y$ in $\fM$ such that $\beta=\sigma \circ \beta_i$ and $\alpha=\alpha_i \circ \sigma^{-1}$. We thus have $\alpha \otimes \beta=\alpha_i\sigma^{-1} \otimes \sigma\beta_i=\alpha_i \otimes \beta_i$, since the tensor product is taken over $\End_{\fM}(y)$. This establishes the claim.

Since the elements $\alpha_i \otimes \beta_i$ span the domain of $\psi$ and map bijectively to a basis, it follows that $\psi$ is an isomorphism. Thus (T3) holds.
\end{proof}

\subsection{A generalization}

Given a triangular category $\fG$, one gets abelian categories $\Mod_{\fG}$, $\Mod_{\fU}$, $\Mod_{\fD}$, $\Mod_{\fM}$, and various functors between them coming from $i$, $j$, $i'$, and $j'$. In fact, one can abstract this situation, as follows. Consider a commutative (up to isomorphism) diagram of abelian categories
\begin{displaymath}
\xymatrix@C=4em{
\cA_m & \cA_u \ar[l]_-{(j')^*} \\
\cA_d \ar[u]^{(i')^*} & \cA \ar[l]_-{j^*} \ar[u]_{i^*} }
\end{displaymath}
We require a number of axioms; we list only the most important ones:
\begin{itemize}
\item The functors $i^*$, $j^*$, $(i')^*$, $(j')^*$ are faithful, continuous, cocontinuous, and admit both left and right adjoints.
\item The category $\cA_m$ is semi-simple, and $(i')^*$ and $(j')^*$ identify the semi-simple subcategories of $\cA_d$ and $\cA_u$ with $\cA_m$.
\item The simple objects of $\cA_m$ can be ordered so that all extensions in $\cA_u$ are upwards and all extensions in $\cA_d$ are downwards.
\item The base change isomorphisms of Proposition~\ref{prop:tri-pushfwd}(a,b) hold.
\end{itemize}
Nearly all constructions and results in this section can be carried out in this framework. This framework is in some ways more natural, since it provides exactly what is needed for the key constructions and results. However, we decided to work in the slightly more special setting of triangular categories since they are less technical and cover the examples of interest.

\section{The Brauer category} \label{sec:brauer}

\subsection{Motivation} \label{ss:brauer-motiv}

Before defining the Brauer category, we attempt to explain where it comes from. Let $V$ be a finite dimensional complex vector space equipped with a non-degenerate symmetric bilinear form. The idea of the Brauer category is to record all the obvious maps between tensor powers of $V$ that commute with the action of the orthogonal group $\bO(V)$. For clarity, we work with tensor powers where the power is a finite set: if $S$ is a finite set of cardinality $n$ then $V^{\otimes S}$ is isomorphic to $V^{\otimes n}$, but the tensor factors are indexed by $S$; when $S=[n]$, we identify $V^{\otimes S}$ with $V^{\otimes n}$. There are three fundamental examples of maps between tensor powers:
\begin{itemize}
\item Given a bijection $\phi \colon S \to T$ there is an induced isomorphism $\phi_* \colon V^{\otimes S} \to V^{\otimes T}$.
\item The map $a \colon V^{\otimes 2} \to \bC$ given by the form. (Note $\bC=V^{\otimes 0}$.)
\item The map $b \colon \bC \to V^{\otimes 2}$ that is dual to $a$.
\end{itemize}
There are three ways that we can build new maps from existing ones:
\begin{itemize}
\item Given maps $V^{\otimes S} \to V^{\otimes T}$ and $V^{\otimes T} \to V^{\otimes U}$, we can form their composition to obtain a map $V^{\otimes S} \to V^{\otimes U}$.
\item Given maps $V^{\otimes S_1} \to V^{\otimes T_1}$ and $V^{\otimes S_2} \to V^{\otimes T_2}$, we can form their tensor product to obtain a map $V^{\otimes (S_1 \amalg S_2)} \to V^{\otimes (T_1 \amalg T_2)}$.
\item Given two maps $V^{\otimes S} \to V^{\otimes T}$, we can form a linear combination.
\end{itemize}
Starting with the fundamental examples and applying these constructions, we can create an endless supply of maps.

There is a convenient way of recording the maps produced by the above constructions using certain diagrams. A {\bf Brauer diagram} from $S$ to $T$ is a perfect matching on $S \amalg T$; that is, it is a partition of the set $S \amalg T$ into disjoint subsets of size two. We picture such a diagram by regarding the elements of $S$ and $T$ as forming two rows of vertices, with the $S$ row below the $T$ row. We thus have three types of edges: \emph{vertical edges} contain one vertex in $S$ and one in $T$; \emph{horizontal edges in $S$} contain two vertices in $S$; and \emph{horizontal edges in $T$} contain two vertices in $T$. In essence, these three types of edges correspond to the three fundamental maps between tensor powers.

Given a Brauer diagram $\alpha$ from $S$ to $T$, we associate to it a linear map $\alpha_* \colon V^{\otimes S} \to V^{\otimes T}$ by applying the map $a$ along horizontal edges in the $S$ row, the map $b$ along horizontal edges in the $T$ row, and identifying the remaining tensor factors along vertical edges. To be more precise, let $S'$ (resp.\ $T'$) be the vertices of $S$ (resp.\ $T$) contained in a vertical edge. The vertical edges of $\alpha$ define a bijection $\alpha' \colon S' \to T'$. The map $\alpha_*$ is then the composite
\begin{displaymath}
\xymatrix{
V^{\otimes S} \ar[r]^{f} & V^{\otimes S'} \ar[r]^{g} & V^{\otimes T'} \ar[r]^{h} & V^{\otimes T} }
\end{displaymath}
where $f$ is the tensor product of the maps $a \colon V^{\otimes \{x,y\}} \to \bC$ over the horizontal edges $(x,y)$ in $S$, $g$ is simply $\alpha'_*$, and $h$ is the tensor product of the maps $b \colon \bC \to V^{\otimes \{x,y\}}$ over the horizontal edges $(x,y)$ in $T$.

\begin{example}
Suppose $\alpha$ is the following diagram from $[3]$ to $[5]$
\begin{center}
\begin{tikzpicture}
\node [node] at (0,1) (a1) {};
\node [node] at (1,1) (a2) {};
\node [node] at (2,1) (a3) {};
\node [node] at (3,1) (a4) {};
\node [node] at (4,1) (a5) {};
\node [node] at (1,0) (b1) {};
\node [node] at (2,0) (b2) {};
\node [node] at (3,0) (b3) {};
\draw[thick, orange] (b1) to[out=-20,in=-160] (b3);
\draw[thick, orange] (b2)--(a4);
\draw[thick, orange] (a3) to[out=20,in=160] (a5);
\draw[thick, orange] (a1) to[out=20,in=160] (a2);
\end{tikzpicture}
\end{center}
Let $e_1, \ldots, e_n$ be an orthonormal basis of $V$. Then $\alpha_*$ is the map $V^{\otimes 3} \to V^{\otimes 5}$ given by
\begin{displaymath}
\alpha_*(e_i \otimes e_j \otimes e_k) = \delta_{i,k} \sum_{r,s=1}^n e_r \otimes e_r \otimes e_s \otimes e_j \otimes e_s. \qedhere
\end{displaymath}
\end{example}

It turns out that the composition of two maps associated to diagrams is always a scalar multiple of another such map. We now explain exactly how this works. Thus suppose that $\alpha$ is a Brauer diagram from $S$ to $T$, and $\beta$ is one from $T$ to $U$. Let $\beta \cup \alpha$ denote the graph on $S \amalg T \amalg U$ whose edge set is the disjoint union of the edge sets of $\alpha$ and $\beta$. Note that if two vertices of $T$ are joined in both $\alpha$ and $\beta$ then they will be joined by a double edge in $\beta \cup \alpha$. We picture $\beta \cup \alpha$ by simply stacking $\beta$ atop $\alpha$. We let $\beta \bullet \alpha$ be the Brauer diagram from $S$ to $U$ in which there is an edge from $x$ to $y$ if there is a path from $x$ to $y$ in $\beta \cup \alpha$. We also define $c(\beta,\alpha)$ to be the number of cycles in $\beta \cup \alpha$. Note that any cycle can only use vertices in $T$. We then have the fundamental formula:
\begin{equation} \label{eq:brauer-comp}
\beta_* \circ \alpha_* = (\dim{V})^{c(\beta,\alpha)} \cdot (\beta \bullet \alpha)_*.
\end{equation}
This can be proved be a straightforward computation. As a consequence, we see that the linear combinations of maps associated to diagrams account for all the maps obtained by starting with the three fundamental maps and applying the three basic methods of creating new maps.

\begin{example}
We highlight one simple but important example of composition. Let $\alpha$ be the unique Brauer diagram from $[0]$ to $[2]$ and let $\beta$ be the unique diagram from $[2]$ to $[0]$. Then $\beta \bullet \alpha$ is the empty diagram from $[0]$ to itself and $c(\beta,\alpha)=1$; also note that $\alpha_*=b$ and $\beta_*=a$. Thus \eqref{eq:brauer-comp} amounts to the fact that the composition
\begin{displaymath}
\xymatrix{
\bC \ar[r]^b & \bC^{\otimes 2} \ar[r]^a & \bC }
\end{displaymath}
is multiplication by $\dim(V)$. We can see this directly as follows. Let $e_1, \ldots, e_n$ be an orthonormal basis for $V$. Then $b$ is the map $1 \mapsto \sum_{i=1}^n e_i \otimes e_i$, while $a$ is the map $e_i \otimes e_j \mapsto \delta_{i,j}$. Thus $a$ maps each term of $b(1)$ to~1, and since there are $n$ terms we find $a(b(1))=n=\dim(V)$.
\end{example}

\begin{example}
We now give an example illustrating a more complicated composition. Let $\beta \colon [7] \to [5]$ be the diagram
\begin{center}
\begin{tikzpicture}
\node [node] at (0,0) (a1) {};
\node [node, right of=a1] (a2) {};
\node [node, right of=a2] (a3) {};
\node [node, right of=a3] (a4) {};
\node [node, right of=a4] (a5) {};
\node [node, right of=a5] (a6) {};
\node [node, right of=a6] (a7) {};
\node [node, white] at (0,1) (b1) {};
\node [node, right of=b1] (b2) {};
\node [node, right of=b2] (b3) {};
\node [node, right of=b3] (b4) {};
\node [node, right of=b4] (b5) {};
\node [node, right of=b5] (b6) {};
\draw[thick, orange] (a1)--(b2);
\draw[thick, orange] (a6)--(b3);
\draw[thick, orange] (a2)--(b5);
\draw[thick, orange] (a4) to[out=-20,in=-160] (a7);
\draw[thick, orange] (a3) to[out=-20,in=-160] (a5);
\draw[thick, orange] (b4) to[out=20,in=160] (b6);
\end{tikzpicture}
\end{center}
and let $\alpha \colon [3] \to [7]$ be
\begin{center}
\begin{tikzpicture}
\node [node] at (0,1) (a1) {};
\node [node, right of=a1] (a2) {};
\node [node, right of=a2] (a3) {};
\node [node, right of=a3] (a4) {};
\node [node, right of=a4] (a5) {};
\node [node, right of=a5] (a6) {};
\node [node, right of=a6] (a7) {};
\node [node, white] at (0,0) (c1) {};
\node [node, white, right of=c1] (c2) {};
\node [node, right of=c2] (c3) {};
\node [node, right of=c3] (c4) {};
\node [node, right of=c4] (c5) {};
\draw[thick, blue] (a1) to[out=20,in=160] (a2);
\draw[thick, blue] (a3) to[out=20,in=160] (a4);
\draw[thick, blue] (a5) to[out=20,in=160] (a7);
\draw[thick, blue] (c3) to[out=-20,in=-160] (c4);
\draw[thick, blue] (c5)--(a6);
\end{tikzpicture}
\end{center}
The graph $\beta \cup \alpha$ is then
\begin{center}
\begin{tikzpicture}
\node [node, white] at (0,2) (b1) {};
\node [node, right of=b1] (b2) {};
\node [node, right of=b2] (b3) {};
\node [node, right of=b3] (b4) {};
\node [node, right of=b4] (b5) {};
\node [node, right of=b5] (b6) {};
\node [node] at (0,1) (a1) {};
\node [node, right of=a1] (a2) {};
\node [node, right of=a2] (a3) {};
\node [node, right of=a3] (a4) {};
\node [node, right of=a4] (a5) {};
\node [node, right of=a5] (a6) {};
\node [node, right of=a6] (a7) {};
\node [node, white] at (0,0) (c1) {};
\node [node, white, right of=c1] (c2) {};
\node [node, right of=c2] (c3) {};
\node [node, right of=c3] (c4) {};
\node [node, right of=c4] (c5) {};
\draw[thick, orange] (a1)--(b2);
\draw[thick, orange] (a6)--(b3);
\draw[thick, orange] (a2)--(b5);
\draw[thick, orange] (a4) to[out=20,in=160] (a7);
\draw[thick, orange] (a3) to[out=20,in=160] (a5);
\draw[thick, orange] (b4) to[out=20,in=160] (b6);
\draw[thick, blue] (a1) to[out=-20,in=-160] (a2);
\draw[thick, blue] (a3) to[out=-20,in=-160] (a4);
\draw[thick, blue] (a5) to[out=-20,in=-160] (a7);
\draw[thick, blue] (c3) to[out=-20,in=-160] (c4);
\draw[thick, blue] (c5)--(a6);
\end{tikzpicture}
\end{center}
This graph has a unique cycle, namely the 4-cycle on the vertices $\{3,4,5,7\}$ in the middle row, and so $c(\beta,\alpha)=1$. The graph $\gamma = \beta \bullet \alpha$ is
\begin{center}
\begin{tikzpicture}
\node [node] at (0,1) (b2) {};
\node [node, right of=b2] (b3) {};
\node [node, right of=b3] (b4) {};
\node [node, right of=b4] (b5) {};
\node [node, right of=b5] (b6) {};
\node [node,white] at (0,0) (c2) {};
\node [node, right of=c2] (c3) {};
\node [node, right of=c3] (c4) {};
\node [node, right of=c4] (c5) {};
\draw[thick, green!50!gray] (b4) to[out=20,in=160] (b6);
\draw[thick, green!50!gray] (b2) to[out=20,in=160] (b5);
\draw[thick, green!50!gray] (c3) to[out=-20,in=-160] (c4);
\draw[thick, green!50!gray] (c5)--(b3);
\end{tikzpicture}
\end{center}
Thus \eqref{eq:brauer-comp} becomes $\beta_* \circ \alpha_* = \dim(V) \cdot \gamma_*$.
\end{example}

\subsection{Definition}

Fix a field (or even a commutative ring) $\bk$ and let $\delta \in \bk$. We define the {\bf Brauer category} over $\bk$ with parameter $\delta$, denoted $\fG$, as follows. The objects of $\fG$ are finite sets. The set $\Hom_{\fG}(S,T)$ is the free $\bk$-module on the Brauer diagrams from $S$ to $T$. For Brauer diagrams $\alpha \in \Hom_{\fG}(S,T)$ and $\beta \in \Hom_{\fG}(T,U)$, the composite morphism $\beta \circ \alpha$ is defined to be $\delta^{c(\beta,\alpha)} \cdot (\beta \bullet \alpha)$. Composition for general morphisms is defined by linearity.

The endomorphism algebras in $\fG$ are exactly the classical Brauer algebras introduced by Brauer in \cite{brauer}, and which have been studied extensively since. The Brauer category itself appears in \cite{LZ-brauer}, \cite[\S 9]{deligne}, \cite[\S 3.6]{martin}, \cite[\S 2.1]{coulembier4}, and \cite{rui-song}.

A bijection $S \to T$ can be regarded as a Brauer diagram with only vertical edges, and thus defines an element of $\Hom_{\fG}(S, T)$. This construction is compatible with composition. In what follows, we tacitly identify a bijection with its corresponding Brauer diagram. It is not difficult to show that two finite sets $S$ and $T$ are isomorphic in $\fG$ if and only if they have the same cardinality.

We make one more simple observation about $\fG$ here. A set can admit a perfect matching only if it has even cardinality. Thus if there is a Brauer diagram from $S$ to $T$, i.e., if $\Hom_{\fG}(S,T)$ is non-zero, then $S$ and $T$ must have the same parity. Hence $\fG$ is the disjoint union (in the sense of linear categories) of $\fG_{\rm even}$ and $\fG_{\rm odd}$, where $\fG_{\rm even}$ (resp.\ $\fG_{\rm odd}$) is the full subcategory of $\fG$ on sets of even (resp.\ odd) cardinality. We thus see that any $\fG$-module decomposes into a direct sum of a $\fG_{\rm even}$-module and a $\fG_{\rm odd}$-module, and that $\fG_{\rm even}$-modules and $\fG_{\rm odd}$-modules do not interact with each other.

\emph{For the remainder of this section, we take $\bk$ to be the field $\bC$ of complex numbers.}

\subsection{Triangular structure}

We say that a Brauer diagram from $S$ to $T$ is {\bf upwards} if it contains no horizontal edges in $S$, and {\bf downwards} if it contains no horizontal edges in $T$. We define $\fU$ to be the wide subcategory of $\fG$ where $\Hom_{\fU}(S,T)$ is the subspace of $\Hom_{\fG}(S,T)$ spanned by upwards diagrams. We similarly define $\fD$ using downwards morphisms. We note that the morphisms in $\fM$ are exactly the linear combinations of bijections.

\begin{proposition}
With the above definitions, $\fG$ is a triangular category.
\end{proposition}

\begin{proof}
We verify the conditions from the definition:
\begin{enumerate}
\item[(T0)] Every object of $\fG$ is isomorphic to $[n]$ for some $n \in \bN$, and so $\fG$ is essentially small. The Hom sets are all finite-dimensional as the set of Brauer diagrams between two fixed sets is finite.
\item[(T1)] The endomorphism ring $\End_\fU([n]) = \End_\fD([n])$ is the group algebra of the symmetric group $\fS_n$ over $\bC$, and hence is semi-simple.
\item[(T2)] We have a natural bijection $\vert \fG \vert = \bN$, and the standard order on this set is clearly admissible.
\item[(T3)] We use Proposition~\ref{prop:tri-crit}. The distinguished elements of $\Hom_\fG(x,y)$ are the elements corresponding to the Brauer diagrams. Conditions (a) and (b) clearly hold. If $\alpha$ is a downwards diagram and $\beta$ is an upwards diagram, then $\beta \bullet \alpha$ has no closed loops, and so $\beta \circ \alpha = \beta \bullet \alpha$ is distinguished (i.e., is a Brauer diagram on the nose, and not a scalar multiple). Thus (c) holds.

Finally, we verify condition (d). Let $\alpha \colon [n] \to [m]$ be a given Brauer diagram. Let $S \subseteq [n]$ and $T \subseteq [m]$ be the elements contained in a horizontal edge. Let $p = n - |S| = m - |T|$. Then we define $\beta \colon [n] \to [p]$ to be any downwards Brauer diagram that agrees with the original one on $S$; note that $\beta$ necessarily induces a bijection $[n] \setminus S \to [p]$. We then define $\gamma \colon [p] \to [m]$ to be an upwards Brauer diagram that agrees with the original on $T$ and such that the bijection $[p] \to [m] \setminus T$ is chosen so that the composition $[n] \setminus S \stackrel{\beta}{\to} [p] \stackrel{\gamma}{\to} [m] \setminus T$ is the bijection induced by $\alpha$. We then have $\alpha=\gamma \circ \beta$. Any other such factorization simply differs by a permutation of $[p]$, and so (d) holds. \qedhere
\end{enumerate}
\end{proof}

\begin{remark}
If we take the coefficient field $\bk$ to have positive characteristic, then $\End_{\fU}([n])$ is no longer semi-simple, and so $(\fU, \fD)$ does not define a triangular structure on $\fG$. The representation theory of $\fG$ in positive characteristic is still interesting, but much harder than the characteristic~0 case, and we do not attempt to say anything about it.
\end{remark}

\subsection{Lowest weight theory}

As usual, we let $\fM=\fU \cap \fD$. As we saw above, $\End_{\fM}([n])$ is the group algebra $\bC[\fS_n]$ of the symmetric group. Its simple modules are the Specht modules $\mathrm{Sp}_{\lambda}$ for partitions $\lambda$ of $n$. We thus see that the set $\Lambda$ of weights for $\fG$ is the set of all partitions. For $\lambda \in \Lambda$, the simple $\fM$-module $S_{\lambda}$ can be described as follows: if $\lambda$ is a partition of $n$ then $S_{\lambda}([n])=\mathrm{Sp}_{\lambda}$ is the Specht module, and $S_{\lambda}([m])=0$ for $m \ne n$. We note that for a $\fG$- (or $\fU$-, $\fD$-, or $\fM$-) module $M$, the multiplicity $m_{\lambda}(M)$ is simply the multiplicity of $\mathrm{Sp}_{\lambda}$ in the $\fS_n$-representation $M([n])$.

The general theory provides us with a number of important $\fG$-modules:
\begin{itemize}
\item The simple object $L_{\lambda}$.
\item The standard module $\Delta_{\lambda}$ and co-standard module $\nabla_{\lambda}$.
\item The principal projective $\bP_n$ and injective $\bI_n$ corresponding to the object $[n]$.
\item The projective $\wt{P}_{\lambda}=k_!(S_{\lambda})$ and the indecomposable projective $P_{\lambda}$.
\item The injective $\wt{I}_{\lambda}=k_*(S_{\lambda})$ and the indecomposable injective $I_{\lambda}$.
\end{itemize}
These objects will feature prominently throughout this series of papers. We now make a few simple observations about them.

The objects $\Delta_{\lambda}$, $\bP_n$, and $\wt{P}_{\lambda}$ are, in a sense, independent of $\delta$. (The same holds for the dual objects $\nabla_{\lambda}$, $\bI_n$, and $\wt{I}_{\lambda}$.) The following proposition shows one way in which this is true. We let $c^\lambda_{\mu,\nu}$ denote the Littlewood--Richardson coefficients \cite[(2.13)]{expos}.

\begin{proposition}
We have the following: 
\begin{enumerate}
\item $m_{\mu}(\Delta_{\lambda})= \sum_\nu c^\mu_{\lambda,2\nu}$
\item $[\wt{P}_{\lambda}:\Delta_{\mu}]= \sum_\nu c^\lambda_{\mu,2\nu}$
\item $\bP_n \cong \bigoplus_{\vert \lambda \vert=n} \wt{P}_{\lambda}^{\oplus \dim \mathrm{Sp}_{\lambda}}$.
\end{enumerate}
Note that everything here is independent of $\delta$.
\end{proposition}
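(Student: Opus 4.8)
The plan is to reduce all three assertions to computations of underlying symmetric group representations, via the base-change and Kan-extension results of Sections~\ref{sec:rep-cat} and~\ref{sec:tri-cat}; write $n=\vert\lambda\vert$, and recall that $m_{\mu}(M)$ is the multiplicity of the Specht module $\mathrm{Sp}_{\mu}$ in $M([m])$ as an $\fS_{m}$-representation, $m=\vert\mu\vert$. Part (c) is the most formal, so I would do it first. Since $k=j\circ i'$ we have $k_{!}=j_{!}\circ i'_{!}$, so two applications of Proposition~\ref{prop:kan-proj} give $k_{!}(\bP_{\fM,[n]})=j_{!}(\bP_{\fD,[n]})=\bP_{\fG,[n]}=\bP_{n}$. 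The $\fM$-module $\bP_{\fM,[n]}$ is supported on $[n]$, where it is the regular module $\bC[\fS_{n}]$ (the Cartan endomorphisms act by post-composition), hence $\bP_{\fM,[n]}\cong\bigoplus_{\vert\lambda\vert=n}S_{\fM,\lambda}^{\oplus\dim\mathrm{Sp}_{\lambda}}$ as $\fM$-modules; applying the additive functor $k_{!}$ and using $\wt{P}_{\lambda}=k_{!}(S_{\lambda})$ gives (c).

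For (a) and (b) I would first pass to the one-sided categories. Base change (Proposition~\ref{prop:tri-pushfwd}(a)) gives $i^{*}\Delta_{\lambda}\cong j'_{!}(S_{\fM,\lambda})$, and since $i$ is bijective on objects the formula of Proposition~\ref{prop:pushfwd-formula} identifies $\Delta_{\lambda}([m])$, as an $\fS_{m}$-representation, with $\Hom_{\fU}([n],[m])\otimes_{\bC[\fS_{n}]}\mathrm{Sp}_{\lambda}$. For (b), $\wt{P}_{\lambda}=j_{!}(i'_{!}(S_{\fM,\lambda}))$ with $i'_{!}(S_{\fM,\lambda})$ of finite length over $\fD$, so Proposition~\ref{prop:jsstd} gives $[\wt{P}_{\lambda}:\Delta_{\mu}]=m_{\mu}(i'_{!}(S_{\fM,\lambda}))$, and the same formula identifies $i'_{!}(S_{\fM,\lambda})([m])$ with $\Hom_{\fD}([n],[m])\otimes_{\bC[\fS_{n}]}\mathrm{Sp}_{\lambda}$. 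Next comes the combinatorics: an upwards Brauer diagram $[n]\to[m]$ is the data of an $n$-element subset $A\subseteq[m]$, a bijection $[n]\to A$, and a perfect matching on the remaining $m-n=2\ell$ vertices, so $\Hom_{\fU}([n],[m])\otimes_{\bC[\fS_{n}]}\mathrm{Sp}_{\lambda}\cong\mathrm{Ind}_{\fS_{n}\times\fS_{2\ell}}^{\fS_{m}}(\mathrm{Sp}_{\lambda}\boxtimes\mathrm{PM}_{2\ell})$, where $\mathrm{PM}_{2\ell}$ is the permutation module on perfect matchings of a $2\ell$-element set; dually, for downwards diagrams $[n]\to[m]$ (now $n\ge m$, $n-m=2\ell$) the same bookkeeping gives $i'_{!}(S_{\fM,\lambda})([m])\cong\bigl(\mathrm{Res}^{\fS_{n}}_{\fS_{m}\times\fS_{2\ell}}\mathrm{Sp}_{\lambda}\bigr)^{H_{\ell}}$, the invariants for the hyperoctahedral subgroup $H_{\ell}=\fS_{2}\wr\fS_{\ell}$ of the $\fS_{2\ell}$-factor. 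The final input is the classical decomposition $\mathrm{PM}_{2\ell}\cong\bigoplus_{\nu\vdash\ell}\mathrm{Sp}_{2\nu}$ (equivalently the plethysm $h_{\ell}[h_{2}]=\sum_{\nu}s_{2\nu}$); inserting it and applying the Littlewood--Richardson rule turns the induction into $m_{\mu}(\Delta_{\lambda})=\sum_{\nu}c^{\mu}_{\lambda,2\nu}$ and the restriction-then-invariants into $m_{\mu}(i'_{!}(S_{\fM,\lambda}))=\sum_{\nu}c^{\lambda}_{\mu,2\nu}$, which is (a) and (b). The $\delta$-independence is then clear, since only $\fU$, $\fD$, $\fM$, and the functors $i',j,k$ entered, none of which involves the composition scalar $\delta$ (it only enters when an upward and a downward diagram are composed so as to create a loop).

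The functorial reductions are routine once the Kan-extension formulas are available; the step I expect to be the crux is the explicit identification of $\Delta_{\lambda}([m])$ and $i'_{!}(S_{\fM,\lambda})([m])$ with the induced, resp.\ invariant, representations above. Two points need care: the left/right $\fS_{m}$-$\fS_{n}$-bimodule bookkeeping inside $\otimes_{\bC[\fS_{n}]}$; and the fact that evaluating $j_{!}(S_{\fD,\lambda})$ in $\fG$-degree $[m]$ kills exactly the Brauer diagrams with a horizontal edge in the bottom row, so that the relevant $\Hom$-space is $\Hom_{\fU}([n],[m])$ rather than $\Hom_{\fG}([n],[m])$ — a coend computation done hastily overcounts here. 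After those are handled, the decomposition of the matching module is simply quoted from the literature.
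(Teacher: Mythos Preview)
Your argument is correct.  Parts (a) and (c) match the paper almost verbatim: the paper also obtains (c) by decomposing the regular $\fS_n$-module and applying $k_!$, and for (a) it writes $i^*\Delta_\lambda([m])=\Ind_{\fS_n\times H_{m-n}}^{\fS_m}\mathrm{Sp}_\lambda$ (your $\Ind_{\fS_n\times\fS_{2\ell}}^{\fS_m}(\mathrm{Sp}_\lambda\boxtimes\mathrm{PM}_{2\ell})$ is the same thing after one step of transitivity of induction), then quotes the plethysm $h_\ell[h_2]=\sum_\nu s_{2\nu}$ and Littlewood--Richardson.

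The genuine difference is in (b).  You compute $i'_!(S_{\fM,\lambda})$ directly via downwards diagrams, obtaining $(\mathrm{Res}^{\fS_n}_{\fS_m\times\fS_{2\ell}}\mathrm{Sp}_\lambda)^{H_\ell}$ and then running Frobenius reciprocity.  The paper instead invokes the transpose on the Brauer category to write
\[
[\wt P_\lambda:\Delta_\mu]=\dim\Hom_\fG(\wt P_\lambda,\nabla_\mu)=m_\lambda(\nabla_\mu)=m_\lambda(\Delta_\mu),
\]
using Propositions~\ref{prop:Delta-mult} and~\ref{prop:Ptilde}(b) together with $\nabla_\mu^{\vee}\cong\Delta_\mu$, thereby reducing (b) to (a) with $\lambda$ and $\mu$ interchanged.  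The paper's route is shorter and explains conceptually why the same Littlewood--Richardson sum appears with the indices swapped, but it relies on the transpose of \S\ref{ss:brauer-duals}.  Your route is self-contained and would go through unchanged for a triangular category lacking a transpose; in effect you are redoing the dual of the (a) computation by hand rather than invoking duality.
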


\begin{proof}
  (a) Let $|\lambda|=n$. We have
  \[
    i^*\Delta_\lambda = i^* j_! S_\lambda = \Hom_{\fS_n} ({\rm Sp}_\lambda, \bP_{\fU,n})
  \]
  Let $|\mu|=m$. For a fixed inclusion $[n] \to [m]$, $\fS_{m-n}$ acts transitively on the set of perfect matchings on $[m] \setminus [n]$, let $H_{m-n}$ be the stabilizer of a particular matching. Then we get
  \[
    i^*\Delta_\lambda([m]) = \Ind_{\fS_n \times H_{m-n}}^{\fS_m} {\rm Sp}_\lambda \cong \bigoplus_{\substack{|\mu|=m,\\
        |\nu|=(m-n)/2}} {\rm Sp}_\mu^{\oplus c^\mu_{\lambda,2\nu}}
  \]
    \cite[(2.13), Example 6.3.4]{expos}, so the formula follows.

    (b) By Proposition~\ref{prop:Delta-mult}, Proposition~\ref{prop:Ptilde}(b), and duality,
    \[
      [\wt{P}_\lambda: \Delta_\mu] = \dim \Hom_\fG(\wt{P}_\lambda, \nabla_\mu) = m_{\lambda}(\nabla_\mu) = m_{\lambda}(\Delta_\mu),
    \]
    and so the result follows from (a).

    (c) We have $\bC[\fS_n] = \bigoplus_{|\lambda|=n} {\rm Sp}_\lambda^{\oplus \dim {\rm Sp}_\lambda}$ as $\fS_n$-representations. It thus follows that $\bP_{\fM,n} = \bigoplus_{|\lambda|=n} S_{\lambda}^{\oplus \dim {\rm Sp}_\lambda}$ as $\fM$-modules. Apply $k_!$ to get the desired identity.
\end{proof}

The objects $P_{\lambda}$ and $L_{\lambda}$, by contrast, depend in subtle ways on $\delta$. The following example illustrates this for $P_{\lambda}$.

\begin{example} 
The principal projective $\bP_0$ is indecomposable, and coincides with both $\wt{P}_{\emptyset}$ and $P_{\emptyset}$. Let $\lambda=(2)$. The decomposition of $\wt{P}_{\lambda}$ can only involve $P_{\lambda}$ and $P_{\emptyset}$ for parity reasons. There is a unique map $f \colon \bP_2 \to \bP_0$ (up to scaling), and it factors through $\wt{P}_{\lambda}$. We thus see that $P_{\emptyset}$ is a summand of $\wt{P}_{\lambda}$ if and only if $f$ is surjective.

Let us examine $f$ more closely. By definition $f$ takes the identity element of $\bP_2([2])$ to the unique Brauer diagram $\alpha$ in $\bP_0([2])$. Since every morphism in $\fG$ can be factored as a downwards morphism followed by an upwards morphism, we see that $(\im{f})(0)$ is the space spanned by applying all downwards maps $[2] \to [0]$ to $\alpha$. There is a unique diagram $\beta \colon [2] \to [0]$, and $\beta \circ \alpha = \delta \cdot \id_{[0]}$. Thus if $\delta \ne 0$ then $\id_{[0]} \in \im(f)$, and so $f$ is surjective since $\id_{[0]}$ generates $\bP_0$; if $\delta=0$ then $\id_{[0]} \not\in \im(f)$, and so $f$ is not surjective.

We thus find that $\wt{P}_{\lambda}=P_{\lambda} \oplus P_{\emptyset}$ if $\delta \ne 0$, while $\wt{P}_{\lambda}=P_{\lambda}$ if $\delta=0$. The standard pieces of $\wt{P}_{\lambda}$ are $\Delta_{\lambda}$ and $\Delta_{\emptyset}=P_{\emptyset}$, each with multiplicity one. Thus $P_{\lambda}=\Delta_{\lambda}$ if $\delta \ne 0$, while $P_{\lambda}$ is a non-trivial extension of $\Delta_{\lambda}$ by $\Delta_{\emptyset}$ if $\delta=0$.
\end{example}

\subsection{The noetherian property}

The following is a fundamental result about the Brauer category and will be used constantly in what follows.

\begin{theorem}
The category $\Mod_{\fG}$ is locally noetherian, that is, any submodule of a finitely generated $\fG$-module is again finitely generated.
\end{theorem}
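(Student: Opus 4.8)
The plan is to reduce the statement to a noetherianity result about a twisted commutative algebra, via the criterion of Proposition~\ref{prop:Unoeth}. By that proposition, it suffices to show that $\Mod_{\fU}$ is locally noetherian, where $\fU$ is the upwards Brauer category, i.e., the wide subcategory of $\fG$ spanned by the upwards Brauer diagrams.

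First I would identify $\Mod_{\fU}$ with a module category over a twisted commutative algebra. The key observation is that an upwards Brauer diagram from $S$ to $T$ is exactly an injection $\iota\colon S\hookrightarrow T$ together with a perfect matching on the complement $T\setminus\iota(S)$; in particular $\Hom_{\fU}(\emptyset,T)$ is the span of the perfect matchings on $T$, which coincides with $A(T)$ for the tca $A=\Sym(\Sym^2(\bV))$, where $\bV$ is the standard linear species. Unwinding this bookkeeping produces an exact equivalence of abelian categories $\Mod_{\fU}\simeq\Mod_A$ under which the principal projective $\bP_{\fU,S}$ corresponds to the free module $A\otimes\bV^{\otimes S}$; in particular the equivalence matches up finitely generated objects, by the characterization of finite generation in \S\ref{ss:principal}. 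This identification is \cite[Remark~1.3]{sym2noeth}; via Schur--Weyl duality, $\Mod_A$ is equivalently the category of $\GL(\bC^{\infty})$-equivariant polynomial modules over the ordinary commutative ring $\Sym(\Sym^2(\bC^{\infty}))$.

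Then I would invoke the main theorem of \cite{sym2noeth}, which asserts that the degree-two tca $\Sym(\Sym^2(\bC^{\infty}))$ is noetherian, i.e., every submodule of a finitely generated $A$-module is finitely generated. Transporting this across the equivalence of the previous paragraph shows that $\Mod_{\fU}$ is locally noetherian, and Proposition~\ref{prop:Unoeth} then yields the theorem. The only genuinely difficult ingredient here is the noetherianity of the tca, which is quoted wholesale from \cite{sym2noeth}; everything else is the routine translation between $\fU$-modules and $A$-modules and the formal reduction already carried out in Proposition~\ref{prop:Unoeth}. (As remarked in the introduction, one expects the $\fG$-module statement to be easier than the tca theorem, but no independent proof is known.)
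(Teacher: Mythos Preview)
Your proposal is correct and follows exactly the same approach as the paper: reduce to local noetherianity of $\Mod_{\fU}$ via Proposition~\ref{prop:Unoeth}, identify $\fU$-modules with modules over the tca $\Sym(\Sym^2(\bC^{\infty}))$ via \cite[Remark~1.3]{sym2noeth}, and quote \cite[Theorem~1.1]{sym2noeth} for the noetherianity of that tca. The paper's proof is a one-sentence citation of precisely these ingredients; you have simply unpacked the identification a bit more.
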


\begin{proof}
We have previously shown that $\Mod_{\fU}$ is locally noetherian (see \cite[Theorem 1.1, Remark 1.3]{sym2noeth}, and note that what is called $\mathbf{FIM}$ there is the same as our $\fU$ here), and so the result follows from Proposition~\ref{prop:Unoeth}.
\end{proof}

\begin{remark}
Some points related to the theorem:
\begin{enumerate}
\item The theorem implies that the category of finitely generated $\fG$-modules is abelian.
\item Due to (a), it makes sense to consider the Grothendieck group of finitely generated $\fG$-modules. Determining this group is a fundamental problem, which we solve in \cite{brauercat5}.
\item By \cite[Remark~1.3]{sym2noeth}, the category of $\fU$-modules is equivalent to the category of $\GL_{\infty}$-equivariant modules over the infinite variable polynomial ring $\Sym(\Sym^2(\bC^{\infty}))$ (with a polynomiality condition on the $\GL_{\infty}$-action). This point of view allows one to apply tools from algebraic geometry and commutative algebra to study $\fU$-modules, which is crucial to the proof of \cite[Theorem~1.1]{sym2noeth}. This perspective will be important in this series of papers too.
\item If $M$ is a $\fG$-module, then the lattice of submodules of $i^*M$ is generally much larger than that of $M$, so local noetherianity of $\Mod_\fG$ should be an easier property to prove than that of $\Mod_\fU$. However, the above proof is the only one we know. \qedhere
\end{enumerate}
\end{remark}

\subsection{Tautological modules} \label{ss:taut}

Suppose that $\delta=p$ is a non-negative integer. Equip $V=\bC^p$ with a non-degenerate symmetric bilinear form. Given a Brauer diagram $\alpha$ from $S$ to $T$, we defined a linear map $\alpha_* \colon V^{\otimes S} \to V^{\otimes T}$ in \S \ref{ss:brauer-motiv}. Moreover, from \eqref{eq:brauer-comp} and the definition of the Brauer category, it follows that formation of $\alpha_*$ is compatible with composition in $\fG$, that is, we have $(\beta \circ \alpha)_*=\beta_* \circ \alpha_*$. In other words, the rule $S \mapsto V^{\otimes S}$ defines a $\fG$-module, which we denote by $T_{p|0}$. The transition maps in $T_{p|0}$ are compatible with the action of $\bO_p$. We can thus regard $\bO_p$ as acting on $T_{p|0}$; alternatively, we can regard $T_{p|0}$ as a functor to $\Rep(\bO_p)$.

More generally, suppose that $\delta$ is an integer, and we have $\delta=p-q$ for non-negative integers $p$ and $q$ with $q$ even. Equip the super vector space $V=\bC^{p|q}$ with a non-degenerate symmetric bilinear form. Given a Brauer diagram $\alpha$ from $S$ to $T$, the same construction yields a map $\alpha_* \colon V^{\otimes S} \to V^{\otimes T}$ (keep in mind the sign conventions when dealing with super vector spaces), which is also functorial. We denote the resulting $\fG$-module by $T_{p|q}$. As above, the super group $\OSp_{p|q}$ acts on $T_{p|q}$, and we can view $T_{p|q}$ as a functor to $\Rep(\OSp_{p|q})$.

In essence, the Brauer category was defined so that the $T_{p|q}$ would be modules. For that reason, we refer to them as {\bf tautological modules}. These modules will play an important role in our study of the Brauer category, especially in \cite{brauercat4}. We emphasize that tautological modules exist only when the parameter $\delta$ is an integer. This is one manifestation of the fact that the Brauer category behaves very differently depending on whether $\delta$ is an integer or not.

\subsection{Duality} \label{ss:brauer-duals}

Mathematically, a Brauer diagram from $S$ to $T$ is exactly the same as a Brauer diagram from $T$ to $S$; the only difference between the two notions is which vertices we put on the bottom when we draw them. Moreover, this symmetry is compatible with composition. In other words, we have an equivalence of categories
\begin{displaymath}
\tau \colon \fG \to \widehat{\fG}.
\end{displaymath}
Explicitly, $\tau$ is the identity on objects, and the bijection
\begin{displaymath}
\tau \colon \Hom_{\fG}(S, T) \to \Hom_{\widehat{\fG}}(S, T)=\Hom_{\fG}(T, S)
\end{displaymath}
takes a Brauer diagram from $S$ to $T$ to the same graph, but regarded as a Brauer diagram from $T$ to $S$. It is clear that $\tau$ is a triangular equivalence and squares to the identity. Moreover, $\tau^*(S^{\vee}_{\lambda})$ is isomorphic to $S_{\lambda}$, since Specht modules are self-dual. Thus $\tau$ is a transpose on $\fG$ (in the sense of Definition~\ref{defn:transpose}).

Suppose that $M$ is a $\fG$-module. We can then regard the $\widehat{\fG}$-module $M^{\vee}$ as a $\fG$-module via $\tau$. In what follows we always do this. We thus have a duality functor
\begin{displaymath}
(-)^{\vee} \colon \Mod_{\fG}^{\rm op} \to \Mod_{\fG}.
\end{displaymath}
It induces an equivalence on the subcategories of pointwise finite modules. We also have duality functors between $\fU$ and $\fD$ modules, and a duality functor on $\fM$-modules.

On the main objects of interest, duality is given as follows:
\begin{align*}
S_{\lambda}^{\vee} &= S_{\lambda} &
L_{\lambda}^{\vee} &= L_{\lambda} &
\Delta_{\lambda}^{\vee} &= \nabla_{\lambda} \\
\bP_n^{\vee} &= \bI_n &
P_{\lambda}^{\vee} &= I_{\lambda} &
T_{p|q}^{\vee} &= T_{p|q}
\end{align*}
We have already explained that $S_{\lambda}$ is self-dual. The formulas for $L_{\lambda}^{\vee}$ and $\Delta_{\lambda}^{\vee}$ are discussed in Remark~\ref{rmk:transp-duals}. The formula for $\bP_n^{\vee}$ is discussed in \S \ref{ss:principal}, while that for $P_{\lambda}^{\vee}$ is discussed in \S \ref{ss:inj}. Finally, let $V=\bC^{p \vert q}$ equipped with a non-degenerate symmetric form. The form gives an isomorphism $V^{\otimes S} \cong (V^{\otimes S})^*$ for all finite sets $S$, and one verifies that this is compatible with maps in the Brauer category, after identifying it with its opposite; this shows that $T_{p \vert q}$ is self-dual.

\subsection{The tensor product}

Given Brauer diagrams $\alpha \colon S \to T$ and $\alpha' \colon S' \to T'$, the disjoint union of $\alpha$ and $\alpha'$ is a Brauer diagram from $S \amalg S'$ to $T \amalg T'$. It is clear that a disjoint union of upwards (resp.\ downwards) morphisms is upwards (resp.\ downwards). We thus see that $\fG$ has the structure of a symmetric monoidal triangular category. We thus get tensor products on the various modules categories as in \S \ref{ss:trimonoid}.

\begin{proposition} \label{prop:brauer-tensor-exact}
The tensor product $\otimes_{\fD}$ on $\Mod_{\fD}$ is exact.
\end{proposition}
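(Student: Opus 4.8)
The plan is to obtain this from the general exactness criterion of Proposition~\ref{prop:tensor-exact}. First I would pass to the dual side: the transpose $\tau$ of \S\ref{ss:brauer-duals}, being the identity on objects and sending a Brauer diagram to the same diagram read in the opposite direction, commutes with disjoint union, so it is a symmetric monoidal triangular equivalence; in particular it restricts to a monoidal equivalence $\fD \xrightarrow{\sim} \widehat{\fD} = \fU^{\op}$. This induces a monoidal equivalence $\Mod_{\fD} \cong \Mod_{\fU^{\op}}$ intertwining the two Day convolutions, so it suffices to prove that $\otimes$ is exact on $\Mod_{\fU^{\op}}$. Now $\fU$ is the linearization of the category $\cC$ whose objects are finite sets and whose morphisms are upwards Brauer diagrams: composition in $\cC$ produces no closed loops, since a loop would have to run through the middle row and use a horizontal edge of the upper diagram, which upwards diagrams forbid, so $\cC$ is a genuine category, with finite $\Hom$ sets, closed under disjoint union. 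Since $\bk = \bC$ has characteristic~$0$, Proposition~\ref{prop:tensor-exact} applied with $\fC = \fU$ reduces everything to checking axiom $(S_x)$ for every object $x$ of $\fU$.

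To verify $(S_x)$ for $x = [n]$, I would take the following model maps. For a subset $A \subseteq [n]$, put $B = [n] \setminus A$, adjoin formal symbols $a_1, \dots, a_r, b_1, \dots, b_r$ for each $r \in \bN$, and set $y_{A,r} = A \amalg \{a_1, \dots, a_r\}$ and $z_{A,r} = B \amalg \{b_1, \dots, b_r\}$; let $\phi_{A,r} \colon [n] \to y_{A,r} \amalg z_{A,r}$ be the upwards diagram that joins each $\nu \in A$ to its copy in $y_{A,r}$, joins each $\nu \in B$ to its copy in $z_{A,r}$, and joins $a_j$ to $b_j$ for $1 \le j \le r$. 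These $\phi_{A,r}$, indexed by $I = \{(A,r) : A \subseteq [n],\ r \in \bN\}$, are the maps required by $(S_x)$.

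The content is then to show every upwards $\psi \colon [n] \to y \amalg z$ factors through a unique $\phi_{A,r}$, uniquely up to the stabilizer of $\phi_{A,r}$. Here $A$ must be the set of vertices of $[n]$ joined by $\psi$ to $y$ (well defined since $\psi$ has no horizontal edge in $[n]$), and $r$ the number of edges of $\psi$ joining a vertex of $y$ to a vertex of $z$; the factors $\alpha \colon y_{A,r} \to y$ and $\beta \colon z_{A,r} \to z$ are read off $\psi$ by sending each $\nu \in A$ to its $\psi$-partner in $y$, sending $a_j$ to the $y$-end of the $j$th $y$--$z$ edge (after fixing an order), and carrying the leftover horizontal-in-$y$ edges of $\psi$ (and symmetrically for $\beta$); one checks that $\alpha,\beta$ are upwards and that $(\alpha \amalg \beta) \circ \phi_{A,r} = \psi$. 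For uniqueness, the key observation is that in any factorization $\psi = (\alpha' \amalg \beta') \circ \phi_{A',r'}$ the diagrams $\alpha', \beta'$ are upwards, hence have no horizontal edge in their \emph{sources} $y_{A',r'}, z_{A',r'}$; so every vertex there lies on a vertical edge of $\alpha'$ or $\beta'$, and tracing paths through the stacked diagram forces $A' = A$ and $r' = r$, and then shows two factorizations differ only by a simultaneous permutation of $a_1, \dots, a_r$ and $b_1, \dots, b_r$ --- which is exactly an element of the stabilizer of $\phi_{A,r}$ in $\Aut_{\cC}(y_{A,r}) \times \Aut_{\cC}(z_{A,r})$. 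That is precisely $(S_x)$.

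I expect the only real work to be the bookkeeping in the last paragraph; there is nothing deep, but one must use ``upwards'' in the form ``no horizontal edge in the source'' --- that is what makes $A$ and the $y$--$z$ edges recoverable from $\psi$ alone, hence the factorization essentially unique. (Characteristic~$0$ is used only inside Proposition~\ref{prop:amalg-proj}, to split off the coinvariants of a principal projective by the finite stabilizer group as a summand.)
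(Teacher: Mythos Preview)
Your proposal is correct and follows essentially the same route as the paper: both reduce to verifying axiom $(S_x)$ for the upwards category $\fU$ (as the linearization of a genuine category $\cC$ of upwards Brauer diagrams) and then invoke Proposition~\ref{prop:tensor-exact} together with the identification $\fD \cong \fU^{\op}$. The only cosmetic differences are that the paper writes ``since $\fD = \fU^{\op}$'' without naming the transpose, and it describes the index set $I$ as equivalence classes of upwards diagrams $[n] \to [a] \amalg [b]$ whose horizontal edges all cross between the two factors, whereas your $\phi_{A,r}$ are an explicit choice of representatives for exactly those classes.
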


\begin{proof}
Let $\cU$ be the wide subcategory of $\fG$ where the morphisms are upwards Brauer diagrams (not linear combinations of such diagrams). Then $\fU=\bk[\cU]$. We prove that property $(S_x)$ holds for all objects $x$ of $\fU$ and then apply Proposition~\ref{prop:tensor-exact} (since $\fD = \fU^\op$). Fix $x = [n]$. Consider the set of upwards diagrams $\phi \colon [n] \to [a] \amalg [b]$ where every horizontal edge has one endpoint in $[a]$ and one in $[b]$. We consider two of them to be equivalent if they differ by permutations of either $[a]$ or $[b]$ and let $I$ be a set of representatives for the equivalence classes. For $i \in I$, we denote the corresponding representative $\phi_i \colon [n] \to [a_i] \amalg [b_i]$.

  Let $\psi \colon [n] \to y \amalg z$ be any upwards Brauer diagram. Let $S \subseteq y$ be the subset obtained by removing all horizontal edges (and vertices) such that both of its vertices are in $y$ and similarly define $T \subseteq z$. Let $\phi \colon [n] \to S \amalg T$ be the restriction of $\psi$ to these subsets. There is a unique $i \in I$ such that $\phi = \phi_i$ for some choice of bijections $[a_i] \cong S$ and $[b_i] \cong T$. Furthermore, we then have a pair of upwards Brauer diagrams $\alpha \colon [a_i] \to y$ and $\beta \colon [b_i] \to z$ such that $\psi =(\alpha \amalg \beta) \circ \phi_i$ and this choice of $(\alpha,\beta)$ is unique up to the stabilizer of $\phi_i$ in $\Aut_{\cU}([a_i]) \times \Aut_{\cU}([b_i])$. This establishes $(S_x)$ and we finishes the proof.
\end{proof}

\begin{corollary}
If $M$ and $N$ are semistandard $\fG$-modules then $M \otimes_{\fG} N$ is semistandard and $\Tor_p^{\fG}(M,N)=0$ for $p>0$.
\end{corollary}

\begin{proof}
This follows from Proposition~\ref{prop:ss-tensor}.
\end{proof}

\begin{remark} \label{rmk:brauer-tensor}
A number of remarks related to tensor products:
\begin{enumerate}
\item The tensor product $\otimes_{\fG}$ is not exact. This will become clear in \cite{brauercat4}.
\item The tensor product $\otimes_{\fU}$ is also not exact. One can see this as follows. Let $A$ be the tca $\Sym(\Sym^2(\bC^{\infty}))$. Then $\Mod_{\fU}$ is equivalent to $\Mod_A$ (see \cite[(4.3.1)]{infrank}), and under this equivalence $\otimes_{\fU}$ corresponds to $\otimes_A$ (as one can see by considering projectives, for instance), which is not exact.
\item The exactness of $\otimes_{\fD}$ also follows (though more indirectly) from the results of \cite[(4.3.1)]{infrank}: there we show that $\Mod_{\fD}$ is equivalent to a certain category of representations of $\bO_{\infty}$ and that, under this equivalence, $\otimes_{\fD}$ corresponds to the usual tensor product.
\item An important problem is to classify ideals of $\fG$ (as defined in \S \ref{ss:tensor}). We solve this problem in \cite{brauercat3}. \qedhere
\end{enumerate}
\end{remark}

\section{The partition category} \label{sec:partition}

\subsection{Motivation} \label{ss:part-motiv}

Let $V=\bC^n$ be the permutation representation of the symmetric group $\fS_n$, with basis $e_1, \ldots, e_n$. The idea of the partition category is to record all the obvious $\fS_n$-equivariant maps between tensor powers of $V$.

Consider the map
\begin{displaymath}
a_{s,t} \colon V^{\otimes s} \to V^{\otimes t}, \qquad
e_{i_1,\ldots,i_s} \mapsto \sum_{j_1,\ldots,j_t} \delta_{i_1,\ldots,i_s,j_1,\ldots,j_t} e_{j_1,\ldots,j_t},
\end{displaymath}
where $e_{i_1,\ldots,i_s}=e_{i_1} \otimes \cdots \otimes e_{i_s}$, and $\delta$ is~1 if all indices are the same and~0 otherwise. For example:
\begin{itemize}
\item $a_{0,1} \colon \bC \to V$ maps 1 to the invariant vector $\sum_{i=1}^n e_i$.
\item $a_{1,0} \colon V \to \bC$ is the augmentation map, defined by $e_i \mapsto 1$ for all $i$.
\item $a_{1,1} \colon V \to V$ is the identity map.
\item $a_{2,1} \colon V^{\otimes 2} \to V$ is given by $e_{i,j} \mapsto \delta_{i,j} e_i$.
\item $a_{1,2} \colon V \to V^{\otimes 2}$ is given by $e_i \mapsto e_i \otimes e_i$.
\end{itemize}
We have an analogous map $a_{S,T} \colon V^{\otimes S} \to V^{\otimes T}$ for finite sets $S$ and $T$ (e.g., pick bijections $S \cong [s]$ and $T \cong [t]$ and transport $a_{s,t}$). One easily sees that these maps are $\fS_n$-equivariant.

We can now apply the same three constructions as in \S \ref{ss:brauer-motiv} (composition, tensor product, linear combination) to create more maps from these. The resulting maps are once again conveniently represented by certain diagrams, as follows. Let $S$ and $T$ be finite sets. A {\bf partition diagram} from $S$ to $T$ is a partition of the set $S \amalg T$ into non-empty disjoint subsets. Given a partition diagram $\alpha$ from $S$ to $T$, we let $\alpha_* \colon V^{\otimes S} \to V^{\otimes T}$ be the tensor product of the maps $a_{S \cap U, T \cap U}$ taken over the parts $U$ of $\alpha$. This is an $\fS_n$-equivariant map.

\begin{example}
Let $\alpha$ be the partition diagram from $[4]$ to $[5]$ given by
\begin{center}
\begin{tikzpicture}
\draw[orange!50!white,fill=orange!50!white] (1,1) circle (6pt);
\draw[line width=12pt,line cap=round,orange!50!white] (3.5,0) -- (3,1);
\draw[orange!50!white,fill=orange!50!white,line width=14pt,rounded corners=.5pt] (1.5,0) -- (2.5,0) -- (2,1) -- cycle;
\draw[orange!50!white,fill=orange!50!white,line width=14pt,rounded corners=.5pt] (4.5,0) -- (5,1) -- (4,1) -- cycle;
\node [node] at (1,1) (b2) {};
\node [node] at (2,1) (b3) {};
\node [node] at (3,1) (b4) {};
\node [node] at (4,1) (b5) {};
\node [node] at (5,1) (b6) {};
\node [node] at (1.5,0) (c1) {};
\node [node] at (2.5,0) (c2) {};
\node [node] at (3.5,0) (c3) {};
\node [node] at (4.5,0) (c4) {};
\end{tikzpicture}
\end{center}
Then $\alpha_* \colon V^{\otimes 4} \to V^{\otimes 5}$ is given by
\begin{displaymath}
\alpha_*(e_{i_1,i_2,i_3,i_4}) = \delta_{i_1,i_2} \sum_{j=1}^n e_{j,i_1,i_3,i_4,i_4}. \qedhere
\end{displaymath}
\end{example}

As in the previous case, the composition of two maps corresponding to diagrams is a scalar multiple of another such map. We now explain exactly how this works. Suppose that $\alpha$ is a partition diagram from $S$ to $T$ and $\beta$ is a partition diagram from $T$ to $U$. Let $\beta \cup \alpha$ denote the set $S \amalg T \amalg U$ equipped with all the parts from $\alpha$ and $\beta$ (possibly with multiplicities); this is not a partition since each vertex of $T$ appears in two parts. Let $\beta \Box \alpha$ denote the result of merging all parts in $\beta \cup \alpha$ that meet; this is a partition of $S \amalg T \amalg U$. We define $\beta \bullet \alpha$ to be the induced partition of $S \amalg U$ (i.e., intersect each part of $\beta \Box \alpha$ with $S \amalg U$, discarding empty sets), and we let $c(\beta, \alpha)$ denote the number of parts of $\beta \Box \alpha$ that contain only vertices of $T$. We then have the fundamental formula:
\begin{displaymath}
\beta_* \circ \alpha_* = (\dim{V})^{c(\beta,\alpha)} (\beta \bullet \alpha)_*.
\end{displaymath}
Again, this can be proved by a straightforward computation.

\begin{example}
We give an example illustrating composition in the partition category. Let $\beta \colon [7] \to [5]$ be given by the following diagram
\begin{center}
\begin{tikzpicture}
\draw[line width=12pt,line cap=round,orange!50!white] (0,0) -- (1,1);
\draw[line width=12pt,line cap=round,orange!50!white] (2,1) -- (3,1);
\draw[orange!50!white,fill=orange!50!white] (1,0) circle (6pt);
\draw[orange!50!white,fill=orange!50!white] (3,0) circle (6pt);
\draw[orange!50!white,fill=orange!50!white] (2,0) circle (6pt);
\draw[orange!50!white,fill=orange!50!white,line width=14pt,rounded corners=.5pt] (4,0) -- (4,1) -- (5,1) -- cycle;
\draw[line width=12pt,line cap=round,orange!50!white] (5,0)--(6,0);
\node [node] at (0,0) (a1) {};
\node [node] at (1,0) (a2) {};
\node [node] at (2,0) (a3) {};
\node [node] at (3,0) (a4) {};
\node [node] at (4,0) (a5) {};
\node [node] at (5,0) (a6) {};
\node [node] at (6,0) (a7) {};
\node [node] at (1,1) (b2) {};
\node [node] at (2,1) (b3) {};
\node [node] at (3,1) (b4) {};
\node [node] at (4,1) (b5) {};
\node [node] at (5,1) (b6) {};
\end{tikzpicture}
\end{center}
and let $\alpha \colon [4] \to [7]$ be given by
\begin{center}
\begin{tikzpicture}
\draw[blue!30!white,fill=blue!30!white] (0,1) circle (6pt);
\draw[blue!30!white,fill=blue!30!white] (2,1) circle (6pt);
\draw[blue!30!white,fill=blue!30!white] (3,1) circle (6pt);
\draw[line width=12pt,line cap=round,blue!30!white] (1.5,0) -- (1,1);
\draw[line width=12pt,line cap=round,blue!30!white] (2.5,0) -- (3.5,0);
\draw[line width=12pt,line cap=round,blue!30!white] (4,1) -- (5,1);
\draw[line width=12pt,line cap=round,blue!30!white] (4.5,0) -- (6,1);
\node [node] at (0,1) (a1) {};
\node [node] at (1,1) (a2) {};
\node [node] at (2,1) (a3) {};
\node [node] at (3,1) (a4) {};
\node [node] at (4,1) (a5) {};
\node [node] at (5,1) (a6) {};
\node [node] at (6,1) (a7) {};
\node [node] at (1.5,0) (c1) {};
\node [node] at (2.5,0) (c2) {};
\node [node] at (3.5,0) (c3) {};
\node [node] at (4.5,0) (c4) {};
\end{tikzpicture}
\end{center}
The partition $\beta \Box \alpha$ is then given by
\begin{center}
\begin{tikzpicture}
\draw[line width=12pt,line cap=round,gray!40!white] (0,1) -- (1,2);
\draw[line width=12pt,line cap=round,gray!40!white] (1.5,0) -- (1,1);
\draw[line width=12pt,line cap=round,gray!40!white] (2,2) -- (3,2);
\draw[gray!40!white,fill=gray!40!white] (2,1) circle (6pt);
\draw[gray!40!white,fill=gray!40!white] (3,1) circle (6pt);
\draw[line width=12pt,line cap=round,gray!40!white] (2.5,0) -- (3.5,0);
\draw[gray!40!white,fill=gray!40!white,line width=14pt,rounded corners=.5pt] (4.5,0) -- (6,1) -- (5,2) -- (4,2) -- (4,1) -- cycle;
\node [node] at (0,1) (a1) {};
\node [node] at (1,1) (a2) {};
\node [node] at (2,1) (a3) {};
\node [node] at (3,1) (a4) {};
\node [node] at (4,1) (a5) {};
\node [node] at (5,1) (a6) {};
\node [node] at (6,1) (a7) {};
\node [node] at (1,2) (b2) {};
\node [node] at (2,2) (b3) {};
\node [node] at (3,2) (b4) {};
\node [node] at (4,2) (b5) {};
\node [node] at (5,2) (b6) {};
\node [node] at (1.5,0) (c1) {};
\node [node] at (2.5,0) (c2) {};
\node [node] at (3.5,0) (c3) {};
\node [node] at (4.5,0) (c4) {};
\end{tikzpicture}
\end{center}
There are two parts concentrated in the middle, and so $c(\beta,\alpha)=2$. The partition $\gamma=\beta \bullet \alpha$ is given by
\begin{center}
\begin{tikzpicture}
\draw[green!50!gray!60!white,fill=green!50!gray!60!white] (1.5,0) circle (6pt);
\draw[green!50!gray!60!white,fill=green!50!gray!60!white] (1,1) circle (6pt);
\draw[line width=12pt,line cap=round,green!50!gray!60!white] (2,1) -- (3,1);
\draw[line width=12pt,line cap=round,green!50!gray!60!white] (2.5,0) -- (3.5,0);
\draw[green!50!gray!60!white,fill=green!50!gray!60!white,line width=14pt,rounded corners=.5pt] (4.5,0) -- (5,1) -- (4,1) -- cycle;
\node [node] at (1,1) (b2) {};
\node [node] at (2,1) (b3) {};
\node [node] at (3,1) (b4) {};
\node [node] at (4,1) (b5) {};
\node [node] at (5,1) (b6) {};
\node [node] at (1.5,0) (c1) {};
\node [node] at (2.5,0) (c2) {};
\node [node] at (3.5,0) (c3) {};
\node [node] at (4.5,0) (c4) {};
\end{tikzpicture}
\end{center}
We thus have $\beta_* \circ \alpha_* = (\dim V)^2 \cdot \gamma_*$.
\end{example}

\subsection{Definition}

Fix a field (or even a commutative ring) $\bk$ and let $\delta \in \bk$. We define the {\bf partition category} over $\bk$ with parameter $\delta$, denoted $\fG$, as follows. The objects of $\fG$ are finite sets. The set $\Hom_{\fG}(S,T)$ is the free $\bk$-module on the partition diagrams from $S$ to $T$. For partition diagrams $\alpha \in \Hom_{\fG}(S,T)$ and $\beta \in \Hom_{\fG}(T,U)$, the composite morphism $\beta \circ \alpha$ is defined to be $\delta^{c(\beta,\alpha)} \cdot (\beta \bullet \alpha)$. Composition for general morphisms is defined by linearity. As with the Brauer category, we tacitly regard bijections as morphisms in the partition category.

The endomorphism rings in the partition category are the classical partition algebras introduced in \cite{martin} and \cite{jones}, and studied in \cite{halverson-ram}. The partition category itself appears in \cite[\S 5]{martin2} and \cite[\S 2.2]{jellyfish}.

\emph{For the remainder of this section, we take $\bk$ to be the field $\bC$ of complex numbers.}

\subsection{Triangular structure}

We say that a partition diagram from $S$ to $T$ is {\bf upwards} if each part meets $T$, and meets $S$ at~0 or~1 elements; we define {\bf downwards} analogously. We define $\fU$ (resp.\ $\fD$) to be the wide subcategory whose $\Hom$ spaces are spanned by upwards (resp.\ downwards) morphisms.

\begin{proposition}
Assume $\bk=\bC$. Then $\fG$, with the above $(\fU,\fD)$, is a triangular category.
\end{proposition}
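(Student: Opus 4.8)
The plan is to follow the template established for the Brauer category: verify (T0), (T1), and (T2) by hand, and deduce (T3) from the criterion in Proposition~\ref{prop:tri-crit}, taking the distinguished morphisms of $\Hom_\fG(S,T)$ to be the partition diagrams. Axiom (T0) is immediate, since every finite set is isomorphic to some $[n]$ and there are only finitely many partitions of the finite set $S \amalg T$.

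For (T1) and (T2) the crux is a simple count of the number of parts of a diagram. If $\phi$ is an upwards partition diagram from $[n]$ to $[m]$, then $\phi$ has at most $m$ parts (each part meets the top row, which has $m$ vertices) and at least $n$ parts (the $n$ bottom vertices lie in pairwise distinct parts, since each part meets the bottom row in at most one vertex); hence $n \le m$, so $\fU$ is upwards with respect to the standard order on $|\fG| = \bN$. Dually $\fD$ is downwards, and condition (a) of (T2) is trivial, so (T2) holds. Taking $m = n$ in this count forces each part of an upwards (or downwards) endomorphism of $[n]$ to contain exactly one vertex in each row, so $\End_\fU([n]) = \End_\fD([n]) = \bC[\fS_n]$, which is semisimple; this gives (T1), and also shows that $\fM$ is the span of the bijections.

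It then remains to verify conditions (a)--(d) of Proposition~\ref{prop:tri-crit}. Conditions (a) and (b) hold by construction. For (c), given $\beta \colon x \to y$ downwards and $\alpha \colon y \to z$ upwards, every part of $\alpha \Box \beta$ contains a part of $\beta$ (which meets $x$) or a part of $\alpha$ (which meets $z$), so no part is concentrated in the middle object $y$; thus $c(\alpha,\beta) = 0$ and $\alpha \circ \beta = \alpha \bullet \beta$ is again a partition diagram. The main work is condition (d), the existence and essential uniqueness of a factorization $\phi = \alpha \circ \beta$ with $\beta$ downwards and $\alpha$ upwards. For existence, given $\phi$ with parts $U_1,\dots,U_k$, I introduce one middle vertex $v_i$ for each $U_i$ meeting both rows; I let $\beta$ have the parts $(U_i\cap x)\cup\{v_i\}$ for those $i$ together with the parts $U_i$ lying entirely in $x$, and I let $\alpha$ have the parts $\{v_i\}\cup(U_i\cap z)$ together with the parts $U_i$ lying entirely in $z$. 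Then $\beta$ is downwards, $\alpha$ is upwards, and $\alpha\circ\beta = \alpha\bullet\beta = \phi$. The uniqueness is where I expect to spend the most effort: using that in $\alpha \Box \beta$ each $\beta$-part and each $\alpha$-part contains at most one middle vertex but at least one vertex in $x$, resp.\ $z$, one shows that the mixed parts of $\phi$ are in bijection with the middle vertices and that the parts of $\alpha$ and $\beta$ are pinned down up to relabeling those vertices, i.e.\ up to an isomorphism in $\fM$. With (a)--(d) established, Proposition~\ref{prop:tri-crit} yields (T3), and $\fG$ is a triangular category.
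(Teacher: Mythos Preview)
Your proof is correct and follows essentially the same approach as the paper's: verify (T0)--(T2) directly and apply Proposition~\ref{prop:tri-crit} with the partition diagrams as distinguished morphisms to obtain (T3). Your part-counting argument for (T1) and (T2), and your more detailed uniqueness sketch in (d), fill in details that the paper leaves implicit, but the construction of the factorization through the set of ``mixed'' parts is exactly the paper's.
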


\begin{proof}
We verify the conditions from the definition:
\begin{enumerate}
\item[(T0)] Every object of $\fG$ is isomorphic to $[n]$ for some $n \in \bN$, and so $\fG$ is essentially small. The Hom sets are all finite-dimensional as the set of partition diagrams between two fixed sets is finite.
\item[(T1)] The endomorphism ring $\End_\fU([n]) = \End_\fD([n])$ is the group algebra of the symmetric group $\fS_n$ over $\bC$, and hence is semi-simple.
\item[(T2)] We have a natural bijection $\vert \fG \vert = \bN$, and the standard order on this set is admissible.
\item[(T3)] We use Proposition~\ref{prop:tri-crit}. The distinguished elements of $\Hom_\fG(x,y)$ are the elements corresponding to the partition diagrams. Conditions (a) and (b) clearly hold. If $\alpha$ is a downwards diagram and $\beta$ is an upwards diagram, then $\beta \Box \alpha$ has no parts concentrated in the middle, and so $\beta \circ \alpha = \beta \bullet \alpha$ is distinguished (i.e., is a partition diagram on the nose, and not a scalar multiple). Thus (c) holds.

  Finally, we verify condition (d). Let $\phi \colon S \to T$ be a partition diagram. Let $U$ be the set of parts of $\phi$ that have nonempty intersection with both $S$ and $T$. We define a partition diagram $\alpha \colon S \to U$ as follows: each $u \in U$ is joined with the elements of $S$ that it contains, and all other elements of $S$ are joined as they were in $\phi$. We similarly define a partition diagram $\beta\colon U \to T$. Then $\alpha$ is downwards, $\beta$ is upwards, and $\phi = \beta \circ \alpha$. Any other such factorization has to go through a set of the same size as $U$, so we see that the factorization is unique up to a permutation of $U$.
  \qedhere
\end{enumerate}
\end{proof}

\begin{remark}
We do not know if the upwards category $\fU$ is noetherian; in fact, we suspect it is not. However, we will show in \cite{brauercat3} that all finitely generated $\fG$-modules have finite length, and are therefore noetherian.
\end{remark}

\subsection{Tautological modules}

Suppose that $\delta=n$ is a non-negative integer. Let $V=\bC^n$ be the permutation representation of $\fS_n$. Given a partition diagram $\alpha$ from $S$ to $T$, we defined a map $\alpha_* \colon V^{\otimes S} \to V^{\otimes T}$ in \S \ref{ss:part-motiv}. The definition of the partition category ensures that formation of $\alpha_*$ is compatible with composition in $\fG$. We thus have a $\fG$-module given by $T(S)=V^{\otimes S}$ and $T(\alpha)=\alpha_*$, which we call the {\bf tautological module}. Since the maps $\alpha_*$ are $\fS_n$-equivariant, we can regard $\fS_n$ as acting on $T$. Alternatively, $T$ is a functor from $\fG$ to $\Rep(\fS_n)$.

\subsection{Duality}

Just like for Brauer diagrams, a partition diagram from $S$ to $T$ is exactly the same as a partition diagram from $T$ to $S$. This gives rise to a transpose functor $\tau \colon \fG \to \widehat{\fG}$ (in the sense of Definition~\ref{defn:transpose}). For the primary $\fG$-modules of interest, duals can be computed as in \S \ref{ss:brauer-duals}.

\subsection{The tensor product}

Just like for Brauer diagrams, given partition diagrams $\alpha \colon S \to T$ and $\alpha' \colon S' \to T'$, the disjoint union of $\alpha$ and $\alpha'$ is a partition diagram from $S \amalg S'$ to $T \amalg T'$. It is clear that a disjoint union of upwards (resp.\ downwards) morphisms is upwards (resp.\ downwards). We thus see that $\fG$ has the structure of a symmetric monoidal triangular category. We thus get tensor products on the various modules categories as in \S \ref{ss:trimonoid}.

\begin{proposition} \label{prop:partition-tensor-exact}
The tensor product $\otimes_{\fD}$ on $\Mod_{\fD}$ is exact.
\end{proposition}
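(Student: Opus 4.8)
The plan is to mimic the proof of Proposition~\ref{prop:brauer-tensor-exact}: exhibit the combinatorial condition $(S_x)$ for every object $x$ of the honest upwards subcategory and then quote Proposition~\ref{prop:tensor-exact}. So let $\cU$ be the wide subcategory of $\fG$ whose morphisms are the honest upwards partition diagrams (not $\bk$-linear combinations of such), so that $\fU = \bk[\cU]$ and $\fD = \fU^{\op}$. By Proposition~\ref{prop:tensor-exact}, applied with $\fC = \fU$ and using that $\bk = \bC$ has characteristic zero, it suffices to check that $(S_x)$ holds for every object $x = [n]$ of $\cU$.

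To this end, call an upwards partition diagram $\phi \colon [n] \to [a] \amalg [b]$ \emph{reduced} if no part of $\phi$ lies entirely in $[a]$ or entirely in $[b]$, and each part of $\phi$ meets $[a]$ in at most one point and $[b]$ in at most one point. Let $I$ be a set of representatives, over all $a$ and $b$, for the reduced diagrams out of $[n]$ modulo the action of $\fS_a \times \fS_b$, and write $\phi_i \colon [n] \to [a_i] \amalg [b_i]$ for the chosen representatives. Given an arbitrary upwards diagram $\psi \colon [n] \to y \amalg z$, I would form its reduction by discarding every part of $\psi$ that lies entirely in $y$ or entirely in $z$, and then, in each surviving part $P$, collapsing the set $P \cap y$ to a single new vertex when it is nonempty and $P \cap z$ likewise; the new vertices assemble into sets $S$ and $T$, giving a reduced diagram $\phi \colon [n] \to S \amalg T$ and hence a unique $i \in I$ with $\phi$ isomorphic to $\phi_i$ over bijections $S \cong [a_i]$, $T \cong [b_i]$. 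One then reconstructs the factors $\alpha \colon [a_i] \to y$ and $\beta \colon [b_i] \to z$: the parts of $\alpha$ are the sets $\{s_P\} \cup (P \cap y)$ for surviving parts $P$ with $P \cap y \neq \emptyset$ (where $s_P \in S$ is the vertex to which $P \cap y$ was collapsed), together with the discarded pure-$y$ parts of $\psi$, and symmetrically for $\beta$. A direct inspection shows that $\alpha$ and $\beta$ are honest upwards diagrams, that $\psi = (\alpha \amalg \beta) \circ \phi_i$ with no power of $\delta$ intervening (composing two upwards diagrams never produces a part concentrated in the middle, since every part of an upwards partition diagram meets its target), and that this factorization is unique up to the stabilizer of $\phi_i$ in $\fS_{a_i} \times \fS_{b_i}$. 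This verifies $(S_x)$ for all $x$ and completes the proof.

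The main obstacle is the bookkeeping in the reduction and reconstruction step. In the Brauer case, parts are edges, so the only thing to peel off is the pure-$y$ and pure-$z$ horizontal edges; here parts have unbounded size, so one must additionally ``uncollapse'' the interior of each mixed part of $\psi$ into $\alpha$ and $\beta$. Checking that this produces a well-defined reduced $\phi_i$, that the reconstructed $(\alpha,\beta)$ recompose exactly to $\psi$, and that two factorizations of the same $\psi$ differ only by an automorphism fixing $\phi_i$, is where the care is needed --- but it is all elementary combinatorics of set partitions, with no new ideas beyond the Brauer argument.
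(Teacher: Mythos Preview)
Your proof is correct and follows the paper's strategy exactly: verify $(S_x)$ for the honest upwards subcategory $\cU$ and invoke Proposition~\ref{prop:tensor-exact}, just as for the Brauer category.

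One point is worth noting. The paper's index set consists of upwards diagrams $\phi \colon [n] \to [a] \amalg [b]$ with no part contained entirely in $[a]$ or in $[b]$, whereas you additionally require that each part meet $[a]$ and $[b]$ in at most one point. In the Brauer case these conditions coincide (parts are edges of size two), but for partitions your condition is genuinely stronger, and it is needed for the uniqueness clause of $(S_x)$: with only the paper's condition, a part $\{p, y_1, y_2\}$ of $\psi$ (with $p \in [n]$ and $y_1, y_2 \in y$) factors both through a $\phi$-part $\{p, s\}$ with $\alpha$-part $\{s, y_1, y_2\}$ and through a $\phi'$-part $\{p, s_1, s_2\}$ with $\alpha'$-parts $\{s_j, y_j\}$, and these lie over inequivalent indices. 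So your refinement is not extra bookkeeping but a genuine sharpening of the paper's terse ``the rest of the proof is the same.''
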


\begin{proof}
We prove that property $(S_x)$ holds for all objects $x$ of $\fU$ and then apply Proposition~\ref{prop:tensor-exact} (since $\fD = \fU^\op$). Fix $x = [n]$. Consider the set of upwards diagrams $\phi \colon [n] \to [a] \amalg [b]$ where no part of $\phi$ is a subset of $[a]$ nor is a subset of $[b]$. We consider two of them to be equivalent if they differ by permutations of either $[a]$ or $[b]$ and let $I$ be a set of representatives for the equivalence classes. The rest of the proof is the same as the proof of Proposition~\ref{prop:brauer-tensor-exact}, so we omit the details.
\end{proof}

\begin{corollary}
If $M$ and $N$ are semistandard $\fG$-modules then $M \otimes_{\fG} N$ is semistandard and $\Tor_p^{\fG}(M,N)=0$ for $p>0$.
\end{corollary}

\begin{proof}
This follows from Proposition~\ref{prop:ss-tensor}.
\end{proof}

\begin{remark}
The exactness of $\otimes_{\fD}$ also follows (though more indirectly) from the results of \cite[(6.3.32)]{infrank}: there we show that $\Mod_{\fD}$ is equivalent to a certain category of representations of $\fS_{\infty}$ and that, under this equivalence, $\otimes_{\fD}$ corresponds to the usual tensor product.
\end{remark}

\section{Other combinatorial triangular categories} \label{sec:other}

\subsection{Brauer-like categories}

The Brauer category records the obvious maps between tensor representations of the orthogonal group. By considering other Lie (super)groups (or other representations), one obtains similar categories. We now explain a number of examples.

\subsubsection{The signed Brauer category}

This category records the obvious maps between tensor powers of the standard representation of the symplectic group.
\begin{itemize}
\item We define the signed Brauer category $\fG$ as follows. The objects are finite sets. A signed Brauer diagram from $S$ to $T$ is a Brauer diagram in which the horizontal edges are oriented. The space $\Hom_{\fG}(S,T)$ is spanned by signed Brauer diagrams from $S$ to $T$ modulo the following relation: if $\beta$ is obtained from $\alpha$ by inverting the orientation of a single edge then $\beta=-\alpha$. To compose two diagrams, proceed as in the Brauer category, but first adjust orientations so that all paths and cycles are oriented coherently.
\item The usual construction endows $\fG$ with a triangular structure. We have $\End_{\fM}([n])=\bC[\fS_n]$, and so the set of weights is again identified with the set of partitions.
\item Suppose $\delta=p$ is a non-negative even integer. Let $V$ be a symplectic space of dimension $p$. We then have a tautological module $T_{p|0}$ that takes $S$ to $V^{\otimes S}$. The action of $T_{p|0}$ on morphisms is defined similarly to the Brauer category case; the one modification is that the orientation is used to determine which tensor factor is placed first when applying $a$ or $b$. More generally, we have tautological modules $T_{p|q}$ whenever $\delta=p-q$ with $p$ even by considering super symplectic spaces.
\item The upwards category is noetherian by \cite[Theorem~1.1]{sym2noeth}, and so $\fG$ is noetherian.
\item The category $\fG$ has a transpose functor and monoidal structure, similar to the Brauer category. The functor $\otimes_{\fD}$ is exact.
\end{itemize}
In fact, the signed Brauer category is, in a sense, nothing new:

\begin{proposition} \label{prop:signed-brauer}
Let $\fG'$ be the Brauer category with parameter $-\delta$. Then we have an equivalence of triangular categories $\fG \cong \fG'$.
\end{proposition}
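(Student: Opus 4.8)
The plan is to produce an identity-on-objects functor $F \colon \fG \to \fG'$, to verify that it is a well-defined functor, and then to observe formally that it is an equivalence carrying $(\fU,\fD)$ to $(\fU',\fD')$. Note at the outset that $F$ cannot be expected to be a (symmetric) monoidal functor: in $\fG$ the cup and cap attached to horizontal edges are \emph{anti}symmetric (flipping an oriented horizontal edge negates the diagram), whereas in $\fG'$ they are symmetric, and no rescaling reconciles these. But a triangular functor is not required to be monoidal, so this is not an obstruction to the statement at hand.

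On objects $F$ is the identity. Every $\Hom$ space in $\fG$ has a basis given by a Brauer diagram together with an orientation of each horizontal edge, taken modulo the relation that flipping one edge negates the element, while $\Hom_{\fG'}$ has as basis the underlying Brauer diagrams. I would therefore set $F(\alpha) = \sigma(\alpha)\,\underline\alpha$, where $\underline\alpha$ is the underlying Brauer diagram and $\sigma(\alpha) \in \{\pm 1\}$ is a sign with two properties: (a) it changes under flipping a single orientation, so that $F$ descends to the quotient; and (b) it is identically $+1$ on bijections, so that $F$ restricts to the identity on $\fM = \fM'$ and induces the identity on weights. The obvious first guess is to fix a reference orientation of every horizontal edge (say from a linear order on the vertices) and let $\sigma(\alpha)$ be the product of $-1$ over the horizontal edges of $\alpha$ disagreeing with the reference; this satisfies (a) and (b) but, as one checks on small examples, is \emph{not} functorial. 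The correct $\sigma$ must in addition record the crossing pattern of the horizontal arcs of $\alpha$; finding and justifying this refined sign is the technical heart of the proof.

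Granting such an $F$, functoriality is the only point to check (identities clearly go to identities). Unwinding the composition rule of $\fG$: to compute $\beta \circ_{\fG} \alpha$ one forms $\beta \cup \alpha$, reorients all of its paths and cycles coherently---each reversal contributing $-1$ via the defining relation---and reads off the oriented composite diagram $\gamma$, each of the $c = c(\beta,\alpha)$ cycles then contributing a factor $\delta$; so $\beta \circ_{\fG} \alpha = \delta^{c}\, s\, \gamma$ with $\underline\gamma = \underline\beta \bullet \underline\alpha$ and $s \in \{\pm 1\}$ the product of reversal signs. On the other hand, in $\fG'$ with parameter $-\delta$ we have $F(\beta) \circ_{\fG'} F(\alpha) = \sigma(\alpha)\sigma(\beta)(-\delta)^{c}\,\underline\gamma$. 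Thus $F$ is a functor if and only if
\[
s \cdot \sigma(\gamma) \;=\; (-1)^{c}\,\sigma(\alpha)\,\sigma(\beta),
\]
a purely combinatorial identity of signs. The main obstacle is exactly to organize $\sigma$ (the reference orientations together with the crossing correction) so that along each path of $\beta \cup \alpha$ the reversal signs and the signs attached to the incident $\alpha$-, $\beta$-, and resulting $\gamma$-edges cancel, while each cycle contributes precisely one extra factor $-1$. The cycle statement is the diagrammatic shadow of the fact that closing the two maps attached to horizontal edges around a loop yields $-\dim V$ for an antisymmetric form rather than $+\dim V$ for a symmetric one.

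Once the sign identity is established, $F$ is bijective on objects and, on each $\Hom$ space, sends the diagram basis bijectively to itself up to sign; hence $F$ is an isomorphism of $\bk$-linear categories, with inverse $G$ obtained from the opposite signs. Both $F$ and $G$ visibly preserve the diagrammatic ``no horizontal edge in the bottom/top row'' conditions, so they carry $\fU \leftrightarrow \fU'$ and $\fD \leftrightarrow \fD'$ and are therefore triangular; thus $F$ is the asserted triangular equivalence $\fG \cong \fG'$. (One could instead argue via presentations---both $\fG'$ and $\fG$ are the free symmetric monoidal $\bk$-linear category on a single self-dual object, with symmetric, resp.\ antisymmetric, duality datum of categorical dimension $-\delta$, resp.\ $\delta$---and match the two presentations; but because the two duality data have opposite symmetry the matching again requires a sign twist rather than a monoidal functor, so it reduces to the same sign bookkeeping.)
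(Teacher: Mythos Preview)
Your outline matches the paper's approach exactly: an identity-on-objects functor sending a signed diagram $\alpha$ to $\sigma(\alpha)\,\underline\alpha$, followed by the observation that such a functor is automatically a triangular equivalence. You also correctly isolate the functoriality condition as the sign identity $s\cdot\sigma(\gamma)=(-1)^{c}\sigma(\alpha)\sigma(\beta)$, and you correctly note that the naive reference-orientation sign fails and that crossings must enter.

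However, the proposal has a genuine gap: you never define $\sigma$, and you never verify the identity. You write ``finding and justifying this refined sign is the technical heart of the proof'' and then proceed with ``Granting such an $F$\ldots'' and ``Once the sign identity is established\ldots''. That is precisely the content of the proof, and it is missing. What the paper does here is quite concrete: embed $[n]\amalg[m]$ into $[n+m]$ via the identity on $[n]$ and the \emph{order-reversing} map $j\mapsto n+m+1-j$ on $[m]$; orient all former vertical edges from smaller to larger; and set $\epsilon(\alpha)$ to be the sign of any permutation carrying the resulting oriented matching on $[n+m]$ to the standard one $\{(1,2),(3,4),\ldots\}$. This single definition packages both the reference-orientation sign and the ``crossing correction'' you allude to. The paper then verifies $\epsilon(\alpha\bullet\beta)=\epsilon(\alpha)\epsilon(\beta)$ (after coherently orienting loops) by a sequence of local moves---deleting loops and contracting a cap-over-two-verticals configuration---each of which is shown to preserve $\epsilon(\alpha\bullet\beta)^{-1}\epsilon(\alpha)\epsilon(\beta)$ by an explicit parity count, reducing to the case of pure permutations where the identity is multiplicativity of sign. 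Without supplying a definition of $\sigma$ at this level of specificity and carrying out some such verification, your argument is a plan rather than a proof.
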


\begin{proof}
  We are free to replace $\fG$ with a skeletal subcategory, so we assume all objects have the form $[n]$ for some $n \in \bN$. Let $\alpha$ be a signed Brauer diagram from $[n]$ to $[m]$. Let $i \colon [n] \amalg [m] \to [n+m]$ be the bijection that is the identity on $[n]$ and takes $j \in [m]$ to $n+m+1-j$. Let $\alpha'$ be the oriented matching on $[n+m]$ obtained by transferring $\alpha$ via $i$, and orienting all former vertical edges from smaller values to larger values. Define $\epsilon(\alpha)$ to be the sign of any permutation that transforms $\alpha'$ to the standard oriented matching $\{(1,2),(3,4),\ldots\}$ with $2i-1$ oriented towards $2i$ for all $i$. Also, let $\ol{\alpha}$ be the Brauer diagram from $[n]$ to $[m]$ with the orientations forgotten. Then $\alpha \mapsto \epsilon(\alpha) \ol{\alpha}$ is a well-defined linear map $\Hom_\fG([n],[m]) \to \Hom_{\fG'}([n],[m])$.

Define $\Phi \colon \fG \to \fG'$ by $\Phi([n])=[n]$ and $\Phi(\alpha) = \epsilon(\alpha) \ol{\alpha}$. We claim this is a functor, i.e., for any signed Brauer diagram $\beta$ from $[m]$ to $[p$], we have $\Phi(\alpha \beta) = \Phi(\alpha) \Phi(\beta)$. Without loss of generality, we may assume that the horizontal edges of $\alpha$ and $\beta$ are oriented so that any loops in  $\alpha \cup \beta$ are coherently oriented. We thus have $\alpha \circ \beta = \delta^{c(\alpha,\beta)} \alpha \bullet \beta$ and $\ol{\alpha} \circ \ol{\beta} = \delta^{c(\ol{\alpha}, \ol{\beta})} \ol{\alpha} \bullet \ol{\beta}$. We also claim that $\epsilon(\alpha \bullet \beta) = \epsilon(\alpha) \epsilon(\beta)$, which will prove that $\Phi$ is compatible with composition. To see this, we will reduce to the case that $\alpha$ and $\beta$ have no horizontal edges, in which case this becomes multiplicativity of the sign of permutations.

First, suppose $\alpha \cup \beta$ has a loop consisting of $2r$ edges which looks like the following up to mirroring:
  \begin{center}
\begin{tikzpicture}
\node [node] at (0,1) (a1) {};
\node [node] at (1,1) (a2) {};
\node [node] at (2,1) (a3) {};
\node [node] at (3,1) (a4) {};
\node [node] at (4,1) (a5) {};
\node [node] at (5,1) (a6) {};
\draw[thick, orange,->] (a5) to[out=20,in=160] (a6);
\draw[thick, orange,->] (a3) to[out=20,in=160] (a4);
\draw[thick, orange,->] (a1) to[out=20,in=160] (a2);
\draw[thick, blue,->] (a4) to[out=-20,in=-160] (a5);
\draw[thick, blue,->] (a2) to[out=-20,in=-160] (a3);
\draw[thick, blue,<-] (a1) to[out=-20,in=-160] (a6);
\end{tikzpicture}
\end{center}
By applying a bijection on $[m]$ (which does not affect $\epsilon(\alpha\bullet \beta)$ nor $\epsilon(\alpha) \epsilon(\beta)$), we can assume that this loop uses the numbers $1,\dots,2r$. To transform $\alpha'$ into the standard matching, we don't need to do anything to these edges. To transform $\beta'$ into the standard matching, we can use an even permutation on $\{1,\dots,2r\}$: first we move the leftmost vertex past the other $r-1$ edges and then we swap the orientations on those $r-1$ edges (here is where it is important that we used the flipped ordering on the target to define $i$) for a total sign of $(-1)^{2r-2}=1$. Hence, $\epsilon(\alpha\bullet \beta)^{-1} \epsilon(\alpha) \epsilon(\beta)$ is the same if we remove these edges from $\alpha$ and $\beta$.

Similarly, consider the following local modification to $\alpha$ and $\beta$:
\begin{center}
  \begin{tikzpicture}
    \node [node] at (0,1) (a1) {};
    \node [node] at (1,1) (a2) {};
    \node [node] at (0,0) (b1) {};
    \node [node] at (1,0) (b2) {};
    \draw[thick, blue] (b1) to (a1);
    \draw[thick, blue] (b2) to (a2);
    \draw[thick, orange, ->] (a1) to[out=20,in=160] (a2);
  \end{tikzpicture} $\mapsto$
  \begin{tikzpicture}
    \node [node] at (0,0) (b1) {};
    \node [node] at (1,0) (b2) {};
    \draw[thick, blue, ->] (b1) to[out=20,in=160] (b2);
  \end{tikzpicture}
\end{center}
We can again apply a bijection to $[m]$ to assume that the top 2 vertices are $\{1,2\}$. Then we can apply an even permutation to $\beta'$ so that these two edges become $(1,2)$ and $(3,4)$ in $[n]$ and the remaining edges are shifted over by 2: we first send the second to rightmost vertex in $[m]$ to $3 \in [n]$ and shift everything else in $[m]$ and $\{4,\dots,n\}$ over by 1 and then we send the rightmost vertex in $[m]$ to $2 \in [n]$ and shift everything else in $[m]$ and $\{3,\dots,n\}$ over by 1 for a total sign of $(-1)^{m-2+n-3+m-1+n-2} = 1$. So again we see that $\epsilon(\alpha\bullet  \beta)^{-1} \epsilon(\alpha) \epsilon(\beta)$ remains the same if we make this modification. The same holds for the mirror versions of this modification with respect to the vertical and horizontal axes.

Finally, it is easy to see that removing any horizontal edges from $[n]$ or from $[p]$ does not affect $\epsilon(\alpha\bullet \beta)^{-1} \epsilon(\alpha) \epsilon(\beta)$. We conclude that $\Phi(\alpha) \Phi(\beta) = \Phi(\alpha \beta)$ in general.

It is clear that $\Phi$ fully faithful, and hence is an equivalence of categories. It is also clear that $\Phi$ and its quasi-inverse preserve the upwards and downwards categories, and are thus triangular.
\end{proof}

\begin{remark}
A few remarks related to the proposition:
\begin{itemize}
\item The proposition is closely related to the well-known fact that the Brauer algebra at parameter $\delta=-2n$ acts on tensor powers of the standard representation of $\Sp_{2n}$.
\item The equivalence $\fG \to \fG'$ is monoidal but \emph{not} symmetric monoidal; the same holds for the induced equivalence $\Mod_{\fG} \to \Mod_{\fG'}$. The situation is similar to the equivalence $\Rep(\Sp) \cong \Rep(\bO)$ discussed in \cite[(1.3.3)]{infrank}. In other words, the existence of the signed Brauer category can be viewed as the existence of a non-standard symmetric structure on the tensor product for the usual Brauer category.
\item For a partition $\lambda$, let $\lambda^{\dag}$ denote the transposed partition. Then, under the equivalence $\Mod_{\fG} \cong \Mod_{\fG'}$, the $\fG$-modules $P_{\lambda}$, $\Delta_{\lambda}$, and $L_{\lambda}$ correspond to the $\fG'$-modules $P_{\lambda^{\dag}}$, $\Delta_{\lambda^{\dag}}$, and $L_{\lambda^{\dag}}$ (and similarly for injectives, co-standards, etc.).
\item We will give a more conceptual proof of the proposition in \cite{brauercat2}. \qedhere
\end{itemize}
\end{remark}

\subsubsection{The walled Brauer category}

This category records the obvious maps between mixed tensor powers of the standard representation of the general linear group.
\begin{itemize}
\item We define the walled Brauer category $\fG$ as follows. The objects are 2-colored finite sets; we denote them as pairs $(S_1,S_2)$. A walled Brauer diagram between two 2-colored sets is a Brauer diagram where vertical edges join vertices of the same color, while horizontal edges join vertices of different colors. Composition works just as in the Brauer category.
\item This category appears in \cite[\S 10]{deligne}, \cite[\S 3]{comes}, and \cite[\S 2.2]{coulembier4} (as the ``oriented Brauer category'').
\item The endomorphism rings in $\fG$ are the walled Brauer algebras appearing in \cite{koike2}, \cite{turaev}, and \cite{walledbrauer}.
\item The usual construction endows $\fG$ with a triangular structure. In this case, we have $\End_{\fM}(([n], [m]))=\bC[\fS_n \times \fS_m]$, and so the set of weights is naturally identified with the set of pairs of partitions.
\item Suppose $\delta=p$ is a non-negative integer, and put $V=\bC^p$. We then have a tautological module $T_{p|0}$ defined as follows. The 2-colored set $(S_1,S_2)$ is taken to $V^{\otimes S_1} \otimes (V^*)^{\otimes S_2}$. Maps are defined similarly to the Brauer category case, with the maps $V \otimes V^* \to \bC$ and $\bC \to V \otimes V^*$ taking the place of the maps $a$ and $b$ from \S \ref{ss:brauer-motiv}. More generally, if $\delta$ is an integer and $\delta=p-q$ for non-negative integers $p$ and $q$ then we have a tautological module $T_{p|q}$ defined in the same manner with $V=\bC^{p|q}$.
\item The upwards category is noetherian by \cite[Theorem~1.2]{sym2noeth} (note that $\Sym(\bC\langle 1,1 \rangle)$ is equivalent to $\Mod_{\fU}$ by \cite[\S 3.3.1]{infrank}), and so $\fG$ is noetherian.
\item The category $\fG$ has a transpose functor and monoidal structure, similar to the Brauer category, and the functor $\otimes_{\fD}$ is exact.
\item The category $\fG$ has a triangular involution given by flipping colors, i.e., $(S_1,S_2) \mapsto (S_2,S_1)$. This induces an involution of $\Mod_{\fG}$ that acts on the named modules in the expected manner (e.g., $L_{\lambda,\mu} \mapsto L_{\mu,\lambda}$ and $\Delta_{\lambda,\mu} \mapsto \Delta_{\mu,\lambda}$).
\end{itemize}

\subsubsection{The periplectic Brauer category}

This category records the obvious maps between tensor powers of the standard representation of the periplectic super group. (Recall that a periplectic form on a super vector space $V$ is a symmetric linear form of odd degree, i.e., a linear map $\Sym^2(V) \to \bk[1]$.)
\begin{itemize}
\item We define the periplectic Brauer category $\fG$ as follows. The objects are finite sets. A periplectic Brauer diagram from $S$ to $T$ is a Brauer diagram in which the horizontal edges in $S$ are oriented, and the set of horizontal edges is totally ordered; reversing orientation of an edge in $S$ introduces a sign, as does transposing consecutive horizontal edges in the order. The composition of two morphisms is defined as for Brauer diagrams at parameter $\delta=0$, up to sign issues. See the following references for details.
\item This category was introduced in \cite{kujawa} (under the name ``marked Brauer category''), is discussed in \cite[\S 2.1]{coulembier} and \cite[\S 2.3]{coulembier4} (under the name ``periplectic Brauer category''), and also appears in \cite[Example~1.5(iii)]{brundan} (under the name ``odd Brauer supercategory'').
\item The endomorphism rings in $\fG$ are the periplectic Brauer algebras introduced by Moon \cite{moon} (see also \cite[Proposition 4.1]{JPW}), and studied in papers of Coulembier--Ehrig \cite{coulembier,coulembier2,coulembier3}.
\item The usual construction endows $\fG$ with a triangular structure. We have $\End_{\fM}([n])=\bC[\fS_n]$, and so the set of weights is identified with the set of partitions.
\item Let $V$ be a periplectic space of dimension $p \vert p$. Then we have a tautological $\fG$-module $T_p$ defined by $T_p(S)=V^{\otimes S}$. Morphisms are defined similarly to the Brauer case; see \cite[\S 5]{kujawa} for details.
\item The category of representations of the upwards periplectic Brauer diagram is locally noetherian by \cite[Theorem~1.1]{periplectic}: the upwards category is equivalent to the twisted skew-commutative algebra $\bigwedge(\Sym^2)$ rather than $\Sym(\Sym^2)$ as in the Brauer case because of the sign convention about swapping the order of horizontal edges.
\item There is an equivalence $\fG \cong \widehat{\fG}$ of triangular categories. Essentially, one flips the diagram as in previous cases, and then multiplies by a sign as in the proof of Proposition~\ref{prop:signed-brauer} to account for the discrepancies in which edges are oriented. This equivalence takes a weight $\lambda$ to its transpose $\lambda^{\dag}$ (in the sense of partitions), and is therefore not a transpose functor in the sense of Definition~\ref{defn:transpose}. The category $\fG$ has a monoidal structure, similar to the Brauer category, and the functor $\otimes_{\fD}$ is exact.
\end{itemize}

\subsubsection{The queer walled Brauer category}

This category records the obvious maps between mixed tensor powers of the standard representation of the queer super group. (Recall that the $n$th queer super group is the stabilizer of an odd-degree involution of $\bC^{n\vert n}$.)
\begin{itemize}
\item We define the queer walled Brauer category $\fG$ as follows. The objects are 2-colored sets; we denote them as pairs $(S_1,S_2)$. A queer walled Brauer diagram between two 2-colored sets is a walled Brauer diagram where some of the edges are allowed to have an additional marking. Composition is defined as in the walled Brauer category with $\delta=0$ up to sign issues (which uses the markings on the edges). See \cite[\S 4]{jungkang} for the details of determining this sign in the case of endomorphisms.
\item This category is discussed in \cite[\S 2.4]{coulembier4} under the name ``oriented Brauer--Clifford category.''
\item The endomorphism algebras in $\fG$ were introduced in \cite{jungkang}, under the name ``walled Brauer superalgebras.''
\item The usual construction endows $\fG$ with a triangular structure. The endomorphism algebra $\End_{\fM}(([n], [m]))$ is identified with $\cH_n \otimes \cH_m$, where $\cH_n$ is the $n$th Hecke--Clifford algebra, and so the set of weights is identified with the set of pairs of strict partitions, see \cite[\S 3.3]{chengwang}. (Recall that a partition is {\it strict} if it has no repeated parts.)
\item Let $V=\bC^{p|p}$ be a super vector space equipped with an odd-degree involution $\alpha$. We have a tautological module $T_p$ defined by $T_p(S_1,S_2) = V^{\otimes S_1} \otimes (V^*)^{\otimes S_2}$. The action of morphisms is similar to the walled Brauer case, with the marked edges using $\alpha$; see \cite[\S 3]{jungkang} for details in the case of endomorphisms (the general case being similar).
\item The noetherian property for the upwards category will be proven in \cite{queer}.
\item  The category $\fG$ has a transpose functor and monoidal structure, similar to the walled Brauer category, and the functor $\otimes_{\fD}$ is exact. 
\end{itemize}

\subsubsection{The spin-Brauer category}

This category records the obvious maps between representations of the form $V^{\otimes n} \otimes \Delta$, where $V$ is the standard representation of an orthogonal group and $\Delta$ is the spinor representation of its simply-connected cover. In addition to the pairings $V \otimes V \to \bC$ and $\bC \to V \otimes V$, there are also equivariant maps $V \otimes \Delta \to \Delta$ and $\Delta \to V \otimes \Delta$.
\begin{itemize}
\item We define the spin-Brauer category $\fG$ as follows. Its objects are finite sets. Given finite sets $S$ and $T$, a spin-Brauer diagram from $S$ to $T$ is a triple $(S_{\ast}, T_{\ast}, \alpha)$ where $S_{\ast} \subset S$ and $T_{\ast} \subset T$ are sets of marked vertices, there is a total ordering on $S_\ast \cup T_\ast$, and $\alpha$ is an ordinary Brauer diagram from $S \setminus S_{\ast}$ to $T \setminus T_{\ast}$. Then $\Hom_{\fG}(S,T)$ is the vector space spanned by spin-Brauer diagrams modulo certain linear relations. The composition of spin-Brauer diagrams is in general a complicated linear combination of spin-Brauer diagrams, so we omit the definition. The linear relations and composition rules are defined in detail for endomorphisms in \cite[\S 3]{laudone}; we will discuss the general case in detail in \cite{brauercat2}.
\item The endomorphism algebras in $\fG$ appear in \cite{laudone} and \cite{koike}.
\item We say that a spin-Brauer diagram from $S$ to $T$ is {\bf upwards} (resp.\ {\bf downwards}) if there are no marked vertices or horizontal edges in $S$ (resp.\ $T$). Let $\fU$ (resp.\ $\fD$) be the wide subcategory of $\fG$ where $\Hom_{\fU}(S,T)$ is spanned by upwards (resp.\ downwards) spin-Brauer diagrams. This defines a triangular structure on $\fG$. The categories $\fU$ and $\fD$ were introduced in \cite{spincat}. We have $\End_{\fM}([n])=\bC[\fS_n]$, and so the set of weights is identified with the set of partitions.
\item The category of representations of the upwards category is equivalent to the module category of the 2-step nilpotent twisted Lie superalgebra $\bV \oplus \Sym^2 \bV$ by \cite[Theorem 2.13]{spincat}. The latter is locally noetherian by \cite[\S 8.2]{symsp1}.
\item Suppose $\delta=p$ is a non-negative integer. Let $V$ be the standard representation of the orthogonal Lie algebra $\fso(p)$, and let $\Delta$ be the spinor representation (if $p$ is even, this is the direct sum of its two half spinor representations). There is then a tautological module $T_p$ defined by $T_p(S) = V^{\otimes S} \otimes \Delta$. See \cite[\S\S 4,5]{laudone} for the action of endomorphisms, and \cite[\S 2]{spincat} for the action of $\fU$ or $\fD$. More generally, if $\delta$ is an integer and we have $\delta=p-q$ for non-negative integers $p$ and $q$, with $q$ even, then a similar construction using $\fosp(p|q)$ yields a tautological module $T_{p|q}$. (Note that $T_{p\vert q}(S)$ will be infinite dimensional if $q>0$ though.)
\item The category $\fG$ does not have a symmetric monoidal structure. (Disjoint union provides a monoidal structure, but it is not symmetric due to the orderings on marked vertices.)
\item The category $\fG$ has a transpose functor.
\end{itemize}

\begin{remark}[The oscillator Brauer category]
We can combine the ideas from spin-Brauer category with the signed Brauer category. Here we add the additional data of orientations on the horizontal edges of the Brauer diagrams. All of the above applies to this category, which we call the {\bf oscillator Brauer category} due to the fact that the role of the spinor representation is assumed by the oscillator (or Weil) representation of the symplectic Lie algebra. The upwards category is studied in \cite[\S 3]{spincat}.
\end{remark}

\subsection{The Temperley--Lieb category}

Roughly speaking, the Temperley--Lieb category with parameter $\delta$ is the subcategory of the Brauer category where the diagrams are planar.
\begin{itemize}
\item  Suppose that $S = \{s_1 < \cdots <  s_n\}$ and $T = \{t_1 < \cdots < t_m\}$ are two totally ordered sets. We put a total ordering on $S \amalg T$ by $s_1 < \cdots < s_n < t_m < \cdots < t_1$. A pair of edges  $x < y$ and $z < w$ on the vertex set $S \amalg T$ is said to cross if $x<z<y<w$ or $z<x<w<y$.  We say that a Brauer diagram from $S$ to $T$ is {\bf planar} if no pair of edges cross. We define Temperley--Lieb category $\fG$ over a field $\bk$ as follows: the objects of $\fG$ are totally ordered finite sets, and the set $\Hom_{\fG}(S,T)$ is the free $\bk$-module on planar Brauer diagrams. Composition is carried out just as in the Brauer category (this preserves planarity and makes use of a chosen parameter $\delta \in \bk$).
\item The Temperley--Lieb category appears in \cite{abramsky}, \cite{chen}, and \cite[\S 3.6]{martin2}.
\item The endomorphism algebras are the classical Temperley--Lieb algebras defined in \cite{temperley}.
\item The triangular structure is defined just as for the Brauer category. However, in this case the endomorphism rings in $\fM$ are trivial (since Brauer diagrams representing non-trivial permutations are non-planar) and so we have a triangular structure for any coefficient field $\bk$ (of arbitrary characteristic). The set of weights is identified with $\bN$.
\item The upwards category is noetherian by the forthcoming paper \cite{ncpnoeth}.
\item Pick $q \in \bC \setminus \{0\}$ and set $\delta = -q-q^{-1}$. Let $V$ be the standard $2$-dimensional representation of the quantum group $\cU_q(\fsl_2)$. We have a functor $T$ on $\fG$ defined by $T([n]) = V^{\otimes n}$. Formulas for $\epsilon_1 \colon V^{\otimes 2} \to \bC$ and $\delta_1 \colon \bC \to V^{\otimes 2}$ are given in \cite[(1.14)]{FK}, and the vertical edges of Temperley--Lieb diagrams move tensor factors. These intertwine the $\cU_q(\fsl_2)$ structure defined in \cite[\S 1.3]{FK}, and hence $T$ defines a functor from $\fG$ to the category of $\cU_q(\fsl_2)$-modules.
\end{itemize}

\begin{remark} 
There are a number of variants that one can consider:
\begin{itemize}
\item One can consider Brauer diagrams that are planar when drawn on a cylinder; this is discussed in  \cite[Definition~6.1]{graham} as the ``Jones algebra,'' see also \cite{jones2}.
\item The classical Temperley--Lieb algebra is connected to the Hecke algebra of type~A. There are variants for other types. See \cite[\S 5]{graham2} or \cite{blob} for type B, which is known as the ``blob algebra.''
\item There are affine variants; see \cite{ernst} for type C.
\item One can also consider a variant of the partition category where the diagrams are planar. The endomorphism algebras in this category are the ordinary Temperley--Lieb category \cite{westbury}; we do not know if the category gives something different. \qedhere
\end{itemize}
\end{remark}

\subsection{The degenerate partition category}

This category is defined just like the partition category, but with one modification in how morphisms are composed: if $\alpha$ and $\beta$ are composable diagrams and some part of $\alpha$ meets some part of $\beta$ at $\ge 2$ vertices then $\beta \circ \alpha=0$. The theory developed in \cite{brauercat2} shows that this is a natural category to consider. However, we do not know if it has previously been considered, or if it relates to any natural centralizer algebras. It is triangular, using the same triangular structure as for the partition category, and $\otimes_{\fD}$ is exact.

\subsection{Finite sets}

There are several examples of triangular categories related to the category of sets that have played a prominent role in representation stability.

\subsubsection{The category $\FA$}

Let $\FA$, $\FI$, $\FS$, and $\FB$ denote the categories whose objects are finite sets and whose morphisms are all functions, injections, surjections, and bijections respectively. Let $\fG=\bC[\FA]$ be the linearization of $\FA$. This is a triangular category, with $\fU=\bC[\FI]$ and $\fD=\bC[\FS]$ (and $\fM=\bC[\FB]$). Thanks to the paper \cite{fimodule}, $\FI$-modules have received much attention. The noetherian property for $\FI$ was proved in characteristic~0 in \cite{delta-mod} and \cite{fimodule}, and over general noetherian coefficient rings in \cite{fi-noeth} and \cite{catgb}. Thus $\Mod_{\fG}$ is locally noetherian. In fact, it is also locally artinian: finitely generated $\fG$-modules have finite length. This result, and many others about $\FA$-modules, can be found in the paper \cite{wiltshire}. (It would be interesting to interpret the results of loc.\ cit.\ from the point of view of triangular categories: e.g., what are the standard objects, and what are their multiplicities in indecomposable projectives?)

\subsubsection{The category $\FA^{\op}$}

Unlike the other triangular categories discussed so far, $\fG=\bC[\FA]$ is not self-dual. The dual category $\widehat{\fG}=\bC[\FA^{\op}]$ is in fact far more complicated, and little is known about it at this time. The upwards category $\widehat{\fU}=\bC[\FS^{\op}]$ is locally noetherian by \cite[Theorem~8.1.2]{catgb}, and so $\Mod_{\widehat{\fG}}$ is also locally noetherian.

\subsubsection{The category $\FI\sharp$}

Let $\FI\sharp$ be the category whose objects are finite sets and where a morphism $S \to T$ is a triple $(S_0, T_0, i)$ where $S_0$ is a subset of $S$, $T_0$ is a subset of $T$, and $i \colon S_0 \to T_0$ is a bijection; such triples are often referred to as \emph{partial injections}, as $i$ can be regarded as an injection from a subset of $S$ to $T$. This category was introduced in \cite{fimodule}. Let $\cU$ be the wide subcategory of $\FI\sharp$ consisting of morphisms with $S=S_0$ (this is simply the category $\FI$), and let $\cD$ be the wide subcategory with $T=T_0$. Then $\fG=\bC[\FI\sharp]$ is a triangular category with $\fU=\bC[\cU]$ and $\fD=\bC[\cD]$ (and $\fM=\bC[\FB]$). We have already seen that $\Mod_{\fU}$ is locally noetherian, and so $\Mod_{\fG}$ is as well. In fact, $\Mod_{\fG}$ is semi-simple, as shown in \cite[Theorem 4.1.5]{fimodule}.

\subsection{Finite vector spaces}

Let $\bF$ be a finite field. Let $\VA$, $\VI$, $\VS$, and $\VB$ denote the categories whose objects are finite dimensional $\bF$-vector spaces, and whose morphisms are all linear maps, injective linear maps, surjective linear maps, and linear isomorphisms, respectively. Then $\fG=\bC[\VA]$ is triangular, with $\fU=\bC[\VI]$ and $\fD=\bC[\VS]$ (and $\fM=\bC[\VB]$). We have a transpose functor $\fG \to \widehat{\fG}$ induced by the duality functor $\VA \to \VA^{\op}$. It is known that $\Mod_{\fG}$ is semi-simple \cite[Corollary~1.3]{kuhn}.

\begin{remark}
The representation theory of $\VA$ is far more interesting when the coefficient field has the same characteristic as $\bF$. However, since $\VB$ is not semi-simple in this case, we do not have a triangular category.
\end{remark}

\def\YES{\color{blue!60!white}{Y}}
\def\NO{\color{red!60!white}{N}}
\begin{figure}[!h]
\begin{tabular}{lccc}
Name & Transpose & Monoidal & Tautological module(s)? \\
\hline
Brauer & \YES & \YES & If $\delta \in \bZ$ \\
Signed Brauer & \YES & \YES & If $\delta \in \bZ$ \\
Walled Brauer & \YES & \YES & If $\delta \in \bZ$ \\
Spin Brauer & \YES & \NO & If $\delta \in \bZ$ \\
Periplectic Brauer & \NO & \YES & Yes \\
Queer walled Brauer & \YES & \YES & Yes \\
Partition & \YES & \YES & If $\delta \in \bN$ \\
Temperley--Lieb & \YES & \YES & Yes (for any $\delta$) \\
$\FA$ & \NO & \YES & No \\
$\FA^{\op}$ & \NO & \YES & No \\
$\FI\sharp$ & \YES & \YES & No \\
$\VA$ & \YES & \YES & No
\end{tabular}
\caption{Summary of triangular categories.} \label{fig:tricat}
\end{figure}

\section{Triangular categories from Lie theory} \label{sec:catO}

Let $\fg$ be a complex semisimple Lie algebra. Let $\fb_+$ be a Borel subalgebra and $\fb_-$ the opposite Borel, so that $\fh=\fb_+ \cap \fb_-$ is a Cartan subalgebra, and let $\fn_{\pm}$ be the nilpotent radical of $\fb_{\pm}$. We order the weights in the usual way, so that the roots in $\fn_+$ are positive. Let $\cO$ be the category of $\fg$-modules $M$ satisfying the following conditions:
\begin{enumerate}
\item $M$ is finitely generated;
\item $M$ decomposes into weight spaces under $\fh$; and
\item every element of $M$ is annihilated by some power of $\fn_-$.
\end{enumerate}
This is the famous category introduced by Bernstein--Gel'fand--Gel'fand in \cite{bgg} (see \cite{humphreys} for general background). We caution the reader that our conventions are the opposite of the usual ones in representation theory (e.g., in condition (c) one typically uses $\fn_+$ instead of $\fn_-$). We now explain how to view this category through the lens of triangular categories.

Let $\Lambda$ be a set of weights such that
\begin{enumerate}[(i)]
\item for any $\lambda \in \Lambda$ there are only finitely many $\mu \in \Lambda$ such that $\mu \le \lambda$; and
\item if $\lambda \in \Lambda$ and $\mu$ is a positive integral weight then $\lambda+\mu \in \Lambda$.
\end{enumerate}
Let $\cO(\Lambda)$ be the full subcategory of $\cO$ spanned by objects whose weights are contained in $\Lambda$. For a weight $\lambda$ we let $\fc_{\lambda}$ be the left ideal of $\cU(\fg)$ generated by the elements $X-\lambda(X)$ for $X\in \fh$, together with all elements of weight $\mu$ such that $\mu+\lambda \not\in \Lambda$.

Define a category $\fG=\fG(\Lambda)$ as follows. The objects are the elements of $\Lambda$. For $\lambda, \mu \in \Lambda$, put $\Hom_{\fG}(\lambda,\mu) = (\cU(\fg)/\fc_{\lambda})_{\mu-\lambda}$, where the subscript indicates the specified weight space. Composition in $\fG$ is induced by multiplication in $\cU(\fg)$; it is an easy exercise to verify that this is well-defined. Let $\fU$ be the wide subcategory of $\fG$ where $\Hom_{\fU}(\lambda,\mu)$ is the image of $\cU(\fb_+)_{\mu-\lambda}$ in $\Hom_{\fG}(\lambda,\mu)$; define $\fD$ analogously using $\fb_-$ instead.

\begin{proposition}
With the above definitions, $\fG$ is a triangular category.
\end{proposition}

\begin{proof}
We verify the axioms:
\begin{itemize}
\item[(T0)] The category $\fG$ is small by definition. Let $\lambda \in \Lambda$. If $Y \in \fn_-$ then $Y\lambda < \lambda$. It follows that some power of $\fn_-$ will carry $\Lambda$ outside of $\Lambda$, and so $\fc_{\lambda}$ contains $\fn_-^N$ for some $N$. By PBW (Poincar\'e--Birkhoff--Witt), it follows that $\cU(\fg)/\fc_{\lambda}$ has a basis consisting of elements of the form $X_1 \cdots X_r Y_1 \cdots Y_s$ where $X_i \in \fn_+$ and $Y_i \in \fn_-$ and $s<N$. It is thus clear that the weight spaces of $\cU(\fg)/\fc_{\lambda}$ are finite dimensional, and so the $\Hom$ spaces in $\fG$ are too.
\item[(T1)] By definition, $\End_{\fU}(\lambda)$ is the image of the zero weight space of $\cU(\fb_+)$ in $\End_{\fG}(\lambda)$. The zero weight space of $\cU(\fb_+)$ is $\cU(\fh)$, and every element of $\fh$ maps to a scalar in $\End_{\fG}(\lambda)$. We thus have $\End_{\fU}(\lambda)=\bC$. The same analysis applies to $\fD$. Thus $\End_{\fU}(\lambda)=\End_{\fD}(\lambda)$, and this ring is semi-simple.
\item[(T2)] It is clear that the order $\le$ on $\Lambda$ is admissible.
\item[(T3)] Let $\lambda,\mu \in \Lambda$. We must show that the natural map
\begin{displaymath}
\bigoplus_{\nu \in \Lambda} \Hom_{\fU}(\nu,\mu) \otimes_{\bC} \Hom_{\fD}(\lambda,\nu) \to \Hom_{\fG}(\lambda,\mu)
\end{displaymath}
is an isomorphism. As stated above, PBW tells us that $\Hom_\fG(\lambda, \mu)$ has a basis consisting of (certain) elements of the form $X_1\cdots X_r Y_1\cdots Y_s$ where the $X_i \in \fn_+$ and $Y_j \in \fn_-$ are chosen among some fixed basis where the total weight is $\lambda-\mu$. Let $\lambda-\nu$ be the weight of $Y_1 \cdots Y_s$. Then $\nu-\mu$ is the weight of $X_1 \cdots X_r$. Furthermore, $\nu-\mu$ is a positive integral weight and hence $\nu = (\nu-\mu) + \mu \in \Lambda$ by axiom (ii). So the natural map above is surjective. That it is injective also follows by appealing to PBW for each of $\Hom_\fU(\nu,\mu)$ and $\Hom_\fD(\lambda,\nu)$: note that $\fc_{\lambda}$ is generated by $\fc_{\lambda} \cap \cU(\fn_-)$, so the $Y_1 \cdots Y_s$ elements from our basis of weight $\lambda-\nu$ form a basis of $\Hom_{\fD}(\lambda, \nu)$. \qedhere
\end{itemize}
\end{proof}

One easily sees that there is an equivalence $\Phi \colon \Mod_{\fG(\Lambda)}^{\rm fg} \to \cO(\Lambda)$ given by $\Phi(M)=\bigoplus_{\lambda \in \Lambda} M_{\lambda}$. We can therefore apply the formalism of triangular categories to category $\cO$, as we now explain. By our proof of (T1) we have $\End_{\fM}(\lambda)=\bC$, and so the set of weights of $\fG(\Lambda)$ (in the sense of \S \ref{ss:weights}) is simply $\Lambda$. For $\lambda \in \Lambda$, the simple module $S_{\lambda}$ is given by $S_{\lambda}(\lambda)=\bC$ and $S_{\lambda}(\mu)=0$ for $\lambda \ne \mu$. In what follows, we use a $\Lambda$ superscript to denote the usual $\fG(\Lambda)$-modules, e.g., $L_{\lambda}^{\Lambda}$ is the simple $\fG(\Lambda)$-module corresponding to $\lambda$.

One easily sees that $\Phi(\Delta_{\lambda}^{\Lambda})$ is the Verma module associated to $\lambda$ in $\cO$; thus $\Delta_{\lambda}^{\Lambda}$ is essentially independent of $\Lambda$. From this, we see that the same holds for the simple modules.

The projective objects $\wt{P}_{\lambda}^{\Lambda}$ and $P_{\lambda}^{\Lambda}$ do not enjoy the same independence in general. However, there is one important case where it does hold. Suppose that $\Lambda$ contains the shifted Weyl orbit of $\lambda$. Then $\cO(\Lambda)$ contains the entire block $\cA$ of $\cO$ associated to $\lambda$ \cite[\S 1.13]{humphreys}, and so we have $\cO(\Lambda)=\cA \oplus \cA'$ for some complementary category $\cA'$. From the equivalence $\Phi$, we see that $\Mod_{\fG(\Lambda)}$ decomposes as $\cB \oplus \cB'$ where $\cB$ is the block containing $L_{\lambda}^{\Lambda}$ and $\cB'$ is a complementary category. Since $P^{\Lambda}_{\lambda}$ is the projective cover of $L_{\lambda}^{\Lambda}$ in $\Mod_{\fG(\Lambda)}$, it must belong to $\cB$. Thus $\Phi(P_{\lambda}^{\Lambda})$ is the projective cover of $\Phi(L_{\lambda}^{\Lambda})$ in $\cA$, and therefore in all of $\cO$ since $\cA$ is a block. In particular, $P_{\lambda}^{\Lambda}$ is independent of $\Lambda$ when $\Lambda$ contains the shifted orbit of $\lambda$. (The projective $\wt{P}^{\Lambda}_{\lambda}$ is basically never independent of $\Lambda$: it bleeds into more blocks as $\Lambda$ is enlarged.)

From the above, we conclude that $\cO$ has enough projectives and that the projectives have standard filtrations (apply $\Phi$ to the standard filtration of $P_{\lambda}^{\Lambda}$ with $\Lambda$ sufficiently large). Furthermore, our version of BGG reciprocity (Proposition~\ref{prop:bgg}) recovers the classical one (we note that $\fG(\Lambda)$ does admit a transpose).

We thus see that the triangular formalism recovers the most basic properties of $\cO$, taking only the block decomposition as input. The proofs given by this method are nearly identical to the original ones from \cite{bgg}.

\begin{remark} \label{rmk:algrep}
  One can also apply the triangular formalism to algebraic representations in positive characteristic. The basic idea is similar to the above, except we use a finite set of weights, and the universal enveloping algebra is replaced by the hyperalgebra (or algebra of distributions, see \cite[Chapter~I.7]{jantzen}). Verma's conjecture, proved by Sullivan \cite{sullivan}, allows one to connect modules over the hyperalgebra (and thus this category) to algebraic representations.
\end{remark}

\end{document}